\newtheorem{theo}{Theorem}[section]
\newtheorem{prop}[theo]{Proposition}
\newtheorem{lemm}[theo]{Lemma}
\newtheorem{corr}[theo]{Corollary}
\newtheorem{coro}[theo]{Corollary}
\theoremstyle{definition}
\newtheorem{defi}[theo]{Definition}
\newtheorem{rema}[theo]{Remark}
\newtheorem{rem}[theo]{Remarque}
\numberwithin{equation}{section}
\newcommand{\mscr}{\mathscr}
\newcommand{\dd}{\mathrm{d}}
\newcommand{\R}{\mathbb{R}}
\newcommand{\Z}{\mathbb{Z}}
\newcommand{\N}{\mathbb{N}}
\newcommand{\wt}{\widetilde}
\newcommand{\mrm}{\mathrm}
\newcommand{\ml}[1]{\mathcal{#1}}
\newcommand{\todo}{\ding{168}}
\newcommand{\rrm}{\mathrm{r}}
\DeclareMathOperator{\supp}{supp}
\DeclareMathOperator{\Id}{Id}
\newcommand{\srm}{{\mathrm{s}}}
\newcommand{\urm}{{\mathrm{u}}}
\newcommand{\mfS}{\mathfrak{S}}
\newcommand{\Ss}{_{\mathfrak S, \sigma}}
\newcommand{\oM}{\operatorname{Op}^{M_1}}
\newcommand{\oW}{\operatorname{Op}^{\mathrm {w}}}
\newcommand{\ohM}{\operatorname{Op}_h^{M_1}}
\begin{document}


\title[Nonlinear chaotic Vlasov equations]{Nonlinear chaotic Vlasov equations}

\author{Yann Chaubet}
\author{Daniel Han-Kwan}
\author{Gabriel Rivi\`ere}

\address{Laboratoire de Math\'ematiques Jean Leray, Nantes Universit\'e, UMR CNRS 6629, 2 rue de la Houssini\`ere, 44322 Nantes Cedex 03, France}


\email{yann.chaubet@univ-nantes.fr}

\email{daniel.han-kwan@univ-nantes.fr}

\email{gabriel.riviere@univ-nantes.fr}

\begin{abstract}

In this article, we study nonlinear Vlasov equations with a smooth interaction kernel on a compact manifold without boundary where the geodesic flow exhibits strong chaotic behavior, known as the Anosov property.
We show that, for small initial data with finite regularity and supported away from the null section, there exist global solutions to the nonlinear Vlasov equations which weakly converge to an equilibrium of the free transport equation, and whose potential strongly converges to zero, both with exponential speed. Central to our approach are microlocal anisotropic Sobolev spaces, originally developed for studying Pollicott-Ruelle resonances, that we further refine to deal with the geometry of the full cotangent bundle, which paves the way to the analysis of nonlinear Vlasov equations.


\end{abstract}

\maketitle

\tableofcontents

\section{Introduction}

Let $(\Sigma,\mathrm{g})$ be a smooth, compact, connected, oriented and Riemannian manifold of dimension $n\geqslant 2$ which has no boundary. All along the article, we make the assumption that $(\Sigma,\mathrm{g})$ is an \emph{Anosov} manifold \cite{anosov1967geodesic}, which means that the geodesic flow displays chaotic features, for instance strong sensitivity to initial conditions (we refer to Section~\ref{subsec:geom} below for a proper definition). In particular, this condition holds as soon as $(\Sigma,\mathrm{g})$ has negative sectional curvature;  this includes the special case of compact quotients of the hyperbolic plane (see e.g. \cite{FH}). We are interested in kinetic mean-field equations, namely  in the \emph{nonlinear} Vlasov equation
\begin{equation}\label{eq:vlasov}
\left\{
\begin{aligned}
 &\partial_t u+ \left\{\mathrm H + \Phi(u), \,u\right\}=0, \\
 &u|_{t=0}=u_0\in L^1(\mathrm{T}^*\Sigma). 
\end{aligned}
 \right.
\end{equation}
Here $\{\cdot, \,\cdot\}$ denotes the Poisson bracket in $\mathrm{T}^*\Sigma$, $\mathrm H$ stands for the classical Hamiltonian associated with the geodesic flow, that is
$
\mathrm H(x, \xi) = \displaystyle{\frac{1}{2}|\xi|^2_x},
$
and the interaction potential $\Phi(u) \in \mathscr C^\infty(\Sigma)$ is given by
\begin{equation}\label{eq:Poisson}
\Phi(u)(x) = \mathrm K \pi_*u(x)=\int_{\mrm{T}^*\Sigma} K(x,y)u(y,\eta)\dd \mrm L(y,\eta),
\end{equation}
where $\mrm L$ is the Liouville measure on $\mathrm{T}^*\Sigma$, $\pi : (x,\xi)\in\mathrm{T}^*\Sigma \mapsto x\in\Sigma$ is the natural projection and $\mathrm K$ is an integral operator acting on $L^2(\Sigma)$. Its kernel $K \in \mscr C^\infty(\Sigma \times \Sigma,\R)$ is assumed to satisfy 
$$
\int_{\Sigma} K(x,y) \dd \text{vol}_{\mathrm{g}}(y) = 0 \quad \text{for each }x \in \Sigma,
$$
with $\text{vol}_{\mathrm{g}}$ the Riemannian volume induced by the metric $\mathrm{g}$. From the physical point of view, when $u_0 \geqslant 0$, the function $u(t)$ can be interpreted as the distribution function in phase space of a population of particles, whose dynamics in $\Sigma$ is steered by a self-induced potential $\Phi(u)$, created by their non-trivial spatial density $\pi_* u(t,x)=\int_{\mrm{T}_x^*\Sigma}u(t,x,\xi)\,\dd \mrm L_x(\xi)$.

The main objective of the present paper is to prove the following result.

\begin{theo}\label{thm:main} There are $N_0\in \N_{\geqslant 1}$ and $\vartheta_0>0$ depending only on $(\Sigma, \mathrm g)$ (and not on the interaction kernel $K$) such that the following holds. 

For any $0<r_0<1$, there are $C, \varepsilon>0$ such that, if $u_0$ satisfies
$$
\|u_0\|_{\mscr C^{N_0}} \leqslant \varepsilon \quad \text{and} \quad \supp u_0 \subset \{(x,\xi) \in \mathrm{T}^*\Sigma~:~ |\xi|_x \in (r_0,r_0^{-1})\},
$$
then \eqref{eq:vlasov} admits a unique solution $u\in\mathscr{C}^1(\R,L^1(\mathrm{T}^*\Sigma))$. Moreover, there exist functions $h_{\pm\infty}\in  L^\infty_{\mathrm{comp}}(\R_{>0})$ such that, for all $\psi\in \mathscr{C}^\infty_c(\mrm T^*\Sigma)$,
$$
\left|\int_{\mrm T^*\Sigma} u(\pm t) \, \psi \, \dd {\mrm L}-\int_{\mrm T^*\Sigma} \left(h_{\pm \infty}\circ \mathrm H\right) \, \psi \, \dd {\mrm L}\right|\leqslant C e^{-t\vartheta _0 r_0}\|\psi\|_{\mathscr{C}^{N_0}}\|u_0\|_{\mscr C^{N_0}},\qquad t\geqslant 0 .
$$
In addition, for every $N\geqslant 0$, there is $C_N>0$ such that 
$$
\|\Phi(u(\pm t))\|_{\mathscr{C}^{N}}\leqslant C_N e^{-t\vartheta_0 r_0} \|u_0\|_{\mscr C^{N_0}},\qquad t\geqslant 0.
$$

\end{theo}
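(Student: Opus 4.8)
The plan is to treat the nonlinear Vlasov equation \eqref{eq:vlasov} as a perturbation of the free transport equation $\partial_t u + \{\mathrm H, u\} = 0$, whose flow is the geodesic flow, and to exploit the Anosov hypothesis through the microlocal anisotropic Sobolev spaces adapted to the full cotangent bundle (referred to in the abstract). The central analytic object is the generator $X = \{\mathrm H, \cdot\}$ (the geodesic vector field lifted to $\mathrm T^*\Sigma$), together with the resolvent $(X - z)^{-1}$ of the transfer operator on an anisotropic space $\H^{N_0}$ on which $X$ has discrete spectrum (Pollicott--Ruelle resonances) and a spectral gap away from the imaginary axis, up to the finitely many resonant states. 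Because $u_0$ is supported in the energy shell $\{|\xi|_x \in (r_0, r_0^{-1})\}$ and $\mathrm H$ is conserved by the free flow, one restricts attention to this compact region of $\mathrm T^*\Sigma$ where the rescaled geodesic flow is uniformly hyperbolic with rates proportional to $r_0$; this is the source of the factor $\vartheta_0 r_0$ in the decay rate.

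First I would set up the functional framework: fix the anisotropic space $\H = \H^{N_0}$ and show that $X$ generates a strongly continuous semigroup $e^{tX}$ on $\H$ whose spectrum, apart from a finite-dimensional part, decays like $e^{-t\vartheta_0 r_0}$; crucially, the resonant states at the imaginary axis are exactly the functions of $\mathrm H$ (the $\{h \circ \mathrm H\}$), reflecting the ergodic decomposition of the geodesic flow on energy shells, which identifies the limits $h_{\pm\infty}\circ \mathrm H$. Next, I would write the Duhamel formula
$$
u(t) = e^{tX} u_0 - \int_0^t e^{(t-s)X}\left\{\Phi(u(s)), u(s)\right\}\, \dd s,
$$
and observe that the nonlinear term is a transport term against the Hamiltonian vector field of the potential $\Phi(u(s))$, which is \emph{smooth and spatially supported} by \eqref{eq:Poisson} and the mean-zero assumption on $K$. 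The key structural point is that $\Phi(u)$ depends on $u$ only through the spatial density $\pi_* u$, i.e.\ through the pairing of $u$ against smooth test functions on the base; so one should prove a closed estimate for $\|\Phi(u(t))\|_{\mathscr C^N}$ alone, using that $\pi_*$ maps $\H$ (or its dual) continuously to $L^2(\Sigma)$ and that $\pi_* e^{tX}$ enjoys the exponential mixing decay $\lesssim e^{-t\vartheta_0 r_0}$ when tested against smooth densities (this is the cotangent-bundle refinement of the mixing estimates). One then runs a bootstrap/fixed-point argument in the Banach space of curves $t \mapsto u(t)$ with norm $\sup_{t\ge 0} e^{t\vartheta_0 r_0}\|u(t) - (h_\infty\circ \mathrm H)\|_{\H}$ (and similarly on the potential), choosing $\varepsilon$ small enough that the quadratic nonlinearity is a contraction; smallness of $\|u_0\|_{\mathscr C^{N_0}}$ guarantees the iteration stays in the small ball. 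The limits $h_{\pm\infty}$ are recovered as the (necessarily energy-radial) weak limits, and their compact support in $\R_{>0}$ follows from propagation of the support hypothesis on $u_0$ together with conservation of $\mathrm H$ along the perturbed characteristics, which remain in a slightly enlarged annulus because $\Phi(u)$ is small. Finally, the $\mathscr C^N$ bound on $\Phi(u(\pm t))$ for \emph{all} $N$ is obtained a posteriori: once $\pi_* u(t)$ is known to decay in a weak norm, elliptic regularity of the smoothing operator $\mathrm K$ (its kernel $K$ being $\mathscr C^\infty$) upgrades the decay to every $\mathscr C^N$ with constants $C_N$ depending on $K$, while $N_0$ and $\vartheta_0$ stay fixed.

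The main obstacle I anticipate is controlling the nonlinear transport term $\{\Phi(u(s)), u(s)\}$ on the anisotropic space: differentiating $u(s)$ along the (time-dependent) Hamiltonian vector field of $\Phi(u(s))$ costs one derivative and, worse, this vector field is not the Anosov field $X$ but a transverse perturbation of it, so one cannot directly absorb it into the semigroup's smoothing. The resolution is to exploit that $\Phi(u(s))$ is a \emph{pullback from the base} $\Sigma$, hence its Hamiltonian vector field is "vertical" (tangent to the fibers of $\pi$) — a direction along which the anisotropic space $\H^{N_0}$ was designed to carry enough regularity (this is precisely why one needs the refinement to the \emph{full} cotangent bundle rather than a fixed energy layer). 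One must therefore prove a commutator/mapping estimate of the form: $\{\varphi, \cdot\}$ with $\varphi \in \mathscr C^{N_0}(\Sigma)$ is bounded $\H^{N_0} \to \H^{N_0 - 1}$, and the loss of one order of anisotropic regularity is recovered by the $e^{(t-s)X}$ factor paying an integrable singularity $(t-s)^{-1/2}$ (or by working with a scale of spaces). Making these anisotropic estimates uniform in the energy parameter $r_0$, so that all constants degrade only polynomially and the rate $\vartheta_0 r_0$ is genuine, is the technical heart of the argument.
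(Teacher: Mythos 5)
Your overall skeleton (Duhamel around the free flow, exploiting that $\Phi(u)$ only sees $\pi_*u$ and is smoothed by $K$, bootstrap with smallness, support propagation away from the null section, identification of the limit as a function of $\mathrm H$) matches the spirit of the paper, but the step where you close the nonlinear estimate contains a genuine gap. You acknowledge that $\{\Phi(u(s)),u(s)\}$ costs one derivative on the anisotropic space, and you propose to recover this loss because ``the $e^{(t-s)X}$ factor pays an integrable singularity $(t-s)^{-1/2}$''. The transfer semigroup of an Anosov flow has no such parabolic-type smoothing: $\varphi_{-t}^*$ is merely bounded on anisotropic spaces for small times and gains no derivatives, so there is no integrable singularity to absorb the loss. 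For the same reason, the contraction you set up in the weighted norm $\sup_t e^{t\vartheta_0 r_0}\|u(t)-h_\infty\circ\mathrm H\|_{\mathcal H}$ cannot close: estimating the Duhamel term in that norm requires the \emph{strong} anisotropic norm of $u(s)$ to decay (or at least stay bounded) with the same exponential weight, and this is exactly what one cannot prove directly — in the paper the strong norm is only shown to grow \emph{polynomially}. Also, the picture of a single space $\mathcal H^{N_0}$ on the full $\mathrm T^*\Sigma$ on which $X$ has discrete spectrum and a spectral gap is not available: the rate degenerates with $r$ and the ``resonant states'' $h\circ\mathrm H$ form an infinite-dimensional family, which is why the paper never argues via a global spectral gap.

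What replaces your smoothing step in the paper is a two-norm bootstrap, and this is the technical heart you are missing. One needs \emph{two} families of anisotropic norms: sliced norms (operators $\mathbf B^{(m_1,N_1)}_{\mathfrak S,\sigma}$, built layer by layer from Faure–Sj\"ostrand weights) on which the Nonnenmacher–Zworski resolvent estimates give exponential decay of $\varphi_{-t}^*\mathbf P$, and global norms (operators $\mathbf A_{\mathfrak S,\sigma}$, built from a new escape function on $\mathrm T^*\mathrm T^*\Sigma$ that is non-increasing along the symplectic lift of the flow on the \emph{full} cotangent bundle) on which one can run a genuine microlocal energy estimate (commutator plus sharp G\aa{}rding, with a time-dependent weight $\sigma_0(t)$ to absorb a logarithmic loss), yielding a polynomial-in-time bound on $\|\mathbf A u(t)\|_{L^2}$. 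A bilinear estimate (Proposition~\ref{p:bilinear}) controls the sliced norm of $\varphi_{-t}^*\mathbf P\{\Phi,\chi u\}$ by $e^{-\vartheta_0 t\min I}\|\Phi\|_{\mathscr C^N}(\|u\|_{L^2}+\|\mathbf A u\|_{L^2})$, so the exponential mixing beats the polynomial growth and the bootstrap on $\|\Phi(u(t))\|_{\mathscr C^k}\leqslant\varepsilon e^{-\vartheta t}$ closes. Neither family alone suffices: the sliced operators have no good symbolic calculus in the radial variable (so no energy estimate, because the perturbing Hamiltonian field contains $\partial_r$), while the global operators do not directly see the exponential mixing; your proposal, which works with a single space and relies on semigroup smoothing, bypasses precisely this difficulty rather than resolving it. Note also that global existence in the paper is obtained separately by characteristics (Cauchy–Lipschitz), not by the contraction in anisotropic spaces.
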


Theorem~\ref{thm:main} provides a fine description of the large time behavior of small solutions with finite regularity, initially compactly supported away from the null section $\{\xi=0\}$, to the nonlinear Vlasov equation~\eqref{eq:vlasov}. Namely, it shows that the distribution function $u(t)$ weakly converges to an equilibrium of the the free transport equation
$$
 \partial_t f+ \left\{\mathrm H, \,f\right\}=0,
 $$
 and the potential $\Phi(u(t))$ strongly converges to $0$, both with exponential speed. 
 A direct consequence of this theorem, by a rescaling argument, is the following statement which states convergence to equilibrium for {weakly} nonlinear perturbations of the free Vlasov equation.
\begin{coro}\label{coro:main} There exists $N_0\geqslant 1$ such that, for any $0<r_0<1$ and any $M_0>0$, one can find $\varepsilon_0> 0$ so that if $\|u_0\|_{\mscr C^{N_0}} \leqslant  M_0$ and 
$$
\supp u_0 \subset \{(x,\xi) \in \mathrm{T}^*\Sigma~:~ |\xi|_x \in (r_0,r_0^{-1})\},
$$
then, for every $0\leqslant\varepsilon\leqslant\varepsilon_0$ the solution of the problem 
\begin{equation}\label{eq:vlasov2}
\left\{
\begin{aligned}
&\partial_t u+ \left\{\mathrm H + \varepsilon \Phi(u), \,u\right\}=0,\\
&u|_{t=0}=u_0,
\end{aligned}
\right.
\end{equation}
satisfies the same properties as in the conclusion of Theorem~\ref{thm:main}.
\end{coro}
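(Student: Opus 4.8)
The plan is to derive Corollary~\ref{coro:main} from Theorem~\ref{thm:main} by a straightforward rescaling of the unknown, exploiting the fact that the Vlasov equation~\eqref{eq:vlasov2} is quadratically nonlinear and that the support condition is scale-invariant. Concretely, given a solution $u$ of~\eqref{eq:vlasov2} with $\|u_0\|_{\mscr C^{N_0}}\leqslant M_0$, I would set $v := \varepsilon u$. Since $\{\mathrm H+\varepsilon\Phi(u),u\}=\{\mathrm H,v\}/\varepsilon + \{\Phi(v),v\}/\varepsilon$ by bilinearity of the Poisson bracket and linearity of $\Phi$ in its argument, multiplying the equation by $\varepsilon$ shows that $v$ solves the unscaled equation $\partial_t v+\{\mathrm H+\Phi(v),v\}=0$ with initial datum $v_0=\varepsilon u_0$. (For $\varepsilon=0$ the equation is the free transport equation and the conclusion is classical, so one may assume $\varepsilon>0$.)

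Next I would check that $v_0$ meets the hypotheses of Theorem~\ref{thm:main}. The support condition is unaffected by multiplication by the scalar $\varepsilon$, so $\supp v_0=\supp u_0\subset\{|\xi|_x\in(r_0,r_0^{-1})\}$. For the smallness condition, $\|v_0\|_{\mscr C^{N_0}}=\varepsilon\|u_0\|_{\mscr C^{N_0}}\leqslant \varepsilon M_0$, so it suffices to choose $\varepsilon_0:=\varepsilon/M_0$ where $\varepsilon$ is the threshold provided by Theorem~\ref{thm:main} applied with this value of $r_0$; then for every $0<\varepsilon\leqslant\varepsilon_0$ we get $\|v_0\|_{\mscr C^{N_0}}\leqslant\varepsilon$. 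Here $N_0$ is exactly the integer from Theorem~\ref{thm:main}, which depends only on $(\Sigma,\mathrm g)$, consistent with the statement of the corollary.

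Applying Theorem~\ref{thm:main} to $v$ then yields a unique global solution $v\in\mathscr C^1(\R,L^1(\mrm T^*\Sigma))$, functions $h_{\pm\infty}\in L^\infty_{\mathrm{comp}}(\R_{>0})$, and the two quantitative convergence estimates for $v$. Since $u=v/\varepsilon$, uniqueness and global existence transfer to $u$ immediately. For the weak convergence, I would divide the first estimate by $\varepsilon$: writing $\tilde h_{\pm\infty}:=h_{\pm\infty}/\varepsilon$, which still lies in $L^\infty_{\mathrm{comp}}(\R_{>0})$, one obtains for all $\psi\in\mathscr C^\infty_c(\mrm T^*\Sigma)$
$$
\left|\int_{\mrm T^*\Sigma} u(\pm t)\,\psi\,\dd{\mrm L}-\int_{\mrm T^*\Sigma}(\tilde h_{\pm\infty}\circ\mathrm H)\,\psi\,\dd{\mrm L}\right|\leqslant \frac{C}{\varepsilon}e^{-t\vartheta_0 r_0}\|\psi\|_{\mscr C^{N_0}}\|v_0\|_{\mscr C^{N_0}}= C e^{-t\vartheta_0 r_0}\|\psi\|_{\mscr C^{N_0}}\|u_0\|_{\mscr C^{N_0}},
$$
so the constant is the same as for $u_0$. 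Similarly, $\Phi(u(\pm t))=\Phi(v(\pm t))/\varepsilon$, and dividing the second estimate by $\varepsilon$ gives $\|\Phi(u(\pm t))\|_{\mscr C^N}\leqslant C_N\varepsilon^{-1}e^{-t\vartheta_0 r_0}\|v_0\|_{\mscr C^{N_0}}=C_N e^{-t\vartheta_0 r_0}\|u_0\|_{\mscr C^{N_0}}$, which is the desired bound with the same $C_N$. Hence $u$ satisfies verbatim the conclusions of Theorem~\ref{thm:main}, which completes the proof.

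There is no genuine obstacle here: the only things to be careful about are that the scaling is exact because the nonlinearity is precisely quadratic and $\Phi$ is linear, that the support hypothesis is scale-invariant, and that one must handle the degenerate case $\varepsilon=0$ separately (where~\eqref{eq:vlasov2} reduces to free transport). The mild subtlety worth a sentence in the writeup is that the constants $C$ and $C_N$ in the rescaled estimates come out independent of $\varepsilon$ precisely because the $\varepsilon^{-1}$ from dividing the estimate cancels the $\varepsilon$ hidden in $\|v_0\|_{\mscr C^{N_0}}=\varepsilon\|u_0\|_{\mscr C^{N_0}}$; one should also note that $\varepsilon_0$ depends on $r_0$ and $M_0$ only, as required.
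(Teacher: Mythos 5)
Your proposal is correct and is exactly the rescaling argument the paper alludes to (the paper gives no further details): setting $v=\varepsilon u$ turns~\eqref{eq:vlasov2} into~\eqref{eq:vlasov} with datum $\varepsilon u_0$, and Theorem~\ref{thm:main} applied to $v$ gives the conclusions for $u$ after dividing by $\varepsilon$, the factors of $\varepsilon$ cancelling as you note. The separate treatment of $\varepsilon=0$ via the linear mixing results is also fine.
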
  
Under this form, our main result can be understood as an analogue for the Vlasov equation of the recent results of Bahsoun, Liverani and S\'elley on globally coupled Anosov diffeomorphisms, acting discretely on distribution functions~\cite{BahsounLiveraniSelley2023} (see also \cite{SelleyTanzi2021,Galatato2022,Tanzi2023} and references therein). More precisely, they consider weakly nonlinear mean-field perturbations of transfer operators associated with Anosov diffeomorphisms. They prove that such nonlinear transfer operators admit a unique physical (or Sinai-Ruelle-Bowen) measure and that convergence to this equilibrium measure holds at an exponential rate. The strategy of~\cite{BahsounLiveraniSelley2023} is based on a fixed point theorem argument which is tailored for discrete time models.  

\subsection{Exponential mixing for the geodesic flow with the Anosov property}  In the free case (that is when $K\equiv 0$), Theorem~\ref{thm:main} is a consequence of  by now classical results. The weak convergence of $u$ without explicit speed of convergence is a consequence of  Anosov's seminal work~\cite{anosov1967geodesic} following earlier contributions by Hopf on the ergodicity of the Liouville measure~\cite{Hopf1939,Hopf1940}. 
Anosov's result was refined for manifolds of \emph{constant} negative curvature by providing speed of convergence using tools from harmonic analysis on Lie groups~\cite{ratner1987rate, moore1987exponential, pollicott1992exponential}. 
The extension of these works to variable negative curvature was obtained in dimension $2$ using methods from Markov partitions by Dolgopyat~\cite{dolgopyat1998decay} and in any dimension using Banach spaces with anisotropic regularity adapted to Anosov flows by Liverani~\cite{liverani2004contact} (see also~\cite{chernov1998markov} for earlier results showing subexponential decay when $r_0>0$). The outcome of the results in these references is that, when $K\equiv 0$, we have
\begin{equation}\label{eq:exp-mixing-classical}
\left|\int_{\mrm{S}^*\Sigma} u(t)\psi \,\dd {\mrm L}_1- \int_{\mrm{S}^*\Sigma}\left(\int_{\mrm{S}^*\Sigma} u_0\dd {\mrm L}_1\right) \psi\, \dd {\mrm L}_1\right|\leqslant Ce^{-\vartheta_0 |t|}\|u_0\|_{\mathscr{C}^{N_0}}\|\psi\|_{\mathscr{C}^{N_0}},
\end{equation}
where $\mrm{S}^*\Sigma=\{(x,\xi)\in \mrm{T}^*\Sigma: |\xi|_x=1\}$ and ${\mrm L}_1$ is the desintegration of the Liouville measure on $\mrm{S}^*\Sigma$. Integrating this expression over $\mrm{T}^*\Sigma$ yields the following theorem for the \emph{free} Vlasov equation:
\begin{theo}[Dolgopyat-Liverani]\label{t:dolgopyatliverani} Let 
$K\equiv0$. 
There exist an integer $N_0\geqslant 1$ and two constants $C,\vartheta_1>0$ such that the following holds. For any $0\leqslant r_0<1$, for any $u_0\in \mathscr{C}^{N_0}(\mathrm{T}^*\Sigma)$ such that 
$$
\supp u_0 \subset \left\{(x,\xi) \in \mathrm{T}^*\Sigma~:~ |\xi|_x \in [r_0,\infty[\right\},
$$
and for any $\psi\in \mathscr{C}^\infty_c(\mrm T^*\Sigma)$, one has
$$\forall t\geqslant 0,\quad
\left|\int_{\mrm T^*\Sigma} u({\pm} t) \, \psi \,  \, \dd {\mrm L}-\int_{\mrm T^*\Sigma} \left(h_{\operatorname{lin}}\circ \mathrm H\right)  \psi \,  \dd {\mrm L}\right|\leqslant C \frac{e^{-t\vartheta_1 r_0}}{(1+t)^{n}}\|\psi\|_{\mathscr{C}^{N_0}}\|u_0\|_{\mscr C^{N_0}},
$$
where
$$
h_{\operatorname{lin}}(r)=\int_{\mrm{S}^*\Sigma} u_0(x,r\xi_1) \, \dd {\mrm L}_1(x,\xi_1).
$$
\end{theo}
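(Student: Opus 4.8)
The plan is to deduce the statement from the known exponential mixing estimate \eqref{eq:exp-mixing-classical} on the unit cosphere bundle by disintegrating the Liouville measure along the energy levels and carefully tracking the dependence of the constants on the radius. First I would introduce, for each $r>0$, the rescaled sphere bundle $\mathrm{S}^*_r\Sigma=\{|\xi|_x=r\}$ and the scaling diffeomorphism $m_r:\mathrm{S}^*\Sigma\to\mathrm{S}^*_r\Sigma$, $(x,\xi_1)\mapsto(x,r\xi_1)$. Since $\mathrm H$ is homogeneous of degree two and the geodesic flow $\varphi_t$ on $\mathrm{T}^*\Sigma$ commutes with dilations in the following precise sense --- $\varphi_t\circ m_r = m_r\circ\varphi_{rt}$ --- the flow on the energy shell $\{\mathrm H=r^2/2\}$ is conjugate, via $m_r$, to the unit-speed geodesic flow run at speed $r$. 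Consequently the free Vlasov solution is simply $u(t)=u_0\circ\varphi_{-t}$, and on the shell of radius $r$ it reads $(u(t)\circ m_r)(x,\xi_1)=u_0(\varphi_{-rt}(x,r\xi_1))$. The exponential mixing bound \eqref{eq:exp-mixing-classical}, applied with the rescaled observables and with time $rt$ in place of $t$, then yields on each fixed shell a decay of the form $Ce^{-\vartheta_0 r t}$ times a $\mathscr C^{N_0}$ norm of the rescaled data; the key point is that the mixing constant $\vartheta_0$ and the reference norm $N_0$ from \eqref{eq:exp-mixing-classical} are uniform in $r$, because the rescaling $m_r$ is a smooth conjugacy of the dynamics.

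The second step is the disintegration. By homogeneity one writes, for $\psi\in\mathscr C^\infty_c(\mathrm T^*\Sigma)$,
$$
\int_{\mathrm T^*\Sigma}u(t)\,\psi\,\dd\mathrm L
=\int_0^\infty\!\!\Big(\int_{\mathrm S^*\Sigma}(u(t)\circ m_r)\,(\psi\circ m_r)\,\dd\mathrm L_1\Big)\,\rho(r)\,\dd r,
$$
for an explicit weight $\rho(r)=c_n r^{n-1}$ coming from the coarea/Liouville decomposition (the precise constant is irrelevant). Applying the shell-wise mixing estimate inside the integral, the leading term is
$$
\int_0^\infty\Big(\int_{\mathrm S^*\Sigma}u_0(x,r\xi_1)\,\dd\mathrm L_1(x,\xi_1)\Big)\Big(\int_{\mathrm S^*\Sigma}\psi(x,r\xi_1)\,\dd\mathrm L_1(x,\xi_1)\Big)\rho(r)\,\dd r,
$$
which is exactly $\int_{\mathrm T^*\Sigma}(h_{\operatorname{lin}}\circ\mathrm H)\,\psi\,\dd\mathrm L$ after undoing the disintegration, with $h_{\operatorname{lin}}(r^2/2)=\int_{\mathrm S^*\Sigma}u_0(x,r\xi_1)\,\dd\mathrm L_1$; note this is well defined precisely because $u_0$ is supported in $\{|\xi|_x\ge r_0\}$ and is smooth, so the shell integrals vary smoothly. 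The remainder is bounded by
$$
C\int_{r_0}^{\infty}e^{-\vartheta_0 r t}\,\|\psi\circ m_r\|_{\mathscr C^{N_0}}\,\|u_0\circ m_r\|_{\mathscr C^{N_0}}\,\rho(r)\,\dd r .
$$

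The third step is to extract the stated quantitative bound $C e^{-t\vartheta_1 r_0}(1+t)^{-n}$ from this remainder integral. Since $\psi$ has compact support, $\|\psi\circ m_r\|_{\mathscr C^{N_0}}$ is controlled by $(1+r)^{N_0}\|\psi\|_{\mathscr C^{N_0}}$ and vanishes for $r$ outside a bounded set depending on $\supp\psi$; similarly $\|u_0\circ m_r\|_{\mathscr C^{N_0}}\lesssim(1+r)^{N_0}\|u_0\|_{\mathscr C^{N_0}}$. Hence the remainder is at most $C\|\psi\|_{\mathscr C^{N_0}}\|u_0\|_{\mathscr C^{N_0}}\int_{r_0}^{R}e^{-\vartheta_0 rt}(1+r)^{2N_0+n-1}\dd r$ for some $R=R(\psi)$. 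Splitting off a factor $e^{-\vartheta_0 r_0 t/2}$ and bounding the remaining $\int_{r_0}^\infty e^{-\vartheta_0 rt/2}(1+r)^{2N_0+n-1}\dd r$ by a substitution $s=rt$ gives a gain of $t^{-(n)}$ from the lowest power together with polynomial-in-$1/t$ corrections absorbed into $(1+t)^{-n}$; taking $\vartheta_1=\vartheta_0/2$ and enlarging $C$ and $N_0$ (to accommodate the polynomial weights $(1+r)^{2N_0+n-1}$, i.e.\ replacing the old $N_0$ by a larger one still depending only on $(\Sigma,\mathrm g)$) yields the claim for $+t$; the case $-t$ is identical since \eqref{eq:exp-mixing-classical} holds with $|t|$. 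The main obstacle, and the only place requiring genuine care rather than bookkeeping, is justifying the \emph{uniformity in $r$} of the mixing rate $\vartheta_0$ and of the constant $C$ in \eqref{eq:exp-mixing-classical} when it is transported by $m_r$: one must check that the anisotropic-space (or Markov-partition) constants underlying Dolgopyat--Liverani degrade at most polynomially in $r$ under the conjugacy --- which is why the final $N_0$ may be larger than the one in \eqref{eq:exp-mixing-classical} and why the polynomial prefactor $(1+t)^{-n}$ rather than a clean exponential is the natural output once one integrates over all energy shells.
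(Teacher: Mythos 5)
Your argument is essentially the paper's own proof (Appendix~\ref{a:liverani}): write the Liouville integral in polar coordinates, apply the mixing bound~\eqref{eq:exp-mixing-classical} on $\mathrm{S}^*\Sigma$ at the rescaled time $rt$ with the rescaled observables $u_0(\cdot,r\,\cdot)$, $\psi(\cdot,r\,\cdot)$, identify the leading term with $\int (h_{\operatorname{lin}}\circ\mathrm H)\psi\,\dd\mathrm L$, and bound the remainder by $\int_{r_0}^{\infty}e^{-\vartheta_0 rt}r^{n-1}\,\dd r$, which yields $e^{-\vartheta_1 r_0 t}(1+t)^{-n}$ after splitting off part of the exponential. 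Your closing worry about uniformity of the Dolgopyat--Liverani constants under the conjugacy $m_r$ is unnecessary: \eqref{eq:exp-mixing-classical} is applied directly on the fixed unit bundle to the rescaled test functions at time $rt$, so the only $r$-dependence enters through their $\mathscr C^{N_0}$ norms, exactly as you track.
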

Note that, in this free setting, the case $r_0= 0$ is allowed but only algebraic decay is reached in that case.
For the reader's convenience, see Appendix~\ref{a:liverani} for a short proof of this theorem based on the mixing property~\eqref{eq:exp-mixing-classical}. Let us emphasize that we will not be able to use this theorem directly and that we will rather rely on microlocal refinements of this result (see Theorem~\ref{t:nonnenmacherzworski} below). Indeed, \eqref{eq:exp-mixing-classical} was subsequently recovered and improved using methods from microlocal analysis~\cite{tsujii2010quasi, tsujii2012contact, nonnenmacher2015decay, faure2017semiclassical, faure2021micro}, with a strategy initiated in~\cite{baladi2007anisotropic, faure2008semi, faure2011upper} and which can display analogies with the study of quantum resonances~\cite{AguilarCombes1971, BaslevCombes1971, HelfferSjostrand1986}.
These alternative approaches allowed to give sharp expressions on the high frequency value of the decay rate $\vartheta_0$ and to obtain results on the distribution of the so-called Pollicott--Ruelle resonances. On top of that, the Fourier analysis machinery behind these works turns out to be relevant for applications to nonlinear Vlasov equations as we shall see in this article. In fact, in all the above references, the upper bound in~\eqref{eq:exp-mixing-classical} involves the norm of $u_0$ and $\psi$ in some Banach space of distributions with anisotropic H\"older or Sobolev regularity adapted to the free dynamics. For the application to  nonlinear Vlasov equations, the microlocal anisotropic Sobolev norms of Faure and Sj\"ostrand~\cite{faure2011upper} appearing in the upper bound obtained by Nonnenmacher and Zworski in~\cite[\S9]{nonnenmacher2015decay} will be appropriate and this microlocal approach will allow us to show that 
\begin{equation}\label{eq:integralremainder}
h_{\pm \infty}(r)=h_{\text{lin}}(r)-\int_{\mrm{S}^*\Sigma}\left(\int_0^{\pm\infty}\{\Phi(u(s)),u(s)\}(x,r\xi_1)ds\right)\dd {\mrm L}_1(x,\xi_1)
\end{equation}
is well defined and that it is indeed the limiting distribution of the solution to~\eqref{eq:vlasov} provided $\|u_0\|_{\mathscr{C}^N}$ is small enough. Yet, it may happen that other types of anistropic norms could be used modulo some careful adaptations of our nonlinear arguments. For instance, the aforementioned recent work~\cite{BahsounLiveraniSelley2023} (see also~\cite{BahsounLiverani2024}) makes use of geometric anisotropic Banach spaces of distributions as developped in~\cite{BahsounLiverani2022} to study nonlinear mean-field perturbations of Anosov diffeomorphisms, which can be somehow viewed as discrete analogues of the Vlasov equation.



  
\subsection{Other Vlasov equations of geometric origin}
  
Other geometric Vlasov equations were previously studied with motivations from physics and with an emphasis on the fine large time behavior of solutions. Such equations on manifolds are in fact natural in the context of general relativity. See for instance \cite{Ring}, \cite{ACGS} or the recent works about the stability of the Minkowski space-time for Einstein--Vlasov \cite{Tay,LinTay,FJS,BFJST} and references therein. Compared with the present work, the free Hamiltonian is given in these references by $\mrm H(x,\xi)=|\xi|_x^2/\sqrt{|\xi|^2_x+m}$ (with $m\geqslant 0$), the potential $\Phi$ is taken to be identically $0$ and the nonlinearity comes from the fact that the (time dependent) metric $\mathrm{g}$ is given by the Einstein equation with a source term depending linearly on certain averages of $u$. We also refer to \cite{ABJ,Big,Vel} which establish various decay estimates of spacetime observables for the (linear) massless Vlasov equation outside of black hole geometries, whose associated flows display hyperbolic features. In other directions, we may finally mention \cite{Moschidis} for the instability of the Anti-de Sitter space-time or  \cite{HL1,HL2,Tou1,Tou2} for works related to derivations of Einstein--Vlasov from the Einstein vacuum equation in the high frequency regime, the so-called Burnett conjecture.

More closely related to our geometric framework, Velozo Ruiz and Velozo Ruiz \cite{VRVR2} treated recently the case of the free Vlasov equation set on a \emph{noncompact} asymptotically hyperbolic manifold $\Sigma$ whose trapped set is empty and established exponential convergence to $0$ of $\pi_*(u)(t)$ when $r_0>0$ and polynomial convergence when $r_0\equiv 0$ and $(\Sigma,\mathrm{g})=(\mathbb{H}^2,y^{-2}(\dd x^2+\dd y^2))$ (see also \cite{Salort} which considers the case of a general metric on $\R^n$). In this non-compact and non-trapping setting, dispersion prevails, leading to decay for the spatial density $\pi_* (u)(t)$. Finally, together with Bigorgne, they studied the stability of vacuum for the Vlasov--Poisson equation on $\mathrm{T}^*\R^n $, in the presence of an external potential $-\frac{1}{2} |x|^2$ which induces hyperbolic features of the free Hamiltonian flow. Indeed, the linear flow has then a non-empty trapped set which is the \emph{hyperbolic fixed point} $(x,\xi)=(0,0)$. For these models and for the Poisson interaction $\mathrm K=\pm\Delta^{-1}$, exponential decay to $0$ for $\pi_* u(t)$ was proved in~\cite{VRVR1} while,  after proper renormalization of the solution along the unstable manifold, the first term in the asymptotic expansion was identified in~\cite{BVRVR} when $n=2$ together with a fine asymptotic description of the limit distribution.


\subsection{The flat torus case}

Besides these models of geometric flavour, it is also instructive to compare our work to what is known for Vlasov equations set on the cotangent bundle of the flat torus $\mathbb{T}^n$, which has been, as far as we know, the unique compact manifold where progress has been made concerning fine large time behavior of solutions to~\eqref{eq:vlasov}. In this simple geometric framework, the Vlasov equation reads
\begin{equation}\label{eq:vlasov-torus}
 \partial_t u+ \xi \cdot \nabla_x u - \nabla_x \Phi(u) \cdot \nabla_\xi u = 0,\quad u|_{t=0}=u_0\in \mscr{C}^N(\mathrm{T}^* \mathbb{T}^n,\R)\cap L^1(\mathrm{T}^*\mathbb{T}^n),
\end{equation}
with the identification $\mathrm{T}^*\mathbb{T}^n \simeq \mathbb{T}^n  \times \mathbb{R}^n$. 
In a breakthrough work \cite{MV}, Mouhot and Villani have proved that small, \emph{analytic} (or at least \emph{Gevrey} with high regularity index) solutions to~\eqref{eq:vlasov-torus}  weakly converges to a stationary state to free transport, and the potential $\Phi(u)$ strongly converges to $0$, both with exponential speed (sub-exponential in the Gevrey case), a result referred to in the mathematical literature as \emph{Landau Damping}.
Prior to that, we mention that \cite{Deg} studied the linear spectral problem, and in \cite{CM,HV} the authors were able to construct examples of solutions that display this behavior. It is also important to note that these references do not restrict to smooth interaction kernels $K$, and in particular treat  Vlasov--Poisson equations; 
note though that they impose a convolution kernel, that is $K(x,y)$ must be of the form $k(x-y)$.

 The linear mechanism at work in the torus is the so-called {\it phase mixing}, which can be briefly summarized as follows. On the torus, the free motion is \emph{integrable}, and the solution to the free transport equation has an explicit formula, namely we have 
 $ u(t,x,\xi) = u(0,x-t\xi, \xi)$ and
 \begin{align*}
 \int_{\mathrm T^*\mathbb{T}^n} u(t) \psi \dd {\mrm L} &=  \int_{\mathrm T^*\mathbb{T}^n} \left(\int_{\mathbb{T}^n} u_0 \, \dd x\right) \psi \dd x \dd \xi + \sum_{k \neq 0} \int_{\R^n} \mathcal{F}_{x} u_0(k,\xi) \mathcal{F}_{x}\psi(-k,\xi) e^{it k \cdot \xi} \, \dd \xi \\
  &=\int_{\mathrm T^*\mathbb{T}^n} \left(\int_{\mathbb{T}^n} u_0 \, \dd x\right) \psi \dd x \dd \xi + \sum_{k \neq 0} \int_{\R^n} \mathcal{F}_{x,\xi} {u_0}(k,v-tk) \mathcal{F}_{x,\xi}\psi(-k,-v) 
 \, \dd v,
 \end{align*}
 where $ \mathcal{F}_{x}$ (resp. $ \mathcal{F}_{x,\xi}$) refers to the Fourier coefficients in $x$ (resp. Fourier coefficients in $x$ and Fourier transform in $\xi$).
 Therefore, $u$  converges weakly to its space average as a consequence of Riemann--Lebesgue's theorem. Furthermore,  the speed of convergence can be quantified, depending on the regularity with respect to $\xi$ of $u_0$. In particular, we have
 $$
  \left|\int_{\mathrm T^*\mathbb{T}^n} u(t) \psi \, \dd {\mrm L} - \int_{\mathrm T^*\mathbb{T}^n} \left(\int_{\mathbb{T}^n} u_0 \dd x\right) \psi \, \dd x \dd \xi
  \right| \leqslant C e^{-tR} \| u_0\|_{\operatorname{analytic}} \| \psi\|_{\operatorname{analytic}} ,
  $$
  where $\| \cdot \|_{\operatorname{analytic}}$ stands for some appropriate analytic norm and $R$ is related to the radius of analyticity \cite{Deg}. This exponential convergence can be compared to the one appearing in~\eqref{eq:exp-mixing-classical}. The linear mechanism we exploit in~\eqref{eq:exp-mixing-classical}, i.e. the strong \emph{chaotic} features of the geodesic flow in compact Anosov manifolds, is thus of very different nature to that of the torus, which relies on the (strong) regularity of the solution to the Vlasov equation. In the nonlinear setting, it results into the fact that $\Phi(u(t))$ converges exponentially fast to $0$ in analytic topology. For what concerns solutions $u(t,x,\xi)$ to~\eqref{eq:vlasov-torus}, they weakly converge to some limit distribution $h_{\pm\infty}(\xi)$, still with an exponential speed, where $h_{\pm\infty}(\xi)$ is an analytic function on $\R^n$. The main difficulty in the nonlinear analysis on $\mathbb{T}^n$ is due to resonances referred to as {\it plasma echoes}  which can lead to catastrophic growth phenomena; they are the  reason why the result of \cite{MV} is restricted to solutions with infinite regularity. Heuristically, these echoes give a threshold for the Gevrey regularity of the initial data, which is related to the regularity of the interaction kernel $K$.
  
  Moreover, \cite{MV} went beyond the small data regime, as they identify a class of stable homogeneous equilibria $h(\xi)$, namely satisfying the so-called Penrose stability condition. In particular, in the repulsive case, this is automatically satisfied for smooth positive radial symmetric $h$ in dimension $n\geqslant 3$.
 Large time behavior is then established for analytic data in a small neighborhood of such equilibria.
 Shortly after, Lin and Zeng proved the existence of low regularity periodic in time objects, the so-called BGK waves, in the vicinity of Penrose stable equilibria~\cite{LZ11,LZ12}, thus implying that the main result of \cite{MV} does not hold in such a low regularity.
 In \cite{Bed21}, Bedrossian showed that in high but finite regularity, the scenario of \cite{MV} cannot hold, precisely due to the aforementioned plasma echoes (see also \cite{Zil21,Zil23}).
 Over the years, the seminal work \cite{MV} was sharpened and the general strategy got simplified, first by Bedrossian, Masmoudi and Mouhot in \cite{BMM} where the presumed optimal Gevrey regularity index was almost reached, and more recently by Grenier, Nguyen and Rodnianski in \cite{GNR21}. Following these works, Ionescu, Pausader, Wang and Widmayer did obtain the optimal Gevrey index~ \cite{IPWW24}. We finally refer to \cite{Bed22} for a more complete discussion and review of the relevant mathematical literature. 

 In another direction, for the so-called Vlasov-HMF (Hamiltonian Mean Field) equation, that is when the potential $\Phi(u)$ only depends on a \emph{finite} number of Fourier modes of the density $\pi_*u$, Faou and Rousset showed in \cite{FR} that Gevrey regularity can be dispensed with, as they obtained stability results in finite Sobolev regularity. This is possible because ``infinite chains'' of plasma echoes cannot occur in this setting. See also~\cite{BOY, FHR}.  
On Anosov manifolds, the linear mixing mechanism is strong enough so that, even if we do not restrict to an interaction potential involving a finite number of modes, resonances such as plasma echoes  do not seem to appear in the nonlinear analysis, which allows to obtain finite regularity results.

\subsection{Further questions}
Whereas on the torus spaces of analytic functions were the right spaces to capture a spectral gap for the dynamics, on Anosov manifolds, the right spaces are \emph{anisotropic Sobolev spaces} (equipped with the aforementioned anisotropic Sobolev norms as first constructed by Faure and Sj\"ostrand~\cite{faure2011upper}) which we shall introduce soon enough. This ultimately leads to the finite regularity result of Theorem~\ref{thm:main}. However it is fair to acknowledge two shortcomings of our analysis:
\begin{itemize}
\item the kernel $K$ needs to be smooth (at least of class $\mathscr C^k$ with $k$ large enough); in particular our work does not handle the Poisson case $\mathrm K=\pm\Delta_g^{-1}$;
\item the initial support of the distribution function needs to be away from the null section; this is because the aforementioned mixing property of the geodesic flow degenerates and we have not been able to cope with this degeneracy in the nonlinear analysis. In the free case, the decay results from~\cite{ratner1987rate, dolgopyat1998decay, liverani2004contact, tsujii2010quasi, tsujii2012contact, nonnenmacher2015decay, faure2021micro}, ensure algebraic decay to equilibrium when the initial data are non trivial near the null section (recall Theorem~\ref{t:dolgopyatliverani})  but it is not clear how to exploit this weaker mixing property for the nonlinear problem.
\end{itemize}
Hopefully, they will be overcome in the future, but this likely will require substantial work. For what concerns results in a small vicinity of a non-trivial equilibrium $f_0(H)$ (where $f_0$ is not necessarily identically $0$), our strategy of proof should in principle allow to write an abstract Penrose stability condition as in the Euclidean setting but we still need to understand in which practical situations it would be indeed relevant.

\subsection{Organization of the article} Section~\ref{sec:strategy} mainly serves pedagogical purposes: we describe an heuristic scheme of proof for a toy model, namely a nonlinear perturbation of a contact Anosov vector field.
The main example is that of the geodesic flow on $\mathrm S^\star \Sigma$. Some of the difficulties of the analysis already appear at this level; yet, the main technical issues actually occur when dealing with nonlinear Vlasov equations set on the full cotangent bundle $\mrm{T}^*\Sigma$. 

With this perspective in mind, in Section~\ref{sec:escape}, we proceed to geometric constructions which are motivated by the way we will design the anisotropic Sobolev spaces adapted to our nonlinear equations. More precisely, these geometric preliminaries consist in building nice enough functions which decay along the flow lines of the symplectic lift on $\mathrm{T}^*\mathrm{T}^*\Sigma$ of the free motion, the so-called escape functions. This kind of geometric approach is natural when dealing with the study of linear operators having continuous spectrum on standard $L^2$ spaces and it originates in the study of quantum resonances~\cite{AguilarCombes1971, BaslevCombes1971, HelfferSjostrand1986} whose fine description is related to the long time dynamics of Schr\"odinger type equations. In the context of hyperbolic dynamical systems, these geometric constructions appeared in~\cite{faure2008semi, faure2011upper, DyatlovZworski2016, faure2017semiclassical} for the study of the free motion on $\mathrm{S}^*\Sigma$. Here, we will build new escape functions adapted to the free dynamics on the full cotangent bundle $\mathrm{T}^*\Sigma$, in the spirit of these references. Namely, we introduce, study and compare to each other what we call \emph{sliced} and \emph{global} escape functions. These two kinds of escape functions fulfill different purposes which cannot be achieved at the same time. Loosely speaking, the sliced escape functions enjoy appropriate properties when restricted to $\mathrm S^\star \Sigma$, whereas the global escape functions are rather adapted to the full dynamics on  $\mrm{T}^*\Sigma$.
 
Equipped with these objects, we can introduce, in Section~\ref{sec:anisotropic}, Sobolev spaces with anisotropic regularity that are adapted to the free dynamics, referred to as \emph{sliced} and \emph{global} anisotropic Sobolev spaces. As suggested by the names, these spaces are associated with the sliced or global escape functions.
We study their main properties, most notably the exponential decay properties they induce on the free transport. The most technical statement is a bilinear estimate, adapted to the structure of the Vlasov equation, which displays exponential decay and relates sliced and global anisotropic Sobolev norms; this turns out to be one of the keys of the proof. 

The end of the paper is devoted to the actual nonlinear analysis. First, in a short Section~\ref{sec:global}, we prove the existence and uniqueness of a solution to~\eqref{eq:vlasov}. In Section~\ref{sec:mainproof}, we gather all pieces together to prove Theorem~\ref{thm:main}, using a bootstrap argument. In particular, all the abstract functional estimates of  Section~\ref{sec:anisotropic} are specifically tailored to this aim.

Finally, we provide in Appendix~\ref{a:pseudo} a detailed toolbox on the classical microlocal tools at the heart of our proofs: these are mostly used in Section~\ref{sec:anisotropic} and in~\S\ref{ss:proof-microlocal} and alluded to in the heuristic scheme of Section~\ref{sec:strategy}. Appendix~\ref{a:liverani} presents a derivation of Theorem~\ref{t:dolgopyatliverani} from~\eqref{eq:exp-mixing-classical}.

 To conclude, we point out that, all along the article, for the sake of simplicity, we consider the Vlasov equation~\eqref{eq:vlasov} in positive times but the argument can be straightforwardly adapted to negative times by considering the negative geodesic vector field which still has the Anosov property.

\subsection*{Acknowledgements} We thank Matthieu L\'eautaud for early discussions about the matter of this work back in 2019 and Yannick Guedes Bonthonneau for his insights related to the construction of escape functions. The three authors acknowledge the support of the Centre Henri Lebesgue (ANR-11-LABX-0020-01). GR is also partially supported by the Institut Universitaire de France and the PRC grant ADYCT (ANR-20-CE40-0017).

\section{An heuristic scheme of proof for a toy model}
\label{sec:strategy}

Due to the use of microlocal methods involving anisotropic symbols (adapted to the Anosov dynamics), the proof of our main theorem is fairly technically involved. Yet, its general scheme is rather natural from the perspective of nonlinear PDEs. In this section, for expository purposes, we aim at explaining the general idea on a related toy model,
without paying attention to technical details, that are left to the following of the paper.
Several difficulties already appear at this level; however, specific issues also arise for nonlinear Vlasov equations such as~\eqref{eq:vlasov} -- they will be discussed in the end of the section.

\subsection{The toy model: nonlinear perturbations of contact Anosov vector fields}\label{ss:strategy}

Let $M_1$ be a smooth compact manifold endowed with a \emph{contact} vector field $X_1$ that has the \emph{Anosov} property\footnote{As already raised in the introduction, this is in particular the case for the geodesic vector field on the unit cotangent bundle $\mathrm S^*\Sigma$ of a negatively curved surface $(\Sigma,\mathrm g)$.}. Denote by $(\varphi_t)_{t \in \R}$ the associated flow. Let $ {\mrm L}_1$ be the (contact) volume measure preserved by $X_1$ and we fix a smooth vector field $V$ that also preserves ${\mrm L}_1$.  Let us consider the toy model 
\begin{equation}\label{eq:cauchytoy}
\partial_tu=X_1u+\omega(u)Vu,\quad u|_{t=0}=u_0,
\end{equation}
where
$$
\omega(u)=\int_{M_1}u(t,z_1)\phi(z_1)\dd {\mrm L}_1(z_1)
$$
with $\phi \in \mathscr C^\infty(M_1)$ being a smooth function that is fixed once and for all. The function $\phi$ is assumed to satisfy 
$$ \int_{M_1}\phi(z_1)\dd {\mrm L}_1(z_1)=0.$$ 
By Duhamel's principle, the solution to the Cauchy problem \eqref{eq:cauchytoy} can be written as
\begin{equation}\label{eq:duhamelintro}
u(t)=\varphi_t^*u_0+\int_0^t\omega(u(s)) \varphi_{t-s}^* V(u(s))\dd s.
\end{equation}
Note that we put aside existence issues and focus on the description of the large time behavior of $u$. In order to prove weak convergence, we take a smooth test function $\psi$ and  write
$$
\langle u(t),\psi\rangle=\langle \varphi_t^*u_0,\psi\rangle+\int_0^t\omega(u(s)) \left\langle \varphi_{(t-s)}^* V(u(s)),\psi\right\rangle \dd s.
$$
The first term on the right-hand side converges (exponentially fast) to 
$$
\int_{M_1}u_0\,\dd {\mrm L}_1\int_{M_1}\psi\, \dd {\mrm L}_1
$$
thanks to the decay of correlations result for contact Anosov flows~\cite{liverani2004contact} (cf. \eqref{eq:exp-mixing-classical} in the case of geodesic flows). The second term splits in two parts by decomposing $\psi$ as
$$
\psi=\left(\psi-\int_{M_1}\psi\, \dd {\mrm L}_1\right)+\int_{M_1}\psi \, \dd {\mrm L}_1 = \mathrm{P}_1(\psi) + (\Id - \mathrm{P}_1)(\psi)
$$
Since the vector field $V$ preserves ${\mrm L}_1$, the contribution coming from $\int_{M_1}\psi \dd {\mrm L}_1$ yields a term equal to $0$. Hence we just need to understand if the integral 
\begin{equation}\label{eq:intesmall}
\int_0^t\omega(u(s)) \left\langle  u(s),V\varphi_{-(t-s)}^*\mathrm{P}_1{\psi}\right\rangle \dd s
\end{equation}
converges when $t \to \infty$.

Suppose now that we are given an essentially self-adjoint invertible operator $B$ acting on $\mathscr C^\infty(M_1)$ and $C, N_0, \vartheta > 0$ such that for each $0 \leqslant s < \infty$ we have
\begin{equation}\label{eq:intropropB}
 \|B \varphi_{{s}}^*\mathrm{P}_1 B^{-1}\|_{L^2\to L^2}\leqslant Ce^{-\vartheta s} \quad \text{and} \quad  \|B^{-1} V \varphi_{{-s}}^*\mathrm{P}_1\|_{\mathscr C^{N_0}\to L^2}\leqslant Ce^{-\vartheta s}.
 \end{equation}
 Such an operator exists thanks to~\cite{faure2011upper,nonnenmacher2015decay}, up to the fact that a loss of derivative occurs, which can be fixed by the introduction of an appropriate resolvent of the form $(X_1+1)^{-N}$, with $N$ large enough, in the estimates. In the present discussion, this technical aspect will be put aside for clarity of exposure but we will have to deal with this issue in the actual proof of our main results. 
 
 The second bound of \eqref{eq:intropropB} allows to bound the integrand of \eqref{eq:intesmall} as
 $$
 \left|\omega(u(s)) \left\langle  u(s),V\varphi_{-(t-s)}^*\mathrm{P}_1{\psi}\right\rangle\right| \leqslant C |\omega(u(s))|\,\|\varphi\|_{\mathscr C^{N_0}} \|B u(s)\|_{L^2} e^{-(t-s)\vartheta}.
 $$
 In particular, if we are able to show that for some $N>0$ and $\vartheta ' \in \left]0, \vartheta \right[$, one has
\begin{equation}\label{eq:esteps}
 \|Bu(s)\|_{L^2}\leqslant C \|u_0\|_{\mathscr{C}^N}  \quad \text{and} \quad |\omega(u(s))|=\mathcal{O}(e^{-\vartheta' s})
\end{equation}
then we would obtain that the integral \eqref{eq:intesmall} would of size $\mathcal O(e^{-\vartheta' t})$ hence converging exponentially fast towards zero.

To estimate $\omega(u(s))$, we proceed with a bootstrap argument, introducing the interval
$$
\mathcal{J}=\left\{T\in\R_+:\ \forall t\in[0,T],\ |\omega(u(s))|\leqslant e^{-\vartheta' t}\right\}.
$$
If $u_0$ is chosen small enough, this is a closed and nonempty interval. In particular, to prove the second part of \eqref{eq:esteps}, it suffices to show that $\mathcal J$ is open which would imply that $\mathcal{J}=\R_+$.
With this aim in mind, we write
$$
\omega(u(t))=\omega(\varphi_t^*u_0)+\int_0^t\omega (u(s))\left\langle u(s), V\varphi_{s-t}^*\phi\right\rangle \dd s.
$$
Then, arguing as above, one can derive the bound, for all $t \in \mathcal J$,
$$
|\omega(u(t))|\leqslant C(e^{-\vartheta t}\|u_0\|_{\mathscr{C}^{N_0}}+e^{-\vartheta' t}  \sup_{s \in \mathcal J}  \|B u(s)\|_{L^2}),
$$
since $\omega(\varphi_t^*u_0)$ converges exponentially fast towards $0$, by the decay of correlation result~\eqref{eq:exp-mixing-classical}. Therefore if one can show the bound
\begin{equation}\label{eq:estwewantintro}
\sup_{s \in \mathcal J}  \|Bu(s)\|_{L^2}\leqslant C \|u_0\|_{\mathscr{C}^N}
\end{equation}
this would imply that $\mathcal J$ is open, provided that 
$$
\|u_0\|_{\mathscr{C}^N}\ll 1,
$$
hence we would get the bound \eqref{eq:esteps}, and the desired convergence would follow.

To obtain \eqref{eq:estwewantintro}, it is natural to use energy estimates. However, if the operator $B$ is \textit{not} pseudo-differential (which will be the case when dealing with the general problem on $\mrm T^*\Sigma$), such estimates are difficult to obtain as we cannot rely on symbolic calculus. It is nevertheless possible to circumvent this problem, as follows. Suppose one can find an invertible self-adjoint pseudo-differential operator $A$, that satisfies the following properties:
\begin{itemize}
\item[(i)] the principal symbol of $A$ is of the form $e^{\lambda}$, with $\lambda \in \mathscr C^\infty(\mrm T^*M_1)$ satisfying
 \begin{equation}\label{eq:introdecay}
 \{\langle \zeta_1, X_1\rangle,\lambda\}\leqslant 0,
\end{equation}
 where $\langle \zeta_1, X_1 \rangle$ is the principal symbol associated with $X_1$;\vspace{0.1cm}
 \item[(ii)] one has the comparison estimate
 \begin{equation}\label{eq:compareintro}
 \|BVv\|_{L^2} \leqslant \|Av\|_{L^2}, \quad v \in \mathscr C^\infty(M_1).
 \end{equation}
\end{itemize}
The existence of such an operator $A$ is related to the construction of the function $\lambda$, which is referred to as an escape function for the operator $X_1$.  As we shall see below, the decay estimate \eqref{eq:introdecay} will allow to obtain polynomial bounds on $\|Au(t)\|$. Heuristically, $\|A \cdot\|_{L^2}$ can be seen as a ``top-order'' norm, while $\|B \cdot\|_{L^2}$ is ``low-order''. As is often the case in nonlinear PDEs, the low-order norm will be bounded, while we allow a growth for the top-order one. 

Since $A$ is pseudo-differential, we may now use a microlocal energy estimate, which yields
\begin{equation}
\label{eq:heuristiccommutator}
\begin{aligned}
\frac{\dd}{\dd t}\|Au(t)\|_{L^2}^2&=2\operatorname{Re}\langle AX_1u(t),Au(t)\rangle+2\omega (u(t))\operatorname{Re}\langle AVu(t),Au(t)\rangle\\
&=2\operatorname{Re}\langle [A,X_1]A^{-1}A u(t),Au(t)\rangle \\
& \qquad \qquad \qquad+2\omega (u(t))\operatorname{Re}\langle [A,V]A^{-1}Au(t),Au(t)\rangle,
\end{aligned}
\end{equation}
where we used that $V$ and $X_1$ are volume preserving in the second line. By the composition rule for pseudo-differential operators, we see that $[A,V]A^{-1}$ is (almost) a pseudo-differential operator of order $0$. Indeed, there is a logarithmic loss here and that will lead us to consider operators $A_t$ depending on time. Again, we will not discuss all this in detail to keep the presentation simple. Using the Calder\'on-Vaillancourt theorem and the Cauchy-Schwarz inequality, we obtain
$$
\frac{\dd}{\dd t}\|Au(t)\|_{L^2}^2\leqslant 2
\operatorname{Re}\langle [A,X_1]A^{-1}A u(t),Au(t)\rangle+C\omega (u(t))\|Au(t)\|_{L^2}^2.
$$
If the operator $[A,X_1]A^{-1}$ were nonpositive this would yield
$$
\frac{\dd}{\dd t}\left(e^{-\int_0^tC\omega (u(s)) ds}\|Au(t)\|_{L^2}^2\right)\leqslant 0,
$$
and hence, integrating the above expression, we would get $\|Au(t)\|_{L^2}\leqslant \widetilde{C}\|Au_0\|_{L^2}$ for $t \in \mathcal J$, for some $\widetilde C > 0$. Unfortunately, the nonpositivity of $[A,X_1]A^{-1}$ cannot be exactly achieved, but almost, thanks to the G\aa{}rding inequality, using \eqref{eq:introdecay} satisfied by $A$. 
The outcome is that the operator is nonpositive, but only up to the addition of a remainder with lower Sobolev regularity. Hence, by an induction scheme, 
 one may obtain a polynomial bound on $\|Au(t)\|_{L^2}$, that is
\begin{equation}
\label{eq:Apolysec2}
\|Au(t)\|_{L^2} \lesssim \langle t \rangle^p \|u_0\|_{\mathscr C^N}, \quad t \in \mathcal J,
\end{equation}
for some $p \in \mathbb{N}$. 

This polynomial bound implies the boundedness of $\|Bu(t)\|_{L^2}$. Indeed, use \eqref{eq:duhamelintro} to get
\begin{equation}
\label{eq:Btoy}
B\mathrm{P}_1u(t)={B}\varphi_t^*\mathrm{P}_1u_0+\int_0^t\omega(u(s))B\varphi_{(t-s)}^*\mathrm{P}_1B^{-1}B VA^{-1}Au(s)ds.
\end{equation}
Eventually, we can plug in the polynomial bound~\eqref{eq:Apolysec2} for $\|Au(s)\|_{L^2}$ into~\eqref{eq:Btoy}. Then, using the first part of \eqref{eq:intropropB} and \eqref{eq:compareintro}, we obtain the bound \eqref{eq:estwewantintro} with $Bu(s)$ replaced by $\mathrm BP_1u(s)$. However $X$ and $V$ preserve $\mathrm L_1$ and hence $B(\mathrm{Id}-\mathrm P_1)u(s)$ is constant equal to $(\mathrm{Id}-P_1)u_0$. Thus we get \eqref{eq:estwewantintro} and this completes the proof of the convergence result.


\subsection{The case of the Vlasov equation on $\mrm{T}^*\Sigma$}

This scheme of proof for Anosov vector fields lays the foundation of the strategy that we will follow when dealing with the Vlasov equation~\eqref{eq:vlasov}. The main additional difficulty that we shall encounter lies in the fact that the Vlasov equation is posed on the full cotangent bundle $M=\mrm{T}^*\Sigma$ on which the geodesic flow does not have the Anosov property. This property indeed only holds on each energy layer $M_r=\{(x,\xi):\|\xi\|=r\}$, for $r>0$. This leads to the following issues when carrying out the above strategy.
\begin{enumerate}
 \item It would be natural to consider integrated versions (over the $r$-variable) of the anisotropic pseudo-differential operators $B$ (as constructed in~\cite{faure2011upper}). This would result into operators $\mathbf B$ defined on $M$. Proceeding like this indeed allows to obtain exponential decay estimates in the spirit of \eqref{eq:intropropB}. However, doing so would have a major drawback coming from the fact that the vector fields $V$ appearing in~\eqref{eq:vlasov} involve derivatives with respect to the variable $r$. In particular, as the integrated version of $B$ does not have good pseudo-differential properties on the full cotangent bundle $M$, and, as mentioned above, we would not be able to carry out the commutator argument in~\eqref{eq:heuristiccommutator}. In order to overcome this problem, we define a new family of pseudo-differential operators $\mathbf{A}$ adapted to the dynamics on $M$. This relies on the construction of a {\it global} escape function for the dynamics lifted on $\mrm{T}^*M$ in the spirit of~\cite{faure2011upper}. This dynamical issue is tackled in Section~\ref{sec:escape} and then implemented in Section~\ref{sec:anisotropic} to define pseudo-differential operators which are tailored for the microlocal arguments used to deal with~\eqref{eq:heuristiccommutator}.
 \item A crucial point in the above strategy was the comparison property \eqref{eq:compareintro} between the norms defined by $A$ and $B$. We will construct {\it sliced} escape functions, in the spirit of~\cite{faure2011upper, nonnenmacher2015decay}, giving rise to an operator $B$ acting on functions of $\mrm S^*\Sigma$. The {\it sliced} escape functions are defined in such a way to be comparable with the previous {\it global} escape functions. This in turn allows to compare the anisotropic norms induced by $\mathbf{A}$ and the integrated version $\mathbf {B}$ (on the full cotangent bundle) of $B$. This procedure is also discussed in Section~\ref{sec:escape}. 
\end{enumerate}

Once we will have gathered these elements, we will be in position to apply the strategy of~\S\ref{ss:strategy} to the Vlasov equation on $M=\mrm{T}^*\Sigma$. This is achieved in Section~\ref{sec:mainproof} after showing in Section~\ref{sec:global} the existence and uniqueness of global solutions to the nonlinear problem~\eqref{eq:vlasov}. As a matter of fact, the above strategy can be implemented at the expense of paying attention to two more issues. First, the Anosov assumption degenerates on $\{\xi=0\}$ and we need to ensure that the solution to~\eqref{eq:vlasov} avoids this null section. This is why we consider initial data supported away from the null section. Second, the nonlinear perturbations are much more general than the $\omega(u(t))V$ considered in the above toy problem. This yields some extra (mostly technical) complications in implementing the argument.

\section{Sliced and global escape functions} 
\label{sec:escape}

In this section, we build escape functions in the spirit of \cite{faure2011upper}, which are adapted to the free flow of the Vlasov equation~\eqref{eq:vlasov}. We start in Subsection~\ref{s:diffFS} by explaining why the escape functions of~\cite{faure2011upper} cannot be directly used and why new constructions are required.
Subsection~\ref{subsec:geom} is then dedicated to geometric preliminaries related to the geodesic flow and its Anosov property; several useful notations are introduced along the way. In Subsections \ref{subsec:global} and \ref{subsec:sliced}, we introduce the key notions of {\it global} and {\it sliced} order functions together with the associated notions of global and sliced escape functions.  The actual construction of relevant sliced and global escape functions $F$ and $G$ is performed in  Subsections \ref{subsec:globalescape} to \ref{subsec:comparison}. The central result of this section is Proposition~\ref{prop:comparisonresult}, which gathers the properties of $F$ and $G$ and explains how they compare to each other.  This result is at the heart of our analytical arguments in the upcoming Sections.

\subsection{Differences with the escape functions of Faure and Sj\"ostrand}
\label{s:diffFS}
As already alluded to in the previous sections, we will introduce, in Section~\ref{sec:anisotropic}, functional spaces that are well suited to study the solutions of \eqref{eq:vlasov}. The presentation of this analytical set-up will be close to the one given by Faure and Sj\"ostrand in \cite{faure2011upper} to which we refer for more details and motivations. Recall that the key point in this reference compared with earlier related works on the spectral theory of Anosov flows \cite{liverani2004contact, butterley2007smooth, ButterleyLiverani2013} is the use of microlocal methods. Such methods turn out to be well suited for the nonlinear problems we study (see also~\cite{baladi2007anisotropic,faure2008semi} for earlier use of microlocal tools for the study of hyperbolic dynamical systems in discrete times). 
The main difference with \cite{faure2011upper} is that we have to deal with functions involving the radial variable $r=\sqrt{2H}$ in $M = {\mathrm T}^*\Sigma$ and that we do not only work with test functions in $M_1 = {\mrm S}^*\Sigma$ where the flow is Anosov. In some sense, this situation is reminiscent of what happens when studying Anosov actions \cite{bonthonneau2020ruelle}, meaning that there is an extra natural vector field $\partial_r$ which commutes with $r^{-1}X$ where $X$ is the Hamiltonian vector field generated by $\mrm H$.

A crucial ingredient in the work \cite{faure2011upper} is the construction of an \emph{escape function} $\mrm T^* M_1 \to \R$ which decreases along the flow lines of the symplectic lift 
$$
\Phi_{t, 1} : \mrm T^* M_1 \to \mrm T^* M_1
$$
of the geodesic flow $\varphi_t : M_1 \to M_1$. Since we are dealing here with the global cotangent space $M = \mrm T^*\Sigma$, a first natural idea would be to extend an escape function $\mrm T^*M_1 \to \R$ chosen as in \cite{faure2011upper} to a map $G : \mrm T^*M \to \R$, called a \textit{sliced} escape function, simply by saying that it does not depend on $r$ while it depends on $\rho$ in a trivial way (here $\rho$ is the dual variable of $r$). Proceeding like this indeed allows to build a space $\mathcal G$ for which the results of \cite{nonnenmacher2015decay} about the exponential decay of correlations can be used (at least far from the null section $\{r = 0\}$).  

However, dealing with nonlinear terms using such an approach requires an \textit{a priori} estimate on the norm of the solution of \eqref{eq:vlasov} in a space related to $\mathcal G$. Ideally, one would like to obtain such a control by energy estimates. The latter estimates require mainly two features~: first, the escape function $G$ needs to be a good symbol in $\mrm T^*M$ ; second, it needs to decrease along the flow lines of 
the symplectic lift 
$$
\Phi_t : \mrm T^* M \to \mrm T^* M
$$
of the geodesic flow $\varphi_t : M \to M$. Both properties unfortunately cannot be achieved for the sliced escape function $G$ introduced above --- for example, the decreasing property fails due to the fact that the Liouville form $\alpha$ is preserved by $\Phi_{t, 1}$ but not by $\Phi_t$.

Therefore, we are led to construct a \textit{global} escape function $F : \mrm T^*M \to \R$ which indeed satisfies these two properties. Nervertheless, although the function $F$ allows to build a space $\mathcal F$ on which one can derive appropriate energy estimates, there is no hope that one can directly apply the results of \cite{nonnenmacher2015decay} on this space as it turns out that the function $F$ cannot be seen as an escape function on $\mrm T^*M_1$. As a consequence, one is invited to make use of both spaces $\mathcal G$ and $\mathcal F$ and combine the estimates obtained on these spaces. To that aim, it will be important to be able to compare the two associated norms, that is, to compare $G$ and $F$. In particular, the construction of the two escape functions needs to be handled simultaneously in a careful way, which is one of the main difficulties of the analysis of this section.

\subsection{Geometrical setting and notations}\label{subsec:geom}
We denote $M = \mathrm{T}^*\Sigma$ and for each $r \geqslant 0$ we set
$$
M_r = \{z=(x, \xi) \in M~:~|\xi|_x = r\}.
$$
 Let $\varphi_t : M \to M$ be the geodesic flow associated to $(\Sigma, g)$. We let $X = \left.\frac{\dd}{\dd t}\right|_{t = 0} \varphi_t$ be the generator of the flow. In what follows we will denote $M^\times = M \setminus \underline 0$ for simplicity, where $\underline 0$ denotes the null section in $M$. In our development, it will be useful to use polar coordinates
\begin{equation}\label{eq:polar}
(r, z_1) : M^\times \overset{\sim}{\longrightarrow} \R_{> 0} \times M_1
\end{equation}
and where we set
$$
r(x, \xi) = |\xi|_x, \quad z_1(x, \xi) = \left(x, \frac{\xi}{|\xi|_x}\right), \quad (x, \xi) \in M^\times.
$$
Of course the flow $\varphi_t$ preserves the energy layers $\{r = c\}$ for each $c \geqslant 0$. In particular we may see $X$ as a vector field $X_r$ on $M_r$ for each $r$ and, in the polar coordinates \eqref{eq:polar}, one has $X_r(z_1)=rX_1(z_1)$.

\begin{rema}
 Observe that $X=rX_1(z_1)$ extends smoothly to a vector field defined on $\R\times M_1$.
\end{rema}
We assume that the geodesic flow $\varphi_t : M_1 \to M_1$ has the Anosov property \cite{anosov1967geodesic, FH}. This means that for each $z_1 \in M_1$ we have a decomposition
$$
\mathrm{T}^*_{z_1} M_1 = E_\urm^*(z_1) \oplus E_\srm^*(z_1) \oplus E_0^*(z_1)
$$
depending continuously on $z_1$, such that 
$$
\left|\dd \varphi_t(z_1)^{-\top} \zeta_1 \right| \leqslant 
\left\lbrace
\begin{matrix} 
C \exp\left(- \vartheta_1 t\right) |\zeta_1|, &\zeta_1 \in E_\srm^*(z_1), & t \geqslant 0, \vspace{0.2cm} \\
C \exp\left(- \vartheta_1 |t|\right) |\zeta_1|, &\zeta_1 \in E_\urm^*(z_1), & t \leqslant 0,
\end{matrix}
\right.
$$
for some constants $C, \vartheta_1 > 0$ not depending on $(z_1;\zeta_1)$. Here we have 
$$
E_0^*(z_1) = \R\alpha(z_1)\quad\text{and}\quad E_\urm^*(z_1) \oplus E_\srm^*(z_1)=\text{ker}\left(\alpha(z_1)\right)
$$
where $\alpha \in \Omega^1(M)$ is the Liouville form restricted to $M_1$, which is a contact form. We could in fact pick any smooth norm $|\cdot|$ on $\mathrm{T}^*M_1$ but we will assume that it is the Sasaki induced by $g$ in view of having orthogonality between $E_0^*$ and $\text{ker}(\alpha)$. The space $E_\urm^*(z_1)$ (resp. $E_\srm^*(z_1)$) is called the co-unstable (resp. co-stable) bundle at $z_1$. 

For each $r \geqslant 0$ we denote by $\Psi_r : \mathrm{T}^*\Sigma \to \mathrm{T}^*\Sigma$ the map $(x, \xi) \mapsto (x, r \xi)$ and we denote by $\partial_r$ the vector field on $M^\times$ given by 
$$
\partial_r = \left.\frac{\dd}{\dd r}\right|_{r = 0} \Psi_r.
$$
For each $r > 0$ and $z \in M_r$ we have $T_zM = T_zM_r \oplus \R \partial_r$ and this decomposition yields an inclusion map
$$
T_z^*M_r \hookrightarrow T_z^*M.
$$
Next, note that we have the commutation relation
$$
\varphi_t \circ \Psi_r = \Psi_r \circ \varphi_{tr}, \quad r \geqslant 0,  \quad t \in \R.
$$
In particular, for each $r > 0$, the flow $\varphi_t|_{M_r} : M_r \to M_r$ is also Anosov. Thus $E_\urm^*(z)$ and $E_\srm^*(z)$ are well defined for each $z = (x, \xi) \in M^\times$ and for such $z$ we have the decomposition
$$
\mathrm{T}^*_{z} M^\times = E_\mathrm r^*(z) \oplus E_\urm^*(z) \oplus E_\srm^*(z) \oplus E_0^*(z),
$$
where $E_\mathrm r^*(z)$ is the co-radial bundle defined by
$$
E_\mathrm r^*(z) = \left\{(x, \xi; \zeta) \in {\mathrm T}^*M^\times~:~{\langle \zeta,  {\mathrm T}^*M_{|\xi|_x} \rangle = 0}\right\}.
$$
For simplicity we will denote
$$
E_{0,\rrm}^* = E^*_\rrm \oplus E_0^*.
$$
In the polar coordinates $z=(r,z_1)$ given by \eqref{eq:polar}, this splitting reads
\begin{equation}\label{eq:coordinates-tangent}
\mathrm T_z^*M^\times
\simeq \R \dd r\oplus E_\urm^*(z_1) \oplus E_\srm^*(z_1)\oplus E_0^*(z_1)
=\R \dd r\oplus\text{ker}(\alpha(z_1))\oplus \R\alpha(z_1)
.
\end{equation}
Recall that the last splitting is smooth while the first one is only H\"older in general \cite{anosov1967tangent, FH}. The corresponding coordinates are denoted by $\zeta=(\rho;\zeta_\perp;\zeta_0)\in\R^{1+2(n-1)+1}$ and we will use the metric
\begin{equation}\label{e:metric-fibers}
|\zeta|^2=\rho^2+|\zeta_\perp|^2+\zeta_0^2
\end{equation}
on ${\mathrm T}^*_{r,z_1}M^\times$.
\begin{rema} Note that this is not exactly the Sasaki metric induced by $g$ on $M$. The standard Sasaki metric on $M$ would depend on $r$ and would rather write as
$$
|\zeta|_{\mathrm{Sasaki}}^2 = \rho^2+r^2|\zeta_\perp|^2+r^2\zeta_0^2.
$$
\end{rema}
Finally, we record the expression of the geodesic flow in the polar coordinates \eqref{eq:polar},
\begin{equation}\label{eq:flow-coordinates}
\varphi_t(r,z_1)=\left(r,\varphi_{rt}(z_1)\right),\quad (r,z_1)\in\R_{>0}\times M_1.
\end{equation}
Again, this flow can be extended to $\R\times M_1$.

\subsection{A global order function}\label{subsec:global}
Recall that $\Phi_t$ is the flow on $\mathrm{T}^*M$ given by the symplectic lift of $\varphi_t$. In other words,
$$
\Phi_t(z, \zeta) =\left(\varphi_t(z), \dd \varphi_t(z)^{-\top} \zeta\right) , \quad (z, \zeta)\in \mathrm{T}^*M,
$$
or, in the polar coordinates $(z,\zeta)\simeq(r,z_1;\rho,\zeta_\perp,\zeta_0)\in{\mathrm T}^*(\R_{>0}\times M_1)$,
\begin{equation}\label{eq:symplectic-flow-coordinates}
\Phi_t(z, \zeta)=\left(r,\varphi_{rt}(z_1);\,\rho,\, \dd \varphi_{rt}(z_1)^{-\top} \zeta_\perp,\, \zeta_0 -rt \rho\right)
\end{equation}
We denote by $\mathfrak X \in \mathscr C^\infty(\mrm T^*M,{\mathrm T}\mrm T^*M)$ the associated vector field. Our first step consists in constructing escape functions for $\Phi_t$ (at least outside compact subsets of ${\mathrm T}^*M$). 

Let $\mrm S^*M$ be the co-sphere bundle of $M$, that is
$$
{\mrm S}^*M = ({\mathrm T}^*M \setminus \underline 0) /\, \R_{>0},
$$
where $\underline 0$ denotes the null section in ${\mathrm T}^*M$. For $\zeta \in {\mathrm T}^*M \setminus \underline 0$ we denote by $[\zeta]$ its image in $\mrm S^*M$. The flow $(\Phi_t)$ induces a flow $(\widetilde \Phi_t)$ on $\mrm S^*M$ and we denote by $\widetilde{\mathfrak X}$ the associated vector field. If $E$ is a nontrivial vector subbundle of  $\mrm T^*M$, we denote by $\widetilde E \subset \mrm S^*M$ the image of $E \setminus \underline 0$ under the projection ${\mathrm T}^*M \setminus \underline 0 \to \mrm S^*M$. Also we denote
$$
\mrm T^*_{M_1}M = \{(z, \zeta) \in \mrm T^*M~:~z \in M_1\}\quad \text{and} \quad \mrm S^*_{M_1}M = \{(z, \zeta) \in \mrm S^*M~:~z \in M_1\}.
$$

\begin{defi}[Global order function] A function $\mu : \mrm T^*M^\times \to [-1, 1]$ is called a \textit{global order function} if it is of the form
\begin{equation}\label{eq:defglobord}
\mu(z, \zeta) = \wt{\mu}([z_1; \rho, \zeta_1])\, \chi\left(|\zeta|\right), \quad (z,\zeta) = (r, z_1; \rho, \zeta_1) \in \mrm T^*M^\times,
\end{equation}
for some smooth functions $\wt{\mu} : \mrm S^*_{M_1}M \to [-1,1]$ and $\chi : \R_+ \to [0,1]$ such that $\chi(t) = 0$ for $t$ small.
\end{defi}
As mentioned above, the purpose of this subsection is to construct global order functions that are adapted to the dynamics, in the following sense.
\begin{prop}\label{lem:order2}
For any $0<\delta<1$ and any small conical neighborhoods $\Upsilon_{\urm}$, $\Upsilon_\srm$ and $\Upsilon_{0,\rrm}$ of $E_\urm^*$, $E_\srm^*$ and $E_{0, \rrm}^*$ respectively, there exist a global order function $\mu$ of the form \eqref{eq:defglobord} and $\eta>0$ such that in the region $\{|\zeta|\geqslant 1\}$, we have the following properties. 
\begin{enumerate}[label = \emph{(\roman*)}]
\item $\mu = 1$ (resp. $0, -1$) near $E_\srm^*$ (resp. $E_{0, \rrm}^*$, $E_\urm^*$)\,;
\item $\mu > 1 - 2\delta$ on $\Upsilon_\srm$\,;
\item $ \mu < -1 + 2 \delta$ on $\Upsilon_\urm$\,;
\item ${\mathfrak X} {\mu} < -\eta $ outside $\Upsilon_\urm \cup \Upsilon_\srm \cup \Upsilon_{0, \rrm}$;
\item \label{item:v} ${\mathfrak X} {\mu} \leqslant 0$ everywhere.
\end{enumerate}
Any global order function satisfying the above properties for some $0<\delta <1$ and some neighborhoods $\Upsilon_{\urm}$, $\Upsilon_\srm$ and $\Upsilon_{0,\rrm}$ as above will be labeled as \emph{good}.
\end{prop}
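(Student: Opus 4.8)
\textbf{Proof plan for Proposition~\ref{lem:order2}.}

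The plan is to construct the function $\wt\mu$ on $\mrm S^*_{M_1}M$ by transporting a carefully chosen ``seed'' function along the flow $\wt\Phi_{t,1}$ (the lift of the geodesic flow to the cosphere bundle of $M_1$, acting on the $(z_1;\rho,\zeta_1)$ variables). First I would note, using the coordinate expression \eqref{eq:symplectic-flow-coordinates}, that the relevant dynamics on $\mrm S^*_{M_1}M$ is a flow whose behavior in the $\zeta_1$-directions is exactly the classical hyperbolic dynamics studied in \cite{faure2011upper,nonnenmacher2015decay}, while the extra coordinate $\rho$ (dual to $r$) is \emph{invariant} and the coordinate $\zeta_0$ drifts linearly (so in the projectivization $\zeta_0/|\zeta|\to 0$ in forward time unless $\rho=\zeta_0=0$ and one is genuinely in the $\zeta_\perp$-directions). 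Consequently the non-wandering structure of $\wt\Phi_{t,1}$ in the co-fibre is governed by the three closed invariant sets $\wt E_\urm^*$, $\wt E_\srm^*$, and $\wt E_{0,\rrm}^*$ (the latter absorbing both the contact direction $E_0^*$ and the radial direction $E_\rrm^*=\R\dd r$). The first step is thus purely dynamical: verify that these three sets are pairwise disjoint compact subsets of $\mrm S^*_{M_1}M$, that $\wt E_\srm^*$ is an attractor and $\wt E_\urm^*$ a repeller for $\wt\Phi_{t,1}$, and that every trajectory not in $\wt E_\urm^*\cup\wt E_\srm^*$ converges to $\wt E_{0,\rrm}^*\cup\wt E_\srm^*$ forward and to $\wt E_{0,\rrm}^*\cup\wt E_\urm^*$ backward.

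Next I would build $\wt\mu$ as a Lyapunov-type function for this gradient-like structure, following the now-standard recipe (cf.\ \cite{faure2011upper, DyatlovZworski2016}): pick smooth functions equal to $1$ near $\wt E_\srm^*$, to $-1$ near $\wt E_\urm^*$, to $0$ near $\wt E_{0,\rrm}^*$, with values in $[-1,1]$, arranged so that $\wt\mu>1-2\delta$ on the prescribed neighborhood $\Upsilon_\srm$ and $\wt\mu<-1+2\delta$ on $\Upsilon_\urm$ (items (i)--(iii)); then average a smooth bump function against $\wt\mu\circ\wt\Phi_{-t,1}$ over a time window, or equivalently add a small multiple of $\int_0^T \wt\mu\circ\wt\Phi_{-s,1}\,\dd s$, to arrange $\widetilde{\mathfrak X}\wt\mu\le 0$ everywhere and $\widetilde{\mathfrak X}\wt\mu<-2\eta_0$ outside the union of the three conical neighborhoods, using that on that (compact, after projectivizing) complementary region every orbit strictly decreases $\wt\mu$ over a uniform time. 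The only point requiring care here is that one must keep $\wt\mu$ \emph{constant} ($=1,-1,0$) on genuine neighborhoods of the three sets so that (i) holds exactly and so that $\widetilde{\mathfrak X}\wt\mu=0$ there is compatible with (iv)--(v); this is handled by doing the flow-averaging only in the transition region and gluing with cutoffs. Then set $\mu(z,\zeta)=\wt\mu([z_1;\rho,\zeta_1])\,\chi(|\zeta|)$ with $\chi$ smooth, $0$ for $|\zeta|\le 1/2$, $1$ for $|\zeta|\ge 1$; since $\mu$ is $0$-homogeneous in $\zeta$ outside a compact set, and the projectivized dynamics is what we controlled, the properties (i)--(v) for $\mathfrak X\mu$ in the region $\{|\zeta|\ge1\}$ follow directly — there $\chi\equiv1$ so $\mathfrak X\mu=\widetilde{\mathfrak X}\wt\mu\circ(\text{projection})$ composed with the $r$-independence, noting $\mathfrak X r=0$ and $\mathfrak X\rho=0$ from \eqref{eq:symplectic-flow-coordinates} so no new terms appear.

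The main obstacle I anticipate is \emph{not} the hyperbolic part — that is imported essentially verbatim from \cite{faure2011upper,nonnenmacher2015decay} — but rather controlling the interaction between the drifting coordinate $\zeta_0$ (which satisfies $\dot\zeta_0=-rt\rho$, i.e.\ it is pushed toward $E_\perp^*$-type directions) and the invariant radial coordinate $\rho$: one must check that the $(\rho,\zeta_0)$-block of the projectivized flow is itself gradient-like with the single attracting direction being $\wt E_{0,\rrm}^*$, so that no trajectory sneaks between the three neighborhoods without paying a definite decrease, and that this remains true uniformly as the base point $z_1$ varies over the compact $M_1$ and as one approaches the boundary where $E_\urm^*(z_1)$ and $E_\srm^*(z_1)$ are only Hölder. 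The latter regularity issue is the familiar one: $\wt\mu$ cannot be required smooth transverse to the (merely Hölder) bundles $E_\urm^*,E_\srm^*$, so one instead fixes smooth \emph{conical neighborhoods} $\Upsilon_\urm,\Upsilon_\srm,\Upsilon_{0,\rrm}$ first and constructs $\wt\mu$ smooth and adapted relative to them, which is exactly why the statement is phrased in terms of given neighborhoods rather than the bundles themselves. A secondary technical nuisance is making the single function $\mu$ simultaneously a good symbol (bounded with all derivatives in the $\zeta$-conic sense) — this is automatic from $0$-homogeneity and smoothness away from $\underline 0$, but worth recording since it is used in Section~\ref{sec:anisotropic}.
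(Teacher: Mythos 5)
Your overall skeleton — projectivize the lifted flow on $\mrm S^*_{M_1}M$, build $\wt{\mu}$ by flow-averaging seed functions that are constant near the invariant sets $\wt E_\srm^*$, $\wt E_\urm^*$, $\wt E_{0,\rrm}^*$, then set $\mu=\wt{\mu}\,\chi(|\zeta|)$ as in \eqref{eq:defglobord} — is essentially the paper's route (the paper does it with two averaged functions $\wt{\mu}_\srm,\wt{\mu}_\urm$ built from a strongly stable / weakly unstable cone pair and takes $\wt{\mu}=\wt{\mu}_\srm-\wt{\mu}_\urm$). However, your ``purely dynamical first step'' is stated with the attractor/repeller roles reversed, and the sign structure of the whole construction hinges on it. With the paper's convention, $|\dd\varphi_t(z_1)^{-\top}\zeta_1|\leqslant Ce^{-\vartheta_1 t}|\zeta_1|$ for $\zeta_1\in E_\srm^*$ and $t\geqslant 0$, so stable covectors contract and unstable covectors expand under the forward lifted flow; hence in the projectivization $\wt E_\srm^*$ is a \emph{repeller} and $\wt E_\urm^*$ an \emph{attractor}, forward limit sets lie in $\wt E_{0,\rrm}^*\cup\wt E_\urm^*$ and backward limit sets in $\wt E_{0,\rrm}^*\cup\wt E_\srm^*$ — the opposite of what you assert (this is exactly the content of \eqref{eq:localuniform}: the complement of the strongly stable cone is pushed forward into the weakly unstable cone). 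This is not a harmless relabeling: $\mathfrak X\mu\leqslant 0$ together with $\mu=1$ near $E_\srm^*$ and $\mu=-1$ near $E_\urm^*$ is only possible with the correct orientation (a continuous function nonincreasing along the flow and equal to $1$ near a forward attractor would be $\geqslant 1$ on the attractor's whole basin, contradicting (iii)), and the averaging identity $\widetilde{\mathfrak X}\bigl(\tfrac{1}{2T}\int_{-T}^{T}\wt{\mu}_0\circ\wt\Phi_s\,\dd s\bigr)=\tfrac{1}{2T}\bigl(\wt{\mu}_0\circ\wt\Phi_{T}-\wt{\mu}_0\circ\wt\Phi_{-T}\bigr)$ has the right sign precisely because forward images land in the low-value (weakly unstable) cone and backward images in the high-value (stable) cone. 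As written, your first verification step fails, and carrying the plan out with that orientation would produce the reverse inequality.

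Two related points. First, your parenthetical ``$\zeta_0/|\zeta|\to 0$ forward unless $\rho=\zeta_0=0$'' is also incorrect: when $\zeta_\perp$ has no unstable component and $\rho\neq 0$, the drift $\zeta_0\mapsto\zeta_0-rt\rho$ dominates the exponentially decaying stable part and the constant $\rho$, so the direction converges to $\wt E_0^*$; this is exactly why the neutral set must be $E_{0,\rrm}^*=E_0^*\oplus E_\rrm^*$ ($E_\rrm^*$ alone is not invariant), and why the correct forward limit set is $\wt E_{0,\rrm}^*\cup\wt E_\urm^*$. Second, two cautions on the construction itself: gluing with cutoffs in the transition region is both risky (multiplying by cutoffs can destroy $\widetilde{\mathfrak X}\wt{\mu}\leqslant 0$) and unnecessary, since averaging a seed that is constant on cone neighborhoods of the invariant sets yields a function \emph{exactly} constant on the smaller invariant-intersection neighborhoods $\bigcap_{|t|\leqslant T}\wt\Phi_t(\mathrm{cone})$; and adding ``a small multiple of $\int_0^T\wt{\mu}\circ\wt\Phi_{-s}\,\dd s$'' to a non-monotone seed does not give $\widetilde{\mathfrak X}\wt{\mu}\leqslant 0$ everywhere — the order function has to be the average itself (or a difference of two such averages, as in the paper, whose two pieces $\mu_\srm,\mu_\urm$ are moreover reused later to compare with the sliced order function). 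Finally, note that $\Phi_{t,1}$ denotes the \emph{sliced} flow in the paper; you clearly intend the full lifted flow (you invoke the $\zeta_0$-drift), but monotonicity along $\Phi_{t,1}$ would not suffice, since $\mathfrak X=\mathfrak X_1-\rho\,\partial_{\zeta_0}$.
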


To prove this result we will proceed with the method of \cite{faure2011upper} to which we refer if more details are needed. The main difference is that we need to pay attention to the extra variable $\rho$. We begin by constructing the function $\wt{\mu}$. 

\begin{lemm}\label{lem:order}
Let $0<\delta<1$. Then there are $\eta > 0$ and arbitrarily small neighbourhoods $\widetilde \Gamma_\urm$, $\widetilde \Gamma_\srm$, $\widetilde \Gamma_{\urm\urm}$ and $\widetilde \Gamma_{\srm\srm}$ of $\widetilde{E^*_\urm \oplus E^*_{0,\rrm}}$, $\widetilde {E^*_\srm \oplus E^*_{0, \rrm}}$, $\widetilde E^*_\urm$  and $\widetilde E^*_\srm$ in $\mrm S^*_{M_1}M$, and smooth functions $\wt{\mu}_\urm, \wt{\mu}_\srm : \mrm S^*_{M_1}M \to [0, 1]$ such that
\begin{enumerate}[label=\emph{(\roman*)}]
\item \label{point:iorder} $\mrm{\wt m}_{\srm} \equiv 1$ near $\wt E_\srm^*$ and $\mrm{\wt m}_{\urm} \equiv 1$ near $\wt E_\urm^*$ ;
\item \label{point:iiorder}$\mrm{\wt m}_{\srm} \equiv 0$ near $\widetilde{E^*_\urm \oplus E^*_{0,\rrm}}$ and $\mrm{\wt m}_{\urm} \equiv 0$ near $\widetilde{E^*_\srm \oplus E^*_{0,\rrm}}$ ;
\item \label{point:iiiorder} $\widetilde{\mathfrak X}\wt{\mu}_\srm \leqslant -\eta$ outside $\widetilde{\Gamma}^{\urm}\cup \widetilde{\Gamma}^{\srm\srm}$ and $\widetilde{\mathfrak X}\wt{\mu}_\urm \geqslant \eta$ outside $\widetilde{\Gamma}^{\srm}\cup \widetilde{\Gamma}^{\urm\urm}$;
\item \label{point:ivorder}$\wt{\mu}_\srm < \delta$ on $\widetilde \Gamma_\urm$ and $ \wt{\mu}_\urm < \delta$ on $\widetilde \Gamma_{\srm}$;
\item \label{point:vorder}$\wt{\mu}_\srm > 1 - \delta$ on $\widetilde \Gamma_{\srm\srm}$ and $\wt{\mu}_\urm > 1 - \delta$ on $\widetilde \Gamma_{\urm\urm}$;
\item \label{point:viorder}$\widetilde{\mathfrak X}\wt{\mu}_\srm \leqslant 0$ and $\widetilde{\mathfrak X}\wt{\mu}_\urm \geqslant 0$ everywhere.
\end{enumerate}
\end{lemm}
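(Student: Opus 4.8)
The plan is to adapt the construction of Faure--Sj\"ostrand \cite{faure2011upper} (see also \cite{nonnenmacher2015decay, DyatlovZworski2016}) to the presence of the extra covariable $\rho$. First I would observe that it is enough to produce $\wt\mu_\urm$: the fibrewise involution $\iota:(x,\xi)\mapsto(x,-\xi)$ of $M_1$ conjugates $\varphi_t$ to $\varphi_{-t}$, so its cotangent lift to $\mrm S^*_{M_1}M$ intertwines $\widetilde{\mathfrak X}$ with $-\widetilde{\mathfrak X}$ while exchanging $\widetilde E^*_\urm$ and $\widetilde E^*_\srm$ and preserving $\widetilde{E^*_{0,\rrm}}$; hence $\wt\mu_\srm:=\wt\mu_\urm\circ\iota$ satisfies the statements (i)--(vi) with the roles of $\urm$ and $\srm$ exchanged.

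Next I would describe the dynamics of $\widetilde{\mathfrak X}$ on $\mrm S^*_{M_1}M$ from the explicit formula \eqref{eq:symplectic-flow-coordinates} taken at $r=1$: the lifted flow moves the base point by $\varphi_t$, fixes the direction $[\rho]$, shifts $\zeta_0\mapsto\zeta_0-t\rho$, and acts on $\zeta_\perp$ by $\dd\varphi_t^{-\top}$, which in positive time contracts $E^*_\srm$ and expands $E^*_\urm$ exponentially. Projectivising, there are three disjoint compact invariant sets: the attractor $\widetilde E^*_\urm$, the repeller $\widetilde E^*_\srm$, and $\widetilde{E^*_{0,\rrm}}=\widetilde{E^*_\rrm\oplus E^*_0}$, which fibres over $M_1$ as a circle (the projective line of radial--flow directions $[\rho:\zeta_0]$) carrying the internal drift $[\rho:\zeta_0]\mapsto[\rho:\zeta_0-t\rho]$ towards $\widetilde{E^*_0}$. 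Because this drift is only linear in $t$ whereas the rates on $\zeta_\perp$ are exponential, I would check that every orbit with nonzero $E^*_\urm$--component converges to $\widetilde E^*_\urm$ as $t\to+\infty$, and that the complement, namely $\{\zeta_\urm=0\}=\widetilde{E^*_\srm\oplus E^*_{0,\rrm}}$, is exactly the set of forward orbits staying away from $\widetilde E^*_\urm$ and converging to $\widetilde E^*_\srm$ or to $\widetilde{E^*_{0,\rrm}}$ (symmetrically for $t\to-\infty$, with $\{\zeta_\srm=0\}=\widetilde{E^*_\urm\oplus E^*_{0,\rrm}}$); in particular $\widetilde{E^*_{0,\rrm}}$ is of saddle type with local stable and unstable sets contained in $\widetilde{E^*_\srm\oplus E^*_{0,\rrm}}$ and $\widetilde{E^*_\urm\oplus E^*_{0,\rrm}}$, and all these convergences are uniform (exponential up to a polynomial factor from the $\zeta_0$--drift).

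With this picture I would build $\wt\mu_\urm$ as in \cite{faure2011upper}: after replacing the merely H\"older bundles $E^*_\urm,E^*_\srm$ by smooth, strictly invariant cone fields around them, one assembles $\wt\mu_\urm$ from local pieces glued by a partition of unity --- a Lyapunov function for the attractor $\widetilde E^*_\urm$ providing the plateau $\wt\mu_\urm\equiv1$ with $\widetilde{\mathfrak X}(\cdot)\geqslant0$ (orbits only enter a trapping neighbourhood of $\widetilde E^*_\urm$); the value $\wt\mu_\urm\equiv0$ on a neighbourhood of $\widetilde{E^*_\srm\oplus E^*_{0,\rrm}}$, which is consistent with monotonicity because orbits leave such a neighbourhood only on the unstable side; and, on the intermediate region, a family of functions attached to the smoothed unstable cone field whose sum increases strictly along $\widetilde{\mathfrak X}$ there and interpolates between $0$ and $1$. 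Choosing the weights so that the intermediate contribution dominates yields $\widetilde{\mathfrak X}\wt\mu_\urm\geqslant0$ everywhere and $\widetilde{\mathfrak X}\wt\mu_\urm\geqslant\eta>0$ off two plateau neighbourhoods $\widetilde\Gamma_{\urm\urm}\supset\widetilde E^*_\urm$ and $\widetilde\Gamma_\srm\supset\widetilde{E^*_\srm\oplus E^*_{0,\rrm}}$; properties (iv)--(v) follow by making the relevant cutoffs steep enough, and all neighbourhoods can be shrunk to the invariant sets by shrinking the supports (with $\eta$ depending on this choice).

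The hard part will be the local analysis near $\widetilde{E^*_{0,\rrm}}$ and its incorporation into the intermediate piece. Unlike in the contact case of \cite{faure2011upper}, where the neutral set is the single section $\widetilde{E^*_0}$, here $\widetilde{E^*_{0,\rrm}}$ is a circle bundle carrying its own (degenerate) internal dynamics and is only \emph{normally pseudo-hyperbolic} --- the transverse exponential rates need not dominate the ambient Anosov rates along $M_1$ --- so no normally hyperbolic linearization or persistence statement is available off the shelf. I would therefore read off a trapping-box structure around $\widetilde{E^*_{0,\rrm}}$ directly from \eqref{eq:symplectic-flow-coordinates} (stable variable $\zeta_\srm$, unstable variable $\zeta_\urm$, neutral directions $[\rho:\zeta_0]$, using smooth cones in the $\ker\alpha$--plane) and construct the local escape function by hand, flat on a full neighbourhood of $\widetilde{E^*_{0,\rrm}}$ but strictly decreasing transversally; the genuine work is the bookkeeping ensuring that this local piece glues with the others into a function that is monotone \emph{globally} on $\mrm S^*_{M_1}M$ and has the stated support and plateau properties. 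As emphasized in \S\ref{s:diffFS}, this is also why the sliced construction carried out here must be kept compatible with the global one of the following subsections.
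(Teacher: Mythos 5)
Your proposal is a programme rather than a proof: the step that actually produces $\wt{\mu}_\urm$ with the six listed properties is never carried out, and you say so yourself (``the genuine work is the bookkeeping\dots''). The paper's proof rests on a concrete mechanism that is absent from your sketch: one fixes a strongly stable cone $C^{\srm\srm}(\theta)$ and a \emph{weakly} unstable cone $C^{\urm}(\theta)$ (the latter containing the whole neutral block $E^*_{0,\rrm}$), establishes the uniform cone-mapping property \eqref{eq:localuniform}, takes a smooth cutoff $\wt{\mu}_0$ adapted to these cones, and defines $\wt{\mu}_\srm$ as the time average of $\wt{\mu}_0$ along $\wt\Phi_s$ over a long window $[-T-\tau,T+\tau]$. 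The Lie derivative then telescopes, $\wt{\mathfrak X}\wt{\mu}_\srm=\frac{1}{2(T+\tau)}\bigl(\wt{\mu}_0\circ\wt\Phi_{T+\tau}-\wt{\mu}_0\circ\wt\Phi_{-T-\tau}\bigr)$ as in \eqref{eq:derivms}, and \emph{all} of (iii)--(vi) drop out of \eqref{eq:localuniform}, with (i)--(ii) coming from the invariant neighborhoods $\bigcap_{|t|\leqslant T+\tau}\Phi_t(\wt C^{\urm}(\theta))$ and $\bigcap_{|t|\leqslant T+\tau}\Phi_t(\wt C^{\srm\srm}(\theta))$. In your plan, the global monotonicity (vi) together with the strict increase (iii) is supposed to follow from ``choosing the weights so that the intermediate contribution dominates'' in a partition-of-unity gluing of a Lyapunov function for the attractor, a zero plateau, and cone-field pieces; nothing in the proposal explains how these local pieces are made simultaneously monotone along $\wt{\mathfrak X}$ after gluing, which is exactly the point the averaging trick settles.

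Relatedly, the difficulty you single out near $\wt{E^*_{0,\rrm}}$ (the saddle-type circle bundle, lack of normal hyperbolicity, a hand-made local escape function ``flat near $\wt{E^*_{0,\rrm}}$ but strictly decreasing transversally'' --- which as phrased is even self-contradictory) does not arise in the paper's route: the neutral directions are simply absorbed into the weak unstable cone, so the averaged function is identically $0$ on a neighborhood of $\wt{E^*_\urm\oplus E^*_{0,\rrm}}$ and identically $1$ near $\wt E^*_\srm$, and no local analysis, persistence statement or trapping-box construction near the neutral set is needed; the only dynamical input is \eqref{eq:localuniform}, proved by compactness from the Anosov property and \eqref{eq:symplectic-flow-coordinates}. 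Two smaller remarks: your reduction of $\wt{\mu}_\srm$ to $\wt{\mu}_\urm$ via the flip $(x,\xi)\mapsto(x,-\xi)$ is legitimate for geodesic flows (it exchanges $\wt E^*_\urm$ and $\wt E^*_\srm$, preserves $\wt{E^*_{0,\rrm}}$ and reverses $\wt{\mathfrak X}$), but it is an inessential shortcut --- the paper just repeats the construction with backward-time cones; and your claim that orbits off $\{\zeta_\urm=0\}$ converge to $\wt E^*_\urm$ ``uniformly'' is only true locally uniformly away from $\{\zeta_\urm=0\}$, which is precisely why the quantitative statement must be formulated through cones as in \eqref{eq:localuniform} rather than through pointwise limit sets.
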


In the proof below, we will identify $\mrm T^*_{M_1}M$ with $\R_\rho \times \mrm T^*M_1$ so that a point $(1, z_1; \rho, \zeta_1) \in \mrm T^*_{M_1}M$ is identified with $(z_1; \rho, \zeta_1) \in \R_\rho \times \mrm T^*M_1$.

\begin{proof}[Proof of Lemma~\ref{lem:order}]
We introduce the strongly stable cone and the weakly unstable cone of aperture $\theta>0$ as
$$
\begin{aligned}
C^{\srm\srm}(\theta)&=\left\{(z_1;\rho,\zeta_1)\in (\R\times{\mathrm T}^*M_1)\setminus\underline{0}:\ \theta |\zeta_\srm|\geqslant |\rho \dd r+\zeta_\urm+\zeta_0\alpha|\right\},\\
C^{\urm}(\theta)&=\left\{(z_1;\rho,\zeta_1)\in (\R\times{\mathrm T}^*M_1)\setminus\underline{0}:\ \theta |\rho \dd r+{\zeta_\urm}+\zeta_0\alpha|\geqslant |{\zeta_\srm}|\right\}.
\end{aligned}
$$
Here, $\underline{0}$ denotes the set $\{(z_1;0)\}$ inside $\R_\rho \times{\mathrm T}^*M_1$. Both sets are disjoint for $0<\theta<1$ by construction and they can be identified with subsets of ${\mathrm T}^*M^\times\setminus\underline{0}$. Moreover we denote by $\wt C^{\srm\srm}(\theta)$ and $\wt C^{\urm}(\theta)$ their images in $\mathrm S^*_{M_1} M.$

Before going further, observe that, from the Anosov assumption and  \eqref{eq:symplectic-flow-coordinates}, the limit sets $\wt C^{\srm\srm}(0)$ and $\wt C^{\urm}(0)$ are respectively repulsive and attractive for the flow $\widetilde{\Phi}_t$ on $\mathrm S^*_{M_1}M$; more precisely, for each $\theta \in \left]0, 1 \right[$, there exists $\tau > 0$ such that  for each $t \geqslant \tau$, it holds
\begin{equation}\label{eq:localuniform}
\wt \Phi_t\left(\complement \wt C^{\srm\srm}(\theta)\right) \subset \wt C^\urm(\theta / 2)
\quad
\text{and}
\quad
\wt \Phi_{-t}\left(\complement \wt C^{\urm}(\theta)\right) \subset \wt C^{\srm\srm}(\theta / 2).
\end{equation}
Now let $\delta > 0$ small and $T > \tau$ (to be determined in a few lines in terms of $\delta$). Let $\wt{\mu}_0$ be a function in $\mathscr{C}^\infty({\mrm S}_{M_1}^*M,[0,1])$ that is equal to $1$ (resp. $0$) on $\wt C^{\srm\srm}(\theta)$ (resp. $\wt C^{\urm}(\theta)$). Now observe that for $T > \tau$ large enough, the sets
$$
\widetilde{\Gamma}^{\srm\srm}= S_{M_1}^*M  \setminus \left(  \bigcup_{t \in [0, T]} \widetilde{\Phi}_{-t}(\wt C^{\urm}(\theta))\right) \quad \text{and} \quad \widetilde{\Gamma}^{\urm}=S_{M_1}^*M \setminus \left( \bigcup_{t \in [0, T]} \widetilde{\Phi}_{t}(\wt C^{\srm\srm}(\theta))\right)$$
are respectively included in $\wt C^{\srm\srm}(\theta/2)$ and $\wt C^{\urm}(\theta / 2)$. Let us define
$$
\widetilde{\mu}_\srm = \frac{1}{2(T+\tau)}\int_{-T-\tau}^{T+\tau}\wt{\mu}_0\circ\widetilde{\Phi}_s\, \dd s.
$$

First, note that \emph{\ref{point:iorder}} and \emph{\ref{point:iiorder}} are automatically satisfied for $\wt{\mu}_\srm$. Indeed, set 
$$
N_\urm = \bigcap_{|t|\leqslant T + \tau} \Phi_t(\wt C^\urm(\theta)) \quad \text{and}  \quad N_{\srm\srm} = \bigcap_{|t|\leqslant T + \tau} \Phi_t(\wt C^{\srm\srm}(\theta)).
$$
Then $N_\urm$ and $N_{\srm\srm}$ are neighborhoods of $\wt {E_\urm^* \oplus \wt E_{0, \mrm r}^*}$ and $\wt E_\srm^*$ respectively and we have $\wt{\mu}_\srm([\zeta]) = 0$ for $[\zeta] \in N_\urm$ and $\wt{\mu}_\srm([\zeta]) = 1$ for $[\zeta] \in N_{\srm\srm}$.

Next, we write
\begin{equation}\label{eq:derivms}
\wt{\mathfrak X} \widetilde{\mu}_\srm = \frac{1}{2(T+\tau)}\left(\wt{\mu}_0 \circ \wt 
\Phi_{T+\tau} - \wt{\mu}_0 \circ \wt \Phi_{-T-\tau}\right).
\end{equation}
Then by \eqref{eq:localuniform} we have, for each $[\zeta] \notin (\widetilde \Gamma^\urm \cup \widetilde \Gamma^{\srm\srm})$,
$$
\widetilde\Phi_{T+\tau}([\zeta]) \in \widetilde C^\urm(\theta) \quad \text{and} \quad \widetilde\Phi_{-T-\tau}([\zeta]) \in \widetilde C^{\srm\srm}(\theta),
$$
as it follows from the definitions of $\widetilde \Gamma^\urm$ and $\widetilde \Gamma^{\srm\srm}$. Hence we get 
$$
\widetilde{\mathfrak X} \widetilde {\mu}_\srm = -\frac{1}{2(T+\tau)} \quad \text{ on } \complement (\widetilde \Gamma^\urm \cup \widetilde \Gamma^{\srm\srm}),
$$
and \emph{\ref{point:iiiorder}} is satisfied for $\widetilde {\mu}_\srm$.

Next we turn to \emph{\ref{point:ivorder}} and \emph{\ref{point:vorder}}. If $[\zeta]\in \widetilde{\Gamma}^{\urm}$, then $\widetilde{\Phi}_{-t}([\zeta])$ does not belong to $\wt C^{\srm\srm}(\theta)$  for $t \in [0, T]$. Hence $\widetilde{\Phi}_{-T}([\zeta]) \notin \wt C^{\srm\srm}(\theta)$ and thus by \eqref{eq:localuniform} we have
$
\wt \Phi_t([\zeta]) \in \wt C^\urm(\theta / 2)
$
for each $t \geqslant -T + \tau$. This gives $\widetilde {\mu}_\srm([\zeta]) \leqslant {\tau}/{2(T+\tau)}$. Similarly if $[\zeta]\in \widetilde{\Gamma}^{\srm\srm}$ one can show that $\widetilde {\mu}_\urm([\zeta]) \geqslant 1 - \tau / 2(\tau + T)$. Therefore $\wt{\mu}_\srm$ verifies \emph{\ref{point:ivorder}} and \emph{\ref{point:vorder}} by picking $T>0$ large enough.

Finally, point \emph{\ref{point:viorder}} is a consequence of \eqref{eq:localuniform} and \eqref{eq:derivms}. Indeed, if $[\zeta] \in \wt \Gamma_{\srm\srm}$, then $\wt \Phi_{-T-\tau}([\zeta]) \in \wt C^{\srm\srm}(\theta / 2)$ by \eqref{eq:localuniform} and thus $(\wt {\mu}_0 \circ \wt \Phi_{-T-\tau})([\zeta]) =1$. In particular \eqref{eq:derivms} yields $\wt{\mathfrak X} \wt {\mu}_\srm([\zeta]) \leqslant 0$. We prove similarly that $\wt{\mathfrak X} \wt {\mu}_\srm \leqslant 0$ on $\wt \Gamma_\urm$.

Considering the cones $\wt C^{\urm\urm}(\theta)$ and $\wt C^{\srm}(\theta)$ (by taking the flow in backward times), one can construct a function $\widetilde{\mu}_\urm$ satisfying \emph{\ref{point:iorder}---\ref{point:viorder}}. 

Finally note that all the neighborhoods constructed above can be made arbitrarily small, by taking $\theta$ arbitrarily small. Also $\eta$ depends only on $T$ and $\tau$, and thus on $\delta$ and $\theta$.
\end{proof}

With Lemma~\ref{lem:order}, Proposition~\ref{lem:order2} becomes at hand.

\begin{proof}[Proof of Proposition~\ref{lem:order2}]
Consider $\delta > 0$ small and $\eta, \wt{\mu}_{\mrm b}, \wt \Gamma_{\mrm b}$ and $\wt \Gamma_{\mrm b \mrm b}$ as in Lemma \ref{lem:order} for $\mrm b = \mrm u, \mrm s$. Then, set
\begin{equation}\label{eq:Upsilondef}
\wt \Upsilon_{\mrm u} = \wt \Gamma_{\mrm u} \cap \wt \Gamma_{\mrm u \mrm u}, \quad \wt \Upsilon_{\mrm s} = \wt \Gamma_{\mrm s} \cap \wt \Gamma_{\mrm s \mrm s}  \quad \text{and} \quad \wt \Upsilon_{0, \rrm} = \wt \Gamma_\urm \cap \wt \Gamma_\srm.
\end{equation}
Define the function $\widetilde{\mu}\in \mscr{C}^\infty(\mrm S^*_{M_1}M, [-1,1])$ by
\begin{equation}\label{eq:mtilde}
\wt{\mu} = \wt{\mu}_\srm - \wt{\mu}_\urm
\end{equation}
and let $\chi \in \mscr C^\infty(\R_+, [0,1])$ such that $\chi(t) = 0$ for $t \leqslant 1/2$ and $\chi(t) = 1$ for $t \geqslant 1$. Finally, define the function ${\mu}$ by \eqref{eq:defglobord}. That ${\mu}$ satisfies points \emph{\ref{point:iorder}---\ref{point:vorder}} of Proposition \ref{lem:order} is an immediate consequence of Lemma \ref{lem:order}~; indeed it suffices to take conical sets $\Upsilon_{\urm}$, $\Upsilon_\srm$ and $\Upsilon_{0,\rrm}$ in $\mrm T^*M^\times$ that project on $\wt \Upsilon_{\urm}$, $\wt \Upsilon_\srm$ and $\wt \Upsilon_{0,\rrm}$ (the latter are defined in \eqref{eq:Upsilondef}).
\end{proof}

\begin{rema}\label{rem:formglobal}
Let us record that the proof of Proposition~\ref{lem:order2} shows that any function $\mu : \mrm T^*M^\times \to [-1,1]$ given by
\begin{equation}
\mu(r, z_1; \rho, \zeta_1) =  \wt{\mu}([z_1; \rho,\zeta_1]) \chi(|\zeta|), \quad \wt{\mu} = \wt {\mu}_\srm - \wt {\mu}_\urm,
\end{equation}
where $\wt{\mu}_\srm$ and $\wt{\mu}_\urm$ satisfy the conditions of Lemma \ref{lem:order} and $\chi \in \mscr C^\infty(\R_+, [0,1])$ is such that $\chi(t) = 0$ for $t \leqslant 1/2$ and $\chi(t) = 1$ for $t \geqslant 1$, is a good global order function.
\end{rema}

\subsection{A sliced order function}\label{subsec:sliced}
In view of applying the exponential mixing properties from \cite{nonnenmacher2015decay}, we now proceed to the construction of an order function which is closer to the one used in this reference. Namely, we want to find a fonction $\nu : \mrm T^*M^\times \to [-1,1]$ that is decreasing along the \textit{sliced flow} $\Phi_{t, 1} : \mrm T^*M \to \mrm T^*M$. The latter is given by
$$
\Phi_{t,1}(z;\zeta)=\left(r,\varphi_{rt}(z_1);\,\rho,\, \dd (\varphi_{rt}|_{M_1})(z_1)^{-\top} \zeta_1\right), \quad (z;\zeta) = (r, z_1; \rho, \zeta_1) \in \mrm T^*M^\times.
$$
We denote by $\mathfrak X_1 \in \mathscr C^\infty(\mrm T^*M^\times, \mrm T \mrm T^*M^\times)$ its generator --- note that $\mathfrak X_1 = \mathfrak X + \rho\, \partial_{\zeta_0}$ in the coordinates $(r, z_1; \rho, \zeta_\perp, \zeta_0).$

\begin{defi}[Sliced order function]\label{def:slicedorder}
A \textit{sliced order function} is a map $\nu : \mrm T^*M^\times \to [-1,1]$ which is a linear combination of functions of the form
$$
(r,z_1;\rho,\zeta_1) \mapsto \widetilde {\mu}_1([z_1;\zeta_1])\,\chi\left(\frac{|\zeta_1|}{\langle \rho \rangle}\right)
$$
for some functions $\widetilde{\mu}_1 \in \mathscr C^\infty(\mrm S^*M_1, [0,1])$ and $\chi \in \mathscr C^\infty(\R, [0,1])$ such that $\chi(t) = 0$ for $t$ small.
\end{defi}

Given a sliced order function $\nu$, we can define the corresponding function on $\mrm T^*M_1$ by letting 
\begin{equation}\label{eq:renormalizedsliced}
\nu_1(z_1;\zeta_1)=\nu\left(z_1,0,\zeta_1\right).
\end{equation}
As a consequence of Proposition \ref{lem:order2}, one has the following result.

\begin{corr}\label{lem:order3}
There exist $\eta > 0$ and a sliced order function $\nu : \mrm T^*M^\times \to [-1,1]$ such that the function $\nu_1 = \nu|_{\mrm T^*M_1}: \mrm T^*M_1 \to [-1,1]$ satisfies the following properties, in the region $\{|\zeta_1|\geqslant 1\}$.
\begin{enumerate}[label = \emph{(\roman*)}]
\item $\nu_1$ is equal to $1$ (resp. $0, -1$) in a conical neighborhood of $E_\srm^*$ (resp. $E_0^*, E_\urm^*$)\,; 
\item $\mathfrak X_1 \nu_1 \leqslant 0$\,;
\item $\mathfrak X_1 \nu_1 < 0$ outside a conical neighborhood of $E_0^* \cup E_\urm^* \cup E_\srm^*$.
\end{enumerate}
A sliced order function satisfying the properties above will be labeled as \emph{good}.
\end{corr}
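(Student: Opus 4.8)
The plan is to deduce Corollary~\ref{lem:order3} almost immediately from Proposition~\ref{lem:order2}, the point being that a sliced order function is, up to the way the cutoff $\chi$ sees the variable $\rho$, nothing but a global order function restricted to the slice $\{\rho$ behaving as a parameter$\}$. First I would note that $\mathfrak X_1 = \mathfrak X + \rho\,\partial_{\zeta_0}$, and that along $\mrm T^*M_1$ with $\rho = 0$ (which is the locus defining $\nu_1$ via \eqref{eq:renormalizedsliced}) the two vector fields $\mathfrak X$ and $\mathfrak X_1$ agree up to the $\rho$-direction which is transverse; more precisely, restricting to $\{\rho = 0\}$ and projecting to $\mrm T^*M_1$, the sliced flow $\Phi_{t,1}$ is exactly the symplectic lift $\Phi_{t,1}$ of the Anosov contact flow $\varphi_t|_{M_1}$ on $M_1$, which is the setting of \cite{faure2011upper, nonnenmacher2015decay}. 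So the content to extract is: the function $\widetilde\mu = \widetilde\mu_\srm - \widetilde\mu_\urm$ from Lemma~\ref{lem:order} (or rather its analogue built on $\mrm S^*M_1$ rather than $\mrm S^*_{M_1}M$) does the job for $\mathfrak X_1$ on the slice.

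Concretely, I would apply Lemma~\ref{lem:order} to produce $\widetilde\mu_\srm, \widetilde\mu_\urm$ on $\mrm S^*_{M_1}M = \mrm S^*(\R_\rho \times \mrm T^*M_1)|_{z \in M_1}$, restrict everything to $\{\rho = 0\}$ to get functions $\widetilde\mu_{1,\srm}, \widetilde\mu_{1,\urm} \in \mathscr C^\infty(\mrm S^*M_1, [0,1])$, and set $\widetilde\mu_1 = \widetilde\mu_{1,\srm} - \widetilde\mu_{1,\urm}$. Then define
$$
\nu(r, z_1; \rho, \zeta_1) = \widetilde\mu_1([z_1; \zeta_1])\, \chi\!\left(\frac{|\zeta_1|}{\langle \rho \rangle}\right)
$$
with $\chi$ as in Proposition~\ref{lem:order2}, so that $\nu$ is a sliced order function in the sense of Definition~\ref{def:slicedorder}, and $\nu_1(z_1; \zeta_1) = \nu(z_1, 0, \zeta_1) = \widetilde\mu_1([z_1;\zeta_1])\chi(|\zeta_1|)$. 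Properties (i)--(iii) for $\nu_1$ then follow by the same computation as in the proof of Proposition~\ref{lem:order2}: (i) is immediate from points \ref{point:iorder}--\ref{point:iiorder} of Lemma~\ref{lem:order} (restricted to $\rho = 0$, where $\widetilde E_{0,\rrm}^*$ becomes $\widetilde E_0^*$ since the co-radial direction $\mathbb R\,\dd r$ is not present once we are on $\mrm T^*M_1$); and for (ii)--(iii) one uses $\mathfrak X_1 \nu_1 = (\widetilde{\mathfrak X}_1 \widetilde\mu_1)\chi(|\zeta_1|) + \widetilde\mu_1\, \mathfrak X_1(\chi(|\zeta_1|))$, noting that on $\mrm T^*M_1$ the sliced flow preserves the contact form $\alpha$ so $|\zeta_1|$ (or rather the radial scaling) is controlled and the $\chi$-term is harmless in the region $\{|\zeta_1| \geqslant 1\}$ where $\chi \equiv 1$, while $\widetilde{\mathfrak X}_1 \widetilde\mu_1 = \widetilde{\mathfrak X}_1 \widetilde\mu_{1,\srm} - \widetilde{\mathfrak X}_1 \widetilde\mu_{1,\urm} \leqslant 0$ with strict inequality outside a conical neighborhood of $E_0^* \cup E_\urm^* \cup E_\srm^*$ by points \ref{point:iiiorder} and \ref{point:viorder}.

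The one genuinely delicate point — and the place I would be most careful — is the interplay between the cutoff argument $|\zeta_1|/\langle\rho\rangle$ in the \emph{global} object $\nu$ and the restriction $\rho = 0$ used to define $\nu_1$: one must check that Lemma~\ref{lem:order}, which was stated on $\mrm S^*_{M_1}M$ (including the $\rho$ coordinate), really does restrict cleanly to the $\rho = 0$ slice, i.e. that the cones $C^{\srm\srm}(\theta), C^\urm(\theta)$ and the attracting/repelling dichotomy \eqref{eq:localuniform} specialize correctly when $\rho = 0$ to the classical picture on $\mrm T^*M_1$ — which they do, because \eqref{eq:symplectic-flow-coordinates} with $\rho = 0$ is exactly the symplectic lift of $\varphi_{rt}|_{M_1}$ and the $\zeta_0$-shift term $-rt\rho$ vanishes. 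I would therefore simply remark that the construction of Lemma~\ref{lem:order} restricted to $\{\rho=0\}$ reduces verbatim to the Faure--Sjöstrand construction of \cite{faure2011upper} on $\mrm T^*M_1$ (this is also why the co-radial direction drops out and $E_{0,\rrm}^*$ is replaced by $E_0^*$ in the statement), and conclude. Everything else is bookkeeping: re-inserting the $\langle\rho\rangle$-dependence in the cutoff to make $\nu$ a bona fide sliced order function, which plays no role in the properties of $\nu_1$ itself.
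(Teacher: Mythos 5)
Your proposal is correct and is essentially the paper's own argument: the paper likewise takes the good global order function of Proposition~\ref{lem:order2} (built precisely from the $\wt{\mu}_\srm-\wt{\mu}_\urm$ of Lemma~\ref{lem:order} that you invoke), restricts it to $\mrm T^*M_1=\{\rho=0\}$, multiplies by $\chi(|\zeta_1|/\langle\rho\rangle)$, and concludes via the same key observation that $\mathfrak X_1=\mathfrak X$ on $\{\rho=0\}$. The only cosmetic difference is that you descend to Lemma~\ref{lem:order} directly rather than quoting Proposition~\ref{lem:order2}, which changes nothing in substance.
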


\begin{proof}
Take $\mu : \mrm T^*M \to [-1,1]$ a good global order function given by Proposition \ref{lem:order2} and any $\chi \in \mathscr C^\infty(\R, [0,1])$ such that $\chi(t) = 0$ for $t$ small and $\chi(t) = 1$ for $t \geqslant 1$. Then set
$$
\nu(r, z_1; \rho, \zeta_1) = \mu|_{\mrm T^*M_1}(z_1; \zeta_1) \chi\left(\frac{|\zeta_1|}{\langle \rho \rangle}\right).
$$
Then $\nu$ is a sliced order function. Moreover, since $\mathfrak X_1 = \mathfrak X$ on $\mrm T^*M_1 = \{\rho = 0\}$, it is straightforward to check that $\nu$ is a good one.
\end{proof}

As a matter of fact, we will need to find good global and sliced order functions which are comparable. This is the purpose of the next subsection.

\subsection{Comparison of order functions}

\begin{lemm}\label{lem:comparisonorder} Let $0<\delta<1/2$. Then, one can find a global order function $\mu=\mu_{\srm}-\mu_{\urm}$ as in Remark~\ref{rem:formglobal} and $T_1,\beta>1$ such that the following holds.

Letting
$
\wt{\mu}_{\srm, 1} = \mu_{\srm}|_{\mrm S^*M_1}$,  $\wt{\mu}_{\urm, 1} = \mu_{\urm}|_{\mrm S^*M_1},$
\begin{equation}\label{eq:mus}
\nu_\srm(z_1; \rho, \zeta_1) = (\wt{\mu}_{\srm, 1}\circ\wt \Phi_{T_1})([z_1; \zeta_1])\,\chi\left(\frac{|\zeta_1|}{\beta\langle\rho\rangle}\right),
\end{equation}
and
\begin{equation}\label{eq:muu}
\nu_\urm(z_1; \rho, \zeta_1) = (\wt{\mu}_{\urm, 1}\circ\wt \Phi_{T_1})([z_1; \zeta_1])\,\chi\left(\frac{\beta|\zeta_1|}{\langle\rho\rangle}\right),
\end{equation}
one has the following properties:
\begin{enumerate}[label = \emph{(\roman*)}]
 \item $\nu=\nu_\srm-\nu_\urm$ is a good sliced order function;
 \item $\nu_\srm\leqslant  \mu_\srm$ and $\mu_\urm\leqslant \nu_\urm$ for $|\zeta|\geqslant 1$;
 \item $\nu_\srm>0$ implies that $|\zeta_\perp|^2\geqslant \rho^2+\zeta_0^2.$
\end{enumerate}

\end{lemm}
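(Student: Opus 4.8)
The plan is to build $\mu$, $\nu_\srm$, $\nu_\urm$ essentially by hand, starting from the output of Lemma~\ref{lem:order} and Proposition~\ref{lem:order2}, and then to choose the extra parameters $T_1$ and $\beta$ large enough to enforce the comparison inequalities. More precisely, I would first fix $\delta$, run Lemma~\ref{lem:order} to get $\wt{\mu}_\srm, \wt{\mu}_\urm$ together with the associated neighbourhoods, and set $\mu_\srm(z,\zeta) = \wt{\mu}_\srm([z_1;\rho,\zeta_1])\chi(|\zeta|)$, $\mu_\urm(z,\zeta) = \wt{\mu}_\urm([z_1;\rho,\zeta_1])\chi(|\zeta|)$, so that $\mu = \mu_\srm - \mu_\urm$ is a good global order function of the form in Remark~\ref{rem:formglobal}. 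The candidate sliced functions are then \eqref{eq:mus}--\eqref{eq:muu}; note the subtle point that $\nu_\srm$ uses the cutoff $\chi(|\zeta_1|/(\beta\langle\rho\rangle))$ (supported where $|\zeta_1|$ is \emph{large} compared to $\langle\rho\rangle$) while $\nu_\urm$ uses $\chi(\beta|\zeta_1|/\langle\rho\rangle)$ (supported where $|\zeta_1|$ is merely comparable to, or large compared to, $\langle\rho\rangle/\beta$), and this asymmetry is exactly what will make (ii) and (iii) work in opposite directions.

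For part (i), I would argue that composing the $\wt{\mu}_\bullet$ with a fixed-time map $\wt\Phi_{T_1}$ does not destroy the defining structural properties: $\wt{\mu}_{\srm,1}\circ\wt\Phi_{T_1}$ and $\wt{\mu}_{\urm,1}\circ\wt\Phi_{T_1}$ are still smooth $[0,1]$-valued functions on $\mrm S^*M_1$, still equal to $1$ (resp.\ $0$) near $\wt E_\srm^*$ (resp.\ $\wt E_\urm^* \oplus \wt E_{0,\rrm}^*$) and vice versa, because these bundles are $\wt\Phi_t$-invariant; and the monotonicity $\widetilde{\mathfrak X}\,(\wt{\mu}_\srm\circ\wt\Phi_{T_1}) \leqslant 0$ (with strict inequality off a conical neighbourhood of $E_0^*\cup E_\urm^*\cup E_\srm^*$) follows from pulling back the corresponding property of $\wt{\mu}_\srm$ along the flow. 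Restricting to $\{\rho = 0\}$ (i.e.\ to $\mrm T^*M_1$), the $\langle\rho\rangle = 1$ normalization makes the $\chi$-factors constant, and since $\mathfrak X_1 = \mathfrak X$ on $\{\rho = 0\}$ one recovers exactly the hypotheses of Corollary~\ref{lem:order3}, so $\nu = \nu_\srm - \nu_\urm$ is a good sliced order function. I would phrase this as: it suffices to check the conditions of Lemma~\ref{lem:order} for the time-shifted functions, which is immediate.

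For (iii), the point is purely about supports: $\nu_\srm(z_1;\rho,\zeta_1) > 0$ forces $\chi(|\zeta_1|/(\beta\langle\rho\rangle)) > 0$, hence $|\zeta_1| \geqslant c\,\beta\langle\rho\rangle$ for the constant $c>0$ below which $\chi$ vanishes; since $|\zeta_1|^2 = |\zeta_\perp|^2 + \zeta_0^2$ in the coordinates \eqref{eq:coordinates-tangent} and $\langle\rho\rangle^2 \geqslant \rho^2$, choosing $\beta$ large enough (depending on $c$) gives $|\zeta_\perp|^2 + \zeta_0^2 \geqslant \beta^2 c^2 \rho^2 \geqslant \rho^2 + \zeta_0^2$... wait, that is not quite it — one wants $|\zeta_\perp|^2 \geqslant \rho^2 + \zeta_0^2$, so I would instead also need that $\nu_\srm > 0$ forces $\zeta_0$ small relative to $\zeta_\perp$. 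This is where the composition with $\wt\Phi_{T_1}$ enters: the support condition $\wt{\mu}_{\srm,1}\circ\wt\Phi_{T_1} > 0$ means $\wt\Phi_{T_1}([z_1;\rho,\zeta_1])$ lies in the (small) region where $\wt{\mu}_{\srm,1} > 0$, which by construction (point \ref{point:ivorder}--\ref{point:vorder} of Lemma~\ref{lem:order} and the choice of neighbourhoods) sits inside the strongly stable cone $\wt C^{\srm\srm}(\theta)$; flowing back by $T_1$ and using \eqref{eq:symplectic-flow-coordinates} — under which $\zeta_0$ evolves as $\zeta_0 - rt\rho$ while $\zeta_\perp$ contracts — one gets that the original point has its $\zeta_\perp$-component dominating its $\rho\,\dd r + \zeta_0\alpha$ component by a factor that grows with $T_1$; combined with $|\zeta_1| \gtrsim \beta\langle\rho\rangle$ from the cutoff, taking $T_1$ and $\beta$ large yields $|\zeta_\perp|^2 \geqslant \rho^2 + \zeta_0^2$. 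So the main work, and the main obstacle, is (ii): to get $\nu_\srm \leqslant \mu_\srm$ one must combine (a) the fact that flowing forward by $T_1$ can only increase $\wt{\mu}_\srm$ towards $1$ on the relevant region — more precisely one needs $\wt{\mu}_{\srm,1}\circ\wt\Phi_{T_1} \leqslant \wt{\mu}_{\srm,1}$ pointwise, which is \emph{not} automatic and must be arranged through the averaging construction of $\wt{\mu}_\srm$ (its $\widetilde{\mathfrak X}$-derivative being $\leqslant 0$ gives monotone \emph{decrease} along the flow, i.e.\ $\wt{\mu}_\srm\circ\wt\Phi_{T_1} \leqslant \wt{\mu}_\srm$, exactly what is wanted) — together with (b) a comparison of the $\chi$-cutoffs: on $\{|\zeta|\geqslant 1\}$ one has $|\zeta_1|/(\beta\langle\rho\rangle) \leqslant |\zeta|$ for $\beta$ large (since $|\zeta|^2 = \rho^2 + |\zeta_1|^2$ and $\langle\rho\rangle \geqslant 1$), so $\chi(|\zeta_1|/(\beta\langle\rho\rangle)) \leqslant \chi(|\zeta|)$ by monotonicity of $\chi$, and hence $\nu_\srm \leqslant (\wt{\mu}_{\srm,1}\circ\wt\Phi_{T_1})\,\chi(|\zeta|) \leqslant \wt{\mu}_{\srm,1}\,\chi(|\zeta|) = \mu_\srm$; the inequality $\mu_\urm \leqslant \nu_\urm$ is dual, using $\widetilde{\mathfrak X}\wt{\mu}_\urm \geqslant 0$ so that $\wt{\mu}_\urm$ \emph{increases} along the flow, and $\beta|\zeta_1|/\langle\rho\rangle \geqslant |\zeta|$ on an appropriate region — here one must be careful that this cutoff comparison only needs to hold where $\mu_\urm \neq 0$, and I expect to have to choose $\chi$ for $\mu$ and $\chi$ for $\nu$ with nested supports and to track constants carefully so that everything is consistent. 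The delicate bookkeeping of which parameter is chosen in terms of which ($\theta$, then $T$ for Lemma~\ref{lem:order}, then $T_1$, then $\beta$) so that all three conclusions hold simultaneously is, I expect, the real content of the proof.
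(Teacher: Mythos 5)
Your constructions for (i) and (iii) are essentially in line with the paper (the paper dispatches (iii) with exactly the support-propagation idea you sketch), but your argument for the key item (ii) has a genuine gap, and it sits precisely at the step you identify as the main work. Your chain of inequalities ends with ``$\nu_\srm \leqslant (\wt{\mu}_{\srm,1}\circ\wt\Phi_{T_1})\,\chi(|\zeta|) \leqslant \wt{\mu}_{\srm,1}\,\chi(|\zeta|) = \mu_\srm$'', and the last identification is false off $\{\rho=0\}$: by definition $\mu_\srm(r,z_1;\rho,\zeta_1)=\wt{\mu}_\srm([z_1;\rho,\zeta_1])\,\chi(|\zeta|)$ evaluates the order function at the projective point $[z_1;\rho,\zeta_1]\in\mrm S^*_{M_1}M$, whereas $\nu_\srm$ evaluates its restriction at $[z_1;\zeta_1]$, i.e.\ at $\rho=0$. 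Since $\wt{\mu}_\srm$ vanishes near $\wt{E_\urm^*\oplus E_{0,\rrm}^*}$, which contains the co-radial ($\rho$) direction, $\wt{\mu}_\srm([z_1;\rho,\zeta_1])$ can be strictly smaller than $\wt{\mu}_{\srm,1}([z_1;\zeta_1])$, so the pointwise monotonicity $\wt{\mu}_\srm\circ\wt\Phi_{T_1}\leqslant\wt{\mu}_\srm$ (true, by item (vi) of Lemma~\ref{lem:order}) does not close the argument. The cutoff $\chi(|\zeta_1|/(\beta\langle\rho\rangle))$ does force $|\rho|/|\zeta_1|$ to be small on $\supp\nu_\srm$, but continuity of $\wt{\mu}_\srm$ then only yields $\wt{\mu}_\srm([z_1;\rho,\zeta_1])\geqslant\wt{\mu}_\srm([z_1;0,\zeta_1])-\epsilon$, an approximate inequality that does not give the exact comparison $\nu_\srm\leqslant\mu_\srm$. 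The same defect affects your ``dual'' argument for $\mu_\urm\leqslant\nu_\urm$.

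The paper's mechanism is different: it is a saturation argument, not a monotonicity argument. Using items (i)--(ii) of Lemma~\ref{lem:order} and the attractor/repeller dynamics on $\mrm S^*_{M_1}M$, for $T_1$ large one has $\supp(\wt{\mu}_{\srm,1}\circ\wt\Phi_{T_1})\subset\operatorname{int}\{\wt{\mu}_{\srm}=1\}$; since this is an open set and the cutoff confines $\supp\nu_\srm$ to $|\rho|\leqslant C|\zeta_1|/\beta$, taking $\beta$ large guarantees that $[z_1;\rho,\zeta_1]$ still lies in $\{\wt{\mu}_\srm=1\}$ there, so that $\mu_\srm=1$ on $\supp\nu_\srm\cap\{|\zeta|\geqslant1\}$ and the inequality $\nu_\srm\leqslant\mu_\srm$ becomes trivial. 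Dually, one sets $K=\overline{\pi(\supp\wt{\mu}_\urm)}\subset\mrm S^*M_1$ (well defined because $\supp\wt{\mu}_\urm$ avoids $\wt{E_\rrm^*}$), notes $K\cap\wt{E_\srm^*\oplus E_0^*}=\emptyset$, and chooses $T_1$ so that $\{\wt{\mu}_{\urm,1}\circ\wt\Phi_{T_1}=1\}\supset K$; then, for $\beta$ large, the cutoff $\chi(\beta|\zeta_1|/\langle\rho\rangle)$ is identically $1$ on $\supp\mu_\urm\cap\{|\zeta|\geqslant1\}$, so $\nu_\urm=1\geqslant\mu_\urm$ there. In short, the role of the time shift $T_1$ is not to exploit monotone decrease along the flow but to force the shifted functions to take their extreme values ($1$) on the relevant supports, which is what absorbs the $\rho$-shift exactly; to repair your proof you would have to replace your step (a) by this support statement. (Two minor points: at $\rho=0$ the $\chi$-factors in \eqref{eq:mus}--\eqref{eq:muu} are not constant in $\zeta_1$, only equal to $1$ for $|\zeta_1|$ large; and your first attempt at (iii) indeed fails for the reason you note, while the corrected version via the $T_1$-shift is the intended one.)
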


\begin{proof}
By items \emph{\ref{point:iorder} \emph{and} \ref{point:iiorder}} of Lemma \ref{lem:order}, for $T_1 > 0$ large enough it holds
$$
\supp(\wt{\mu}_{\srm, 1}\circ\wt \Phi_{T_1}) \subset \operatorname{int} \{\wt{\mu}_{\srm, 1} = 1\}.
$$
Hence for $T_1$ large, $\supp(\wt{\mu}_{\srm, 1}\circ\wt \Phi_{T_1})$ is contained in the interior of $\{\wt{\mu}_{\srm} = 1\}$. In particular taking $\beta > 0$ large enough, we have for $|\zeta|\geqslant 1$
$$
\nu_\srm(z_1; \rho, \zeta_1) > 0 \quad \implies \quad {\mu}_\srm(z_1;\rho, \zeta_1) = 1.
$$
In particular this yields $\nu_\srm \leqslant \mu_\srm$ for $|\zeta|\geqslant 1$. 

Let $\pi : \mrm S^*_{M_1}M \setminus \wt {E_{\mrm r}^*} \to \mrm S^*M_1$ be the map $[(1, z_1; \rho, \zeta_1)] \mapsto (z_1;  \zeta_1 / |\zeta_1|)$, and let $K = \overline{\pi(\supp(\wt{\mu}_{\urm}))} \subset S^*M_1$, which is well defined since $\supp \wt{\mu}_\urm$ does not intersect $\wt{E_\mrm{r}^*}$. Then $ {K \cap \wt{E_\srm^*\oplus E_0^*} = \emptyset}$ by \emph{\ref{point:iiorder}} of Lemma \ref{lem:order2} and thus for $T_1$ large enough, we have
$$
\{\wt{\mu}_{\urm, 1}\circ\wt \Phi_{T_1} = 1\} \supset K.
$$
Therefore, if $\beta > 0$ is chosen large enough, we have $\nu_{\urm} \geqslant {\mu}_{\urm}$ on $\{|\zeta|\geqslant 1\}$. This completes the proof of the first two items. For the third item, this follows by construction (up to increasing the values of $\beta$ and $T_1$). 
\end{proof}


\subsection{Global and sliced escape functions}\label{subsec:globalescape}

\begin{defi}
A \textit{good global escape function} is a smooth map $F : \mrm T^*M^\times \to \R$ independent of $r$, which is a symbol in $\overline{S}^{0+}(\mrm T^*M^\times)$, and such that
$$
\mathfrak X F \leqslant 0 
$$
in the region $\{|\zeta|\geqslant 1\}.$
\end{defi}
Since $F$ does not depend on $r$, the condition $F \in \overline{S}^{0+}(\mrm T^*M^\times)$ means that for every $\varepsilon > 0$ and $\beta, \gamma \in \mathbb N^n$, there exists $C_{\beta, \gamma, \varepsilon} >0$ such that
$$
|\partial^\beta_z \partial^\gamma_\zeta F(z, \zeta)|\leqslant C_{\beta, \gamma, \varepsilon} \langle \zeta \rangle^{\varepsilon - |\gamma|}, \quad (z, \zeta) \in \mrm T^*M^\times.
$$
Again these symbols can be viewed as symbols on $\mrm{T}^*(\R\times M_1)$. Note that we only impose a nonincreasing condition (and not a decreasing condition) along the flow lines of $\Phi_t$ because it will be sufficient to show that the solution of \eqref{eq:vlasov} grows at most polynomially in time in a space defined by a good global escape function $F$ (see the proof of Lemma~\ref{l:polynomialbound} below). 





We can now state the main result of this section. 

\begin{prop}\label{prop:comparisonresult}
For each $\delta \in \left]0, 1\right[$, there are good global and sliced order functions $\mu$ and $\nu$, as well as positive symbols $f \in \overline{S}^2(\mrm T^*M^\times)$ (independent of $r$) and $g_1 \in \overline{S}^2(\mrm T^*M_1)$, such that the following holds. 

Let $g : \mrm T^*M^\times \to \R$ be defined by
$$
g(r,z_1;\rho, \zeta_1) = g_1(z_1; \zeta_1/\langle \rho \rangle).
$$
Let also $\mathfrak S > 0$ and $\sigma \in \R$ such that $|\sigma| < (1-\delta)\mathfrak S$ and denote
$$
F\Ss = (\mfS \mu + \sigma) \log f \quad \text{and} \quad G\Ss = (\mfS \nu + \sigma) \log g.
$$
Then we have the following properties:
\begin{enumerate}[label=\emph{(\roman*)}]
\item there exists $c>0$ such that $g_1(z_1;\zeta_1)\geqslant c\langle\zeta_1\rangle^2$;
\item $F\Ss$ is a global escape function\,;
\item for $|\zeta_1|\geqslant 1$, we have 
$$
\mathfrak S\nu(z_1,0,\zeta_1) = \mathfrak S \quad \text{near }E_\srm^* \qquad \text{and} \qquad \mathfrak S\nu(z_1,0,\zeta_1) = -\mathfrak S \quad \text{near }E_\urm^*;
$$
\item for $|\zeta_1|\geqslant 1$, it holds $\mathfrak{X}_1(\nu)(z_1,0,\zeta_1)\leqslant 0$;
\item whenever $\sigma \leqslant 0$, we have
$$
G_{\mathfrak S, \sigma} \leqslant F_{\mathfrak S, \sigma}  + 2(\mathfrak S + |\sigma|) \log \langle \rho \rangle
$$
in the region $\bigl\{|\zeta|\geqslant 1\bigr\}.$
\end{enumerate}
\end{prop}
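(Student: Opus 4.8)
The plan is to obtain $\mu,\nu$ (and $T_1,\beta$) from Lemma~\ref{lem:comparisonorder}, applied with $\delta/2$ in place of $\delta$, so that $\mu=\mu_\srm-\mu_\urm$ is a good global order function as in Remark~\ref{rem:formglobal}, $\nu=\nu_\srm-\nu_\urm$ a good sliced one of the shape \eqref{eq:mus}--\eqref{eq:muu}, and $\nu_\srm\leqslant\mu_\srm$, $\mu_\urm\leqslant\nu_\urm$, hence $\nu\leqslant\mu$, on $\{|\zeta|\geqslant1\}$. For $g_1$ I would take the elliptic symbol $g_1(z_1;\zeta_1)=C_1\langle\zeta_1\rangle^2$, with $C_1>1$ a large constant fixed at the end: then $g_1\in\overline S^2(\mrm T^*M_1)$ and $g_1\geqslant\langle\zeta_1\rangle^2$, giving (i), while $g=g_1(z_1;\zeta_1/\langle\rho\rangle)=C_1\langle\zeta\rangle^2/\langle\rho\rangle^2$, i.e.\ $\log g=\log C_1+\log\langle\zeta\rangle^2-2\log\langle\rho\rangle$. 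Items (iii)--(iv) are then just the defining properties of a good sliced order function (Corollary~\ref{lem:order3}) restricted to the $\Phi_{t,1}$-invariant set $\{\rho=0\}$, on which $\nu(z_1,0,\zeta_1)=\nu_1(z_1;\zeta_1)$ by \eqref{eq:renormalizedsliced} and $\mathfrak X_1=\mathfrak X$.

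The core is the symbol $f=C_1\tilde f$. I look for $\tilde f:\mrm T^*M^\times\to\R_{>0}$ independent of $r$, in $\overline S^2$, with $1\leqslant\tilde f\leqslant\langle\zeta\rangle^2$ and $\log\tilde f\in\overline S^{0+}$, such that on $\{|\zeta|\geqslant1\}$: \emph{(a)} $\tilde f\equiv1$ on a conical neighbourhood $\mc N_0$ of $E_{0,\rrm}^*$; \emph{(b)} near $E_\srm^*$ (resp.\ $E_\urm^*$) $\tilde f$ is elliptic of order $2$ and strictly decreasing (resp.\ increasing) along $\Phi_t$, which is possible by taking, with respect to a norm adapted to the Anosov splitting, a forward (resp.\ backward) time-average of $|\dd\varphi_t^{-\top}\cdot|^2$ as in \cite{faure2011upper}; \emph{(c)} $\tilde f=\langle\zeta\rangle^2$ away from conical neighbourhoods of $E_\srm^*,E_\urm^*,E_{0,\rrm}^*$. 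Behaviour (a) near $E_{0,\rrm}^*$ is forced: there $\mu\equiv0$ and $\mathfrak X\mu\equiv0$ on an open set, so the escape inequality below requires $\mathfrak X\log f\equiv0$ there, and since $\log f\in\overline S^{0+}$ this forces $f$ locally constant. The point that reconciles (ii) with (v) is that the set where $\tilde f$ departs from $\langle\zeta\rangle^2$ near $E_{0,\rrm}^*$ can be kept inside $\{\mu_\srm=0\}\subset\{\mu\leqslant0\}$ and away from $\{\mathfrak X\mu=0\}$; this is because, near $E_{0,\rrm}^*$, $\mathfrak X\mu_\srm\leqslant-\eta$ off a neighbourhood of $\widetilde{E^*_\urm\oplus E^*_{0,\rrm}}$ on which $\mu_\srm$ already vanishes (Lemma~\ref{lem:order}), symmetrically for $\mu_\urm$, so $\{\mathfrak X\mu=0\}$ meets a neighbourhood of $E_{0,\rrm}^*$ only inside the open set $\{\mu_\srm=\mu_\urm=0\}$.

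With such an $f$, (ii) follows. Clearly $F\Ss=(\mfS\mu+\sigma)\log f$ is $r$-independent and lies in $\overline S^{0+}$ (product of an order-$0$ symbol with $\log f\in\overline S^{0+}$). Writing $\mathfrak X F\Ss=\mfS(\mathfrak X\mu)\log f+(\mfS\mu+\sigma)\mathfrak X\log f$, I split $\{|\zeta|\geqslant1\}$ into cases. On $\mc N_0$: $\mathfrak X\log f=0$, $\mathfrak X\mu\leqslant0$, $\log f\geqslant0$, so $\mathfrak X F\Ss\leqslant0$. Near $E_\srm^*$: $\mu>1-\delta$ by Proposition~\ref{lem:order2} (parameter $\delta/2$), so $\mfS\mu+\sigma>\mfS(1-\delta)-(1-\delta)\mfS=0$ since $|\sigma|<(1-\delta)\mfS$, and $\mathfrak X\log f<0$ by (b), hence $(\mfS\mu+\sigma)\mathfrak X\log f<0$ and $\mathfrak X F\Ss\leqslant0$; near $E_\urm^*$ the same with all signs reversed. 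Elsewhere $\mathfrak X\mu\leqslant-\eta_0<0$ (because $\{\mathfrak X\mu=0\}$ lies in neighbourhoods of $E_\srm^*,E_\urm^*,E_{0,\rrm}^*$ and the transition shell of $\tilde f$ avoids $\{\mathfrak X\mu=0\}$), while $\log f\geqslant\log C_1$ and $|\mathfrak X\log f|\leqslant C_0$ with $\eta_0,C_0$ depending only on $\delta$; so $\mathfrak X F\Ss\leqslant\mfS(-\eta_0\log C_1+2C_0)\leqslant0$ once $C_1$ is chosen large in terms of $\delta$. This fixes $C_1$ and proves (ii).

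For (v), let $\sigma\leqslant0$ and set $\kappa=\log(\langle\zeta\rangle^2/\tilde f)\geqslant0$, supported in $\{\mu_\srm=0\}$ by construction; then $\log f=\log C_1+\log\langle\zeta\rangle^2-\kappa$, so
\[
G\Ss-F\Ss=\mfS(\nu-\mu)\bigl(\log C_1+\log\langle\zeta\rangle^2\bigr)+(\mfS\mu+\sigma)\kappa-2(\mfS\nu+\sigma)\log\langle\rho\rangle .
\]
On $\{|\zeta|\geqslant1\}$ the first term is $\leqslant0$ since $\nu\leqslant\mu$ and $\log(C_1\langle\zeta\rangle^2)>0$; the second is $\leqslant0$ since on $\supp\kappa$ one has $\mu=-\mu_\urm\leqslant0$, whence $\mfS\mu+\sigma\leqslant0$, while $\kappa\geqslant0$; the third is $\leqslant2(\mfS+|\sigma|)\log\langle\rho\rangle$ because $\nu\geqslant-1$, $-\sigma=|\sigma|$ and $\log\langle\rho\rangle\geqslant0$. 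Summing the three bounds gives $G\Ss\leqslant F\Ss+2(\mfS+|\sigma|)\log\langle\rho\rangle$. The step I expect to be hardest is the construction in the second paragraph: producing a single symbol $f\in\overline S^2$ (with $\log f\in\overline S^{0+}$) that is simultaneously an escape symbol for \emph{every} admissible pair $(\mfS,\sigma)$ and equals $\langle\zeta\rangle^2$ off $\{\mu\leqslant0\}$ — the extra radial/flow direction $E_{0,\rrm}^*$, which has no analogue in \cite{faure2011upper}, is precisely what makes the required placement of neighbourhoods delicate.
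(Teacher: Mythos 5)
Your handling of items (i)--(iv) and of the escape property (ii) is essentially the paper's route: take $\mu,\nu$ from Lemma~\ref{lem:comparisonorder} (so that $\nu\leqslant\mu$ on $\{|\zeta|\geqslant1\}$), make $f$ constant near $E_{0,\rrm}^*$, dynamically adapted near $E_\srm^*,E_\urm^*$, and beat the transition regions with a large constant; that part is sound in spirit. The genuine gap is in item (v), and it comes from decoupling $g_1$ from $f$ by choosing $g_1=C_1\langle\zeta_1\rangle^2$. Your decomposition needs $(\mathfrak S\mu+\sigma)\kappa\leqslant0$ on $\supp\kappa$, and you assert $\supp\kappa\subset\{\mu_\srm=0\}$ ``by construction''; but your own requirement \emph{(b)} forces $\tilde f$ to be a time-averaged, dynamically adapted order-$2$ symbol near $E_\srm^*$, hence $\tilde f<\langle\zeta\rangle^2$, i.e.\ $\kappa>0$, on part of a conic neighbourhood of $E_\srm^*$ where $\mu_\srm=1$ and $\mathfrak S\mu+\sigma\geqslant\mathfrak S+\sigma>0$. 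In that region, for $|\zeta_1|\gtrsim\beta\langle\rho\rangle$ one has $\nu=\mu=1$, so the first term of your decomposition vanishes, and at $\rho=0$ the third term vanishes as well; your inequality then reduces to $(\mathfrak S+\sigma)\kappa\leqslant0$, i.e.\ $g\leqslant f$, i.e.\ $\langle\zeta\rangle^2\leqslant\tilde f$ near $E_\srm^*$ at $\rho=0$. That contradicts \emph{(b)} unless the background (Sasaki-type) norm already contracts pointwise along the flow near $E_\srm^*$, which is false in general -- that is exactly why one time-averages in the first place. Since the slack $2(\mathfrak S+|\sigma|)\log\langle\rho\rangle$ vanishes at $\rho=0$, statement (v) itself forces $g_1$ to carry the same dynamical weight as $f$ near $E_\srm^*$; a purely elliptic, dynamics-independent $g_1$ cannot satisfy it.

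This is precisely what the paper's construction is built to avoid: it defines a single adapted quadratic symbol $\widetilde g_1$ (from the finite-time Lyapunov average $N_T$ and $\zeta_0^2$, cf.\ \eqref{e:lyapunov-norm}) and uses it in \emph{both} functions, $f=Q+\chi\bigl(|\zeta_\perp|^2/(\zeta_0^2+\rho^2)\bigr)\widetilde g_1$ and $g_1=Q_0+\widetilde g_1$. Then on $\{\nu_\srm>0\}\subset\{|\zeta_\perp|^2\geqslant\rho^2+\zeta_0^2\}$ (third item of Lemma~\ref{lem:comparisonorder}) one gets $g\leqslant f$ for free from the $2$-homogeneity of $\widetilde g_1$, which settles the region $\{\nu>0\}$ with no $\log\langle\rho\rangle$ loss, while the crude bound $f\leqslant\langle\rho\rangle^2 g$ together with $\sigma\leqslant0$, $\nu\geqslant-1$ and $\nu\leqslant\mu$ handles $\{\nu\leqslant0\}$. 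To repair your argument you would have to replace $C_1\langle\zeta_1\rangle^2$ by (a constant plus) the same adapted quadratic form used in $f$ near $E_\srm^*$; as written, the proof of (v) does not go through.
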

By analogy with the definition of a good global escape function, we will say in the following that $G_{\mathfrak S,\sigma}$ is a \emph{good sliced escape function} even if it is maybe not decreasing along the flow lines. The rest of this section is devoted to the proof of Proposition \ref{prop:comparisonresult}.

\subsection{Construction of a global pseudo-norm}

Before going further, let us introduce some additional norms on $\mrm{T}^*M_1$. Given $T>1$, we set
$$
N_T(z_1; \zeta_1)=\int_0^{T} |\Phi_t(z_1; \zeta_\perp)|^2\dd t.
$$
Then note that we have
$$
\mathfrak{X}_1N_T(z_1; \zeta_1)=|\Phi_T(z_1;\zeta_\perp)|^2-|\zeta_\perp|^2.
$$
Hence, thanks to the Anosov property, we can fix $T>1$ large enough to ensure the existence of two conic neighborhoods $\Gamma_{\urm\urm}^1$ and $\Gamma_{\srm\srm}^1$ of $E_\urm^*$ and $E_\srm^*$ respectively such that
\begin{equation}\label{e:lyapunov-norm}
\mathfrak{X}_1 N_T \geqslant c_{\Sigma,\mathrm{g}}N_T  \quad \text{in } \Gamma_{\urm\urm}^1 \qquad \text{and} \qquad \mathfrak{X}_1 N_T \leqslant -c_{\Sigma,\mathrm{g}}N_T \quad \text{in } \Gamma_{\srm\srm}^1
\end{equation}
where $c_{\Sigma,\mathrm{g}}>0$ is a constant depending only on the Riemannian manifold. 
The neighborhood and parameter $T>1$ appearing in the above definition are fixed once and for all in this rest of this section.

Let $ Q \geqslant  1$ be some parameter (to be determined later) and set
$$
\widetilde{g}_1(z_1;\zeta_1)=\chi\left(\frac{|\zeta_0|^2}{N_T(\zeta_1)}\right)\zeta_0^2  +\left(1-\chi\left(\frac{|\zeta_0|^2}{N_T(\zeta_1)}\right)\right)N_T(\zeta_1),
$$
where we take $\chi(t)=1$ for $t\geqslant 1$ and $\chi(t)=0$ for $|t|\leqslant 1/2$. We also define
\begin{equation}\label{eq:deff}
f(r, z_1; \rho, \zeta_1) = Q + \chi\left(\frac{|\zeta_\perp|^2}{\zeta_0^2 + \rho^2}\right)\widetilde{g}_1(z_1;\zeta_1), \quad (r, z_1; \rho, \zeta_1) \in \mathrm T^*M^\times.
\end{equation}
One can check that this function belongs to $\overline{S}^2(\mrm{T}^* M^\times)$; we also note that it is independent of $r$. The next results says that $F_{\mathfrak S,\sigma}$ is indeed a good global escape function for an appropriate choice of $Q$. 
\begin{lemm}\label{lem:comparison} Let $0<\delta<1/2$. Then, one can find a global order function $\mu$ as in Lemma~\ref{lem:comparisonorder} and $Q_0> 1$ such that for all $Q\geqslant Q_0$, and all $\mathfrak S\geqslant 1$ and $\sigma\in\R$ verifying $|\sigma|\leqslant (1-2\delta)\mathfrak S$, one has
\begin{equation}\label{eq:decreasing}
\mathfrak X\bigl((\mathfrak S\mathrm m+\sigma)\log f\bigr) \leqslant 0
\end{equation}
in the region $\{|\zeta|\geqslant 1\}$.
\end{lemm}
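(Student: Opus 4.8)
The plan is to expand the left-hand side of \eqref{eq:decreasing}, use the sign properties of $\mu$ and of $f$ separately over four regions of phase space, and close by fixing $Q$ large to absorb a remainder. Since $\mu$ and $f$ are independent of $r$ and $\mathfrak X h = r\,\widetilde{\mathfrak X}_1 h$ for every $r$-independent $h$ (where $\widetilde{\mathfrak X}_1$ does not involve $r$, because $\Phi_t$ preserves $\{r=c\}$ and is conjugated by $\Psi_c$ to a time change of the flow at $r=1$), the sign of $\mathfrak X\bigl((\mathfrak S\mu+\sigma)\log f\bigr)$ does not depend on $r$, so one may work at $r=1$, i.e.\ over a compact base. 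Using $f\geqslant Q\geqslant 1$ (hence $\log f\geqslant 0$), write
\begin{equation}\label{eq:planexpand}
\mathfrak X\bigl((\mathfrak S\mu+\sigma)\log f\bigr) = \mathfrak S\,(\mathfrak X\mu)\,\log f \;+\; (\mathfrak S\mu+\sigma)\,\frac{\mathfrak Xf}{f}.
\end{equation}
By Proposition~\ref{lem:order2}\,(v) one has $\mathfrak X\mu\leqslant 0$ on $\{|\zeta|\geqslant 1\}$, so the first summand is $\leqslant 0$ everywhere; the whole matter is to dominate the second summand by the absolute value of the first.

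Near $E^*_{0,\rrm}$, i.e.\ on a small conic neighbourhood $\mathcal N_{0,\rrm}$ of $E^*_{0,\rrm}=\{\zeta_\perp=0\}$, the outer cutoff $\chi\bigl(|\zeta_\perp|^2/(\zeta_0^2+\rho^2)\bigr)$ vanishes identically, so $f\equiv Q$ and $\mathfrak Xf=0$; moreover $\mu\equiv 0$ there by Proposition~\ref{lem:order2}\,(i), so \eqref{eq:planexpand} is identically $0$. Near $E^*_\srm\cup E^*_\urm\subset\{\zeta_0=\rho=0\}$ the outer cutoff equals $1$ and the inner cutoff $\chi(\zeta_0^2/N_T)$ vanishes (as $\zeta_0\to 0$ while $N_T\gtrsim|\zeta_\perp|^2$ stays bounded below), so $f=Q+N_T$ and $\mathfrak Xf=\mathfrak X_1 N_T$ (the fields $\mathfrak X$ and $\mathfrak X_1$ agree on $N_T$, which is independent of $\zeta_0$). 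Shrinking the neighbourhoods so they lie inside $\Gamma^1_{\srm\srm}$ and $\Gamma^1_{\urm\urm}$, estimate \eqref{e:lyapunov-norm} gives $\mathfrak Xf/f\leqslant -c_{\Sigma,\mathrm g}N_T/(Q+N_T)<0$ near $E^*_\srm$ and $\mathfrak Xf/f\geqslant c_{\Sigma,\mathrm g}N_T/(Q+N_T)>0$ near $E^*_\urm$. Since $\mu\equiv 1$ near $E^*_\srm$ — where $\mathfrak S\mu+\sigma=\mathfrak S+\sigma\geqslant 2\delta\mathfrak S>0$ thanks to $|\sigma|\leqslant(1-2\delta)\mathfrak S$ — and $\mu\equiv -1$ near $E^*_\urm$ — where $\mathfrak S\mu+\sigma\leqslant -2\delta\mathfrak S<0$ — in both cases $(\mathfrak S\mu+\sigma)\,\mathfrak Xf/f<0$, and as $\mathfrak X\mu=0$ there ($\mu$ being locally constant), \eqref{eq:planexpand} is $<0$ on conic neighbourhoods $\mathcal N_\srm$, $\mathcal N_\urm$.

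There remains the region $\mathcal D=\{|\zeta|\geqslant 1\}\setminus(\mathcal N_{0,\rrm}\cup\mathcal N_\srm\cup\mathcal N_\urm)$. Applying Proposition~\ref{lem:order2} (equivalently, using the $\mu$ produced in Lemma~\ref{lem:comparisonorder}, whose apertures may be taken small) with $\Upsilon_\srm=\mathcal N_\srm$, $\Upsilon_\urm=\mathcal N_\urm$, $\Upsilon_{0,\rrm}=\mathcal N_{0,\rrm}$ — legitimate since $\Gamma^1_{\srm\srm}$, $\Gamma^1_{\urm\urm}$ may themselves be shrunk at will — yields $\eta>0$ with $\mathfrak X\mu\leqslant -\eta$ on $\mathcal D$. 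Thus on $\mathcal D$ the first summand of \eqref{eq:planexpand} is $\leqslant -\mathfrak S\eta\log f\leqslant -\mathfrak S\eta\log Q$, while $|\mathfrak S\mu+\sigma|\leqslant(2-2\delta)\mathfrak S$. The crux — and the step I expect to be the main obstacle — is to bound the remainder on $\mathcal D$ in the form $|(\mathfrak S\mu+\sigma)\,\mathfrak Xf/f|\leqslant C_\star\mathfrak S$, with $C_\star$ depending only on $(\Sigma,\mathrm g)$, the fixed cutoffs and the parameter $T$, but \emph{not} on $Q$, $\mathfrak S$ or $\sigma$; granted this, choosing $Q_0$ with $\eta\log Q_0\geqslant C_\star$ gives \eqref{eq:decreasing} for all $Q\geqslant Q_0$.

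This remainder bound is easy on the part of $\mathcal D$ where the cutoffs defining $f$ are locally constant: there $f$ is an elliptic symbol of order $2$ with $f\gtrsim\langle\zeta\rangle^2$ while $\mathfrak Xf\in\overline S^2$, so $|\mathfrak Xf|\lesssim\langle\zeta\rangle^2\lesssim f$; likewise the term $\chi\bigl(|\zeta_\perp|^2/(\zeta_0^2+\rho^2)\bigr)\,\mathfrak X\widetilde g_1$ in $\mathfrak Xf$ is always controlled by $\chi(\cdot)\widetilde g_1=f-Q\leqslant f$, because $N_T$ was built so that $|\mathfrak X N_T|\lesssim N_T$ and, on the support of the outer cutoff, $\zeta_0^2$ is tied to $N_T$. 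The genuinely delicate zone is the transition region of the outer cutoff, where $|\zeta_\perp|$, $|\zeta_0|$, $|\rho|$ are all comparable: one is then away from $E^*_\srm\cup E^*_\urm\cup E^*_{0,\rrm}$ (so $\mathfrak X\mu\leqslant-\eta$ is available), yet $f$ may be comparable to $Q$ while $\mathfrak Xf$ is of order $\langle\zeta\rangle^2$ (through the $\chi'$-term), so one must extract cancellation from the precise form of $\widetilde g_1$ and of $\chi'$ and from the sign of $\mathfrak X\mu$ — exploiting the freedom in the construction of $\mu$ in Lemma~\ref{lem:comparisonorder} — to prevent $(\mathfrak S\mu+\sigma)\,\mathfrak Xf/f$ from being large and positive there. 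Once this quantitative balance is secured, \eqref{eq:decreasing} follows.
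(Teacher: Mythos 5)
Your overall route is the same as the paper's proof of Lemma~\ref{lem:comparison}: expand $\mathfrak X\bigl((\mathfrak S\mu+\sigma)\log f\bigr)=\mathfrak S(\mathfrak X\mu)\log f+(\mathfrak S\mu+\sigma)\mathfrak Xf/f$, observe that near $E^*_{0,\rrm}$ the outer cutoff makes $f\equiv Q$ so that $\mathfrak Xf=0$, near $E^*_\srm$ and $E^*_\urm$ combine $\mathfrak X\mu\leqslant 0$ with the sign of $\mathfrak S\mu+\sigma$ (forced by $|\sigma|\leqslant(1-2\delta)\mathfrak S$) and the Lyapunov estimate \eqref{e:lyapunov-norm}, and in the remaining conic region play the strict decay $\mathfrak X\mu\leqslant-\eta r$ against $\log f\geqslant\log Q$ with $Q$ large. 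Up to the harmless reduction to $r=1$, this is exactly the paper's argument, which closes the last region by the uniform bound \eqref{e:trivialbound}, namely $|\mathfrak Xf/f|\leqslant rC_T$ on $\{|\zeta|\geqslant1\}$ with $C_T$ independent of $Q$.

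The problem is that you do not prove this bound: the step you yourself call the crux — $|(\mathfrak S\mu+\sigma)\mathfrak Xf/f|\leqslant C_\star\mathfrak S$ with $C_\star$ independent of $Q$ on the transition shell of the outer cutoff — is left as an appeal to unspecified ``cancellation'', so your text reduces the lemma to the estimate that does all the work rather than establishing it. Your localisation of the difficulty is accurate: writing $u=|\zeta_\perp|^2/(\zeta_0^2+\rho^2)$ and $\mathfrak Xf=\chi'(u)(\mathfrak Xu)\widetilde g_1+\chi(u)\mathfrak X\widetilde g_1$, the second term is indeed controlled, since on $\supp\chi(u)$ one has $|\rho|,|\zeta_0|\lesssim|\zeta_\perp|$ and $N_T\asymp|\zeta_\perp|^2$ (constants depending on $T$), giving $|\mathfrak X\widetilde g_1|\leqslant C_Tr\,\widetilde g_1$ and hence $\chi(u)|\mathfrak X\widetilde g_1|\leqslant C_Tr f$; likewise $|\mathfrak Xu|\leqslant C_Tr$ on $\supp\chi'(u)$ because $\mathfrak X\rho=0$, $\mathfrak X\zeta_0=-r\rho$ and $2|\rho\zeta_0|\leqslant\zeta_0^2+\rho^2$. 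What is missing is precisely the comparison $\chi'(u)\,\widetilde g_1\lesssim Q+\chi(u)\,\widetilde g_1$ on $\{1/2\leqslant u\leqslant1\}$, where $\widetilde g_1\asymp|\zeta_\perp|^2$ may be much larger than $Q$ while $\chi(u)$ is small; note that no choice of cutoff gives $\chi'\leqslant C\chi$ near the left edge of its support, the sign of $\mathfrak X\mu$ gives no help there because $\mathfrak Xf$ has no definite sign on that shell, and the extra factor $\log(f/Q)$ you could retain from the good term does not close it either. So there is a genuine gap: to complete the proof you must either verify the uniform bound \eqref{e:trivialbound} for the specific $f$ of \eqref{eq:deff} — which is how the paper proceeds, asserting it ``by construction'' — or modify the construction on the transition shell so that the $\chi'$-contribution is manifestly dominated by $f$ (equivalently, absorbed by the term $-\mathfrak S\eta r\log f$), rather than invoking a cancellation you have not exhibited.
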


\begin{proof} Let  $\mu$ be a global order function as obtained from Lemma~\ref{lem:comparisonorder}. By definition, for $|\zeta|\geqslant 1$, one has
$$
\mu(r,z_1;\rho,\zeta_1)=\widetilde{\mu}\left([z_1;\rho,\zeta_1]\right)=\widetilde{\mu}_\srm\left([z_1;\rho,\zeta_1]\right)-\widetilde{\mu}_\urm\left([z_1;\rho,\zeta_1]\right).
$$
Hence, we have
$$
\mathfrak X\bigl((\mathfrak{S}{\mu}+\sigma) \log f\bigr) 
= \mathfrak S(\mathfrak X \widetilde{\mu}) \log f + (\mathfrak S \widetilde{\mu} +\sigma)\frac{\mathfrak X f}{f}.
$$
Recall that by construction we have, in the region $\{|\zeta|\geqslant 1\}$, 
\begin{equation}\label{e:trivialbound}
 \mathfrak{X}\widetilde{\mu}\leqslant 0\quad \text{and}\quad |\mathfrak{X}f/f|\leqslant r\,C_{T},
\end{equation}
for some constant $C_T>0$ depending only on $T$ and $(\Sigma,\mathrm{g})$. In particular, this constant does not depend on $Q$ and $r$. By picking the neighborhoods $\widetilde{\Gamma}_{\urm\urm}$, $\widetilde{\Gamma}_{\srm\srm}$, $\widetilde{\Gamma}_\urm$ and $\widetilde{\Gamma}_\srm$ in Lemma~\ref{lem:order} small enough, we can verify that, there exist small conic neighborhoods $Y_{\urm\urm}, Y_{\srm\srm}$ and $Y_{0,\rrm}$ of $E_\urm^*$, $E_\srm^*$ and $E_0^*\oplus E_\rrm^*$ such that
\begin{itemize}[label=\tiny\textbullet]
 \item in $\{|\zeta|\geqslant 1\}\cap \complement(Y_{\urm\urm} \cup Y_{\srm\srm} \cup Y_{0, \rrm})$, one has $\mathfrak{X} \mu \leqslant -\eta r$;
 \item the quantity $\displaystyle\chi\left(\frac{|\zeta_\perp|^2}{\zeta_0^2 + \rho^2}\right)$ is equal to $0$ in $Y_{0, \rrm}$ and equal to $1$ in $Y_{\urm\urm} \cup Y_{\srm\srm}$;
 \item the quantity $\displaystyle\chi\left(\frac{|\zeta_0|^2}{N_T(\zeta_1)}\right)$ is equal to $0$ in $Y_{\urm\urm} \cup Y_{\srm\srm}$;
 \item there holds $\mu \geqslant 1 - 2\delta$ in $Y_{\srm\srm}$ and $\mu \leqslant 2\delta$ in $Y_{\urm\urm}$.
\end{itemize}
In particular, thanks to~\eqref{e:trivialbound}, we can deduce on the one hand that $\mathfrak X(\mathfrak{S}{\mu}+\sigma)\leqslant 0$ in $Y_{0,\rrm}$. On the other hand one has, in $\complement(Y_{\urm\urm} \cup Y_{\srm\srm} \cup Y_{0, \rrm})$,
$$
\mathfrak X(\mathfrak{S}{\mu}+\sigma)\leqslant -\mathfrak S \eta r\log Q+C_T r(\mathfrak S+|\sigma|)\leqslant r \mathfrak S\left(-\eta \log Q+ 2C_{T}\right).
$$
Hence, picking $Q>1$ large enough, we obtain \eqref{eq:decreasing} outside $Y_{\urm\urm} \cup Y_{\srm\srm}$. In these two regions, we use that $\mathfrak{X}\widetilde{\mu}\leqslant 0$ to obtain
$$
\mathfrak X\bigl((\mathfrak{S}{\mu}+\sigma) \log f\bigr) \leqslant (\mathfrak S \widetilde{\mu} +\sigma)\frac{\mathfrak X f}{f}.
$$
However by Lemma~\ref{lem:order}, one has $\mathfrak S {\mu} +\sigma\geqslant \mathfrak S(1-2\delta)+\sigma$ in $Y_{\srm\srm}$ while, in $Y_{\urm\urm}$, one has $\mathfrak S \widetilde{\mu} +\sigma\leqslant -\mathfrak S(1-2\delta)+\sigma$. Finally, by~\eqref{e:lyapunov-norm}, one finds that, in $Y_{\urm\urm} \cup Y_{\srm\srm},$
$$
\mathfrak X((\mathfrak{S}{\mu}+\sigma) \log f) \leqslant -rc_{\Sigma,\mathrm{g}}\left(\mathfrak S(1-2\delta)-|\sigma|\right).
$$
The right-hand side of the above inequality is indeed nonpositive as soon as the condition $|\sigma|\leqslant \mathfrak S(1-2\delta)$ holds.
\end{proof}

\subsection{Proof of Proposition~\ref{prop:comparisonresult}}\label{subsec:comparison}

We are now in position to prove Proposition~\ref{prop:comparisonresult}. We let $\mu$ be the order function appearing in Lemma~\ref{lem:comparison} and $Q>1$ be the parameter appearing in that same Lemma. The second item of Proposition~\ref{prop:comparisonresult} follows from this Lemma. Now letting $g_1=Q_0+\widetilde{g}_1$ and $\nu$ be the sliced order function from Lemma~\ref{lem:comparisonorder}, we directly obtain all the remaining items except for the last one (that is the comparison between $G_{\mathfrak S,\sigma}$ and $F_{\mathfrak S,\sigma}$). To deal with this point, note that we have
\begin{equation}\label{eq:borneinf}
\log f - 2 \log \langle \rho \rangle \leqslant \log g
\end{equation}
everywhere, since $f \leqslant\langle\rho \rangle^2g$ by \eqref{eq:deff} and the definition of $g_1$. In particular, one has, for $\sigma\leqslant 0$,
$$
\sigma\log g \leqslant \sigma\log f +2|\sigma|\log\langle\rho\rangle,
$$
and, in the region in the region $\{\nu \leqslant 0\}$, 
$$
\mathfrak S \nu \log g \leqslant \mathfrak S \nu\log f +2|\mathfrak S|\log\langle\rho\rangle\leqslant \mathfrak S \mu\log f +2|\mathfrak S|\log\langle\rho\rangle,
$$
where we used Lemma~\ref{lem:comparisonorder} to write the last inequality. Recall that $\nu = \nu_\srm - \nu_\urm$ where $\nu_\srm$ and $\nu_\urm$ are defined in \eqref{eq:mus} and \eqref{eq:muu}. According to the third item in Lemma~\ref{lem:comparisonorder}, one has
$$
\{\nu_\srm > 0\}\subset \left\{(z;\zeta)~:~\chi\left({|\zeta_\perp|^2}/(\rho^2+\zeta_0^2)\right) = 1\right\}.
$$
As $\{\nu > 0\} \subset \{\nu_{\srm} > 0\}$, we obtain that, in $\{\nu > 0\} \cap\{|\zeta|\geqslant 1\}$,
$$
\mathfrak S\nu\log g\leqslant \mathfrak S \nu\log f\leqslant \mathfrak S \mu\log f,
$$
simply because $\nu\leqslant \mu$ and $g\leqslant f$ in that region. This completes the proof the last item and thus the proof of the Proposition.

\section{Anisotropic Sobolev norms adapted to the free dynamics}
\label{sec:anisotropic}

The global and sliced escape functions from Section~\ref{sec:escape} will now be used to design anisotropic Sobolev norms that will be at the heart of the proof of Theorem~\ref{thm:main}. In this Section, we define the associated anisotropic norms, which are adapted to the free dynamics and establish some of their main properties. 

In Subsection~\ref{sec:4.1}, we start by explaining how the global (resp. sliced) escape functions give rise to the class of pseudo-differential operators $\mathbf{A}_{\mathfrak S_0,\sigma_0}$ (resp. $\mathbf{B}_{\mathfrak S_1,\sigma_1}^{(m_1)}$), which in turn allow to define the \emph{global} (resp. \emph{sliced}) anisotropic norms, see Definition~\ref{def:global-anisotropic-sobolev} (resp. Definition~\ref{def:sliced-anisotropic-sobolev}). In Subsection~\ref{sec:4.2}, we state the key result of this section, that is the bilinear estimate of Proposition~\ref{p:bilinear}. This estimate is tailored to handle the nonlinear part of the Vlasov equation~\eqref{eq:vlasov}; loosely speaking, it shows an exponential decay (obtained from the strong mixing of the geodesic flow) which shows that the nonlinear part might be treated in a perturbative way. It also features an estimate of the action of the operators $\mathbf{B}_{\mathfrak S_1,\sigma_1}^{(m_1)}$ by the anisotropic Sobolev norm associated with $\mathbf{A}_{\mathfrak S_0,\sigma_0}$ (that is, a control of a sliced anistropic Sobolev norm by a global anistropic Sobolev norm).

The remainder of the section is then mostly dedicated to the proof of Proposition~\ref{p:bilinear}; along the way, we also prove some estimates that will turn out handy for the upcoming bootstrap analysis.
The purpose of Subsections~\ref{sec:4.2} and~\ref{sec:4.3} is to show that our sliced escape functions (and their associated pseudo-differential operators) are well adapted to the abstract framework of \cite{nonnenmacher2015decay}, in view of obtaining a refined microlocal version of the decay of correlations of Theorem~\ref{t:dolgopyatliverani}. This verification is first thoroughly performed on $\mathrm S^\star \Sigma$ in Subsection~\ref{sec:4.2} and then lifted on the cotangent space (minus the null section) in Subsection~\ref{sec:4.3}.

Eventually, the proof of  Proposition~\ref{p:bilinear} is concluded in Subsections~\ref{sec:4.4} and~\ref{sec:4.6}. In particular we need to establish a connection between the sliced and global anisotropic Sobolev norms, using in a crucial way the comparison properties between the global and sliced escape functions, that were obtained in Proposition~\ref{prop:comparisonresult}.

\subsection{Definitions} 
\label{sec:4.1}
Let $\mathfrak S_0>0$ and $|\sigma_0|<(1-\delta) \mathfrak S_0$. The function $e^{F_{\mathfrak S_0,\sigma_0}}$ built from our global escape function $F_{\mathfrak S_0, \sigma_0}$ given by Proposition \ref{prop:comparisonresult} is a function on $\R\times{\mathrm T}^*M_1$ that we identified with a function on 
$$
{\mathrm T}^*M^\times\simeq{\mathrm T}^*(\R_{>0}\times M_1)\subset{\mathrm T}^*\R\times{\mathrm T}^*M_1.
$$ 
It defines a symbol in the standard class $\overline{S}^{2\mathfrak S_0}({\mathrm T}^*(\R\times M_1))$ of functions amenable to pseudo-differential calculus. See Appendix~\ref{aa:mfd} for the precise definition. We can then define
\begin{equation}\label{eq:anisotropicoperator}
\mathbf{A}_{\mathfrak S_0,\sigma_0}=\mathbf{Op}\left(e^{F_{\mathfrak S_0,\sigma_0}}\right),
\end{equation}
where $\mathbf{Op}$ is a quantization procedure on ${\mathrm T}^*(\R\times M_1)$ \cite[Ch.~14]{zworski}. In fact, due to the product structure of our space, we can set 
$$
\mathbf{A}_{\mathfrak S_0,\sigma_0}(u)(r,z_1)=\frac{1}{2\pi }\int_{\R^2}e^{i(r-s)\rho}{\oM}\left(\exp {F_{\mathfrak S_0,\sigma_0}(\rho,\, \cdot)}\right)u(s,z_1)\dd s\dd\rho,
$$
where ${\oM}$ is a quantization procedure on $M_1$. Again refer to Appendix~\ref{aa:mfd} for more details. These operators are well defined for every $u\in\mathscr{C}^\infty_c(\R\times M_1)$ (hence compactly supported functions on $M^\times$) and they map the set $\mathscr{S}(\R\times M_1)$ of smooth functions that are rapidly decaying in $r$ to itself. 

\begin{defi}
\label{def:global-anisotropic-sobolev} The \emph{global anisotropic Sobolev norms} are, for $\mathfrak S_0,\sigma_0 \in \R$,  the norms
$$
\left\|\mathbf{A}_{\mathfrak S_0,\sigma_0}u\right\|_{L^2(\R\times M_1)}, \quad u\in\mathscr{C}^{\infty}_c(\R\times M_1),
$$
where the $L^2$-norm is taken with respect to the measure $\dd r\,\dd L_1(z_1)$ rather than the Liouville measure $r^{n-1}\dd r\,\dd L_1(z_1)$ that we would extend to $\R\times M_1$. 

The associated global anisotropic Sobolev spaces are defined as the completion of $\mathscr{C}^{\infty}_c(\R \times M_1)$ for these norms in the space of tempered distributions.

\end{defi}
One key aspect of the analysis of~\eqref{eq:vlasov} will be to use these global anisotropic Sobolev norms, which are adapted to the dynamical properties of the free flow, to perform microlocal energy estimates.


\begin{rema}
This ``renormalized'' measure is still invariant by the vector field $X=rX_1$ and we will always pick it for the $L^2$-scalar product. The choice of this measure slightly simplifies the use of a quantization procedure adapted to $\R\times M_1$. Yet, at some points, it will add extra constants in our upper bounds depending on the support of the test function $u$. 
\end{rema}

In order to make the connection with the exponential decay results from~\cite{nonnenmacher2015decay}, we also introduce a family of slightly more exotic pseudo-differential operators built from our sliced escape function. Namely, let $\mathfrak S_1>0$ and $|\sigma_1|<(1-2\delta)\mathfrak S_1$. The escape function given by 
$$
G_{\mathfrak S_1,\sigma_1,1}(z_1;\zeta_1)=G_{\mathfrak S_1,\sigma_1}(r,z_1;0,\zeta_1)
$$
is independent from $r$ and belongs to the class of symbols $\overline{S}^{0+}(\mrm{T}^*M_1)=S^{0+}(\mrm{T}^*M_1)$ amenable to pseudo-differential calculus on the \emph{compact} manifold $M_1$. In particular, for every $s\in\mathbb{R}$, the operator $\exp {\oM}(G_{\mathfrak S_1,\sigma_1,1})$ defines a bounded operator from $H^s(M_1)$ to $H^{s-2\mathfrak S_1}(M_1)$. See Appendix~\ref{aaa:exponential} for a brief reminder and references. This allows to define the operator-valued pseudo-differential operator
\begin{equation}\label{eq:operatorvaluedanisotropic}
\forall m_1\in\R,\quad\mathbf{B}_{\mathfrak S_1,\sigma_1}^{(m_1)}(u)(r,z_1)=\frac{1}{2\pi }\int_{\R^2}\frac{e^{i(r-s)\rho}}{(1+\rho^2)^{\frac{m_1}{2}}}\exp\left({\oM}\left(G_{\mathfrak S_1,\sigma_1,1}\right)\right)u(s,z_1)\dd s\,\dd \rho.
\end{equation}
See Appendix~\ref{aa:operatorvalued} for a brief reminder for the pseudo-differential calculus of operator-valued symbols. This operator admits an inverse which is given by
$$
\mathbf{B}_{-\mathfrak S_1,-\sigma_1}^{(-m_1)}(u)(r,z_1)=\frac{1}{2\pi }\int_{\R^2}\frac{e^{i(r-s)\rho}}{(1+\rho^2)^{-\frac{m_1}{2}}}\exp\left(-{\oM}\left(G_{\mathfrak S_1,\sigma_1,1}\right)\right)u(s,z_1)dsd\rho.
$$
Recall from~\cite[Lemma~3.3]{faure2011upper} that we can pick $C_{1}>0$ large enough (depending on $\mathfrak S_1$) to ensure that, for every $\sigma_1\in[-2,2]$ and for every $\operatorname{Re}(z)\geqslant C_{1}$,
\begin{equation}\label{eq:FSresolventbound}
\left\|e^{{\oM}\left(G_{\mathfrak S_1,\sigma_1,1}\right)}\left(X_1+z\right)^{-1}e^{-{\oM}\left(G_{\mathfrak S_1,\sigma_1,1}\right)}\right\|_{L^2(M_1)\rightarrow L^2(M_1)}<\infty.
\end{equation}
Hence, for every integer $N_1\geqslant 1$, we can set
\begin{equation}\label{eq:operatorvaluedanisotropicresolvent}
\mathbf{B}_{\mathfrak S_1,\sigma_1}^{(m_1,N_1)}=\mathbf{B}_{\mathfrak S_1,\sigma_1}^{(m_1)}(X_1+C_1)^{-N_1}.
\end{equation}
Here the operator $X_1$ is identified with an operator acting on $\R\times M_1$. 
\begin{defi}
\label{def:sliced-anisotropic-sobolev}
The \emph{sliced anisotropic Sobolev norms} are, for $m_1, \mathfrak S_1,\sigma_1 \in \R$, $N_1 \geqslant1$,  the norms
$$
\left\| \mathbf{B}_{\mathfrak S_1,\sigma_1}^{(m_1,N_1)} u\right\|_{L^2}, \quad u\in\mathscr{C}^{\infty}_c(\R\times M_1).
$$
The associated sliced anisotropic Sobolev spaces are defined as the completion of the space $\mathscr{C}^{\infty}_c(\R \times M_1)$ for these norms.
\end{defi}

\begin{rema}\label{r:faure-sjostrand}
Recall that $X=rX_1$ where $X=\frac{\dd}{\dd t}\varphi_t^*|_{t=0}$. Theorem 1.4 in~\cite{faure2011upper} implies the meromorphic extension of the resolvent in~\eqref{eq:FSresolventbound} up to $
\text{Re}(z)\geqslant -c_0\mathfrak S,$ 
where $c_0>0$ is a geometric constant depending only on $(\Sigma,\mathrm{g})$ according to Proposition~\ref{prop:comparisonresult} and $\mathfrak S$ is the regularity parameter in their anisotropic spaces. In fact, there is an additional constant $C$ (still independent of $(\nu,g)$ defining the escape function) in the statement of this reference which comes from the fact the vector field may not be volume preserving. Here, due to the particular form of our quantization procedure (with isochore charts) and to the fact that $X_1$ is volume preserving, it holds in fact $$
X_1={\oM}\bigl(\langle \zeta_1, X_1\rangle\bigr)
$$
(see Remark~\ref{r:explicitsymbol}). Therefore the constant $C$ from~\cite[Th.~1.4]{faure2011upper} can be taken equal to $0$ (see Equation~$(3.8)$ in this reference). It should be noted that the escape function in~\cite{faure2011upper} satisfies slightly stronger requirements that the ones satisfied by $G_{\mathfrak S_1,\sigma_1,1}$. Yet, using an approach through radial type estimates, these requirements were simplified by Dyatlov and Zworski in~\cite[\S 3]{DyatlovZworski2016} and one only needs that $G_{\mathfrak S_1,\sigma_1,1}$ satisfies the weaker properties stated in Proposition~\ref{prop:comparisonresult} (namely the first, third and fourth item of this statement). In that case, one can verify that the resolvent can be extended to the region
\begin{equation}\label{eq:strip-meromorphic-continuation}
\text{Re}(z)> -c_0(\mathfrak S_1-|\sigma_1|),
\end{equation}
where $c_0>0$ is some geometric constant depending only on $(\Sigma,\mathrm{g})$.
We refer to~\cite[\S3.3.1, Th.~2]{Dyatlov2023} for more details on the value of the constant $c_0$ in terms of the Lyapunov exponents of the flow.
\end{rema}

\subsection{A bilinear estimate}
\label{sec:4.2}

The main result of this section is the following bilinear estimate, which will be crucial in the upcoming nonlinear analysis.  

\begin{prop}
\label{p:bilinear} There exists $\vartheta_0>0$ such that the following holds.

 Let $\mathfrak S_0, m_1, N_1 \geqslant 0$ such that
$$
\mathfrak{S}_0\geqslant 3, \quad m_1\geqslant 50 \mathfrak{S}_0 \quad \text{and} \quad N_1\geqslant 50\mathfrak S_0+m_1.
$$
Let also $I \subset \R$ be an interval with $\overline{I}\subset\R_+^*$ and $\chi\in\mathscr{C}^\infty_c(I)$. 

Then there exists a constant $C>0$ such that, for all $\Phi\in\mathscr{C}^\infty(\Sigma)$, and for all $u\in \mathscr{C}^\infty_c(\R\times M_1)$,
\begin{equation}\label{eq:bilinearprop}
\left\|\mathbf{B}_{\mathfrak S_0,-2}^{(m_1,N_1)}\varphi_{-t}^* \mathbf{P}\{\Phi, \chi u\} \right\|_{L^2}\leqslant C  e^{-\vartheta_0 t\min I}
\| \Phi \|_{\mathscr{C}^N} \left( \left\| u\right\|_{L^2} +  \left\|\mathbf{A}_{\mathfrak S_0,0}\chi u\right\|_{L^2}\right), \quad t \geqslant 0.
\end{equation}
Here $\mathbf P: \mathscr C^\infty(M^\times) \to  \mathscr C^\infty(M^\times)$ is given by
\begin{equation}\label{eq:spectral-projector}
\mathbf{P}v(x_1, r\xi_1)=v(x_1, r\xi_1) - \int_{M_1} v(y_1,r\eta_1)\dd\mrm {\mrm L}_1(y_1,\eta_1), \quad v \in \mathscr C^\infty(M^\times),
\end{equation}
where $\mrm {\mrm L}_1$ is the normalized Liouville measure on the unit cotangent bundle $M_1=\mrm S^*\Sigma$.
\end{prop}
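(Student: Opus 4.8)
The plan is to run the strategy of the toy model of Section~\ref{sec:strategy}: extract the factor $e^{-\vartheta_0 t\min I}$ from the microlocal exponential mixing of the geodesic flow (the refinement of Theorem~\ref{t:dolgopyatliverani} obtained in Subsections~\ref{sec:4.2}--\ref{sec:4.3} by verifying that the sliced operators fit the framework of~\cite{nonnenmacher2015decay}), and use the smoothing built into $\mathbf{B}^{(m_1,N_1)}_{\mathfrak S_0,-2}$ together with the comparison of escape functions of Proposition~\ref{prop:comparisonresult} to absorb the derivative carried by the Poisson bracket. First I would reduce $\{\Phi,\cdot\}$ to a controlled first-order operator. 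Since $\Phi=\Phi(x)$, the bracket $\{\Phi,\cdot\}$ is the vertical vector field $V_\Phi=-\partial_x\Phi\cdot\partial_\xi$, which preserves the Liouville measure; as $\chi u$ is supported in $\{r\in\supp\chi\}$, so is $V_\Phi(\chi u)$, and fixing $\chi_1\in\mathscr C^\infty_c(I)$ with $\chi_1\equiv 1$ on $\supp\chi$ gives $\{\Phi,\chi u\}=\chi_1 V_\Phi(\chi u)$. Now $\chi_1=\chi_1(r)$ and $X_1$ both commute with $\mathbf P$ and with $\varphi_{-t}^*$ (recall $[X_1,X]=0$, so $X_1$ generates a flow commuting with $\varphi_t$, and $X_1$ is volume preserving, hence $\mathbf P X_1=X_1\mathbf P$), so the left-hand side of~\eqref{eq:bilinearprop} equals $\mathbf{B}^{(m_1)}_{\mathfrak S_0,-2}\,\varphi_{-t}^*\,\mathbf P\,(X_1+C_1)^{-N_1}\chi_1 V_\Phi(\chi u)$. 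In the polar coordinates~\eqref{eq:polar} one splits $V_\Phi=a(z)\partial_r+r^{-1}W$ with $W$ tangent to each sphere $M_r$, the functions $a$ and the coefficients of $W$ being smooth on $\{r\in I\}$ with $\mathscr C^N$-norms $\lesssim\|\Phi\|_{\mathscr C^{N+1}}$.

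Next I would invoke the dynamical decay estimate. The microlocal decay of correlations for the contact Anosov flow on $M_1$ (Theorem~\ref{t:nonnenmacherzworski}), lifted to $\R\times M_1$ as in Subsection~\ref{sec:4.3} and combined with the reparametrization $\varphi_t|_{M_r}\leftrightarrow\varphi_{rt}|_{M_1}$ of~\eqref{eq:flow-coordinates}, yields, for every $h$ supported in $\{r\in I\}$,
$$
\bigl\|\mathbf B^{(m_1,N_1)}_{\mathfrak S_0,-2}\,\varphi_{-t}^*\,\mathbf P h\bigr\|_{L^2}\leqslant C e^{-\vartheta_0 t\min I}\Bigl(\bigl\|\mathbf B^{(m_1,N_1)}_{\mathfrak S_0,-2} h\bigr\|_{L^2}+\|h\|_{H^{-N_1}}\Bigr),\qquad t\geqslant 0,
$$
the resolvent power $(X_1+C_1)^{-N_1}$ being there precisely to kill the loss of derivatives in the raw decay estimate, exactly as in~\eqref{eq:intropropB}. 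Applying this with $h=\chi_1 V_\Phi(\chi u)$, the last term is $\lesssim\|\Phi\|_{\mathscr C^N}\|u\|_{L^2}$ (using $N_1\geqslant 1$), so it remains to control $\|\mathbf B^{(m_1,N_1)}_{\mathfrak S_0,-2}\chi_1 V_\Phi(\chi u)\|_{L^2}$ by the global anisotropic norm.

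For this, the factor $(1+\rho^2)^{-m_1/2}$ in $\mathbf B^{(m_1)}_{\mathfrak S_0,-2}$ provides $m_1$ orders of smoothing in $r$, which swallows the part $a\partial_r$ of $V_\Phi$; and since $g_1\gtrsim\langle\zeta_1\rangle^2$ by Proposition~\ref{prop:comparisonresult}(i) and $G_{\mathfrak S_0,-2,1}=G_{\mathfrak S_0,0,1}-2\log g_1$, Proposition~\ref{prop:comparisonresult}(v) at $\rho=0$ with $\sigma=0$ gives, on $\{|\zeta_1|\geqslant 1\}$,
$$
e^{G_{\mathfrak S_0,-2,1}(z_1;\zeta_1)}\leqslant C\langle\zeta_1\rangle^{-4}\,e^{F_{\mathfrak S_0,0}(r,z_1;0,\zeta_1)},
$$
i.e. the $M_1$-component of $\mathbf B^{(m_1)}_{\mathfrak S_0,-2}$ is $\mathbf A_{\mathfrak S_0,0}$ with four extra orders of $M_1$-smoothing, which swallows the first-order operator $r^{-1}W$. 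Writing $\chi_1 V_\Phi(\chi u)=\bigl(\chi_1 V_\Phi\mathbf A_{\mathfrak S_0,0}^{-1}\bigr)\mathbf A_{\mathfrak S_0,0}(\chi u)$ and invoking the composition and boundedness results of Appendix~\ref{a:pseudo}, one checks that $\mathbf B^{(m_1,N_1)}_{\mathfrak S_0,-2}\,\chi_1 V_\Phi\,\mathbf A_{\mathfrak S_0,0}^{-1}$ is $L^2$-bounded with norm $\lesssim\|\Phi\|_{\mathscr C^N}$ --- the hypotheses $\mathfrak S_0\geqslant 3$, $m_1\geqslant 50\mathfrak S_0$, $N_1\geqslant 50\mathfrak S_0+m_1$ leaving ample room in the symbolic bookkeeping --- while the commutator remainders are of strictly lower order and hence $L^2\to L^2$ bounded, contributing $\lesssim\|\Phi\|_{\mathscr C^N}\|\chi u\|_{L^2}$. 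Combining with the previous step yields~\eqref{eq:bilinearprop}.

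I expect the delicate point to be this last step: $\mathbf B^{(m_1,N_1)}_{\mathfrak S_0,-2}$ is an operator-valued pseudo-differential operator in the variable $r$ whose $M_1$-component is an \emph{exotic} anisotropic operator (not a classical symbol), so that establishing the $L^2$-boundedness of $\mathbf B^{(m_1,N_1)}_{\mathfrak S_0,-2}\chi_1 V_\Phi\mathbf A_{\mathfrak S_0,0}^{-1}$ requires tracking, region by region in $\mathrm T^*M^\times$ (near $\widetilde E^*_\srm$, $\widetilde E^*_\urm$, $\widetilde E^*_0$ and $\widetilde E^*_\rrm$), the interplay between the $\rho$-dependent weight $e^{F_{\mathfrak S_0,0}}$ of $\mathbf A_{\mathfrak S_0,0}$, the $\rho$-independent anisotropic weight $e^{G_{\mathfrak S_0,-2,1}}$ of $\mathbf B$, and the factor $(1+\rho^2)^{-m_1/2}$; this is exactly what the simultaneous construction and the comparison of the sliced and global escape functions in Proposition~\ref{prop:comparisonresult} are designed to make possible.
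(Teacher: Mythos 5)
Your architecture is essentially the paper's: split $\{\Phi,\cdot\}$ as $a_\Phi\partial_r+r^{-1}Y_\Phi$, get the factor $e^{-\vartheta_0 t\min I}$ from the conjugated propagator (the lifted Nonnenmacher--Zworski estimate, Lemma~\ref{l:NZradial}), and reduce everything to the $L^2$-boundedness of $\mathbf{B}_{\mathfrak S_0,-2}^{(m_1,N_1)}\chi_1\bigl(a_\Phi\partial_r+r^{-1}Y_\Phi\bigr)$ composed with an inverse of $\mathbf{A}_{\mathfrak S_0,0}$. The genuine gap is at exactly that last step, which is the heart of the proposition. You justify it by the pointwise comparison $e^{G_{\mathfrak S_0,-2,1}(z_1;\zeta_1)}\leqslant C\langle\zeta_1\rangle^{-4}e^{F_{\mathfrak S_0,0}(z_1;0,\zeta_1)}$, i.e.\ Proposition~\ref{prop:comparisonresult}(v) evaluated at $\rho=0$. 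But this is not the relevant comparison: on each Fourier mode $\rho$, the operator $\mathbf{B}_{\mathfrak S_0,-2}^{(m_1)}$ applies the $\rho$-\emph{independent} weight $\exp\oM(G_{\mathfrak S_0,-2,1})$, whereas the parametrix of $\mathbf{A}_{\mathfrak S_0,0}$ carries the weight $e^{-F_{\mathfrak S_0,0}(z_1;\rho,\zeta_1)}$ at the \emph{actual} $\rho$. In the regime $|\rho|\gg|\zeta_1|\geqslant 1$ with $\zeta_1$ near $E^*_\srm$, the direction $[\rho,\zeta_1]$ is close to the co-radial one, so $\mu\approx 0$ and $F_{\mathfrak S_0,0}(\rho,\zeta_1)\approx 0$, while $G_{\mathfrak S_0,-2,1}(\zeta_1)\approx 2(\mathfrak S_0-2)\log\langle\zeta_1\rangle$; the quotient of the two weights is of size $\langle\zeta_1\rangle^{2(\mathfrak S_0-2)}$, which your $\rho=0$ inequality does not control. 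It is only bounded by a power of $\langle\rho\rangle$ (because $|\zeta_1|\lesssim|\rho|$ there), and this polynomial loss in $\rho$ must then be absorbed by the factor $(1+\rho^2)^{-m_1/2}$ --- this is precisely why the hypothesis $m_1\geqslant 50\,\mathfrak S_0$ is needed, and it is the content of Lemma~\ref{l:comparisonnormNZ-ours}, whose proof works mode by mode, compares $\exp\oM(G_{\mathfrak S_0,\sigma_1,1})$ with its $\langle\rho\rangle$-rescaled version $G_{\mathfrak S_0,\sigma_1}(z_1,\rho,\zeta_1)=G_{\mathfrak S_0,\sigma_1,1}(z_1,\langle\rho\rangle^{-1}\zeta_1)$ (the rescaling built into the sliced order function), and only then invokes Proposition~\ref{prop:comparisonresult}(v) at the actual $\rho$. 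You flag this interplay as ``the delicate point'', but the sketch you give (a $\rho=0$ comparison plus symbolic bookkeeping) would not close it, so the key bound $\bigl\|\mathbf{B}_{\mathfrak S_0,-2}^{(m_1,N_1)}\chi_1 V_\Phi\,\mathbf{A}_{\mathfrak S_0,0}^{-1}\bigr\|_{L^2\to L^2}\lesssim\|\Phi\|_{\mathscr C^N}$ remains unproved in your proposal.

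Two smaller points. $\mathbf{A}_{\mathfrak S_0,0}$ is not invertible: one must use the parametrix $\mathbf{Op}\bigl(b_{N'}e^{-F_{\mathfrak S_0,0}}\bigr)$ of~\eqref{eq:pseudoinverseA}, whose smoothing remainder is exactly what produces the $\|u\|_{L^2}$ term in~\eqref{eq:bilinearprop}; your argument would produce the same term, so this is cosmetic. Also, absorbing the first-order factor $a_\Phi\partial_r+r^{-1}Y_\Phi$ between two sliced operators is not free: these are Lemmas~\ref{l:comparisonnormdr} and~\ref{l:comparisonnormtgt}, where the tangential vector field costs one unit of the parameter $\sigma$ (hence the shift $\sigma_1\to\sigma_1+1$ and the choice $\sigma_1=-2$, $\sigma_0=0$ in the paper's factorization); your heuristic that ``$-2\log g_1$ gives four orders of smoothing on $M_1$'' is the right idea, but it has to be implemented through the exotic calculus for exponentials of escape functions (Appendix~\ref{aaa:exponential}), not by a symbol-level inequality alone.
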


This proposition will be relevant to ensure the convergence of the integral remainder term in the Duhamel formula for the Vlasov equation~\eqref{eq:vlasov}. More precisely, the estimate~\eqref{eq:bilinearprop} is intended to be applied with $\Phi=\Phi(u(t))$ and $u=u(t)$ where $u(t)$ is the solution to the nonlinear Vlasov equation~\eqref{eq:vlasov}. 
  The right-hand side of~\eqref{eq:bilinearprop} achieves two important goals:
\begin{itemize}
\item exponential decay in time is obtained;
\item the sliced anisotropic norm $\left\| \mathbf{B}_{\mathfrak S_0,-2}^{(m_1,N_1)}(\cdots)\right\|_{L^2}$ in the left-hand side is controlled by the global anisotropic norm $\left\|\mathbf{A}_{\mathfrak S_0,0}\chi u \right\|_{L^2}$ on the right-hand side.
\end{itemize}

The exponential decay will help the convergence of the integral remainder \eqref{eq:integralremainder} provided that the  norms in the right-hand side of~\eqref{eq:bilinearprop} do not grow too fast with $t$. We can first observe that all $L^p$ norms of $u(t)$ are preserved under the evolution by~\eqref{eq:vlasov} so that $\left\| u(t)\right\|_{L^2}$ will not contribute to any growth in time. Similarly, the fact that $\Phi(u(t))$ is expressed in terms of a smooth interaction kernel implies that the term $\| \Phi(u(t)) \|_{\mathscr{C}^N}$ remains bounded with time. Hence, the only difficulty is to find an a priori upper bound on the global anisotropic Sobolev norm $\|\mathbf{A}_{\mathfrak S_0,0}\chi u(t)\|_{L^2}$. This is exactly where the choice of the global escape function will turn out to be crucial in view of getting a polynomial upper bound. In fact, as we shall use it in the proof of Lemma~\ref{l:polynomialbound} below, the principal symbol of $\mathbf{A}_{\mathfrak S_0,0}$ belongs to a nice enough class of symbols on $\R\times M_1$, which is the key to performing an energy estimate with the help of microlocal methods.

The rest of this section is devoted to the proof of Proposition~\ref{p:bilinear} and, along the way, we shall also obtain intermediate corollaries that will be helpful for the upcoming nonlinear analysis. The proof is organized as follows. First, we recall the exponential decay result for the geodesic flow restricted to $M_1$. Then, we lift these results to $\R\times M_1$ and finally combine them with the properties of our escape functions (together with microlocal arguments) to get Proposition~\ref{p:bilinear}.

\subsection{Exponential decay on $M_1$}  
\label{sec:4.3}

Unfortunately, the decay estimate given by Theorem~\ref{t:dolgopyatliverani} does not seem sufficient for our needs and we will rather rely on a microlocal refinement of this result following from \cite{nonnenmacher2015decay}:

\begin{theo}[Nonnenmacher-Zworski]\label{t:nonnenmacherzworski} There is $\widetilde{\vartheta}_0>0$ such that, for each $\mathfrak S_1\geqslant 3$, one can find a constant\footnote{As we shall see in the proof, this is in fact the same constant as in~\eqref{eq:FSresolventbound}.} $C_{1}>0$ so that the following property holds.

 Let $N>10\mathfrak S_1$ and $\sigma_1\in[-2,2]$. Then, there exists a constant $C>0$ such that, for every $t\geqslant 0$,
 
 $$ 
 \left\|e^{{\oM}\left(G_{\mathfrak S_1,\sigma_1,1}\right)}\left(X_1+C_{1}\right)^{-N}\varphi_{-t}^*\mathrm{P}_1e^{-{\oM}\left(G_{\mathfrak S_1,\sigma_1,1}\right)}\right\|_{L^2(M_1)\rightarrow L^2(M_1)}\leqslant Ce^{-t\widetilde{\vartheta}_0},
$$
where $\mathrm{P}_1v=v-\int_{M_1}v\,\dd \mrm {\mrm L}_1.$
 \end{theo}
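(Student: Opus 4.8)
The plan is to read the estimate on the anisotropic Sobolev space attached to $G_{\mathfrak S_1,\sigma_1,1}$ and reduce it to the spectral gap of the contact Anosov flow on $M_1$. Write $\mathcal H=\mathcal H_{\mathfrak S_1,\sigma_1}$ for the space $e^{-\oM(G_{\mathfrak S_1,\sigma_1,1})}L^2(M_1)$ equipped with the norm $\|v\|_{\mathcal H}=\|e^{\oM(G_{\mathfrak S_1,\sigma_1,1})}v\|_{L^2}$, so that $e^{-\oM(G_{\mathfrak S_1,\sigma_1,1})}\colon L^2(M_1)\to\mathcal H$ is an isometry; conjugating by it turns the operator in the statement into $(X_1+C_1)^{-N}\varphi_{-t}^*\mathrm{P}_1$ acting on $\mathcal H$, and since $X_1$, $\varphi_{-t}^*=e^{-tX_1}$ and $\mathrm{P}_1v=v-\int_{M_1}v\,\dd\mrm L_1$ pairwise commute ($X_1$ being volume preserving), it suffices to bound $\|\varphi_{-t}^*\mathrm{P}_1(X_1+C_1)^{-N}\|_{\mathcal H\to\mathcal H}$. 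By the Faure--Sj\"ostrand construction, in the simplified radial-estimate form of Dyatlov--Zworski and as recalled in Remark~\ref{r:faure-sjostrand} (applicable since $G_{\mathfrak S_1,\sigma_1,1}$ satisfies the first, third and fourth items of Proposition~\ref{prop:comparisonresult}, and since $X_1=\oM(\langle\zeta_1,X_1\rangle)$ here), the operator $X_1$ acts on $\mathcal H$ with discrete spectrum --- the Pollicott--Ruelle resonances --- in the half-plane $\{\operatorname{Re}(z)>-c_0(\mathfrak S_1-|\sigma_1|)\}$, and its resolvent $(X_1+z)^{-1}$, bounded on $\mathcal H$ for $\operatorname{Re}(z)\geqslant C_1$ by~\eqref{eq:FSresolventbound}, continues meromorphically there. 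As $\mathfrak S_1\geqslant 3$ and $|\sigma_1|\leqslant 2$ one has $\mathfrak S_1-|\sigma_1|\geqslant 1$, so this region contains the fixed strip $\{\operatorname{Re}(z)>-c_0\}$.

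I would next identify the resonance at the origin. Since the geodesic flow on $M_1=\mathrm S^*\Sigma$ is a mixing contact Anosov flow preserving the smooth probability measure $\mrm L_1$, $z=0$ is a simple Pollicott--Ruelle resonance, the resonant state being the constant function $\mathbf 1$ and the co-resonant state $\mrm L_1$; the associated spectral (residue) projector is $\Pi_0 v=\bigl(\int_{M_1}v\,\dd\mrm L_1\bigr)\mathbf 1=(\mathrm{Id}-\mathrm{P}_1)v$. Hence $\mathrm{P}_1=\mathrm{Id}-\Pi_0$, and as $\mathrm{P}_1$ annihilates the range of $\Pi_0$ --- which carries the entire principal part of $(X_1+z)^{-1}$ at $z=0$ --- the operator-valued function $z\mapsto\mathrm{P}_1(X_1+z)^{-1}$ extends holomorphically across $z=0$.

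The key input is the spectral gap for contact Anosov flows, due to Tsujii and to Nonnenmacher--Zworski~\cite{tsujii2010quasi,tsujii2012contact,nonnenmacher2015decay} (building on Dolgopyat and Liverani): there is a constant $g_0>0$ \emph{depending only on $(\Sigma,\mathrm g)$} such that $z=0$ is the only Pollicott--Ruelle resonance of $X_1$ with $\operatorname{Re}(z)\geqslant -g_0$, together with polynomial resolvent bounds $\|(X_1+z)^{-1}\|_{\mathcal H\to\mathcal H}\leqslant C\langle\operatorname{Im}(z)\rangle^{K}$ on $\{\operatorname{Re}(z)\geqslant -g_0\}\setminus B(0,\varepsilon)$, with $K=K(\mathfrak S_1)$ bounded by a fixed multiple of $\mathfrak S_1$. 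I would then set $\widetilde\vartheta_0=\tfrac12\min(g_0,c_0)$, which depends only on $(\Sigma,\mathrm g)$ and lies strictly inside both the gap and the continuation strip, fix $C_1$ to be the constant of~\eqref{eq:FSresolventbound}, and use the standard resonance-expansion (equivalently, contour-deformation / inverse-Laplace) representation of the semigroup $\varphi_{-t}^*=e^{-tX_1}$ on $\mathcal H$ in terms of $(X_1+z)^{-1}$; the presence of $(X_1+C_1)^{-N}$ with $N>10\mathfrak S_1>K+1$ makes the remainder contour integral on $\{\operatorname{Re}(z)=-\widetilde\vartheta_0\}$ absolutely convergent and $O(e^{-\widetilde\vartheta_0 t})$. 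Moving the contour down to $\{\operatorname{Re}(z)=-\widetilde\vartheta_0\}$ crosses only the resonance $z=0$, whose contribution lands in the range of $\Pi_0$ and is therefore killed by $\mathrm{P}_1$; hence $\|\varphi_{-t}^*\mathrm{P}_1(X_1+C_1)^{-N}\|_{\mathcal H\to\mathcal H}\leqslant Ce^{-\widetilde\vartheta_0 t}$, and transporting this bound back to $L^2(M_1)$ through the isometry $e^{-\oM(G_{\mathfrak S_1,\sigma_1,1})}$ gives the theorem.

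The main obstacle is the spectral gap itself: establishing that a contact Anosov flow has a resonance-free strip below the real axis, with polynomial resolvent bounds there, is the deep theorem of Tsujii and Nonnenmacher--Zworski, which I would simply cite; the only point to verify on our side is that the space $\mathcal H$ built from $G_{\mathfrak S_1,\sigma_1,1}$ --- which satisfies only the weaker Dyatlov--Zworski-type requirements rather than the original Faure--Sj\"ostrand ones --- falls within its scope, and this is exactly what Remark~\ref{r:faure-sjostrand} records. Everything else (the isometry reduction, the commutation of $X_1$, $\varphi_{-t}^*$ and $\mathrm{P}_1$, the $(X_1+C_1)^{-N}$ device securing convergence, and the bookkeeping in the resonance expansion) is routine.
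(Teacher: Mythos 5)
Your proposal is correct and follows essentially the same route as the paper: reduce everything to the Nonnenmacher--Zworski spectral gap with polynomial resolvent bounds on the anisotropic space attached to $G_{\mathfrak S_1,\sigma_1,1}$, then convert this into exponential decay by a contour deformation in which the factor $(X_1+C_{1})^{-N}$ with $N>10\mathfrak S_1$ supplies the integrability in $\operatorname{Im}(z)$ and the pole at $z=0$ is annihilated by $\mathrm{P}_1$. The one nuance is that the compatibility of the weaker Dyatlov--Zworski-type escape function with the Nonnenmacher--Zworski machinery is not fully recorded in Remark~\ref{r:faure-sjostrand} (which only gives the meromorphic continuation); the paper's proof devotes most of its length to exactly this verification (complex absorbing potential, damping near $E_\urm^*\cup E_\srm^*$, Datchev--Vasy gluing), a point you correctly flag but treat as a citation.
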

Recall that the original decay of correlations result in~\eqref{eq:exp-mixing-classical} is rather written for smooth test functions on $M_1$, i.e. for every $u,v\in\mathscr{C}^{k_0}(M_1)$,
\begin{equation}\label{eq:liverani}
\left|\int_{M_1} (u\circ\varphi_{-t}) v \,\dd\mrm L-\int_{M_1} u\dd \mrm {\mrm L}_1\int_{M_1}v\dd \mrm {\mrm L}_1\right|\\
\leqslant C e^{-t\vartheta_0}\|u\|_{\mathscr{C}^{k_0}}\|v\|_{\mathscr{C}^{k_0}},
\end{equation}
which can directly be recovered from the refined estimate of Theorem~\ref{t:nonnenmacherzworski}.

\begin{rema}
For simplicity, we choose to work in Section~\ref{sec:mainproof} with a fixed regularity parameter $\mathfrak S_1$ independent of the geometry, more specifically $\mathfrak S_1= 3$ to ensure that $(1-2\delta)\mathfrak S_1>2\geqslant |\sigma_1|$ (provided $\delta>0$ is small enough). Modulo some slightly more tedious work, this could probably be optimized to pick any $\mathfrak S_1>1$ but we do not pursue this issue here.
\end{rema}

\begin{proof} The results in~\cite{nonnenmacher2015decay} are not exactly stated under such a precise form. Yet, this result is implicitely proved in $\S9$ of this reference as a consequence of their main result and, for the sake of completeness, we will briefly recall the analytical context of this result and the argument to go from resolvent estimates to exponential decay of the flow. 

The main result (Theorem $2$) in~\cite{nonnenmacher2015decay} is a result on semiclassical selfadjoint pseudo-differential operators $P(h)$ of order $m>0$ such that the following holds
\begin{enumerate}
 \item there exists a complex absorbing potential, i.e. a selfadjoint (semiclassical) pseudo-differential $W(h)$ of order $0\leqslant k\leqslant m $ whose principal symbol $W$ is nonnegative and satisfies certain growth assumptions (Eq.~(1.9) and (1.10) in that reference);
 \item the level set $p^{-1}(E)$ (where $p$ is the principal symbol of $P(h)$) is a smooth submanifold with $\dd p|_{p^{-1}(E)}\neq 0$;
 \item the trapped set at energy $p^{-1}((E-\delta,E+\delta))$ is a normally hyperbolic and symplectic manifold. Recall that the trapped set consists in all the Hamiltonian trajectories for the flow induced by $p$ that never enters the damping region in forward and backward times.
\end{enumerate}
Then, Theorem~$2$ in~\cite{nonnenmacher2015decay} is a resolvent estimate for $(P(h)-iW(h)-z)^{-1}$ when $z$ lies in a small neighbodhood of size $\lambda_0 h$ of $E$ (with $\lambda_0>0$ that can be expressed in terms of purely dynamical quantities). We also refer to~\cite{GerardSjostrand1988, WunschZworski2011, Dyatlov2015, Dyatlov2016} for related results when the incoming and outgoing trapped sets are regular enough. 

It is important to note that the role of $W(h)$ is purely auxiliary in this result and one has to find some concrete operators $\widetilde{P}(h)$ where such a $W(h)$ exists and to deduce some applications on the spectral gap for $\widetilde{P}(h)$. Among the several examples given in this reference, one has the case where $P(h)=-ihX_1$ where $X_1$ is a contact Anosov flow on a compact manifold (a geodesic flow is the main example of a contact flow). In fact, as explained in Example $2$ and $\S 9.1$ of this reference, one can design an operator $W$ with these requirements when $P(h)=-ihX_1$. This follows from the fact that the trapped set\footnote{Here trapped set refers to the Hamiltonian trajectories that stay in a bounded region of $\mrm{T}^*M_1$} in the energy window $(E-\delta,E+\delta)$ for this operator is given by $\{(z_1;\zeta_0,0): |\zeta_0-E|<\delta\}\subset E_0^*$. Recall that $p(z_1;\zeta_1)=\zeta_1(X_1(z_1))$ in this setting. The contact assumption is thus here to ensure that our trapped set $E_0^*$ has a symplectic structure and the Anosov assumption allows to verify the normally hyperbolic property. Hence, Theorem~2 from this reference can be applied and it yields a resolvent estimate for the corresponding modified operator, i.e. $(-ihX_1-iW(h)-z)^{-1}$. Now, the strategy in~\cite[\S 9]{nonnenmacher2015decay} is to convert this key resolvent estimate into a resolvent estimate for a more relevant operator regarding the dynamical properties (see Theorem $4$ in this reference).

The strategy is as follows. Define 
$$
\widetilde{P}(h) = e^{{\oM}_h\left(G_{\mathfrak S_1,\sigma_1,1}\right)}(-ihX_1)e^{-{\ohM}\left(G_{\mathfrak S_1,\sigma_1,1}\right)},
$$
where ${\ohM}$ is a semiclassical quantization procedure~\cite{zworski,dyatlov2017mathematical}. By construction, $\widetilde{P}(h)$ agrees microlocally with $P(h)-iW(h)$ in a compact region of $\mrm{T}^*M_1$. According to Remark~\ref{r:faure-sjostrand}, the geometric properties of our sliced escape function\footnote{This Remark was stated in a nonsemiclassical set-up but the proof in~\cite{DyatlovZworski2016} allows $0<h\leqslant 1.$} ensures that the resolvent $(\widetilde{P}(h)-z)^{-1}$ extends meromorphically to a strip 
$$
\{\text{Re}(z)\geqslant -(c_0((1-2\delta)\mathfrak S_1+|\sigma_1|)h\}.
$$
Moreover, the principal symbol of $\widetilde{P}(h)$ is given by $\zeta_1(X_1)+ih\{\zeta_1(X_1),G_{\mathfrak{S}_1,\sigma_1,1}\}$ modulo terms lying in $h^2S^{-1+}(\mathrm{T}^*M_1)$ (see Remark~\ref{r:explicitsymbol}). Then, thanks to Proposition~\ref{prop:comparisonresult} and to~\eqref{e:lyapunov-norm}, we can verify that the imaginary part is not greater than $-ch<0$ (for some positive $c>0$) near $E_u^*$ and $E_s^*$. In particular, using the Anosov property, only points lying in $E_0^*$ does not end up in the damping region of our nonselfadjoint operator (meaning in either backward or forward times).

\begin{rema} In~\cite{nonnenmacher2015decay}, the authors made the stronger assumptions of~\cite{faure2011upper} on the escape function used to define $\widetilde{P}(h)$ which ensures that the imaginary part is $\leqslant -ch$ outside a small neighborhood of $E_0^*$. Yet, the choice of this escape function was made to ensure the Fredholm property and the meromorphic continuation of the resolvent beyond the real axis~\cite[Lemma~9.2]{nonnenmacher2015decay} and they could as well have used the larger class of weights functions as introduced in~\cite{DyatlovZworski2016} for which these properties remain true (see Remark~\ref{r:faure-sjostrand}). See for instance~\cite[\S 6]{DyatlovZworski2015} for resolvent estimates with these weaker weights. As we have just explained it, the only difference is that the damping only occurs near $E_u^*$ and $E_s^*$ (but all points outside $E_0^*$ end up in these neighborhoods in either backward or forward times). 
\end{rema}

Hence, one can define a (selfadjoint) pseudo-differential operator $W_\infty(h)$ which is compactly supported (near $E_0^*$), whose wavefront set does not intersect the wavefront of $W(h)$ and such that the wavefront set of $\text{Id}-W_\infty(h)$ does not intersect the trapped set of the Hamiltonian flow near the energy $E$. See~\cite[\S 9.3]{nonnenmacher2015decay} for more precise statements. The key point is that $P_\infty(h)=\widetilde{P}(h)-iW_\infty(h)$ has now an empty undamped set and one can apply (nontrapping) resolvent estimate to this operator near $z=E$. See~\cite[Lemma 9.3]{nonnenmacher2015decay} and the references therein. Then, with these two resolvent estimates at hand and still following~\S9 from this reference, one can make use of the gluing method due to Datchev and Vasy~\cite{DatchevVasy2012} in view of getting a resolvent estimate for $(\widetilde{P}(h)-z)^{-1}$ near $z=E$ -- see~\cite[Th.~4]{nonnenmacher2015decay} for a precise statement of the semiclassical resolvent estimate.

Once we have these semiclassical resolvent estimates, we can turn to the decay estimate of Theorem~\ref{t:nonnenmacherzworski} for the induced flow. Recall that~\cite[Th.~4]{nonnenmacher2015decay} (when translated in a nonsemiclassical form, see Equation~(9.13) in this reference) states the existence of a parameter $\vartheta_1>0$ such that, for every $\mathfrak S_1\geqslant 3$ and for every $\sigma_1\in[-2,2]$, one can find a constant $C_{\mathfrak S_1,\sigma_1}>0$ with the following property
 \begin{equation}\label{eq:NZresolvent}
 \begin{aligned}
 &\left\|e^{{\oM}\left(G_{\mathfrak S_1,\sigma_1,1}\right)}\left(X_1+z\right)^{-1}e^{-{\oM}\left(G_{\mathfrak S_1,\sigma_1,1}\right)}\right\|_{L^2(M_1)\rightarrow L^2(M_1)}\\
 &\hspace{8cm}\leqslant C_{\mathfrak S_1,\sigma_1}(1+|\operatorname{Im}(z)|)^{2(\mathfrak S_1+|\sigma_1|)+1},
 \end{aligned}
 \end{equation}
whenever $z\in\mathbb{C}$ verifies $\operatorname{Re}(z)\in[-\vartheta_1,\vartheta_1]$ and $|\text{Im}(z)|\geqslant \vartheta_1$. 

Such a resolvent estimate can be transferred into our expected exponentially decaying upper bound. Let $v, w \in \mathscr C^\infty(M)$~; we want to estimate
\begin{equation}\label{eq:wantestnz}
\int_{M_1} (\mathrm{P}_1(v) \circ\varphi_{-\tau}) w\, \dd {\mrm L}_1 = \int_{M_1} (\mathrm{P}_1(v)\circ\varphi_{-\tau}) \mathrm{P}_1(w)\, \dd {\mrm L}_1.
\end{equation}
To do that, we proceed as \cite[\S9, Proof of Cor. 5]{nonnenmacher2015decay}. More precisely, we use Stone's formula to rewrite \eqref{eq:wantestnz} as 
\begin{equation}
\begin{aligned}
\frac{1}{2\pi}\lim_{\varepsilon\to 0}\int_{\R}e^{i\tau \lambda}\left\langle \left((X_1-i\lambda +\varepsilon)^{-1}-(X_1-i\lambda-\varepsilon)^{-1}\right) \mathrm{P}_1(v),\overline{\mathrm{P}_1(w)}\right\rangle_{L^2(M_1)}\dd\lambda.
\end{aligned}
\end{equation}
From this, we infer that, for every $N\geqslant 0$, \eqref{eq:wantestnz} writes
$$
\frac{1}{2\pi}\lim_{\varepsilon\to 0}\int_{\R}\frac{\dd \lambda \, e^{i\tau \lambda}}{(C_1+iz)^{N}} 
 \sum_{\pm} \pm  \left\langle  (X_1-iz \pm \varepsilon)^{-1} (X_1+C_{1})^{N}\mathrm{P}_1(v),\overline{\mathrm{P}_1(w)}\right\rangle_{L^2(M_1)},
$$
where $C_{1}$ is chosen large enough to ensure that~\eqref{eq:FSresolventbound} holds true. Then, we use the resolvent bound~\eqref{eq:NZresolvent} (and its analogue in backward time) and pick $N\geqslant 10\mathfrak S_1$ to ensure the convergence of the integral in $\lambda$. It allows us make a deformation of contour from $\lambda =\R$ to $\lambda=\R+i\vartheta_0$ where we choose $\vartheta_0$ in such a way that there is no Ruelle-Pollicott resonances (except for $0$) in the strip delimited by these two lines. As a consequence, one gets that \eqref{eq:wantestnz} is equal to
\begin{multline*}
\frac{1}{2\pi}\lim_{\varepsilon\to 0}\int_{\R+ i\vartheta_0}\frac{\dd \lambda \, e^{i\tau \lambda}}{(C_1+iz)^{N}} 
 \sum_{\pm} \pm  \left\langle  (X_1-iz \pm \varepsilon)^{-1} (X_1+C_{1})^{N}\mathrm{P}_1(v),\overline{\mathrm{P}_1(w)}\right\rangle_{L^2(M_1)}.
\end{multline*}
Finally by~\eqref{eq:NZresolvent} we obtain that for every $\tau\geqslant 0$,
\begin{multline*}
\left|\int_{M_1} (\mathrm{P}_1(v)\circ\varphi_{-\tau}) w\, \dd{\mrm L}_1\right|
\leqslant C e^{-\tau\vartheta_0}\left\|e^{{\oM}\left(G_{\mathfrak S_1,\sigma_1,1}\right)}(X_1+C_1)^{N}\mathrm{P}_1(v)\right\|_{L^2}\\
\times
\|e^{-{\oM}\left(G_{\mathfrak S_1,\sigma_1,1}\right)}\mathrm{P}_1(w)\|_{L^2},
\end{multline*}
which concludes the proof of the Theorem thanks to~\eqref{eq:FSresolventbound}. 
\end{proof}

\subsection{Lifting Theorem~\ref{t:nonnenmacherzworski} to $\R\times M_1$}
\label{sec:4.4}

Building on Theorem~\ref{t:nonnenmacherzworski}, we are able to derive the following lemma which lifts this result to $\R\times M_1$: 
\begin{lemm}\label{l:NZradial} There exists $\vartheta_0>0$ depending only on the choice of $(\nu,g_1)$ defined in Proposition~\ref{prop:comparisonresult} such that the following holds.

Let $\mathfrak{S}_1\geqslant 3$, $\sigma_1\in[-2,2]$, $N\in\mathbb{Z}_+^*$ and $m_1,m_2\in\mathbb{Z}$ be such that
$$
m_1m_2\leqslant 0,\quad m_1 + m_2 > 1,\quad \text{and}\quad N\geqslant 10\mathfrak S_1-\min\{m_1,m_2\}+2.
$$
Let also $I$ be an open and bounded interval such that $\overline{I}\subset\R_+^*$ and $\chi\in\mathscr{C}^\infty_c(I)$. Then, there exists a constant $C>0$ such that, for all $t\geqslant 0$,
$$
\left\|\mathbf{B}_{\mathfrak S_1,\sigma_1}^{(m_1,N)}\chi\varphi_{-t}^*\mathbf{P}\mathbf{B}_{-\mathfrak S_1,-\sigma_1}^{(m_2)}\right\|_{L^2\rightarrow L^2}\leqslant C e^{-t\vartheta_0\min I}.
$$
\end{lemm}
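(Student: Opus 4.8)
The strategy is to deduce the lifted estimate on $\R \times M_1$ from the base estimate of Theorem~\ref{t:nonnenmacherzworski} on $M_1$ by a partial Fourier transform in the $r$-variable, treating the $\rho$-frequency as a parameter. Writing $u \in \mathscr C^\infty_c(\R \times M_1)$ and taking the Fourier transform $\hat u(\rho, z_1)$ in $r$, all the operators involved act fiberwise: $\varphi_{-t}^*$ and $\mathbf P$ act only in $z_1$ (and do not mix $\rho$-frequencies), while $\mathbf{B}_{\mathfrak S_1, \sigma_1}^{(m_1, N)}$ and $\mathbf{B}_{-\mathfrak S_1, -\sigma_1}^{(m_2)}$ become, after Fourier transform in $r$, multiplication by $(1+\rho^2)^{-m_1/2}$, respectively $(1+\rho^2)^{-m_2/2}$, composed with the $\rho$-independent operators $e^{\oM(G_{\mathfrak S_1,\sigma_1,1})}(X_1+C_1)^{-N}$ and $e^{-\oM(G_{\mathfrak S_1,\sigma_1,1})}$ acting on $M_1$. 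The cutoff $\chi = \chi(r)$ is the one genuinely nontrivial coupling in $\rho$; I would handle it by writing $\chi\varphi_{-t}^*\mathbf P = \chi\varphi_{-t}^*\mathbf P\chi_1$ for a slightly larger cutoff $\chi_1 \in \mathscr C^\infty_c(\R_+^*)$ equal to $1$ on a neighborhood of $\supp\chi$ (legitimate since $\varphi_{-t}^*$ and $\mathbf P$ act fiberwise in $r$, hence commute with multiplication by functions of $r$ up to the obvious identity $\chi\varphi_{-t}^*\mathbf P = \chi\varphi_{-t}^*\mathbf P\chi_1$), absorbing $\chi_1$ into the source and keeping $\chi$ to localize $r$ away from $0$.

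\textbf{Key steps.} First, reduce to the fiber: by Plancherel in $r$, it suffices to bound, uniformly in $\rho \in \R$, the operator norm on $L^2(M_1)$ of
$$
(1+\rho^2)^{-\frac{m_1}{2}} e^{\oM(G_{\mathfrak S_1,\sigma_1,1})}(X_1+C_1)^{-N}\, \varphi_{-t}^*\mathrm P_1\, e^{-\oM(G_{\mathfrak S_1,\sigma_1,1})}\, (1+\rho^2)^{-\frac{m_2}{2}},
$$
times the contribution of the $r$-cutoffs. Second, split $(X_1+C_1)^{-N} = (X_1+C_1)^{-N_1}(X_1+C_1)^{-(N-N_1)}$ with $N_1 = 10\mathfrak S_1 + \lceil|\min\{m_1,m_2\}|\rceil$, applying Theorem~\ref{t:nonnenmacherzworski} (with the exponent $N_1 > 10\mathfrak S_1$) to the conjugated $\varphi_{-t}^*\mathrm P_1$ factor to gain $Ce^{-t\widetilde\vartheta_0}$, and using the extra resolvent powers $(X_1+C_1)^{-(N-N_1)}$ together with the bounded operator $e^{-\oM(G_{\mathfrak S_1,\sigma_1,1})}$ of order $2\mathfrak S_1$ to absorb the polynomial loss $(1+\rho^2)^{|\min\{m_1,m_2\}|/2} \le (1+\rho^2)^{(m_1+m_2)/2}$ coming from whichever of $m_1, m_2$ is negative; here $m_1m_2 \le 0$ guarantees at most one of the factors $(1+\rho^2)^{-m_i/2}$ grows, and $m_1+m_2 > 1$ gives an honest gain in $\rho$ after pairing. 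Since $(X_1+C_1)^{-1}$ gains one derivative and $|\rho| \lesssim \langle\rho\rangle$ is controlled by $\langle X_1 \rangle$-powers only microlocally near $E_0^*$—this subtlety is why $N \ge 10\mathfrak S_1 - \min\{m_1,m_2\}+2$ with the $+2$ margin—I would instead just estimate $(1+\rho^2)^{-m_i/2}$ as a scalar and use that the whole $z_1$-operator is uniformly bounded $L^2 \to L^2$ by \eqref{eq:FSresolventbound}; the scalar factor $(1+\rho^2)^{-(m_1+m_2)/2}$ is then integrable-after-squaring against the $r$-cutoff via Minkowski. Third, restore the geometry: the constant $\widetilde\vartheta_0$ in Theorem~\ref{t:nonnenmacherzworski} is dimensionless, but the flow $\varphi_t$ on $M^\times$ is reparametrized as $\varphi_{rt}$ on each slice $M_r$ (cf. \eqref{eq:flow-coordinates}); since $\supp\chi \subset I$ with $\overline I \subset \R_+^*$, on the support the time $t$ becomes effectively $rt \ge t\min I$, which produces exactly the claimed rate $e^{-t\vartheta_0 \min I}$ with $\vartheta_0 = \widetilde\vartheta_0$ (depending only on $(\nu, g_1)$, hence on $(\Sigma,\mathrm g)$).

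\textbf{Main obstacle.} The principal difficulty is the bookkeeping of the $r$-cutoff $\chi$ together with the $\rho$-weights: $\chi$ is not a Fourier multiplier, so I must be careful that the reparametrization of time (passing from $\widetilde\vartheta_0$ to $\vartheta_0\min I$) is applied \emph{before} moving to Fourier variables in $r$, i.e.\ one should apply Theorem~\ref{t:nonnenmacherzworski} slice-by-slice to $\varphi_{-rt}^*\mathrm P_1$ on $M_1$ with the slice-dependent time $rt$, and only afterwards integrate in $\rho$ and $r$. A clean way to organize this is to keep the $r$-variable in physical space throughout, write the composed operator as an operator-valued symbol in $(r,\rho)$ (as set up in Appendix~\ref{aa:operatorvalued}), note that its $L^2(M_1)$-operator-norm symbol is dominated by $\chi(r)^2 \langle\rho\rangle^{-(m_1+m_2)}\, C e^{-t\widetilde\vartheta_0 r}$ by the base theorem and \eqref{eq:FSresolventbound}, and then invoke the Calderón–Vaillancourt / Schur-type bound for operator-valued pseudodifferential operators from the appendix to conclude; the exponential decay rate in the resulting $L^2(\R\times M_1)\to L^2(\R\times M_1)$ bound is $\inf_{r \in \supp\chi} \widetilde\vartheta_0 r = \widetilde\vartheta_0 \min I$, as desired, and the $\rho$-integrability is ensured by $m_1+m_2 > 1$.
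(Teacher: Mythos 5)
There is a genuine gap, and it sits exactly at the point you flag as the ``main obstacle''. Your reduction rests on the claim that $\varphi_{-t}^*$ and the cutoff act ``fiberwise'' and ``do not mix $\rho$-frequencies''. This is false: by \eqref{eq:flow-coordinates} the flow acts on the slice at radius $r$ as $\varphi_{-rt}$, so $\varphi_{-t}^*$ is an $r$-\emph{dependent} family of operators on $M_1$; after the partial Fourier transform in $r$ it is a convolution-type operator genuinely coupling $\rho$ and $\widetilde\rho$. Consequently you cannot ``pair'' the two scalar weights $(1+\rho^2)^{-m_1/2}$ and $(1+\rho^2)^{-m_2/2}$ at the same frequency: since $m_1m_2\leqslant 0$, one of them may grow, and transferring growth in $\widetilde\rho$ on the input side to decay in $\rho$ on the output side requires off-diagonal decay in $|\rho-\widetilde\rho|$ of the kernel of the middle operator. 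This is what the paper's proof produces, by integrating by parts in the physical variable $s$ with the operator $P_{\rho,\widetilde\rho}=\bigl(1+i(\widetilde\rho-\rho)\partial_s\bigr)/\bigl(1+(\rho-\widetilde\rho)^2\bigr)$. Each integration by parts hits the flow factor $e^{tsX_1}$ and brings down $t X_1$; the resulting terms $t^{\ell}\,e^{\oM(G_{\mathfrak S_1,\sigma_1,1})}\chi^{(\ell_2)}(s)e^{tsX_1}\mathrm P_1 X_1^{\ell}(X_1+C_1)^{-N}e^{-\oM(G_{\mathfrak S_1,\sigma_1,1})}$ are then controlled by Theorem~\ref{t:nonnenmacherzworski} provided $N-N_2>10\mathfrak S_1$ with $N_2=-\min\{m_1,m_2\}+1$. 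This is precisely where the hypothesis $N\geqslant 10\mathfrak S_1-\min\{m_1,m_2\}+2$ is consumed, and why polynomial factors $t^{\ell}$ appear and must be absorbed into the exponential. In your write-up the extra resolvent powers are instead invoked to ``absorb the polynomial loss in $\rho$'', which is not meaningful: powers of $(X_1+C_1)^{-1}$ regularize in the $M_1$ variables and have no bearing on weights in $\rho$ (dual to $r$), a point you half-notice but then bypass.

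Your fallback via the operator-valued calculus of Appendix~\ref{aa:operatorvalued} does not close the gap either: the Calder\'on--Vaillancourt bound \eqref{eq:calderonoperatorvalued} requires estimates on $r$- and $\rho$-derivatives of the operator-valued symbol, not just on its norm, and every $\partial_r$ falling on the $r$-dependent flow again produces $tX_1$, which is unbounded on the anisotropic space unless compensated by resolvent powers via Theorem~\ref{t:nonnenmacherzworski} --- so the same mechanism (and the same use of the hypothesis on $N$) is unavoidable. What you do get right is the choice of base estimate, the slice-by-slice reparametrization giving the rate $e^{-t\widetilde\vartheta_0\,r}\leqslant e^{-t\widetilde\vartheta_0\min I}$ on $\supp\chi$, and the role of $m_1+m_2>1$ for integrability in the dual variables; but without the integration-by-parts (or an equivalent source of off-diagonal decay in $|\rho-\widetilde\rho|$ with uniform-in-$t$ control), the argument as written fails whenever $\min\{m_1,m_2\}<0$, and the stated hypothesis on $N$ is never genuinely used.
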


\begin{rema} By taking the flow in positive time, one obtains a similar statement by remplacing $\mathfrak S_1 $ by $-\mathfrak S_1 <0$.
\end{rema}

\begin{proof} Let $u$ be an element in $\mathscr{S}(\R\times M_1)$. From the definition of our sliced anisotropic norms~\eqref{eq:operatorvaluedanisotropic} and~\eqref{eq:operatorvaluedanisotropicresolvent}, we want to compute the norm of 
\begin{equation}\label{eq:wantcompute}
\left\|\mathbf{B}_{\mathfrak S_1,\sigma_1}^{(m_1)}\chi\varphi_t^*\mathbf{P}(X_1+C_1)^{-N}\mathbf{B}_{-\mathfrak S_1,-\sigma_1}^{(m_2)}u\right\|_{L^2},
\end{equation}
where $\chi\in\mathscr{C}^\infty_c(\R_+^*)$ is compactly supported inside the interval $I$. To this end, we use Plancherel's formula to write \eqref{eq:wantcompute} as 
$$
\begin{aligned}
\frac{1}{(2\pi)^4}\int_{\R}\frac{1}{(1+\rho^2)^{m_1}} \times\Biggl\|\int_{I\times\R^2} & \frac{e^{is(\widetilde{\rho}-\rho)}e^{-i\widetilde{r}\widetilde{\rho}}}{(1+\widetilde{\rho}^2)^{\frac{m_2}{2}}} e^{{\oM}\left(G_{\mathfrak S_1,\sigma_1,1}\right)}{e^{tsX_1}\mathrm{P}_1\chi(s)} \\
& \quad \times {(X_1+C_{1})^{-N}}e^{-{\oM}\left(G_{\mathfrak S_1,\sigma_1,1}\right)}u(\widetilde{r})\,\dd s\,\dd\widetilde{\rho}\,\dd\widetilde{r}\Biggr\|_{L^2(M_1)}^2\dd\rho,
\end{aligned}
$$
or equivalently
$$
\begin{aligned}
\frac{1}{(2\pi)^4}\int_{\R}\frac{1}{(1+\rho^2)^{m_1}} \times\Biggl\|\int_{I\times\R} & \frac{e^{is(\widetilde{\rho}-\rho)}}{(1+\widetilde{\rho}^2)^{\frac{m_2}{2}}} e^{{\oM}\left(G_{\mathfrak S_1,\sigma_1,1}\right)}{e^{tsX_1}\mathrm{P}_1\chi(s)} \\
& \quad \times {(X_1+C_{1})^{-N}}e^{-{\oM}\left(G_{\mathfrak S_1,\sigma_1,1}\right)}\widehat{u}(\widetilde{\rho})\,\dd s\,\dd\widetilde{\rho}\Biggr\|_{L^2(M_1)}^2\dd\rho,
\end{aligned}
$$
where $\widehat{u}$ is the Fourier transform of $u$ with respect to the radial variable. Let us introduce the operator
$$
P_{\rho,\widetilde{\rho}}=\frac{1+i(\widetilde{\rho}-\rho)\partial_s}{1+(\rho-\widetilde{\rho})^2}.
$$
Then it holds
$$
P_{\rho, \widetilde \rho}\left(e^{is(\widetilde \rho - \rho)}\right) = e^{is(\widetilde \rho - \rho)},
$$
and integrating by parts yields, for $N_2\geqslant 0$,
$$
\begin{aligned}
 &\left\|\mathbf{B}_{N_1,\sigma_1}^{(m_1)}\chi\varphi_t^*\mathbf{P}(X_1+C_{1})^{-N}\mathbf{B}_{-\mathfrak S_1,-\sigma_1}^{(m_2)}u\right\|_{L^2}^2 \\
&  \qquad = \frac{1}{(2\pi)^4}\int_{\R}\frac{1}{(1+\rho^2)^{m_1}}
\left\|\int_{I\times\R}e^{-is\rho}\langle\widetilde{\rho}\rangle^{-m_2} \langle\widetilde{\rho}-\rho\rangle^{-N_2}W_{s,\rho,\widetilde{\rho}}\left[\widehat{u}(\widetilde{\rho})\right]\,\dd s\,\dd\widetilde{\rho}\,\right\|_{L^2(M_1)}^2\dd\rho,
 \end{aligned}
$$
where the operator $W_{s,\rho,\widetilde{\rho}}$ is given by
$$
W_{s,\rho,\widetilde{\rho}}=e^{{\oM}\left(G_{\mathfrak S_1,\sigma_1,1}\right)}\left(\frac{e^{is\widetilde{\rho}}(P_{\rho,\widetilde{\rho}})^{N_2}(e^{tsX_1}\mathrm{P}_1\chi(s))(X_1+C_{1})^{-N}}{\langle\widetilde{\rho}-\rho\rangle^{-N_2}}\right)e^{-{\oM}\left(G_{\mathfrak S_1,\sigma_1,1}\right)}.
$$
By Cauchy-Schwarz inequality, one has
$$
\left\|\mathbf{B}_{N_1,\sigma_1}^{(m_1)}\chi\varphi_t^*\mathbf{P}(X_1+C_{1})^{-N}\mathbf{B}_{-\mathfrak S_1,-\sigma_1}^{(m_2)}u\right\|_{L^2}^2 \leqslant I_{m_1,m_2,N_2}\sup_{s\in I,\,\rho\in\R}
\int_{\R}\left\|W_{s,\rho,\widetilde{\rho}}\left[\widehat{u}(\widetilde{\rho})\right]\right\|_{L^2(M_1)}^2\,\dd\widetilde{\rho},
$$
where
$$
 I_{m_1,m_2,N_2}=\frac{|I|^2}{(2\pi)^4}\int_{\R^2 }\frac{1}{(1+\rho^2)^{m_1}}\frac{1}{(1+\widetilde{\rho}^2)^{m_2}} \frac{1}{(1+(\rho-\widetilde{\rho})^2)^{N_2 }}\dd\rho\,\dd\widetilde{\rho}
$$
is finite provided $N_2=-\min\{m_1,m_2\}+1$ and $m_1+ m_2 > 1$.
Hence, recalling that $\|u\|_{L^2}=\int_\R\|\widehat{u}(\widetilde{\rho})\|_{L^2(M_1)}^2\dd\widetilde{\rho}$, the proof boils down to estimating the operator norm of $W_{s,\rho,\widetilde{\rho}}$, which can be bounded by a sum of terms of the form
 $$
 t^{\ell_1}\left\|e^{{\oM}\left(G_{\mathfrak S_1,\sigma_1,1}\right)}\chi^{(\ell_2)}(s)e^{tsX_1}\mathrm{P}_1X_1^{\ell_1}(X_1+C_{1})^{-N}e^{-{\oM}\left(G_{\mathfrak S_1,\sigma_1,1}\right)}\right\|_{L^2(M_1)\rightarrow L^2(M_1)}
 $$
 with $\ell_1+\ell_2\leqslant N_2$ and $s\in I$. According to Theorem~\ref{t:nonnenmacherzworski}, the latter are bounded by
 $
C  t^{\ell_1 } e^{-t \vartheta_0 \min I },
 $ 
 provided that $N-N_2>10\mathfrak S_1$. The result follows.
\end{proof}

As a direct consequence of Lemma~\ref{l:NZradial}, we deduce the following decay estimate for smooth test functions which will also be used in our nonlinear analysis:
\begin{coro}\label{c:smoothNZ} There exists $\vartheta_0>0$ depending only on the choice of $(\nu,g_1)$ defined in Proposition~\ref{prop:comparisonresult} such that the following holds.

Let $\mathfrak S_1\geqslant 3$, $\sigma_1\in[-2,2]$, $m_1\in\Z$ and $I$ be an open and bounded interval such that $\overline{I}\subset\R_+^*$. Then, there exist a constant $C>0$ and an integer $N \in \mathbb{N}$ such that, for all $t\geqslant 0$ and for every $u\in\mathscr{C}^N_c(I\times M_1)$

$$
\left\|\mathbf{B}_{\mathfrak S_1,\sigma_1}^{(m_1)}\varphi_{-t}^*\mathbf{P}u\right\|_{L^2}\leqslant C e^{-t\vartheta_0\min I}\|u\|_{\mathscr{C}^N}.
$$ 
\end{coro}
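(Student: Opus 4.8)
The plan is to deduce Corollary~\ref{c:smoothNZ} from Lemma~\ref{l:NZradial} by choosing suitable parameters and by writing $\mathbf{P}u=\mathbf{B}_{-\mathfrak S_1,-\sigma_1}^{(m_2)}w$ with $\|w\|_{L^2}\lesssim\|u\|_{\mathscr C^N}$. First I would reduce to the case $m_1\leqslant 0$: the operator $\mathbf{B}_{\mathfrak S_1,\sigma_1}^{(m_1)}$ is the composition of the Fourier multiplier $\langle\rho\rangle^{-m_1}$ in the variable $r$ with $\exp(\oM(G_{\mathfrak S_1,\sigma_1,1}))$ acting on $M_1$, two factors acting on disjoint sets of variables, and since $\langle\rho\rangle\geqslant 1$ one has $\langle\rho\rangle^{-m_1}\leqslant\langle\rho\rangle^{-\min(m_1,0)}$, so $\|\mathbf{B}_{\mathfrak S_1,\sigma_1}^{(m_1)}v\|_{L^2}\leqslant\|\mathbf{B}_{\mathfrak S_1,\sigma_1}^{(\min(m_1,0))}v\|_{L^2}$ for every $v$. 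I may thus assume $m_1\leqslant 0$ and set $m_2:=2-m_1\geqslant 2$, so that $m_1 m_2\leqslant 0$ and $m_1+m_2=2>1$, and fix a positive integer $N_0\geqslant 10\mathfrak S_1-m_1+2$; then $(m_1,m_2,N_0)$ meets the arithmetic hypotheses of Lemma~\ref{l:NZradial}.

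Next I would peel off the resolvent. Since $\mathbf{B}_{\mathfrak S_1,\sigma_1}^{(m_1)}=\mathbf{B}_{\mathfrak S_1,\sigma_1}^{(m_1,N_0)}(X_1+C_1)^{N_0}$ by definition, and $X_1$ commutes both with $\varphi_{-t}^*$ (because $[X_1,X]=0$ with $X=rX_1$ the generator of $\varphi_t$) and with $\mathbf{P}$ (because $X_1$ preserves $\mrm L_1$ and does not act on $r$), one gets
$$
\mathbf{B}_{\mathfrak S_1,\sigma_1}^{(m_1)}\varphi_{-t}^*\mathbf{P}u=\mathbf{B}_{\mathfrak S_1,\sigma_1}^{(m_1,N_0)}\varphi_{-t}^*\mathbf{P}\,v,\qquad v:=(X_1+C_1)^{N_0}u,
$$
where, $X_1$ being tangent to the energy layers, $v\in\mathscr C^{N-N_0}_c(I\times M_1)$ has support in that of $u$ and $\|v\|_{\mathscr C^{N-N_0}}\lesssim\|u\|_{\mathscr C^N}$. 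I would then insert $\mathbf{B}_{-\mathfrak S_1,-\sigma_1}^{(m_2)}\mathbf{B}_{\mathfrak S_1,\sigma_1}^{(-m_2)}=\mathrm{Id}$ in front of $v$, i.e. write $v=\mathbf{B}_{-\mathfrak S_1,-\sigma_1}^{(m_2)}w$ with $w:=\mathbf{B}_{\mathfrak S_1,\sigma_1}^{(-m_2)}v$, and pick a cutoff $\chi\in\mathscr C^\infty_c(\R_+^*)$, depending only on $I$, with $\chi\equiv 1$ on $\overline I$ and $\supp\chi$ inside a bounded interval $\wt I$ with $\overline{\wt I}\subset\R_+^*$ and $\min\wt I\geqslant\tfrac12\min I$. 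Since $\mathbf{B}_{-\mathfrak S_1,-\sigma_1}^{(m_2)}w=v$ is supported in $\{r\in\overline I\}$, so is $\varphi_{-t}^*\mathbf{P}v$, hence $\chi\,\varphi_{-t}^*\mathbf{P}v=\varphi_{-t}^*\mathbf{P}v$ and
$$
\mathbf{B}_{\mathfrak S_1,\sigma_1}^{(m_1)}\varphi_{-t}^*\mathbf{P}u=\mathbf{B}_{\mathfrak S_1,\sigma_1}^{(m_1,N_0)}\chi\,\varphi_{-t}^*\mathbf{P}\,\mathbf{B}_{-\mathfrak S_1,-\sigma_1}^{(m_2)}w.
$$
Lemma~\ref{l:NZradial} applied with the interval $\wt I$ then yields $\|\mathbf{B}_{\mathfrak S_1,\sigma_1}^{(m_1)}\varphi_{-t}^*\mathbf{P}u\|_{L^2}\leqslant C e^{-t\vartheta_0\min\wt I}\|w\|_{L^2}\leqslant C e^{-t(\vartheta_0/2)\min I}\|w\|_{L^2}$, and after relabelling $\vartheta_0/2$ as $\vartheta_0$ (still a geometric constant depending only on $(\nu,g_1)$) it remains to control $\|w\|_{L^2}$.

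For that last point I would use that $w=\langle D_r\rangle^{m_2}\exp(\oM(G_{\mathfrak S_1,\sigma_1,1}))\,v$, with $\exp(\oM(G_{\mathfrak S_1,\sigma_1,1})):H^s(M_1)\to H^{s-2\mathfrak S_1}(M_1)$ and $\langle D_r\rangle^{m_2}:H^{m_2}(\R)\to L^2(\R)$ bounded, together with $\|v\|_{H^{m_2+2\mathfrak S_1}(\R\times M_1)}\lesssim\|v\|_{\mathscr C^{m_2+\lceil 2\mathfrak S_1\rceil}}$ (valid since $v$ has support in the fixed compact set $\overline I\times M_1$), to obtain $\|w\|_{L^2}\lesssim\|u\|_{\mathscr C^N}$ provided $N\geqslant N_0+m_2+\lceil 2\mathfrak S_1\rceil$, which fixes the value of $N$. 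I expect the only mildly delicate part to be the bookkeeping of parameters at the outset: $m_2$ must be chosen so that simultaneously the constraints $m_1m_2\leqslant 0$ and $m_1+m_2>1$ of Lemma~\ref{l:NZradial} hold and $\mathbf{B}_{\mathfrak S_1,\sigma_1}^{(-m_2)}$ loses only finitely many derivatives, which is precisely what forces the preliminary reduction to $m_1\leqslant 0$ (the case $m_1=1$ being otherwise excluded); everything else is a routine manipulation of the operators $\mathbf{B}$ and of the support properties of $\varphi_{-t}^*$ and $\mathbf{P}$.
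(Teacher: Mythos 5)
Your argument is correct and is precisely the deduction from Lemma~\ref{l:NZradial} that the paper intends (the corollary is stated there as a direct consequence of that lemma): reduce to $m_1\leqslant 0$ using monotonicity of the multiplier $\langle\rho\rangle^{-m_1}$, peel off $(X_1+C_1)^{N_0}$ using $[X_1,X]=0$ and $[X_1,\mathbf{P}]=0$, insert the exact inverse $\mathbf{B}_{\mathfrak S_1,\sigma_1}^{(-m_2)}$, and place a cutoff equal to $1$ on $\overline I$ supported in a slightly enlarged interval $\wt I$. The only cost is the harmless replacement of $\vartheta_0$ by $\vartheta_0/2$ coming from $\min\wt I\geqslant\tfrac12\min I$, which the statement's quantifiers allow since the relabeled constant is still independent of $I$ and depends only on $(\nu,g_1)$.
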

Again, we can get a similar statement for the flow in positive time by replacing $\mathfrak S_1$ by $-\mathfrak S_1$. We also record that we implicitely recover Theorem~\ref{t:dolgopyatliverani} from the introduction (away from the zero section):
\begin{coro}\label{c:smoothcorrelation} There exist $\vartheta_0,k_0>0$ depending only on Riemannian manifold $(\Sigma,\mathrm{g})$ such that, for every open and bounded interval $I$ such that $\overline{I}\subset\R_+^*$, one can find a constant $C>0$ such that, for all $t\geqslant 0$ and for every $u,v\in\mathscr{C}^{k_0}_c(I\times M_1)$,
\begin{multline*}
\left|\int_{M^\times} (u\circ\varphi_{-t}) v\, \dd {\mrm L}-\int_{M^\times}\left(\int_{M_1} u(r,z_1')\dd {\mrm L}_1(z_1')\right) v(r,z_1)\dd {\mrm L}(r,z_1)\right|\\
\leqslant C e^{-t\vartheta_0\min I}\|u\|_{\mathscr{C}^{k_0}}\|v\|_{\mathscr{C}^{k_0}}.
\end{multline*}
\end{coro}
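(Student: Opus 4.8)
The plan is to deduce this correlation estimate from Corollary~\ref{c:smoothNZ} by a duality (integration-by-parts) argument that transfers all the anisotropic regularity onto $u$, leaving only a fixed finite number of ordinary derivatives on $v$. First I would rewrite the left-hand side: since $\varphi_{-t}$ preserves each energy layer $M_r$ and the fibered average $r\mapsto\int_{M_1}u(r,z_1')\,\dd{\mrm L}_1(z_1')$ is constant along $M_r$, one has the pointwise identity $\varphi_{-t}^*\mathbf{P}u=\varphi_{-t}^*u-\int_{M_1}u(r,\cdot)\,\dd{\mrm L}_1$ on $M^\times$, where $\mathbf{P}$ is the fibered projector of~\eqref{eq:spectral-projector}. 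Pairing with $v\,\dd{\mrm L}$ and using that this average depends only on $r$, the quantity to be bounded equals $\bigl|\int_{M^\times}(\varphi_{-t}^*\mathbf{P}u)\,v\,\dd{\mrm L}\bigr|$. Because $u$ and $v$ are supported in $I\times M_1$ with $\overline I\subset\R_+^*$, the radial weight $r^{n-1}$ relating the Liouville measure to the reference measure $\dd r\,\dd{\mrm L}_1$ used to define the anisotropic $L^2$ spaces is bounded above and below on the relevant set; absorbing it into $v$ replaces $v$ by another function of $\mathscr{C}^{k_0}_c(I\times M_1)$ with comparable $\mathscr{C}^{k_0}$ norm, so it suffices to bound $\bigl|\langle\varphi_{-t}^*\mathbf{P}u,\bar v\rangle_{L^2(\dd r\,\dd{\mrm L}_1)}\bigr|$.

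Next I would freeze, once and for all, a small $\delta>0$ together with anisotropy parameters $\mathfrak S_1=3$, $\sigma_1=0$ and an integer $m_1\geqslant 1$ (so that the hypotheses of Corollary~\ref{c:smoothNZ} are met), and set $\mathbf{B}:=\mathbf{B}_{\mathfrak S_1,\sigma_1}^{(m_1)}$, which is invertible by construction. Inserting $\mathbf{B}^{-1}\mathbf{B}=\mathrm{Id}$ and applying Cauchy--Schwarz,
\[
\bigl|\langle\varphi_{-t}^*\mathbf{P}u,\bar v\rangle\bigr|=\bigl|\langle\mathbf{B}\,\varphi_{-t}^*\mathbf{P}u,\,(\mathbf{B}^{-1})^*\bar v\rangle\bigr|\leqslant\bigl\|\mathbf{B}\,\varphi_{-t}^*\mathbf{P}u\bigr\|_{L^2}\,\bigl\|(\mathbf{B}^{-1})^*\bar v\bigr\|_{L^2}.
\]
The first factor is handled directly by Corollary~\ref{c:smoothNZ}, which produces $\vartheta_0>0$ depending only on $(\nu,g_1)$ (hence only on $(\Sigma,\mathrm g)$), an integer $N$ depending only on $\mathfrak S_1$ and $m_1$, and a constant $C>0$ (allowed to depend on $I$) such that $\|\mathbf{B}\,\varphi_{-t}^*\mathbf{P}u\|_{L^2}\leqslant Ce^{-t\vartheta_0\min I}\|u\|_{\mathscr{C}^N}$ for all $t\geqslant 0$.

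For the second factor I would use that, $G_{\mathfrak S_1,\sigma_1,1}$ being independent of $\rho$, the operator $\mathbf{B}$ is the tensor product of the Fourier multiplier $\langle D_r\rangle^{-m_1}$ in the radial variable with the fiberwise operator $\exp(\oM(G_{\mathfrak S_1,\sigma_1,1}))$ on the \emph{compact} manifold $M_1$; hence $(\mathbf{B}^{-1})^*=\langle D_r\rangle^{m_1}\otimes\bigl(\exp(-\oM(G_{\mathfrak S_1,\sigma_1,1}))\bigr)^*$ is continuous from $H^s(\R\times M_1)$ to $H^{s-m_1-2\mathfrak S_1}(\R\times M_1)$ for every $s$, since $\exp(\pm\oM(G_{\mathfrak S_1,\sigma_1,1}))$ and its adjoint are pseudo-differential operators of order $2\mathfrak S_1$ on $M_1$. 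As a compactly supported $\mathscr{C}^{k_0}$ function lies in $H^{k_0}$, this yields $\|(\mathbf{B}^{-1})^*\bar v\|_{L^2}\leqslant C\|v\|_{\mathscr{C}^{k_0}}$ as soon as $k_0\geqslant m_1+2\mathfrak S_1$. Taking $k_0\geqslant\max\{N,\,m_1+2\mathfrak S_1\}$ and combining the three displays above gives the claim, with $\vartheta_0$ and $k_0$ depending only on $(\Sigma,\mathrm g)$ and $C$ on $I$.

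I do not expect a serious obstacle here: essentially all the analytic content is already packaged in Corollary~\ref{c:smoothNZ}. The only points requiring care are the bookkeeping ensuring that $\vartheta_0$ and $k_0$ are genuinely independent of the interval $I$ (which is precisely why the anisotropy parameters $\mathfrak S_1,\sigma_1,m_1$ must be frozen at the outset rather than allowed to vary with $I$), and the elementary change of measure between the Liouville measure and the reference measure $\dd r\,\dd{\mrm L}_1$ together with the correct identification of the $L^2$-adjoint $(\mathbf{B}^{-1})^*$ with respect to that reference measure.
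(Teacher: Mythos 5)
Your proposal is correct and is essentially the derivation the paper intends: Corollary~\ref{c:smoothcorrelation} is recorded without a written proof as a direct consequence of Corollary~\ref{c:smoothNZ}, and your steps (the identity $\varphi_{-t}^*u-\int_{M_1}u(r,\cdot)\,\dd{\mrm L}_1=\varphi_{-t}^*\mathbf{P}u$, absorbing the radial weight $r^{n-1}$ into $v$ on $I$, inserting $\mathbf{B}^{-1}\mathbf{B}$ with frozen parameters $\mathfrak S_1=3$, $\sigma_1=0$, $m_1$, Cauchy--Schwarz, and the tensor-product structure of $\mathbf{B}_{\mathfrak S_1,\sigma_1}^{(m_1)}$) are exactly the expected way to transfer the anisotropic decay to $\mathscr{C}^{k_0}$ test functions. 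One cosmetic remark: you only need the $H^s(M_1)\to H^{s-2\mathfrak S_1}(M_1)$ boundedness of $\exp\bigl(\pm\oM(G_{\mathfrak S_1,\sigma_1,1})\bigr)$ (stated in Appendix~\ref{aaa:exponential}, and inherited by the adjoint via duality), not that these exponentials are honest pseudo-differential operators of order $2\mathfrak S_1$ --- a point the paper explicitly does not establish --- and your estimate in fact uses only the former, so nothing is affected.
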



\subsection{Comparison between the different norms}
\label{sec:4.5}

We now discuss how the different anisotropic norms we have introduced compare to each other in various senses that are relevant to the proof of Proposition~\ref{p:bilinear}. We begin with a comparison of sliced anisotropic Sobolev norms.
\begin{lemm}\label{l:comparisonnormdr} 
Let $\mathfrak S_1,\sigma_1,\sigma_2\in\mathbb{R}$ and $m_1,m_2 \in\Z$ verifying
$$
\mathfrak{S}_1\geqslant 1,\quad m_1m_2\leqslant 0,\quad m_1 + m_2 >2,\quad\text{and}\quad\sigma_1+\sigma_2\leqslant 0.
$$
Let also $I$ be an open and bounded interval such that $\overline{I}\subset\R_+^*$ and $\chi\in\mathscr{C}^\infty_c(I)$. Then, there exist a constant $C>0$ and an integer $N\geqslant 1$ such that

$$
\left\|\mathbf{B}_{\mathfrak S_1,\sigma_1}^{(m_1)} (a\partial_r) \chi\mathbf{B}_{-\mathfrak S_1,\sigma_2}^{(m_2)}\right\|_{L^2\rightarrow L^2}\leqslant C\left\|a\right\|_{\mathscr{C}^N}, \qquad a \in \mathscr C^\infty(M_1).
 $$
\end{lemm}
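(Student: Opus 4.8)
The plan is to exploit the product structure of the operators $\mathbf B^{(\cdot)}_{\cdot,\cdot}$ on $L^2(\R_r)\otimes L^2(M_1)$. Since the symbol $G_{\mathfrak S_1,\sigma_1,1}$ is independent of $r$ and of $\rho$, formula~\eqref{eq:operatorvaluedanisotropic} shows that $\mathbf B^{(m_1)}_{\mathfrak S_1,\sigma_1}=\langle D_r\rangle^{-m_1}\otimes E_{\mathfrak S_1,\sigma_1}$, where $\langle D_r\rangle^{-m_1}$ is the Fourier multiplier in $r$ with symbol $(1+\rho^2)^{-m_1/2}$ and $E_{\mathfrak S_1,\sigma_1}:=\exp({\oM}(G_{\mathfrak S_1,\sigma_1,1}))$ acts on $M_1$; likewise $\mathbf B^{(m_2)}_{-\mathfrak S_1,\sigma_2}=\langle D_r\rangle^{-m_2}\otimes E_{-\mathfrak S_1,\sigma_2}$. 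Since $\chi=\chi(r)$ and $\partial_r$ act only on the $r$ variable while $a=a(z_1)$ acts only on $M_1$, these four operators factor accordingly and one obtains
\[
\mathbf B^{(m_1)}_{\mathfrak S_1,\sigma_1}(a\partial_r)\chi\,\mathbf B^{(m_2)}_{-\mathfrak S_1,\sigma_2}=\bigl(\langle D_r\rangle^{-m_1}\partial_r\,\chi(r)\,\langle D_r\rangle^{-m_2}\bigr)\otimes\bigl(E_{\mathfrak S_1,\sigma_1}\,a\,E_{-\mathfrak S_1,\sigma_2}\bigr),
\]
so that the $L^2$ operator norm is the product of the operator norm of the radial factor on $L^2(\R)$ and of the $M_1$-factor on $L^2(M_1)$. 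It then suffices to bound the two factors separately.

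For the radial factor I would use only that multiplication by $\chi\in\mathscr C^\infty_c(\R)$ is bounded on $H^s(\R)$ for every real $s$, and that $\langle D_r\rangle^{a}$ (resp. $\partial_r$) is a Fourier multiplier of order $a$ (resp. $1$). Chaining these mapping properties, $\langle D_r\rangle^{-m_1}\partial_r\chi(r)\langle D_r\rangle^{-m_2}$ maps $L^2(\R)$ continuously into $H^{m_1+m_2-1}(\R)$, which embeds continuously in $L^2(\R)$ because $m_1+m_2>2$; the resulting bound depends only on $m_1$, $m_2$ and $\chi$.

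The $M_1$-factor is where the work lies. I would write $E_{\mathfrak S_1,\sigma_1}\,a\,E_{-\mathfrak S_1,\sigma_2}=\bigl(E_{\mathfrak S_1,\sigma_1}\,a\,E_{\mathfrak S_1,\sigma_1}^{-1}\bigr)\bigl(E_{\mathfrak S_1,\sigma_1}\,E_{-\mathfrak S_1,\sigma_2}\bigr)$. The conjugation $E_{\mathfrak S_1,\sigma_1}\,a\,E_{\mathfrak S_1,\sigma_1}^{-1}$ is, by the (exotic) pseudodifferential calculus of Appendix~\ref{a:pseudo} applied to the exponential $\exp({\oM}(G_{\mathfrak S_1,\sigma_1,1}))$ of a symbol of logarithmic order, a pseudodifferential operator on $M_1$ of order $0$ with principal symbol $a(z_1)$, the remaining terms of the symbolic expansion involving finitely many derivatives of $a$ times symbols in $\overline{S}^{-1+}(\mathrm{T}^*M_1)$; hence it is $L^2(M_1)$-bounded with norm $\lesssim\|a\|_{\mathscr C^N}$ for $N$ large enough. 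For the composition $E_{\mathfrak S_1,\sigma_1}\,E_{-\mathfrak S_1,\sigma_2}$, the essential point is the cancellation $G_{\mathfrak S_1,\sigma_1,1}+G_{-\mathfrak S_1,\sigma_2,1}=(\sigma_1+\sigma_2)\log g_1$, which comes from the structure of the sliced escape functions in Proposition~\ref{prop:comparisonresult}: its principal symbol is $g_1^{\sigma_1+\sigma_2}$, which — together with all lower-order terms in the composition expansion — is a bounded symbol since $\sigma_1+\sigma_2\leqslant 0$ and $g_1\geqslant 1$; therefore this composition is $L^2(M_1)$-bounded. Multiplying the three resulting bounds yields the claim.

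I expect the main obstacle to be precisely the $M_1$-factor. Individually $E_{\mathfrak S_1,\sigma_1}$ and $E_{-\mathfrak S_1,\sigma_2}$ map $L^2$ only into very negative Sobolev spaces (of order $\sim-2\mathfrak S_1$ and worse), so no naive chaining of mapping properties suffices; one must genuinely track the symbol product and verify, within the exotic calculus of Appendix~\ref{a:pseudo}, that every remainder in the expansions of both the conjugation and the composition stays in a bounded symbol class and that its coefficients involve only a fixed number $N=N(\mathfrak S_1)$ of derivatives of $a$. The radial part, by contrast, is routine Fourier-multiplier bookkeeping.
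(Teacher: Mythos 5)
Your proof is correct, and its heart coincides with the paper's: everything hinges on showing that $e^{{\oM}(G_{\mathfrak S_1,\sigma_1,1})}\,a\,e^{{\oM}(G_{-\mathfrak S_1,\sigma_2,1})}$ is bounded on $L^2(M_1)$ with norm linear in $\|a\|_{\mathscr C^N}$, which you obtain (just as the paper does) from the composition calculus and the approximate expressions \eqref{eq:symbolexpleft}--\eqref{eq:symbolexpright} for the exponential, the decisive cancellation being $G_{\mathfrak S_1,\sigma_1,1}+G_{-\mathfrak S_1,\sigma_2,1}=(\sigma_1+\sigma_2)\log g_1\leqslant 0$; your extra splitting into a conjugation $E_{\mathfrak S_1,\sigma_1}aE_{\mathfrak S_1,\sigma_1}^{-1}$ times $E_{\mathfrak S_1,\sigma_1}E_{-\mathfrak S_1,\sigma_2}$ is a harmless reorganization of that same computation (note $E_{\mathfrak S_1,\sigma_1}^{-1}=E_{-\mathfrak S_1,-\sigma_1}$, so the conjugation is again a sum-of-exponents-zero composition). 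Where you genuinely diverge is the treatment of the radial variable: the paper passes to Plancherel in $r$ and performs a non-stationary-phase integration by parts with the operator $P_{\rho,\widetilde\rho}$, reducing matters to the finiteness of the explicit integral $I_{m_1,m_2,N_2}$ (this is where $m_1+m_2>2$ and the choice $N_2=-\min\{m_1,m_2\}+1$ enter), whereas you observe that, because $G_{\mathfrak S_1,\sigma_1,1}$ is independent of $(r,\rho)$, each $\mathbf B$ is an exact tensor product $\langle D_r\rangle^{-m_i}\otimes E$, and likewise $(a\partial_r)\chi=(\partial_r\chi)\otimes a$, so the operator norm factors and the radial piece is dispatched by the boundedness of $\chi$-multiplication on every $H^s(\R)$ plus Fourier-multiplier bookkeeping. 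This buys a cleaner argument that makes transparent that only $m_1+m_2>1$ and $\sigma_1+\sigma_2\leqslant 0$ are really used (the hypothesis $m_1m_2\leqslant 0$ plays no role in your route), at the modest cost of two points you should state explicitly: the tensor identity and the multiplicativity of operator norms are first checked on a dense subspace of finite sums $\sum_i f_i(r)g_i(z_1)$ and then extended by density, since the intermediate compositions pass through negative-order Sobolev spaces in $z_1$; and, as in the paper's proof, the exponential expansions may require the small rescaling of the symbol by a parameter $h$ alluded to in the paper's footnote, which is innocuous because only the behaviour outside a compact set of $\mathrm T^*M_1$ matters.
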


\begin{proof} We proceed as in the proof of Lemma~\ref{l:NZradial} and we write that
\begin{multline*}
 \left\|\mathbf{B}_{\mathfrak S_1,\sigma_1}^{(m_1)} (a(z_1)\partial_r) \chi\mathbf{B}_{-\mathfrak S_1,\sigma_2}^{(m_2)}u\right\|_{L^2}^2=\frac{1}{(2\pi)^4}\int_{\R}\frac{\rho^2}{(1+\rho^2)^{m_1}}\\
 \left\|\int_{I\times \R^2}\frac{ e^{is(\widetilde{\rho}-\rho)}\chi(s)}{(1+\widetilde{\rho}^2)^{\frac{m_2}{2}}}e^{{\oM}(G_{\mathfrak S_1,\sigma_1})}ae^{{\oM}(G_{-\mathfrak S_1,\sigma_2})} \widehat{u}(\widetilde{\rho})\,\dd s\,\dd\widetilde{\rho}\right\|^2_{L^2(M_1)}\dd \rho.
\end{multline*}
 As in the proof of Lemma~\ref{l:NZradial}, we can integrate by parts with respect to the variable $s$ using the operator $P_{\rho,\widetilde{\rho}}$ in view of getting the missing integrability in the $\rho$ (or $\widetilde{\rho}$) variable. Hence, under the assumption that $m_1 + m_2 >2$, one can find some uniform constant $C>0$ such that
 \begin{multline*}
 \left\|\mathbf{B}_{\mathfrak S_1,\sigma_1}^{(m_1)} (a\partial_r) \chi\mathbf{B}_{-\mathfrak S_1,\sigma_2}^{(m_2)}u\right\|_{L^2}^2\leqslant C\left\|e^{{\oM}(G_{\mathfrak S_1,\sigma_1,1})}ae^{{\oM}(G_{-\mathfrak S_1,\sigma_2,1})}\right\|_{L^2(M_1)\rightarrow L^2(M_1)}^2\\
 \times \int_{\R}\left\|
 \widehat{u}(\widetilde{\rho})\right\|^2_{L^2(M_1)}\dd\widetilde{\rho}.
 \end{multline*}
 Hence, applying Plancherel's formula, everything boils down to an estimate on
 $$
 \left\|e^{{\oM}(G_{\mathfrak S_1,\sigma_1,1})}ae^{{\oM}(G_{-\mathfrak S_1,\sigma_2,1})}\right\|_{L^2(M_1)\rightarrow L^2(M_1)}.
 $$
 Using the composition rule~\eqref{eq:compositionmfd2} in $S^{2\mathfrak S_1}(\mrm{T}^*M_1)$ together with the approximate expressions~\eqref{eq:symbolexpleft} and~\eqref{eq:symbolexpright} for the exponential\footnote{Note that this may require to rescale the symbol of $\mathbf{B}$ by a parameter $h>0$ that depends only on $N_1$ and on the choice of the escape function. As our arguments only care about what happens outside a compact subset of $\mrm{T}^*M_1$, this rescaling of the escape function has no importance.} of a pseudo-differential operator, one finds that this quantity is uniformly bounded as soon as $\sigma_1+\sigma_2\leqslant 0$. Moreover, the upper bound (given by the composition rule and the Calder\'on-Vaillancourt Theorem~\eqref{eq:calderonmfd}) is linear in terms of $\|a\|_{\mathscr{C}^N}$ for some large enough $N\geqslant1$. 
\end{proof}

Arguing similarly, one obtains
\begin{lemm}\label{l:comparisonnormtgt}
Let $\mathfrak S_1,\sigma_1,\sigma_2\in\mathbb{R}$ and $m_1,m_2 \in\Z$ verifying
$$
\mathfrak{S}_1\geqslant 1,\quad m_1m_2\leqslant 0,\quad m_1 + m_2 >1,\quad\text{and}\quad\sigma_1+\sigma_2+1\leqslant 0.
$$
Let also $I$ be an open and bounded interval such that $\overline{I}\subset\R_+^*$ and $\chi\in\mathscr{C}^\infty_c(I)$. Then, there exist a constant $C>0$ and an integer $N\geqslant 1$ such that, for every smooth vector field $Y$ on $M_1$, one has
 $$
 \left\|\mathbf{B}_{\mathfrak S_1,\sigma_1}^{(m_1)} Y\chi\mathbf{B}_{-\mathfrak S_1,\sigma_2}^{(m_2)}\right\|_{L^2\rightarrow L^2}\leqslant C\left\|Y\right\|_{\mathscr{C}^N}.
 $$ 
\end{lemm}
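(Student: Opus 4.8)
The plan is to mimic, almost verbatim, the proof of Lemma~\ref{l:comparisonnormdr}, the only structural difference being that $Y$ is a first-order operator along the \emph{compact} factor $M_1$ rather than in the radial direction. I would first apply Plancherel's formula in the radial variable $r$, exactly as there. Since $Y$ differentiates only along $M_1$, it commutes with the Fourier transform in $r$ and with the cutoff $\chi=\chi(r)$, so---in sharp contrast with the factor $\rho$ produced by $\partial_r$ in Lemma~\ref{l:comparisonnormdr}---no power of $\rho$ is generated. This is precisely why the weaker hypothesis $m_1+m_2>1$ (rather than $>2$) suffices here.

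Next, as in the proof of Lemma~\ref{l:NZradial}, I would integrate by parts in $s$ with the operator $P_{\rho,\widetilde\rho}$ to gain a factor $\langle\rho-\widetilde\rho\rangle^{-N_2}$ with $N_2=-\min\{m_1,m_2\}+1$; combined with the Cauchy--Schwarz inequality, the finiteness of $\int_{\R^2}(1+\rho^2)^{-m_1}(1+\widetilde\rho^2)^{-m_2}(1+(\rho-\widetilde\rho)^2)^{-N_2}\,\dd\rho\,\dd\widetilde\rho$ (which holds thanks to $m_1+m_2>1$), and Plancherel's formula, the whole estimate reduces to a uniform bound, linear in $\|Y\|_{\mathscr C^N}$ for a suitable $N\geqslant1$, on the operator norm
\[
\left\|e^{{\oM}(G_{\mathfrak S_1,\sigma_1,1})}\,Y\,e^{{\oM}(G_{-\mathfrak S_1,\sigma_2,1})}\right\|_{L^2(M_1)\to L^2(M_1)}.
\]

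To handle this I would use the composition rule~\eqref{eq:compositionmfd2} in $S^\bullet(\mrm T^*M_1)$ together with the approximate expressions~\eqref{eq:symbolexpleft}--\eqref{eq:symbolexpright} for the symbols of $e^{\pm{\oM}(G_{\mathfrak S_1,\sigma_1,1})}$ (rescaling the escape function by a small $h>0$ if necessary, exactly as in Lemma~\ref{l:comparisonnormdr}). The symbol of $Y$ is $\langle\zeta_1,Y(z_1)\rangle\in S^1(\mrm T^*M_1)$, whose seminorms are controlled linearly by $\|Y\|_{\mathscr C^N}$. Since $G_{\mathfrak S_1,\sigma_1,1}=(\mathfrak S_1\nu_1+\sigma_1)\log g_1$ and $G_{-\mathfrak S_1,\sigma_2,1}=(-\mathfrak S_1\nu_1+\sigma_2)\log g_1$, the $\mathfrak S_1\nu_1$ contributions cancel and the leading symbol of the triple product is, up to lower-order terms, $\langle\zeta_1,Y\rangle\,g_1^{\sigma_1+\sigma_2}$, a symbol of order $1+2(\sigma_1+\sigma_2)\leqslant-1<0$ under the hypothesis $\sigma_1+\sigma_2+1\leqslant0$. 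As $G_{\mathfrak S_1,\sigma_1,1}\in S^{0+}(\mrm T^*M_1)$, each derivative falling on the exponential weights costs only $\langle\zeta_1\rangle^{-1+}$, so all the remainder terms of the calculus are likewise of strictly negative order; the Calder\'on--Vaillancourt theorem~\eqref{eq:calderonmfd} then gives the desired $L^2$-bound.

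The only point that genuinely requires care is this last step: one must verify that the heuristic cancellation of the anisotropic weights $g_1^{\pm\mathfrak S_1\nu_1}$ is not spoiled by the remainders of the symbolic calculus, i.e. that these remain of order $\leqslant2(\sigma_1+\sigma_2)+\epsilon<0$ instead of order $\geqslant0$. This is exactly what the $S^{0+}$-regularity of $G_{\mathfrak S_1,\sigma_1,1}$ ensures, and it is the reason one needs the extra unit of margin $\sigma_1+\sigma_2+1\leqslant0$ (as opposed to $\sigma_1+\sigma_2\leqslant0$ in Lemma~\ref{l:comparisonnormdr}): the order-$1$ factor $\langle\zeta_1,Y\rangle$ coming from $Y$ is precisely what accounts for the discrepancy.
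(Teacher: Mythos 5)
Your proposal is correct and follows exactly the route the paper intends: the paper gives no separate argument for this lemma, stating only that one argues as in Lemma~\ref{l:comparisonnormdr}, and your adaptation (no factor of $\rho$ since $Y$ acts only along $M_1$, hence $m_1+m_2>1$ suffices; reduction via Plancherel, integration by parts with $P_{\rho,\widetilde\rho}$ and Cauchy--Schwarz to the bound on $e^{{\oM}(G_{\mathfrak S_1,\sigma_1,1})}\,Y\,e^{{\oM}(G_{-\mathfrak S_1,\sigma_2,1})}$; then the composition rule, the exponential-symbol approximations and Calder\'on--Vaillancourt, with the extra unit $\sigma_1+\sigma_2+1\leqslant 0$ absorbing the order-one symbol of $Y$) is precisely that argument. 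The only cosmetic point is that with the paper's quantization $Y={\oM}(\zeta_1(Y(z_1))+r(z_1))$ may carry a zeroth-order term, but it is also controlled by $\|Y\|_{\mathscr C^N}$ and only lowers the order, so nothing changes.
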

From the two lemmas above and Corollary~\ref{c:smoothNZ}, we deduce the following result on the semigroup $(\varphi_t)_{t\geqslant0}$:
\begin{coro}\label{c:smoothNZ2} There exists $\vartheta_0>0$ depending only on the choice of $(\nu,g_1)$ defined in Proposition~\ref{prop:comparisonresult} such that the following holds.

Let $\mathfrak S_1\geqslant 3$, $\ell\geqslant 0$, $\sigma_1\in[-1,1]$ and $m_1\in\Z$. Let also $I$ be an open and bounded interval such that $\overline{I}\subset\R_+^*$. Then, there exist  $C>0$, $N \in \mathbb{N}$ such that, for all $t\geqslant 0$, for every $u\in\mathscr{C}^N_c(I\times M_1)$, for every smooth function $a$ in $M_1$ and for every smooth vector field $Y$ on $M_1$,

\begin{equation}\label{eq:lll}
\left\|\mathbf{B}_{\mathfrak S_1,\sigma_1}^{(m_1)}X_1^\ell\left(a\partial_r+Y\right)\varphi_{-t}^*\mathbf{P}u\right\|_{L^2}\leqslant C e^{-t\vartheta_0\min I}\|u\|_{\mathscr{C}^N}\left(\|a\|_{\mathscr{C}^N}+\|Y\|_{\mathscr{C}^N}\right).
\end{equation}
\end{coro}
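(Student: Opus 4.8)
\textbf{Proof of Corollary~\ref{c:smoothNZ2}.}

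The plan is to decompose the action of the vector field $a\partial_r+Y$ and absorb each piece using the previously established sliced-anisotropic estimates, converting an unbounded operator into a composition of bounded ones at the price of inserting an inverse family $\mathbf{B}_{-\mathfrak S_1,\sigma_2}^{(m_2)}$. First I would pick an auxiliary pair of exponents $m_2\in\Z$ and $\sigma_2\in\R$ satisfying the compatibility conditions needed simultaneously by Lemma~\ref{l:comparisonnormdr} (namely $m_1+m_2>2$ and $\sigma_1+\sigma_2\leqslant 0$), by Lemma~\ref{l:comparisonnormtgt} ($m_1+m_2>1$ and $\sigma_1+\sigma_2+1\leqslant 0$), and by Corollary~\ref{c:smoothNZ} applied to the operator $\mathbf B^{(m_2,\cdot)}_{-\mathfrak S_1,-\sigma_2}$; since $\sigma_1\in[-1,1]$, choosing $\sigma_2=-\sigma_1-1$ and $m_2$ large enough (depending on $m_1$) and then $N$ accordingly makes all three sets of hypotheses hold. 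With a cutoff $\chi\in\mathscr C^\infty_c(I)$ equal to $1$ on $\supp u$, I write
$$
\mathbf{B}_{\mathfrak S_1,\sigma_1}^{(m_1)}X_1^\ell\left(a\partial_r+Y\right)\varphi_{-t}^*\mathbf{P}u
=\Bigl(\mathbf{B}_{\mathfrak S_1,\sigma_1}^{(m_1)}X_1^\ell\left(a\partial_r+Y\right)\chi\,\mathbf{B}_{-\mathfrak S_1,\sigma_2}^{(m_2)}\Bigr)\Bigl(\mathbf{B}_{-\mathfrak S_1,-\sigma_2}^{(-m_2,N)}(X_1+C_1)^{N}\varphi_{-t}^*\mathbf{P}u\Bigr),
$$
using that $\mathbf B^{(m_2)}_{-\mathfrak S_1,\sigma_2}$ has inverse $\mathbf B^{(-m_2)}_{\mathfrak S_1,-\sigma_2}$ and that $\chi$ acts as the identity on $\varphi_{-t}^*\mathbf P u$ only after one checks that $\varphi_{-t}$ preserves the $r$-variable so the support stays inside $I$ (this is where $\overline I\subset\R_+^*$ and the flow formula $\varphi_t(r,z_1)=(r,\varphi_{rt}(z_1))$ enter; note $\mathbf P$ also respects slices). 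The second factor is exactly the quantity bounded by $Ce^{-t\vartheta_0\min I}\|u\|_{\mathscr C^N}$ via Corollary~\ref{c:smoothNZ} (the extra $(X_1+C_1)^N$ and $X_1^\ell$ are harmless since one may enlarge $N$ in that corollary).

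For the first factor, I expand $X_1^\ell(a\partial_r+Y)$. The term $\partial_r$ commutes with $X_1$ (as $\partial_r$ commutes with $r^{-1}X=X_1$ on $\R\times M_1$), and $X_1^\ell$ applied to the $M_1$-vector-field part $Y$ produces, by the Leibniz/commutator rule, a finite sum of terms of the form $a'\partial_r$ with $a'=X_1^{\ell}a\in\mathscr C^\infty(M_1)$ (with $\|a'\|_{\mathscr C^N}\lesssim\|a\|_{\mathscr C^{N+\ell}}$) and $Y'$ a smooth vector field on $M_1$ built from iterated Lie brackets of $X_1$ and $Y$ and applications of $X_1^j$ to coefficients, so $\|Y'\|_{\mathscr C^N}\lesssim\|Y\|_{\mathscr C^{N+\ell}}$. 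Thus the first factor is a finite linear combination of operators $\mathbf B^{(m_1)}_{\mathfrak S_1,\sigma_1}(a'\partial_r)\chi\,\mathbf B^{(m_2)}_{-\mathfrak S_1,\sigma_2}$ and $\mathbf B^{(m_1)}_{\mathfrak S_1,\sigma_1}Y'\chi\,\mathbf B^{(m_2)}_{-\mathfrak S_1,\sigma_2}$, which are bounded on $L^2$ by $C\|a'\|_{\mathscr C^N}$ and $C\|Y'\|_{\mathscr C^N}$ respectively, directly by Lemma~\ref{l:comparisonnormdr} and Lemma~\ref{l:comparisonnormtgt}. Multiplying the two bounds gives~\eqref{eq:lll} after relabeling $N$.

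The only genuinely delicate point — and the one I'd expect to be the main obstacle — is bookkeeping the commutator expansion of $X_1^\ell$ against $a\partial_r+Y$ while keeping all operators of the form $(\text{coefficient})\partial_r$ or $(\text{smooth vector field on }M_1)$ so that Lemmas~\ref{l:comparisonnormdr} and~\ref{l:comparisonnormtgt} apply verbatim, and simultaneously tracking that the auxiliary exponents $(m_2,\sigma_2)$ and the regularity index $N$ can be chosen \emph{uniformly} (independent of $\ell$, $t$, $a$, $Y$, and $u$ beyond the stated norms). This is routine but must be done carefully, since a naive expansion could generate $\partial_r^2$ or $r$-dependent coefficients; the key structural fact preventing this is that $[X_1,\partial_r]=0$ and that $X_1$ differentiates only in the $M_1$-directions, so no second-order $r$-derivative ever appears and all coefficients remain functions on $M_1$.

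\qed
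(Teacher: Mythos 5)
Your overall strategy coincides with the paper's: expand $X_1^\ell(a\partial_r+Y)$, insert a cutoff $\chi$ and an inverse pair $\mathbf{B}^{(m_2)}_{-\mathfrak S_1,\sigma_2}\mathbf{B}^{(-m_2)}_{\mathfrak S_1,-\sigma_2}$, bound the sandwiched factor by Lemmas~\ref{l:comparisonnormdr} and~\ref{l:comparisonnormtgt} and the remaining factor by Corollary~\ref{c:smoothNZ}. However, there is a genuine gap in your treatment of $X_1^\ell$. In the statement, $X_1^\ell(a\partial_r+Y)$ is an operator \emph{composition} of order $\ell+1$, not the $\ell$-fold Lie derivative of the vector field $a\partial_r+Y$. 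Your claim that it reduces to a finite sum of first-order operators $a'\partial_r+Y'$ with $a',Y'$ on $M_1$ is false: already for $\ell=1$ one has $X_1(a\partial_r+Y)=\bigl((X_1a)\partial_r+[X_1,Y]\bigr)+(a\partial_r+Y)X_1$, and by induction $X_1^\ell(a\partial_r+Y)=\sum_{k=0}^\ell(a_{\ell,k}\partial_r+Y_{\ell,k})X_1^k$, with residual compositions $X_1^k$ of order up to $\ell$. If these $X_1^k$ stay inside your first bracket, the bracket $\mathbf{B}^{(m_1)}_{\mathfrak S_1,\sigma_1}(a_{\ell,k}\partial_r+Y_{\ell,k})X_1^k\chi\,\mathbf{B}^{(m_2)}_{-\mathfrak S_1,\sigma_2}$ is an operator of positive order and is \emph{not} covered by the comparison lemmas, which only absorb a single $\partial_r$ or a single vector field between the weights (absorbing order $k+1$ would force $\sigma_1+\sigma_2+k+1\leqslant 0$, pushing $-\sigma_2$ out of the range $[-2,2]$ needed to apply Corollary~\ref{c:smoothNZ} to the other factor). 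The fix — which is exactly the paper's proof — is to push each residual $X_1^k$ through $\varphi_{-t}^*$ and $\mathbf{P}$ onto $u$ (legitimate because the flow preserves $r$, so $X_1=r^{-1}X$ commutes with $\varphi_{-t}^*$, and $X_1$ preserves $\mrm L_1$, so it commutes with $\mathbf{P}$), and then apply Corollary~\ref{c:smoothNZ} to $X_1^k u$, paying $\ell$ extra derivatives of $u$ in the $\mathscr C^N$ norm. Your closing remark that ``no second-order $r$-derivative ever appears'' is true but beside the point: the problematic terms are the higher-order derivatives in the $M_1$-directions.

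Two secondary slips: in your displayed factorization the second factor $\mathbf{B}^{(-m_2,N)}_{-\mathfrak S_1,-\sigma_2}(X_1+C_1)^{N}$ equals $\mathbf{B}^{(-m_2)}_{-\mathfrak S_1,-\sigma_2}$, which is not the inverse of $\mathbf{B}^{(m_2)}_{-\mathfrak S_1,\sigma_2}$ (the sign of $\mathfrak S_1$ is wrong, contradicting the correct statement you make in words just after), so the identity as displayed is false; and ``$m_2$ large enough'' is incompatible with the hypothesis $m_1m_2\leqslant 0$ of the comparison lemmas when $m_1>0$, so the choice of the auxiliary exponents has to be made with the sign constraint in mind rather than by simply taking $m_2$ large.
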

Again, the same result holds true for positive time provided that we replace $\mathfrak S_1$ by $-\mathfrak S_1$ in the definition of $\mathbf{B}_{\mathfrak S_1,\sigma_1}^{(m_1)}$. 

\begin{proof}
First note that one has
 $$
 X_1^\ell(a\partial_r+Y)=X_1[X_1^{\ell-1},a\partial_r+Y]+X_1(a\partial_r+Y)X_1^{\ell-1}.
 $$
 Hence, by induction, we get
 $$
 X_1^\ell(a\partial_r+Y)=\sum_{k=0}^\ell (a_{\ell,k}\partial_r+Y_{\ell,k})X_1^k,
 $$
 where $Y_{\ell,k}$ (resp. $a_{\ell,k}$) is a vector field (resp. a function) on $M_1$ (whose semi-norms depend on a finite number of derivatives of $Y$). In particular, to get the south result, it suffices to bound the quantity
 $$
\max_{k = 0, \dots, \ell} \left\|\mathbf{B}_{N_1,\sigma_1}^{(m_1)}\left(a_{\ell,k}\partial_r+Y_{\ell,k}\right)\varphi_{-t}^*\mathbf{P}X_1^ku\right\|_{L^2}.
 $$
Such a bound is obtained as a direct consequence of Corollary~\ref{c:smoothNZ} and Lemmas~\ref{l:comparisonnormdr} and~\ref{l:comparisonnormtgt}.
\end{proof}

We end this subsection by establishing the connection between the sliced and the global anisotropic Sobolev norms. 
To this purpose, we recall that $\mathbf{A}_{\mathfrak S_0,\sigma_0}$ admits an almost inverse thanks to the composition formula. More precisely, according to Lemma~\ref{l:pseudoinverse1}, Remark~\ref{r:comparisonsobolevnorms} and given any $N'\geqslant 1$, one can find $b_{N'}\in \overline{S}^0(\mrm{T}^*(\R\times M_1))$ which is independent of $r$ and equal to $1$ modulo $\overline{S}^{-1+}(\mrm{T}^*(\R\times M_1))$ such that, for every $\chi\in\mathscr{C}^\infty_c(\R)$, it holds
\begin{equation}\label{eq:pseudoinverseA}
\left\|\left(\mathbf{Op}\left(e^{-F_{\mathfrak S_0,\sigma_0}}b_{N'}\right)\mathbf{A}_{\mathfrak S_0,\sigma_0}-\text{Id}\right)\chi\right\|_{H^{-N'}(\R\times M_1)\rightarrow H^{N'}(\R\times M_1)}<\infty.
\end{equation}
With this convention at hand, one has the following result.
\begin{lemm}\label{l:comparisonnormNZ-ours}
For every $\mathfrak S_0\geqslant 1$, $\sigma_1 < \sigma_0 \leqslant 0$,  $m_1 \geqslant 5(\mathfrak S_0+|\sigma_1|)$ and $N'\geqslant 0$, one has
 $$
 \left\|\mathbf{B}_{\mathfrak S_0,\sigma_1}^{(m_1)}\mathbf{Op}\left(e^{-F_{\mathfrak S_0,\sigma_0}}b_{N'}\right)\right\|_{L^2\rightarrow L^2} < \infty.
 $$

\end{lemm}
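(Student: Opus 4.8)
The plan is to realise $\mathbf B_{\mathfrak S_0,\sigma_1}^{(m_1)}\mathbf{Op}(e^{-F_{\mathfrak S_0,\sigma_0}}b_{N'})$ as an operator‑valued Fourier multiplier in the radial variable, so that its $L^2$‑boundedness reduces to a uniform operator‑norm bound on its $M_1$‑symbol; that bound will in turn reduce, through the pseudodifferential calculus on the compact manifold $M_1$, to a pointwise comparison of the weights $e^{G_{\mathfrak S_0,\sigma_1,1}}$ and $e^{F_{\mathfrak S_0,\sigma_0}}$, which is where Proposition~\ref{prop:comparisonresult}(v) enters. (Throughout we tacitly assume that $\sigma_0,\sigma_1$ lie in the range for which the escape functions of Proposition~\ref{prop:comparisonresult} are defined, and recall that $b_{N'}\in\overline S^0$ whatever $N'\geqslant0$ is — which is all we use about it.)

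\emph{Step 1.} Since $G_{\mathfrak S_0,\sigma_1,1}$ does not depend on $(r,\rho)$, the operator $\exp({\oM}(G_{\mathfrak S_0,\sigma_1,1}))$ acts only on $z_1$ and commutes with the Fourier multiplier $\langle\rho\rangle^{-m_1}$ in $r$. Using this together with the product form of the quantizations in \eqref{eq:anisotropicoperator} and \eqref{eq:operatorvaluedanisotropic} (see Appendices~\ref{aa:mfd} and~\ref{aa:operatorvalued}), the composition $\mathbf B_{\mathfrak S_0,\sigma_1}^{(m_1)}\mathbf{Op}(e^{-F_{\mathfrak S_0,\sigma_0}}b_{N'})$ is the operator‑valued Fourier multiplier $u\mapsto\mathcal F_r^{-1}\bigl[\rho\mapsto Q(\rho)\,\widehat u(\rho,\cdot)\bigr]$ with
\begin{equation}
Q(\rho)=\langle\rho\rangle^{-m_1}\exp\!\bigl({\oM}(G_{\mathfrak S_0,\sigma_1,1})\bigr)\,{\oM}\!\bigl((e^{-F_{\mathfrak S_0,\sigma_0}}b_{N'})(\rho,\cdot)\bigr)\ \in\ \mathcal B\bigl(L^2(M_1)\bigr).\tag{$\ast$}
\end{equation}
By Plancherel in $r$ (the $L^2$‑measure being the product measure $\dd r\,\dd L_1$), the operator norm of $\mathbf B_{\mathfrak S_0,\sigma_1}^{(m_1)}\mathbf{Op}(e^{-F_{\mathfrak S_0,\sigma_0}}b_{N'})$ on $L^2(\R\times M_1)$ equals $\sup_{\rho\in\R}\|Q(\rho)\|_{L^2(M_1)\to L^2(M_1)}$, so it is enough to bound this supremum.

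\emph{Step 2 (the crux).} By the composition rule \eqref{eq:compositionmfd2} on $M_1$ and the expansions \eqref{eq:symbolexpleft}--\eqref{eq:symbolexpright} for $\exp({\oM}(G_{\mathfrak S_0,\sigma_1,1}))$, the operator $\exp({\oM}(G_{\mathfrak S_0,\sigma_1,1}))\,{\oM}((e^{-F_{\mathfrak S_0,\sigma_0}}b_{N'})(\rho,\cdot))$ equals ${\oM}(c_\rho)$ with $c_\rho=e^{G_{\mathfrak S_0,\sigma_1,1}}\,e^{-F_{\mathfrak S_0,\sigma_0}(\rho,\cdot)}\,b_{N'}(\rho,\cdot)$ modulo terms of strictly lower order in $\zeta_1$, since $G_{\mathfrak S_0,\sigma_1,1}$ and $F_{\mathfrak S_0,\sigma_0}$ are order‑$0+$ symbols (their $\zeta_1$‑derivatives each gaining an almost full power $\langle\zeta_1\rangle^{-1}$). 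By Calderón--Vaillancourt \eqref{eq:calderonmfd} on $M_1$ it therefore suffices to prove the pointwise bound
\begin{equation}
e^{\,G_{\mathfrak S_0,\sigma_1,1}(z_1;\zeta_1)\,-\,F_{\mathfrak S_0,\sigma_0}(z_1;\rho,\zeta_1)}\ \leqslant\ C\,\langle\rho\rangle^{m_1}\,\langle\zeta_1\rangle^{-c},\qquad (z_1;\rho,\zeta_1)\in\mathrm T^*M^\times,\tag{$\star$}
\end{equation}
for some $C,c>0$, the extra decay $\langle\zeta_1\rangle^{-c}$ absorbing the (arbitrarily small) losses from the symbol calculus. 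For $|(\rho,\zeta_1)|$ bounded this is clear, so assume $|(\rho,\zeta_1)|\geqslant1$. Recalling from Definition~\ref{def:slicedorder} and Proposition~\ref{prop:comparisonresult} that $G_{\mathfrak S_0,\sigma_1}(r,z_1;\rho,\zeta_1)=G_{\mathfrak S_0,\sigma_1,1}(z_1;\zeta_1/\langle\rho\rangle)$, while $\nu_1$ is $0$‑homogeneous off the unit ball and $g_1\simeq\langle\zeta_1\rangle^2$, one checks — distinguishing the case $|\zeta_1|\geqslant\langle\rho\rangle$ (where $g_1(z_1;\zeta_1/\langle\rho\rangle)\leqslant g_1(z_1;\zeta_1)\leqslant\langle\rho\rangle^2 g_1(z_1;\zeta_1/\langle\rho\rangle)$) from the case $|\zeta_1|<\langle\rho\rangle$ (where $0\leqslant\log g_1(z_1;\zeta_1)\leqslant2\log\langle\rho\rangle+C$) and the sign of $\mathfrak S_0\nu_1+\sigma_1$ — that
$$
G_{\mathfrak S_0,\sigma_1,1}(z_1;\zeta_1)\ \leqslant\ G_{\mathfrak S_0,\sigma_1}(r,z_1;\rho,\zeta_1)+2(\mathfrak S_0+|\sigma_1|)\log\langle\rho\rangle+C.
$$
Since $\sigma_1\leqslant0$, Proposition~\ref{prop:comparisonresult}(v) gives $G_{\mathfrak S_0,\sigma_1}\leqslant F_{\mathfrak S_0,\sigma_1}+2(\mathfrak S_0+|\sigma_1|)\log\langle\rho\rangle$ on $\{|\zeta|\geqslant1\}$, and $F_{\mathfrak S_0,\sigma_1}=(\mathfrak S_0\mu+\sigma_1)\log f\leqslant(\mathfrak S_0\mu+\sigma_0)\log f=F_{\mathfrak S_0,\sigma_0}$ because $\sigma_1\leqslant\sigma_0$ and $\log f\geqslant\log Q\geqslant0$. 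Combining the three inequalities,
$$
G_{\mathfrak S_0,\sigma_1,1}(z_1;\zeta_1)\ \leqslant\ F_{\mathfrak S_0,\sigma_0}(z_1;\rho,\zeta_1)+4(\mathfrak S_0+|\sigma_1|)\log\langle\rho\rangle+C\qquad\text{on }\{|\zeta|\geqslant1\};
$$
since $m_1\geqslant5(\mathfrak S_0+|\sigma_1|)$, this already yields $(\star)$ with the spare factor $\langle\rho\rangle^{-(\mathfrak S_0+|\sigma_1|)}$ in place of $\langle\zeta_1\rangle^{-c}$. To turn this into genuine $\zeta_1$‑decay when $|\zeta_1|\gg\langle\rho\rangle$ one uses the \emph{strict} inequality $\sigma_1<\sigma_0$: near $E_\srm^*$ and $E_\urm^*$ (where $\nu_1,\mu=\pm1$ and $f\simeq g_1\simeq\langle\zeta_1\rangle^2$) one gets $G_{\mathfrak S_0,\sigma_1,1}-F_{\mathfrak S_0,\sigma_0}\leqslant2(\sigma_1-\sigma_0)\log\langle\zeta_1\rangle+C$; near $E_0^*\oplus E_\rrm^*$ one has $f=Q$, hence $F_{\mathfrak S_0,\sigma_0}=\mathcal O(1)$, while $\nu_1=0$ gives $G_{\mathfrak S_0,\sigma_1,1}\leqslant2\sigma_1\log\langle\zeta_1\rangle+C$; and in the remaining ``mixed'' directions $\langle\rho\rangle\simeq\langle(\rho,\zeta_1)\rangle\gtrsim\langle\zeta_1\rangle$ while the $\langle\rho\rangle$‑exponent stays $\leqslant-(\mathfrak S_0+|\sigma_1|)\leqslant-1$. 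Thus $(\star)$ holds with $c=\min\{2(\sigma_0-\sigma_1),\,-2\sigma_1,\,\mathfrak S_0+|\sigma_1|\}>0$.

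\emph{Step 3 and main difficulty.} Given $(\star)$, choosing in the order‑$0+$ symbol estimates the free small parameter below $c$ at each order of differentiation shows $c_\rho\in\overline S^0(\mathrm T^*M_1)$ with all seminorms $\mathcal O(\langle\rho\rangle^{m_1})$ uniformly in $\rho$; Calderón--Vaillancourt on $M_1$ then gives $\|{\oM}(c_\rho)\|_{L^2(M_1)}=\mathcal O(\langle\rho\rangle^{m_1})$, so $\sup_\rho\|Q(\rho)\|_{L^2(M_1)}=\sup_\rho\langle\rho\rangle^{-m_1}\|{\oM}(c_\rho)\|_{L^2(M_1)}<\infty$, which by Step~1 concludes the proof. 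The only delicate point is the first inequality of Step~2: passing from the sliced weight $G_{\mathfrak S_0,\sigma_1,1}$ — read in $\zeta_1$ at scale $1$ — to the global escape function $G_{\mathfrak S_0,\sigma_1}$ — read in $\zeta_1$ at scale $\langle\rho\rangle$ — forces one to treat the regimes $|\zeta_1|\geqslant\langle\rho\rangle$ and $|\zeta_1|<\langle\rho\rangle$ separately and to account exactly for the logarithmic losses, after which everything is funnelled into Proposition~\ref{prop:comparisonresult}(v); one then has to check that the hypotheses $\sigma_1<\sigma_0\leqslant0$ (sign and strictness) and $m_1\geqslant5(\mathfrak S_0+|\sigma_1|)$ are precisely what make $(\star)$ — hence the whole estimate — close.
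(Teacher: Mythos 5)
Your proof is correct and follows essentially the same route as the paper: reduce via Plancherel to a fixed-$\rho$ bound on $L^2(M_1)$, use the rescaling identity $G_{\mathfrak S_0,\sigma_1,1}(z_1;\zeta_1/\langle\rho\rangle)=G_{\mathfrak S_0,\sigma_1}(z_1;\rho,\zeta_1)$ together with Proposition~\ref{prop:comparisonresult}(v) and the monotonicity in $\sigma$ (with the strict inequality $\sigma_1<\sigma_0$ absorbing the logarithmic calculus losses), and close with Calder\'on--Vaillancourt and $m_1\geqslant5(\mathfrak S_0+|\sigma_1|)$. The only difference is organizational: you compare the symbols pointwise and apply Calder\'on--Vaillancourt once, whereas the paper factors the operator through $e^{\pm{\oM}(G_{\mathfrak S_0,\sigma_1}(\rho,\cdot))}$ and multiplies two operator-norm bounds, each costing $\mathcal O(\langle\rho\rangle^{2(\mathfrak S_0+|\sigma_1|)})$.
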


\begin{proof}
Arguing as in the proof of Lemma~\ref{l:NZradial}, owing to Plancherel's identity, it is sufficient to study, for all $\rho \in \R$, the quantity
\begin{equation}
\left\|e^{{\oM}(G_{\mathfrak S_0,\sigma_1,1})}{\oM}(e^{-F_{\mathfrak S_0,\sigma_0}(\rho)}b_{N'}(\rho))\right\|_{L^2(M_1)\rightarrow L^2(M_1)}.
\end{equation}
The latter is bounded from above by
 \begin{multline*}
 \left\|e^{{\oM}(G_{\mathfrak S_0,\sigma_1,1})} e^{-{\oM}(G_{\mathfrak S_0,\sigma_1,1}(z_1, \langle \rho \rangle^{-1}\zeta_1 ))}  \right\|_{L^2(M_1)\rightarrow L^2(M_1)} \\
 \times \left\|e^{{\oM}(G_{\mathfrak S_0,\sigma_1,1}(z_1, \langle \rho \rangle^{-1}\zeta_1 ))}{\oM}(e^{-F_{\mathfrak S_0,\sigma_0}}(\rho)b_{N'}(\rho))\right\|_{L^2(M_1)\rightarrow L^2(M_1)} .
 \end{multline*}
Using the composition rule~\eqref{eq:compositionmfd2} and relying on the approximate expressions~\eqref{eq:symbolexpleft} and~\eqref{eq:symbolexpright} for the exponential of a pseudo-differential operator, 
we have on the one hand
$$
 \left\|e^{{\oM}(G_{\mathfrak S_0,\sigma_1,1})} e^{-{\oM}(G_{\mathfrak S_0,\sigma_1,1}(z_1, \langle \rho \rangle^{-1}\zeta_1 ))}  \right\|_{L^2\rightarrow L^2}  \leqslant C \langle  \rho \rangle^{2(\mathfrak S_0 + |\sigma_1|)}.
 $$
On the other hand, we recall that by construction of the sliced and global escape functions, we have $G_{\mathfrak S_0,\sigma_1,1}(z_1, \langle \rho \rangle^{-1}\zeta_1 )=G_{\mathfrak S_0,\sigma_1}(z_1,\rho,\zeta_1 )$ and Proposition~\ref{prop:comparisonresult} gives
\begin{equation}\label{eq:comparison-escape-fct}
G_{\mathfrak S_0, \sigma_0}(z_1,\rho,\zeta_1) \leqslant F_{\mathfrak S_0, \sigma_0}(z_1,\rho,\zeta_1)  + 2(\mathfrak S_0 + |\sigma_0|) \log \langle \rho \rangle
\end{equation}
in the region $\{|\zeta|\geqslant 1\}$. By the composition rule in $S^{\mathfrak S_0+|\sigma_1|}(\mrm{T}^*M_1)$ (see Appendix~\ref{aa:compactmfd}) and the approximate expressions~\eqref{eq:symbolexpleft} and~\eqref{eq:symbolexpright} for the exponential of a pseudo-differential operator, we can, modulo a bounded operator on $L^2$ (of norm $\mathcal{O}(\langle\rho\rangle^{2(\mathfrak{S}_0+|\sigma_1|)})$), focus on estimating the following norm:
$$
    \langle\rho\rangle^{\sigma_0-\sigma_1}\left\|{\oM}\left(b(\rho)e^{(\sigma_1-\sigma_0) \log g} e^{G_{\mathfrak S_0, \sigma_0}(\rho) - F_{\mathfrak S_0, \sigma_0}(\rho)}\right)\right\|_{L^2\rightarrow L^2} 
$$
 where by assumption $\sigma_1 - \sigma_0 <0$ and where $b\in \overline{S}^0(\mrm{T}^*(\R\times M_1))$ is independent of $r$ and equal to $1$ modulo $\overline{S}^{-1+}(\mrm{T}^*(\R\times M_1))$. By the Calder\'on-Vaillancourt theorem (see e.g. \eqref{eq:calderonmfd}) and by~\eqref{eq:comparison-escape-fct},
 we get the estimate
 $$
  \left\|{\oM}\left(b(\rho)e^{(\sigma_1-\sigma_0) \log g} e^{G_{\mathfrak S_0, \sigma_0} (\rho)- F_{\mathfrak S_0, \sigma_0}(\rho)}\right)\right\|_{L^2\rightarrow L^2} \leqslant C \langle \rho \rangle^{ 2(\mathfrak S_0 + |\sigma_0|) }
  $$
  We finally obtain the bound
 $$
 \left\|e^{{\oM}\left(G_{\mathfrak S_0,\sigma_1}\right)}{\oM}\left(e^{-F_{\mathfrak S_0,\sigma_0}(\rho)}b_{N'}(\rho)\right)\right\|_{L^2\rightarrow L^2} \leqslant C \langle \rho \rangle^{4(\mathfrak S_0+ |\sigma_1|)},
 $$
 and we can conclude, relying on the fact that $m_1 \geqslant 5(\mathfrak S_0+|\sigma_1|)$.
 \end{proof}

\subsection{Proof of Proposition~\ref{p:bilinear}}
\label{sec:4.6}


We are eventually in position to prove Proposition~\ref{p:bilinear}. Before getting to this matter, we first give a description of the Hamiltonian vector field $X_\Phi$ generated by a smooth function $\Phi$ on $\Sigma$.

\subsubsection{Hamiltonian vector fields}

In what follows we will work with the coordinates $(r,z_1)=(r,x,\xi_1)\in\R_+^*\times M_1$. Recall that the connection map associated with the Riemannian metric $\mathrm{g}$ on $\Sigma$ is defined as follows. Let $z=(x,\xi)\in M=\mrm{T}^*\Sigma$ and let $z(t)=(x(t),\xi(t))$ be a curve in $M$ such that $z(0)=z$ and $z'(0)=Z$. Then, one sets
$$
\mathcal{C}_z:Z\in \mrm T_z M\mapsto \nabla_{z'(0)}\xi(0)\in \mrm T_x^*\Sigma,
$$
where $\nabla$ is the Levi-Civita connection associated to $(\Sigma,\mathrm{g})$. One can verify that $\mathcal{C}_z$ induces an isomorphism between the horizontal space $\text{Ker}(D\pi(z)),$ with $\pi:(x,\xi)\in M\mapsto x\in\Sigma$ the canonical projection~\cite[Ch.~1]{Ruggiero2007}. Then, one has that $X_\Phi(z)=\mathcal{C}_z^{-1}(\dd\Phi(x))$ with $\dd\Phi(x)$ that can be decomposed as follows
$$
\dd\Phi(x)=\mathrm \mathrm{g}_x^*\left(\dd\Phi(x),\xi_1\right)\xi_1+ D^{\perp}_{\xi_1}\Phi(x).
$$
In the coordinates $(r,z_1)=(r,x,\xi_1)\in\R_+^*\times M_1$, it finally yields the following decomposition for the vector field $X_{\Phi}$:
\begin{equation}\label{eq:potentialvectorfield}
 X_{\Phi}=\frac{1}{r}\mathcal{C}_{z_1}^{-1}\left(D^{\perp}_{\xi_1}\Phi(x)\right) + \mathrm{g}_x^*\left(\dd\Phi(x),\xi_1\right) \partial_r,
\end{equation}
where the vector field
$$
X_{\Phi}^1(z_1)=\mathcal{C}_{z_1}^{-1}\left(D^{\perp}_{\xi_1}\Phi(x)\right)
$$
can be identified with a vector field on $M_1$. This can be reexpressed in terms of pseudo-differential operators on $\R\times M_1$ as defined in Appendix~\ref{aa:operatorvalued}. Namely, one has
\begin{equation}\label{eq:potentialvectorfieldpseudo}
 X_{\Phi}=\frac{1}{r}\mathbf{Op}\left(b_1(z_1;\zeta_1)\right)+\mathbf{Op}\left(\rho \Phi_1(z_1)\right),
\end{equation}
where $b_1(z_1;\zeta_1)$ is a polynomial of degree $1$ in the $\zeta_1$ variable. Namely, one has
\begin{equation}\label{eq:semi-normsymbolpotentialvf}
 |\beta|\geqslant 2\quad \Longrightarrow \quad p_{1,\alpha,\beta}(\rho \Phi_1(z_1))=p_{1,\alpha,\beta}(b_1(z_1;\zeta_1))=0,
\end{equation}
and
 \begin{equation}\label{eq:semi-normsymbolpotentialvf2}
 |\beta|=0, 1 \quad \Longrightarrow \quad  p_{1,\alpha,\beta}(\rho \Phi_1(z_1))+p_{1,\alpha,\beta}(b_1(z_1;\zeta_1))\leqslant C_{\alpha,\beta}\|\Phi\|_{\mathscr{C}^{|\alpha|+1}(\Sigma)},
\end{equation}
 where $p_{1,\alpha,\beta}$ are the semi-norms associated to the class of symbols $\overline{S}^{1}(\mrm{T}^*(\R\times M_1))$ (see Appendix~\ref{aa:mfd}).

 \begin{rema}\label{r:potential-kernel}
For later purposes, we record a few useful relations when $\Phi=\Phi(u)=\mathrm{K}\pi_* u$, for $\mathrm{K}f(x)=\int_{\Sigma}K(x,y)f(y)\,\dd\text{vol}_{\mathrm{g}}(y)$ the smoothing interaction operator in the Vlasov equation~\eqref{eq:vlasov}. Namely, we can set
\begin{equation}\label{eq:radialcomponent-vf-potential}
K_1(x,\xi_1,y)=\mathrm{g}_x^*(\dd_xK(x,y),\xi_1)\in\mathscr{C}^\infty (M_1\times\Sigma,\R),
\end{equation}
and 
\begin{equation}\label{eq:tangentialcomponent-vf-potential}
\mathcal{X}_1(x,\xi_1,y)=\mathcal{C}_{z_1}^{-1}D_{\xi_1}^\perp K(x,y)\in \mathscr{C}^\infty(M_1\times\Sigma,TM_1).
\end{equation}
This allows to rewrite $X_{\Phi(u)}$ as
\begin{equation}\label{eq:potentialvectorfield2}
X_{\Phi(u)}(r,z_1)=\frac{1}{r}X_{\Phi(u)}^1(z_1)+\Phi_1(u)(z_1)\partial_r,
\end{equation}
where
$$
\Phi_1(u)(z_1)=\int_{M}K_1(z_1,y)u(y,\eta)\dd {\mrm L}(y,\eta)
$$
and
$$
X_{\Phi(u)}^1(z_1)=\int_{M}\mathcal{X}_1(z_1,y)u(y,\eta)\dd {\mrm L}(y,\eta),
$$
where $L$ is the Liouville measure on $M$. Hence, the semi-norms of the corresponding symbols $b_1(z_1;\zeta_1,u)$ and $\Phi_1(u)(z_1)$ can be controlled in terms of certain integrals of $u$ over $M$.
\end{rema}

 \subsubsection{Proof of Proposition~\ref{p:bilinear}}

Recall that we aim at estimating
$$
\left\|\mathbf{B}_{\mathfrak S_0,\sigma_1}^{(m_1,N_1)}\varphi_{-\tau}^*\mathbf{P}\{\Phi,\chi u\} \right\|_{L^2},
$$
which, thanks to~\eqref{eq:potentialvectorfield}, can be rewritten as
$$
\left\|\mathbf{B}_{\mathfrak S_0,\sigma_1}^{(m_1,N_1)}\varphi_{-\tau}^*\mathbf{P}\left(a_\Phi \partial_r+r^{-1}Y_\Phi\right) \chi u\right\|_{L^2},
$$
where $a_\Phi$ (resp. $Y_\Phi$) is a smooth function (resp. vector field) on $M_1$. Recall from~\eqref{eq:semi-normsymbolpotentialvf} and~\eqref{eq:semi-normsymbolpotentialvf2} that their $\mathscr{C}^N$ norms are controlled by the $\mathscr{C}^{N+1}$ norm of $\Phi$. 

We now fix $\chi_1$ which is compactly supported in $I$ and which is identically equal to $1$ on the support of $\chi$. Thanks to~\eqref{eq:pseudoinverseA}, one can then write, for every $N'\geqslant 1$,
\begin{multline*}
\left\|\mathbf{B}_{\mathfrak S_0,\sigma_1}^{(m_1,N_1)}\varphi_{-\tau}^*\mathbf{P}\{\Phi,\chi u\}\right\|_{L^2}\leqslant C_{N'}\left\|\mathbf{B}_{\mathfrak S_0,\sigma_1}^{(m_1,N_1)}\mathbf{P}\varphi_{-\tau}^*\chi_1\left(a_\Phi \partial_r+r^{-1}Y_\Phi\right)\chi_1 \right\|_{H^{N'}\rightarrow L^2} \|u\|_{L^2}\\
  +\left\|\mathbf{B}_{\mathfrak S_0,\sigma_1}^{(m_1,N_1)}\varphi_{-\tau}^*\mathbf{P}\chi_1 \left(a_\Phi \partial_r+r^{-1}Y_\Phi\right)\chi_1\mathbf{Op}\left(b_{N'}e^{-F_{\mathfrak S_0,\sigma_0}}\right)\mathbf{A}_{\mathfrak S_0,\sigma_0}\chi u\right\|_{L^2}.
\end{multline*}
Thanks to Lemmas~\ref{l:NZradial},~\ref{l:comparisonnormdr} and~\ref{l:comparisonnormtgt}, there exists and $\vartheta_0>0$ (that is independent of the choice of the interval $I$) such that, for every $\mathfrak S_0\geqslant 3$, $m_1\geqslant 2$, $N_1\geqslant 10\mathfrak S_0+m_1+4$ and $-2\leqslant \sigma_1\leqslant0$, one can find $N', N\gg 1$ and $C>0$ such that
\begin{align*}
&\left\|\mathbf{B}_{\mathfrak S_0,\sigma_1}^{(m_1,N_1)}\mathbf{P}\varphi_{-\tau}^*\chi_1\left(a_\Phi \partial_r+r^{-1}Y_\Phi\right)\chi_1 \right\|_{H^{N'}\rightarrow L^2} \\
&\qquad \qquad \leqslant \left\|\mathbf{B}_{\mathfrak S_0,\sigma_1}^{(m_1,N_1)}\mathbf{P}\varphi_{-\tau}^*\chi_1 \mathbf{B}_{-\mathfrak S_0,-\sigma_1}^{(-m_1+2)} \|_{L^2 \rightarrow L^2} \| \mathbf{B}_{\mathfrak S_0,\sigma_1}^{(m_1-2)} \left(a_\Phi \partial_r+r^{-1}Y_\Phi\right)\chi_1 \right\|_{H^{N'}\rightarrow L^2}  \\
&\qquad \qquad \leqslant
Ce^{-\vartheta_0 \tau \min I }\|\Phi\|_{\mathscr{C}^N}.
\end{align*}
Hence, we are left with estimating the term
\begin{align*}
&\left\|\mathbf{B}_{\mathfrak S_0,\sigma_1}^{(m_1,N_1)}\varphi_{-\tau}^*\mathbf{P}\chi_1 \left(a_\Phi \partial_r+r^{-1}Y_\Phi\right)\chi_1\mathbf{Op}\left(b_{N'}e^{-F_{\mathfrak S_0,\sigma_0}}\right)\right\|_{L^2\rightarrow L^2} \\
&\qquad \leqslant \left\|\mathbf{B}_{\mathfrak S_0,\sigma_1}^{(m_1,N_1)}\varphi_{-\tau}^*\mathbf{P}\chi_1\mathbf{B}_{-\mathfrak S_0,-\sigma_1}^{(-m_1+2)} \right\|_{L^2\rightarrow L^2} \left\| \mathbf{B}_{\mathfrak S_0,\sigma_1}^{(m_1-2)} \left(a_\Phi \partial_r+r^{-1}Y_\Phi\right)\chi_1 \mathbf{B}_{-\mathfrak S_0,-\sigma_1-1}^{(-m_1+5)} \right\|_{L^2\rightarrow L^2}  \\
&\qquad  \qquad \times \left\| \mathbf{B}_{\mathfrak S_0,\sigma_1+1}^{(m_1-5)} \mathbf{Op}\left(b_{N'}e^{-F_{\mathfrak S_0,\sigma_0}}\right)\right\|_{L^2\rightarrow L^2} . 
\end{align*}
We can therefore again apply Lemmas~\ref{l:NZradial},~\ref{l:comparisonnormdr} and~\ref{l:comparisonnormtgt} in addition with Lemma~\ref{l:comparisonnormNZ-ours}. This yields the existence of $\vartheta_0>0$ (that is still independent of the choice of the interval $I$) such that, for every $\mathfrak S_0\geqslant 3$, $m_1\geqslant 50\mathfrak S_0$, $N_1\geqslant 50 \mathfrak S_0+m_1$, and $\sigma_1+1<\sigma_0\leqslant 0$ with $-2\leqslant \sigma_1\leqslant -1$, one can find $N,C>0$ such that
$$
\left\|\mathbf{B}_{\mathfrak S_0,\sigma_1}^{(m_1,N_1)}\varphi_{\tau}^*\mathbf{P}\chi_1 \left(a_\Phi \partial_r+r^{-1}Y_\Phi\right)\chi_1\mathbf{Op}\left(b_{N'}e^{-F_{\mathfrak S_0,\sigma_0}}\right)\right\|_{L^2\rightarrow L^2}\leqslant Ce^{-\vartheta_0 \tau \min I }\|\Phi\|_{\mathscr{C}^N}.
$$
The proof of  Proposition~\ref{p:bilinear} is finally complete.

\section{Global existence}
\label{sec:global}

The goal of this short section is to prove the following result.

\begin{theo}
\label{theo:globalexistence}
Let $N_0 \in \mathbb{N}$.
Let $u_0 \in \mscr C_c^{N_0}(M).$ Then there exists a unique $u \in \mscr C^1\left(\R, L^1(M,\dd {\mrm L})\right)$ satisfying \eqref{eq:vlasov} on $\mathbb{R}$. In fact $u \in \mscr C^k\left(\R,  \mscr C^{N_0-k}_c(M)\right)$ for all $k=0,\ldots,N_0$.\end{theo}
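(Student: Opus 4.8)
The plan is to realize the solution as the pull-back of $u_0$ along the characteristic flow of the time-dependent Hamiltonian $H+\Phi(u(t))$, and to construct this flow together with $u$ by a contraction argument on the force field $t\mapsto\Phi(u(t))$. Two structural facts drive the proof. First, for every $k\geqslant0$ the map $v\mapsto\Phi(v)=\mathrm K\pi_*v$ is bounded from $L^1(M)$ into $\mathscr C^\infty(\Sigma)$, with $\|\Phi(v)\|_{\mathscr C^k(\Sigma)}\leqslant\|K\|_{\mathscr C^k(\Sigma\times\Sigma)}\|v\|_{L^1(M)}$; thus the nonlinearity gains infinitely many derivatives and all regularity bookkeeping decouples. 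Second, for any $\phi\in\mathscr C^1(\Sigma)$ the characteristic vector field $X_{H+\phi}$ is divergence free (it is Hamiltonian), so the associated transport flow preserves every $L^p(M,\dd\mrm L)$ norm; in particular $\|u(t)\|_{L^1}\equiv\|u_0\|_{L^1}$ along any solution, which makes the previous bound \emph{uniform in time}.

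Before the fixed point I would record the localization estimate. Along a characteristic $\gamma(t)=(x(t),\xi(t))$ of $H+\phi(t)$ one has $\tfrac{\dd}{\dd t}H(\gamma(t))=\{H,\phi(t)\}(\gamma(t))$, and since $\phi(t)$ depends on the base point only, $|\{H,\phi(t)\}(x,\xi)|\leqslant C_\Sigma(1+H(x,\xi))\|\phi(t)\|_{\mathscr C^1}$; Gronwall then gives $1+H(\gamma(t))\leqslant(1+H(\gamma(0)))\exp\!\bigl(C_\Sigma\!\int_0^t\|\phi(s)\|_{\mathscr C^1}\dd s\bigr)$. Hence, whenever $\sup_{[0,T]}\|\phi\|_{\mathscr C^1}<\infty$, the flow of $H+\phi(t)$ is globally defined on $[0,T]$ (the base manifold $\Sigma$ being compact) and carries every compact subset of $M$ into a compact subset whose size is controlled in terms of $T$, $\sup_{[0,T]}\|\phi\|_{\mathscr C^1}$ and $\supp u_0$. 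For a genuine solution, $\phi(t)=\Phi(u(t))$ satisfies $\|\phi(t)\|_{\mathscr C^1}\leqslant\|K\|_{\mathscr C^1}\|u_0\|_{L^1}$ for \emph{all} $t$, so $\supp u(t)$ stays in a fixed compact set on every bounded interval, with a bound independent of how long the solution has already existed.

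For local existence and uniqueness (say $N_0\geqslant1$; the case $N_0=0$ is recovered by smooth approximation using the $L^1$-stability estimate below), set $R_0=\|K\|_{\mathscr C^{N_0+1}}\|u_0\|_{L^1}$ and, on $Y_T:=\mathscr C^0([0,T],\mathscr C^{N_0+1}(\Sigma))$, consider the map $\mathcal F$ sending $\phi$ to $t\mapsto\Phi\bigl(u_0\circ\Psi^\phi_{t,0}\bigr)$, where $\Psi^\phi_{s,t}$ denotes the characteristic flow of $H+\phi(\cdot)$ from time $s$ to time $t$. Since $\Psi^\phi_{t,0}$ preserves $\dd\mrm L$, the first structural fact shows that $\mathcal F$ maps the closed ball $\overline B_{Y_T}(0,R_0)$ into itself for \emph{every} $T>0$; smallness of $T$ is needed only for the contraction. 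Writing $\delta(t)=\sup_{z\in\supp u_0}|\Psi^\phi_{0,t}(z)-\Psi^{\psi}_{0,t}(z)|$ and using that $X_{H+\phi(t)}-X_{H+\psi(t)}=X_{\phi(t)-\psi(t)}$ only perturbs the fibre direction, comparison of the two ODEs gives $\delta(t)\leqslant C\,t\,e^{Ct}\sup_{[0,T]}\|\phi-\psi\|_{\mathscr C^1}$ on the relevant compact set, hence, since $u_0\in\mathscr C^1$,
\[
\|\mathcal F(\phi)-\mathcal F(\psi)\|_{Y_T}\leqslant \|K\|_{\mathscr C^{N_0+1}}\,\|u_0\|_{\mathscr C^1}\,\vol(\supp u_0)\,C\,T e^{CT}\,\|\phi-\psi\|_{Y_T}.
\]
For $T=T_*$ small (depending only on $\|u_0\|_{\mathscr C^1}$, $\supp u_0$, $K$ and $(\Sigma,\mathrm g)$) this is a contraction; its fixed point $\phi$ yields a solution $u(t)=u_0\circ\Psi^\phi_{t,0}$ of \eqref{eq:vlasov} on $[0,T_*]$, and local uniqueness follows from the same estimate.

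It remains to upgrade regularity and to globalize. Since $\phi(t)=\Phi(u(t))\in\mathscr C^\infty(\Sigma)$, the vector field $X_{H+\phi(t)}$ is smooth on $M$, so $\Psi^\phi_{t,0}$ is a smooth diffeomorphism and $u(t)=u_0\circ\Psi^\phi_{t,0}\in\mathscr C^{N_0}_c(M)$; differentiating \eqref{eq:vlasov} gives $\partial_t u=-\{H+\Phi(u),u\}\in\mathscr C^{N_0-1}_c(M)$ and $\partial_t\Phi(u(t))=\Phi(\partial_t u(t))$, so by induction $u\in\mathscr C^k(I,\mathscr C^{N_0-k}_c(M))$ for $k=0,\dots,N_0$ on the interval of existence $I$, and in particular $u\in\mathscr C^1(I,L^1(M,\dd\mrm L))$. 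For globalization, the second paragraph bounds $\supp u(t)$ on any $[0,T]$; the variational equations for $\Psi^\phi_{t,0}$ then involve only the derivatives up to order $N_0$ of $X_{H+\Phi(u(s))}$ restricted to that fixed compact set, which are bounded uniformly in $s$ by the two structural facts, so $\|\Psi^\phi_{t,0}\|_{\mathscr C^{N_0}}$ and hence $\|u(t)\|_{\mathscr C^{N_0}}$ stay bounded on $[0,T]$. Therefore the local existence time $T_*$ stays bounded below along $[0,T]$, and $u$ extends to all of $\R$ (negative times being handled identically, cf. the remark at the end of the introduction). Global uniqueness follows from a Gronwall estimate on $\|u(t)-v(t)\|_{L^1}\lesssim\int_0^t\|\Phi(u(s))-\Phi(v(s))\|_{\mathscr C^1}(\cdots)\dd s$. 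The genuine obstacle is exactly this last step: since the momentum fibre is non-compact the energy $H$ could a priori grow, and ruling out finite-time blow-up requires genuinely combining the Hamiltonian structure (which bounds energy growth by $\|\Phi(u)\|_{\mathscr C^1}$) with conservation of $\|u\|_{L^1}$ (which keeps $\|\Phi(u)\|_{\mathscr C^1}$ bounded for all time); everything else is a routine Picard iteration.
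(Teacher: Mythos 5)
Your proposal is correct and follows essentially the same route as the paper: construct the solution along the characteristics of $H+\Phi(u(t))$ via a Picard/Banach fixed point whose contraction rests on a Gronwall comparison of the flows and the $\mathscr C^1$ regularity of $u_0$, control the support and the force globally by combining the smoothing bound $\|\Phi(v)\|_{\mathscr C^k}\leqslant\|K\|_{\mathscr C^k}\|v\|_{L^1}$ with conservation of $\|u(t)\|_{L^1}$ by the Hamiltonian flow, and continue the solution since the local existence time depends only on quantities that remain bounded. The only (cosmetic) differences are that you run the fixed point on the force field $\phi(t)=\Phi(u(t))$ in $\mathscr C^0([0,T],\mathscr C^{N_0+1}(\Sigma))$ rather than on the distribution in $\mathscr C^0([-\varepsilon,\varepsilon],L^1(M))$, and you globalize by direct a priori bounds on $[0,T]$ instead of the paper's contradiction argument at $T_{\mathrm{max}}$.
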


All along this section, we will use the Liouville measure $\dd {\mrm L}(r,z_1)=r^{n-1}\dd r \dd {\mrm L}_1(z_1)$ rather than $\dd r \dd {\mrm L}_1(z_1)$. We start with a local existence result.






\begin{lemm}
\label{lem:localexistence}
Let $N_0 \in \mathbb{N}$.
Let $u_0 \in \mscr C_c^{N_0}(M).$ Then there exist $\varepsilon > 0$ and  a unique $u \in \mscr C^1\left([-\varepsilon, \varepsilon], L^1(M,\dd {\mrm L})\right)$ satisfying \eqref{eq:vlasov} on $[-\varepsilon, \varepsilon]$. In fact $u \in \mscr C^k\left([-\varepsilon, \varepsilon],  \mscr C^{N_0-k}_c(M)\right)$ for all $k=0,\ldots,N_0$.
Moreover, $\varepsilon$ depends only on $\| u_0\|_{L^1(M)}$, $\| \nabla u_0\|_{L^\infty(M)}$ and on the support of $u_0$.
\end{lemm}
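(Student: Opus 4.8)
The plan is to prove local existence and uniqueness by a standard Picard iteration scheme in a suitably chosen Banach space, exploiting the fact that the geodesic flow together with the smoothing interaction kernel propagate $\mathscr C^{N_0}$ regularity and compact support. First I would reformulate \eqref{eq:vlasov} in Duhamel (mild) form: writing $X_{\mathrm H}$ for the geodesic vector field and $X_{\Phi(u)}$ for the Hamiltonian vector field of the potential (with the explicit expression \eqref{eq:potentialvectorfield}), the equation reads $\partial_t u = -X_{\mathrm H} u - X_{\Phi(u)} u$, so $u(t) = \varphi_t^* u_0 - \int_0^t \varphi_{t-s}^* \bigl(X_{\Phi(u(s))} u(s)\bigr)\,\dd s$, where $\varphi_t$ is the geodesic flow. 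Here the key structural point, visible in Remark~\ref{r:potential-kernel}, is that $\Phi(u) = \mathrm K \pi_* u$ is obtained from $u$ by integration against the smooth kernel $K$, so $\Phi(u)$ and all its derivatives are controlled by $\|u\|_{L^1(M)}$ alone: $\|\Phi(u)\|_{\mathscr C^{N}} \leqslant C_N \|u\|_{L^1(M)}$ for every $N$. This is what makes the nonlinearity "mild" despite involving a derivative of $u$ (through $X_{\Phi(u)} u = \{\Phi(u),u\}$).

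The iteration would run as follows. Fix $T_0>0$ (say $T_0 = 1$) and a compact set $\mathcal K_0 \supset \supp u_0$; let $\mathcal K_T = \bigcup_{|t|\leqslant T} \varphi_t(\mathcal K_0)$ together with a neighborhood to absorb the radial drift. On the space $E_T = \mathscr C\bigl([-T,T], \mathscr C^{N_0}_c(M)\bigr)$ with norm $\sup_{|t|\leqslant T}\|u(t)\|_{\mathscr C^{N_0}}$, restricted to functions supported in $\mathcal K_T$ with $\sup_t \|u(t)\|_{\mathscr C^{N_0}} \leqslant 2\|u_0\|_{\mathscr C^{N_0}}$, define the map $\Psi(u)(t) = \varphi_t^* u_0 - \int_0^t \varphi_{t-s}^*\{\Phi(u(s)),u(s)\}\,\dd s$. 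One checks that $\Psi$ maps this ball into itself: the term $\{\Phi(u(s)),u(s)\}$ has $\mathscr C^{N_0 - 1}$ norm bounded by $C\|\Phi(u(s))\|_{\mathscr C^{N_0}}\|u(s)\|_{\mathscr C^{N_0}} \leqslant C\|u(s)\|_{L^1}\|u(s)\|_{\mathscr C^{N_0}}$ (using that $K$ is $\mathscr C^k$ with $k$ large), and $\|u(s)\|_{L^1}$ is controlled since $u(s)$ is supported in a fixed compact set with bounded sup norm; composition with $\varphi_{t-s}$ costs a constant depending on $\mathcal K_T$ and $T$; finally, applying one derivative is lost by the commutator $\{\Phi(u(s)),\cdot\}$ but this is where one must be a bit careful --- the standard fix is to note that $\Psi(u)$ still lands in $\mathscr C^{N_0}$ because differentiating the Duhamel integral in $x$ (or $\xi$) brings the derivative onto $\Phi(u(s))$ (which is smooth) and onto $\varphi_{t-s}^*$ (smooth flow), not onto an extra derivative of $u$; more precisely, one writes $\{\Phi(u),u\} = \operatorname{div}_{\text{(Liouville)}}\bigl(\Phi\text{-transport of }u\bigr)$ is not needed --- rather, the cleanest route is the characteristics/transport estimate: $u(t)$ solves a linear transport equation $\partial_t u + (X_{\mathrm H} + X_{\Phi(u)})u = 0$ along a $\mathscr C^{N_0}$ vector field (since $X_{\Phi(u)}$ is $\mathscr C^\infty$ when $u$ is fixed), hence $u(t) = u_0 \circ (\text{flow of that field})^{-1}_t$ preserves $\mathscr C^{N_0}$ regularity and compact support, with norm estimate by Grönwall. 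The contraction estimate is analogous: for $u, v$ in the ball, $\Psi(u)(t) - \Psi(v)(t) = -\int_0^t \varphi_{t-s}^*\bigl(\{\Phi(u-v),u\} + \{\Phi(v),u-v\}\bigr)\dd s$, and $\|\{\Phi(u-v),u\}\|_{\mathscr C^{N_0-1}} \leqslant C\|u-v\|_{L^1}\|u\|_{\mathscr C^{N_0}}$ while $\|\{\Phi(v),u-v\}\|_{\mathscr C^{N_0-1}} \leqslant C\|v\|_{L^1}\|u-v\|_{\mathscr C^{N_0}}$; again using that $L^1$ norms are controlled by $\mathscr C^0$ norms on a fixed compact set, one gets $\|\Psi(u) - \Psi(v)\|_{E_T} \leqslant C T \|u_0\|_{\mathscr C^{N_0}} \|u - v\|_{E_T}$, so $\Psi$ is a contraction for $T = \varepsilon$ small enough, with $\varepsilon$ depending only on $\|u_0\|_{L^1}$, $\|\nabla u_0\|_{L^\infty}$ and $\supp u_0$ (the last two entering through the size of $\mathcal K_T$ and the constant in the transport estimate). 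The fixed point $u \in E_\varepsilon$ then solves \eqref{eq:vlasov}; the higher regularity $u\in \mathscr C^k([-\varepsilon,\varepsilon],\mathscr C^{N_0-k}_c)$ follows by differentiating the equation in $t$ and inducting, since $\partial_t u = -\{\mathrm H + \Phi(u),u\}$ gains a factor of smoothness in $t$ for each derivative lost in space.

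The main obstacle, and the point deserving the most care, is the apparent loss of one derivative in the nonlinear term $\{\Phi(u),u\}$: naively the Duhamel map sends $\mathscr C^{N_0}$ to $\mathscr C^{N_0-1}$. The resolution is that this term is genuinely a \emph{transport} nonlinearity --- the bracket $\{\Phi(u),u\} = X_{\Phi(u)} u$ is a first-order operator applied to $u$ with coefficients ($X_{\Phi(u)}$) that are as smooth as we like --- so one should not treat $\Psi$ as a perturbation of the free flow at the level of a fixed-point in $\mathscr C^{N_0}$ but rather work directly with the flow of the full (nonlinear-but-for-fixed-$u$) vector field, equivalently run the Picard iteration at the level of the characteristics. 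Concretely: build the sequence $u_{j+1}(t) = u_0 \circ \Theta^{u_j}_{t,0}{}^{-1}$ where $\Theta^{u_j}$ is the flow of $X_{\mathrm H} + X_{\Phi(u_j(\cdot))}$; then each $u_j$ is manifestly in $\mathscr C^{N_0}_c$ with no loss, the compact support is automatically propagated (the characteristics move at finite speed, controlled by $\|\Phi(u_j)\|_{\mathscr C^1} \lesssim \|u_0\|_{L^1}$), and one only needs Lipschitz-in-$u$ dependence of the flow $\Theta^u$, which is classical. One also should record that the support stays away from the null section on a short time interval as long as $u_0$ is supported in $\{|\xi|_x > r_0\}$ --- this is not needed for Lemma~\ref{lem:localexistence} itself but is the reason the radial drift $\mathrm g_x^*(\dd\Phi,\xi_1)\partial_r$ in \eqref{eq:potentialvectorfield} causes no blow-up of $1/r$ coefficients. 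Finally, uniqueness in the class $\mathscr C^1([-\varepsilon,\varepsilon],L^1)$ (rather than just $\mathscr C^1([-\varepsilon,\varepsilon],\mathscr C^{N_0}_c)$) follows by a Grönwall argument directly on the difference of two solutions in $L^1$: if $u,\tilde u$ both solve \eqref{eq:vlasov}, then $w = u - \tilde u$ solves $\partial_t w + X_{\mathrm H} w + X_{\Phi(u)} w = -X_{\Phi(w)}\tilde u$, and integrating against $\operatorname{sgn}(w)$ (or using that $X_{\mathrm H} + X_{\Phi(u)}$ is divergence-free for the Liouville measure) gives $\frac{\dd}{\dd t}\|w(t)\|_{L^1} \leqslant \|X_{\Phi(w)}\tilde u\|_{L^1} \leqslant \|\Phi(w)\|_{\mathscr C^1}\|\nabla \tilde u\|_{L^1} \leqslant C\|w\|_{L^1}\|\tilde u\|_{\mathscr C^1_c}$, whence $w\equiv 0$.
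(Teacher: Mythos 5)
Your final scheme --- iterating $v \mapsto u_0\circ(\text{flow of } X+X_{\Phi(v)})$ and contracting in a low-regularity norm, using $\|\Phi(w)\|_{\mathscr C^1}\lesssim \|w\|_{L^1}$, a Gr\"onwall bound on the differential of the flow, and $\|\nabla u_0\|_{L^\infty}$, then recovering the $\mathscr C^{N_0-k}$ regularity from the representation $u(t)=u_0\circ\psi^u_{t,0}$ and uniqueness by an $L^1$ Gr\"onwall argument --- is essentially identical to the paper's proof, which performs exactly this fixed-point argument in $\mathscr C^0([-\varepsilon,\varepsilon],L^1(M,\dd{\mrm L}))$. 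Your initial Duhamel iteration in $\mathscr C^{N_0}$ indeed does not close because of the derivative loss you yourself identify (the stated contraction bound in the $\mathscr C^{N_0}$ norm is not justified), but since you discard it in favour of the characteristics formulation, the argument as finally proposed is correct and takes the same route as the paper.
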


\begin{proof}
The proof is based on a classical argument involving the Banach theorem. Note that $u \in \mathscr{C}^1\left([-\varepsilon, \varepsilon], L^1(M)\right)$ satisfies \eqref{eq:vlasov} if and only if one has
\begin{equation}\label{eq:fixedpoint}
u(t) = \varphi_{-t}^*u_0 - \int_0^t \varphi_{s-t}^* \{\Phi(u(s)), u(s)\} \dd s, \quad |t|\leqslant \varepsilon.
\end{equation}
If $v \in \mscr C^0([-\varepsilon, \varepsilon], L^1(M))$, we let $u = \Psi(v) \in \mscr C^1([-\varepsilon, \varepsilon], L^1(M))$ be the unique solution of the equation
$$
\partial_t u + (X + X_{\Phi(v)}) u=0, \quad u|_{t=0} = u_0.
$$
where $X_{\Phi(v)} \in \mscr C^\infty(M, TM)$ is the Hamiltonian vector field generated by $\Phi(v)$, that is $X_{\Phi(v)} f = \{\Phi(v), f\}$ for $f \in \mscr C^1(M)$. We shall explain in a few lines why it is indeed defined on $[-\varepsilon,\varepsilon]$. Equivalently, one can write
$$
\Psi(v)(t) = u_0 \circ \psi^v_{t, 0}
$$
where $\psi^v_{t,s} : M \to M$ is the non-autonomous flow associated to the vector field $X + X_{\Phi(v)}(t)$, defined by
\begin{equation}\label{eq:cauchylip}
\partial_t \psi^v_{t,s}(z) + [X + X_{\Phi(v)(t)}](\psi^v_{t,s}(z))=0, \quad \psi^v_{s,s}(z) = z, \quad z \in M.
\end{equation}
Recall from the Cauchy--Lipschitz Theorem (see \cite[Theorem V.4.1]{hartman1982ordinary} for example) that the map $(t,z)\in[-\varepsilon,\varepsilon]\times M\mapsto \psi_{t,0}^v(z)\in M$ is smooth with respect to the $z$-variable and of class $\mathscr{C}^1$ with respect to the time variable (as $\Phi(v)$ is $\mathscr{C}^\infty$).  In~\eqref{eq:potentialvectorfield}, we already gave an exact expression of this vector field. In particular, given an orbit $(r(t),z_1(t))$ of this vector field, one has
$$
r(t)=r(0)+\int_0^t\Phi_1(v)(s)\,\dd s
,$$
so that every orbit remains in a compact set of $M$. In particular, if $u_0$ is supported in the region $\{|\xi|\leqslant R_0\}$, then the solution $u$ is indeed defined on $[-\varepsilon,\varepsilon]$ and supported in $\{|\xi|\leqslant R_0+C_K\varepsilon |v|_{L^1(M)}\}$, where $C_K>0$ depends only on the interaction kernel $K$.

We want to find a fixed point of $\Psi$. Let $B$ be the ball of radius $|u_0|_{L^1}$ in the space $\mscr C^0([-\varepsilon, \varepsilon], L^1(M))$, for the norm
$$
\|v\|_1 = \sup_{|t|\leqslant \varepsilon} \|v(t)\|_{L^1(M, \dd {\mrm L})}.
$$
Note that $\Psi : B \to B$. Let $v_1, v_2 \in B$ and set $u_j = \Psi(v_j)$. We write
$$
\partial_t(u_2-u_1)= -\left(X+X_{\Phi(v_2)}\right)(u_2-u_1) +X_{\Phi(v_2)-\Phi(v_1)}u_1,
$$
from which we infer that
$$
u_2(t)-u_1(t)=\int_0^t (\psi_{t,s}^{v_2})^*X_{\Phi(v_2)(s)-\Phi(v_1)(s)}u_1(s)\dd s.
$$
As $\psi_{t,s}^{v_2}$ is volume preserving, one finds that

$$
\begin{aligned}
\left\|u_2(t)- u_1(t)\right\|_{L^1(M,\dd {\mrm L})} &\leqslant \int_0^t \left|(\psi_{t,s}^{v_2})^* X_{\Phi(v_2-v_1)} u_2(s)\right|_{L^1(M,\dd {\mrm L})} \dd s \\
&\leqslant \varepsilon \sup_{|t|\leqslant\varepsilon}\|X_{\Phi(v_2-v_1)}(t)\|_\infty \|u_1(t)\|_{\mscr C^1(M)}.
\end{aligned}
$$
Here the norm on $\mscr C^1(M)$ is taken with respect to any smooth metric on $M$. One has $\|X_{\Phi(v_2 - v_1)}\|_\infty \leqslant C \|\Phi(v_2 - v_1)\|_{\mathscr{C}^1} \leqslant \widetilde{C} \|v_2 - v_1\|_{1}$, and 
$$
\|u_1\|_{\mscr C^1(M)} = \sup_{|t|\leqslant \varepsilon} \|u_0 \circ \psi^{v_1}_{t,0}\|_{\mscr C^1(M)} \leqslant \|\nabla u_0\|_{\infty} \sup_{|t|\leqslant \varepsilon}|\dd \psi_{t,0}^{v_1}|_{L^\infty(Q)}
$$
where $Q = \{|\xi|\leqslant R_0+C_K\|u_0\|_{L^1}\varepsilon\}$. Now, by differentiating~\eqref{eq:cauchylip}, we find that $\dd \psi_{t,s}^{v_1}$ solves a linear differential equation, namely
$$
\partial_t\dd \psi_{t,s}^{v_1}(z)=\dd \left(X+X_{\Phi(v_1)(t)}\right)\left(\psi_{t,s}^{v_1}(z)\right)\dd \psi_{t,s}^{v_2}(z),\quad \dd \psi_{s,s}^{v_2}(z)=\text{Id}.$$
Applying Gronwall's Lemma, one finds that $|\dd \psi_{t,0}^{v_1}|_{L^\infty(Q)}$ is bounded by 
$$Ce^{C\varepsilon(R_0+C_K(\varepsilon+1)|v|_{L^1})}\leqslant Ce^{\widetilde{C}\varepsilon(R_0+|u_0|_{L^1})},$$ 
where $C,\widetilde{C}>0$ are constants that depend only on the interaction kernel. This implies the existence of a constant $C_{\mathrm{g},K}$ (depending only on the interaction kernel $K$ and on the metric $\mathrm{g}$) such that, for all $|t|\leqslant\varepsilon$,
$$
\|u_2(t)-u_1(t)\|_{L^1(M,\dd {\mrm L})}\leqslant C_{\mathrm{g},K}\varepsilon e^{\varepsilon C_{\mathrm{g},K}(R_0+\|u_0\|_{L^1})}\|\nabla u_0\|_{L^\infty}\|v_2-v_1\|_{L^1(M,\dd {\mrm L})}.
$$

In particular, $\Psi$ is a contraction as soon as $\varepsilon$ is small enough. Now the Banach theorem yields the existence of a unique fixed point $u \in B$ of $\Psi$. Next, we write that
\begin{equation}\label{eq:ufp}
u(t) = u_0 \circ \psi^{\urm}_{t, 0}, \quad |t|\leqslant\varepsilon.
\end{equation}
We already saw that $(t,z)\in[-\varepsilon, \varepsilon] \times M  \mapsto \psi^{\urm}_{t,0}(z)\in M$ is smooth with respect to $z$, $\mathscr{C}^1$ with respect to $t$ and proper (so that $u(t)$ has compact support). Applying \eqref{eq:ufp} inductively, one obtains the expected regularity of $u$.
\end{proof}

On the other hand, exploiting the Hamiltonian structure of the Vlasov equation, we have

\begin{lemm}
\label{lem:conservation-Lp}
Let $J$ be an interval containing $0$. Let $u$ be a solution to~\eqref{eq:vlasov} on $J$. Then for all $p \geqslant 1$ and all $t \in J$,
$$
\| u(t) \|_{L^p(M,\dd {\mrm L})} = \| u_0 \|_{L^p(M,\dd {\mrm L})},
$$
where the $L^p$ norm is taken with respect to the Liouville measure.
\end{lemm}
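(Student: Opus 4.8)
The plan is to exploit the Hamiltonian --- hence volume-preserving --- structure of the evolution, exactly as in the proof of Lemma~\ref{lem:localexistence}. Recall from that proof that, on $J$, the solution is transported by the non-autonomous flow, namely $u(t) = u_0 \circ \psi^{\urm}_{t,0}$, where $\psi^{\urm}_{t,s}$ is given by \eqref{eq:cauchylip} with $v = u$, i.e. it is the flow generated by the time-dependent vector field $-(X + X_{\Phi(u(t))})$; here $X = X_{\mathrm H}$ is the Hamiltonian vector field of $\mathrm H$ on $M = \mathrm T^*\Sigma$ and $X_{\Phi(u(t))}$ is the Hamiltonian vector field of $\pi^*\Phi(u(t))$. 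By Lemma~\ref{lem:localexistence}, $u(t)$ is continuous with compact support, hence lies in $L^p(M, \dd{\mrm L})$ for every $p \in [1,\infty]$.

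The key point is that each $\psi^{\urm}_{t,s}$ preserves $\dd{\mrm L}$. Writing $\om$ for the canonical symplectic form of $M$, the Liouville measure $\dd{\mrm L}$ is, up to a constant, the $n$-th power $\om^n$; and for any (possibly time-dependent) Hamiltonian $G$, Cartan's formula gives $\Lie_{X_G}\om = \dd\,\iota_{X_G}\om + \iota_{X_G}\,\dd\om = 0$, hence $\Lie_{X_G}(\om^n) = 0$. In particular both $X$ and $X_{\Phi(u(t))}$ are divergence-free with respect to $\dd{\mrm L}$, so $t \mapsto (\psi^{\urm}_{t,s})_*\dd{\mrm L}$ solves
$$
\frac{\dd}{\dd t}\bigl[(\psi^{\urm}_{t,s})_*\dd{\mrm L}\bigr] = (\psi^{\urm}_{t,s})_*\bigl(\Lie_{X + X_{\Phi(u(t))}}\dd{\mrm L}\bigr) = 0 ,
$$
and therefore $(\psi^{\urm}_{t,s})_*\dd{\mrm L} = \dd{\mrm L}$ for all $s,t \in J$. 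This is precisely the volume-preservation invoked in the proof of Lemma~\ref{lem:localexistence}.

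It then remains to apply the change-of-variables formula: for $t \in J$ and $p \in [1,\infty)$,
$$
\int_M |u(t)|^p \,\dd{\mrm L} = \int_M \bigl|u_0 \circ \psi^{\urm}_{t,0}\bigr|^p \,\dd{\mrm L} = \int_M |u_0|^p \,\dd\bigl((\psi^{\urm}_{t,0})_*\dd{\mrm L}\bigr) = \int_M |u_0|^p \,\dd{\mrm L},
$$
and taking $p$-th roots yields $\|u(t)\|_{L^p(M,\dd{\mrm L})} = \|u_0\|_{L^p(M,\dd{\mrm L})}$; the case $p = \infty$ is immediate since $\psi^{\urm}_{t,0}$ is a diffeomorphism.

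I do not expect a genuine difficulty here. The only point requiring a little care is that for $p < 2$ one may not differentiate $t \mapsto \int_M |u(t)|^p\,\dd{\mrm L}$ under the integral sign, since $s \mapsto |s|^p$ fails to be $\mscr C^1$ near $0$; this is why I route the argument through the measure-preserving flow rather than through the formal identity $\frac{\dd}{\dd t}\int_M |u|^p\,\dd{\mrm L} = -\int_M X_{\mathrm H + \Phi(u)}\bigl(|u|^p\bigr)\,\dd{\mrm L} = \int_M |u|^p\,\div_{\mrm L}\!\bigl(X_{\mathrm H + \Phi(u)}\bigr)\,\dd{\mrm L} = 0$. (One could alternatively justify the latter for all $p \geqslant 1$ by smoothing $s \mapsto |s|^p$, but the flow argument is cleaner given what is already available.)
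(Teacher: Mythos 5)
Your proof is correct and is precisely the argument the paper intends: the paper states this lemma without proof, attributing it to "the Hamiltonian structure", i.e. the characteristic flow $\psi^{u}_{t,s}$ of $X+X_{\Phi(u(t))}$ preserves the Liouville measure (volume preservation of this flow is already used in the proof of Lemma~\ref{lem:localexistence}) and $u(t)=u_0\circ\psi^{u}_{t,0}$, so all $L^p$ norms are conserved by change of variables. Your remark about avoiding differentiation of $|u|^p$ under the integral for $p<2$ is a sensible precaution, but it does not mark a departure from the paper's (implicit) route.
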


Finally let us proceed with the proof of Theorem~\ref{theo:globalexistence}.

\begin{proof}
Let $T_{\mathrm{max}}$ be the maximal (forward) time of existence associated to the initial condition $u_0$, that is 
$$
T_{\mathrm{max}} = \sup \left\{T>0, \exists u \in \mathscr{C}^1([0,T], L^1(M,\dd {\mrm L})) \text{ solution to  } \eqref{eq:vlasov} \text{  with initial condition  } u_0 \right\}.
$$
Thanks to Lemma~\ref{lem:localexistence}, $T_{\mathrm{max}}>0$. If $T_{\mathrm{max}} = + \infty$, then we may conclude as in the end of the proof of Lemma~\ref{lem:localexistence}. Let us show that the case $T_{\mathrm{max}} <+  \infty$ is impossible. Would it be the case, we could proceed as in the proof of Lemma~\ref{lem:localexistence} to control $\| \nabla u(t) \|_{L^\infty(M)}$ in the following way. First, using Lemma~\ref{lem:conservation-Lp} with $p=1$, if $u_0$ is supported in $\{|\xi|\leqslant R_0\}$, then $u(t)$ is supported in $Q_{\mathrm{max}}=\{|\xi|\leqslant R_0+C_K T_{\mathrm{max}}\|u_0\|_{L^1}\}$. Then, we can write
$$
\|\nabla u(t)\|_{L^\infty} = \|\nabla\left(u_0 \circ \psi^{u}_{t,0}\right)\|_{L^\infty} \leqslant \|\nabla u_0\|_{L^\infty} \|\dd \psi_{t,0}^{u}\|_{L^\infty(Q_{\mathrm{max}})}.
$$
One thus finds that $|\dd \psi_{t,0}^{u}|_{L^\infty}$ is bounded by $Ce^{C T_{\mathrm{max}} (R_0+(1+T_{\mathrm{max}})|u_0|_{L^1})}$, where $C>0$ is a constant that depends only on the interaction kernel $K$ and on the metric $\mathrm{g}$. 
In particular, there exists a constant $C_\mathrm{max}$ with a dependence with respect to the solution only through  $\| u_0 \|_{L^1(M,\dd {\mrm L})}$, $ \|\nabla u_0 \|_{L^\infty(M)}$, the support of $u_0$ and $T_{\mathrm{max}}$, such that for all $t \in [0,T_\mathrm{max})$, we have $\| \nabla u(t) \|_{L^\infty(M)} \leqslant C_\mathrm{max}$. 


Now let $\eta>0$ to be fixed later and consider the Cauchy problem
\begin{equation}\label{eq:vlasovTmax}
 \partial_t \widetilde{u}+ \left\{\mathrm H + \Phi(\widetilde{u}), \, \widetilde{u} \right\}=0,\quad \widetilde{u}|_{t=T_{\mathrm{max}} - \eta }=u(T_{\mathrm{max}} - \eta ). 
 \end{equation}
Owing to Lemma~\ref{lem:localexistence} (using the invariance by translation in time of the equation), there exists $\varepsilon>0$ depending only on $\| u(T_{\mathrm{max}} - \eta )  \|_{L^1(M,\dd {\mrm L})}$, $\| \nabla u(T_{\mathrm{max}} - \eta ) \|_{L^\infty(M)}$, and the support of $u(T_{\mathrm{max}} - \eta )$, and therefore only on $\| u_0 \|_{L^1(M,\dd {\mrm L})}$, $ \|\nabla u_0 \|_{L^\infty(M)}$, the support of $u_0$ and $T_{\mathrm{max}}$, such that \eqref{eq:vlasovTmax}
is uniquely solvable on $[T_{\mathrm{max}} - \eta - \varepsilon, T_{\mathrm{max}} - \eta  + \varepsilon]$. We can therefore pick $\eta = \varepsilon/2$, which shows by a uniqueness argument that the solution to~\eqref{eq:vlasov} can be continued beyond $T_{\mathrm{max}}$. It leads to the expected contradiction.
We argue identically for the backward times of existence, which concludes the proof of the theorem.
\end{proof}

\section{Nonlinear exponential stability}
\label{sec:mainproof}

This final Section is dedicated to the proof of Theorem~\ref{thm:main}.
To this purpose, we argue with a bootstrap argument, which we initiate in Subsection~\ref{sec:6.1}. Loosely speaking, the idea is to work on the largest interval of nonnegative times $\mathcal{J}_{k}(\varepsilon)$ on which the potential $\Phi(u)$ decays exponentially fast for a well-chosen rate (with a control by a small enough constant $\varepsilon$), with the aim to show that $\mathcal{J}_{k}(\varepsilon)$ is actually $\mathbb{R}_+$: that this is the case corresponds precisely to the main result of this Section, namely Theorem~\ref{theo:connected}. Theorem~\ref{thm:main} then follows.

Subsection~\ref{sec:6.2} corresponds to a preliminary step in which we show that on  $\mathcal{J}_{k}(\varepsilon)$, the support of the solution to the Vlasov equation~\eqref{eq:vlasov} remains far from the null section. In Subsection~\ref{sec:6.3}, we reduce the proof of Theorem~\ref{thm:main} to two statements, Lemmas~\ref{l:mainPhi} and~\ref{l:boundednorm}. Lemma~\ref{l:mainPhi} states that a refined control of $\Phi(u)$ (with exponential decay) can be obtained on $\mathcal{J}_{k}(\varepsilon)$, modulo the uniform control of a sliced anisotropic Sobolev norm. This uniform control correspond precisely to Lemma~\ref{l:boundednorm}.
The proof of Lemma~\ref{l:mainPhi} is performed in Subsection~\ref{sec:6.4} and relies crucially on many results from Section~\ref{sec:anisotropic}.
Subsection~\ref{sec:6.5} is then about the proof of Lemma~\ref{l:boundednorm}. We again reduce it to the proof of two other statements, Lemmas~\ref{l:BtoA} and~\ref{l:polynomialbound}.  Lemma~\ref{l:BtoA} is a refined estimate of the sliced anisotropic Sobolev norm, which shows that it is sufficient to prove an estimate of a global anisotropic Sobolev norm with a polynomial growth in time. To prove Lemma~\ref{l:BtoA}, we use crucially the bilinear estimate of Proposition~\ref{p:bilinear} from Section~\ref{sec:anisotropic}. This polynomial growth is precisely the purpose of Lemma~\ref{l:polynomialbound}.
Everything therefore finally boils down to an estimate of the global anisotropic Sobolev norm, that is associated with our global escape function.
This is obtained thanks to a microlocal energy estimate for the Vlasov equation~\eqref{eq:vlasov}, which we perform in Subsection~\ref{ss:proof-microlocal}.
We conclude with Subsection~\ref{sec:weak} in which we finally prove the weak convergence result (with exponential speed) of Theorem~\ref{thm:main}.

\subsection{The bootstrap argument}
\label{sec:6.1}
We are now in position to prove Theorem~\ref{thm:main}. Let $u_0$ be a smooth function in $\mathscr{C}^{k_0}_c(\R_+^*\times M_1)$ (with $k_0\gg 1$) and denote by $u(t,r,z_1)$ the solution to~\eqref{eq:vlasov} given by Theorem~\ref{theo:globalexistence}. Let $r_0\in\left]0,1\right[$ be such that
$$
\supp u_0 \subset \{(x,v) \in \mathrm{T}^*\Sigma~:|v| \in[ r_0,r_0^{-1}]\}.
$$
Let $0<\varepsilon\leqslant 1$ be a (small) parameter to be fixed later on and take also $\vartheta=\vartheta_0 r_0/4$, where $\vartheta_0>0$ is the rate of convergence appearing in Section~\ref{sec:anisotropic}, namely the one given by Proposition~\ref{p:bilinear}, Lemma~\ref{l:NZradial}, Corollaries~\ref{c:smoothNZ}, and~\ref{c:smoothcorrelation}, and~\eqref{eq:liverani}. This parameter $\vartheta$ is fixed once and for all. For every $k\geqslant1$, we introduce the following subinterval of $\R_+$:
\begin{equation}
\label{def:Tstar-boostrap}
\mathcal{J}_{k}(\varepsilon,u_0) = \Bigg\{ T \geqslant 0:\ \forall t \in [0,T],\ 
 e^{\vartheta t} \| \Phi(u (t))  \|_{\mathscr{C}^k(\Sigma)}\leqslant \varepsilon \Bigg\}.
\end{equation}
Imposing $\|u_0\|_{L^1(M)}$ small enough with respect to $\varepsilon$ (in terms of $\|K\|_{\mathscr{C}^k}$), this defines by continuity a nonempty interval which is closed by continuity of the solution with respect to the time variable $t$. Our goal is to show that this interval is also open for appropriate choices of the parameters $\varepsilon$ and $k$. This will imply that this interval is $\R_+$. 
We will then be able to complete the proof of Theorem~\ref{thm:main} in Section~\ref{sec:weak}. 
Hence, the main statement of this Section reads:
\begin{theo}\label{theo:connected} 
There exist $k,k_0\geqslant 0$ such that, for all  $r_0\in(0,1)$, one can find $\varepsilon \in (0,1)$ and $c_0>0$ such that, for all $u_0\in\mathscr{C}^{k_0}_c(M^\times)$ that is supported in $[r_0,r_0^{-1}]\times M_1$ and satisfying $\|u_0\|_{\mathscr{C}^{k_0}}\leqslant c_0\varepsilon$, one has
$$
\mathcal{J}_{k}(\varepsilon,u_0)=\R_+,
$$
where $\mathcal{J}_{k}(\varepsilon,u_0)$ is defined by~\eqref{def:Tstar-boostrap}.
\end{theo}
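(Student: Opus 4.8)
\textbf{Strategy: openness of $\mathcal{J}_k(\varepsilon,u_0)$ via a bootstrap.} The plan is to argue by contradiction. Suppose $T=T_*<\infty$ is the right endpoint of $\mathcal{J}_k(\varepsilon,u_0)$, so that $e^{\vartheta t}\|\Phi(u(t))\|_{\mathscr C^k}\leqslant\varepsilon$ for $t\in[0,T_*]$ with equality at $t=T_*$. We will improve the bound on $[0,T_*]$ to $e^{\vartheta t}\|\Phi(u(t))\|_{\mathscr C^k}\leqslant \varepsilon/2$ (say), provided $\varepsilon$ and $c_0$ are small enough; by continuity of $t\mapsto\|\Phi(u(t))\|_{\mathscr C^k}$ this strict improvement forces the interval to extend past $T_*$, contradicting maximality, hence $\mathcal J_k(\varepsilon,u_0)=\R_+$. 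The whole content is in producing this improved estimate, and the paper's own roadmap (Subsections~\ref{sec:6.2}--\ref{ss:proof-microlocal}) tells us exactly how: first control the support, then estimate $\Phi(u(t))$ via a Duhamel/bilinear argument modulo a sliced anisotropic norm (Lemma~\ref{l:mainPhi}), then control that sliced norm uniformly (Lemma~\ref{l:boundednorm}), which itself reduces to a polynomial-in-time bound on a global anisotropic norm (Lemmas~\ref{l:BtoA} and~\ref{l:polynomialbound}) obtained by a microlocal energy estimate.

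\textbf{Step 1: the support stays away from the null section.} Using the expression~\eqref{eq:potentialvectorfield2} for $X_{\Phi(u)}$, along any characteristic $(r(t),z_1(t))$ one has $r(t)=r(0)+\int_0^t\Phi_1(u(s))(z_1(s))\,\dd s$, and $|\Phi_1(u(s))|$ is controlled by $\|\Phi(u(s))\|_{\mathscr C^1}\leqslant\varepsilon e^{-\vartheta s}$ on $\mathcal J_k(\varepsilon,u_0)$, which is integrable. Hence for $\varepsilon$ small, if $\supp u_0\subset\{|\xi|\in[r_0,r_0^{-1}]\}$ then $\supp u(t)\subset\{|\xi|\in[r_0/2,2r_0^{-1}]\}=:I\times M_1$ for all $t\in\mathcal J_k(\varepsilon,u_0)$, uniformly in $t$. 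Fix once and for all $\chi\in\mathscr C^\infty_c(I)$ equal to $1$ on $[r_0/2,2r_0^{-1}]$, so that $\chi u(t)=u(t)$ on this interval. This is exactly where $r_0>0$ is needed: all the anisotropic estimates of Section~\ref{sec:anisotropic} require $\overline I\subset\R_+^*$, and the decay rate they deliver is $\vartheta_0\min I\approx\vartheta_0 r_0/2$, consistent with our choice $\vartheta=\vartheta_0 r_0/4$.

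\textbf{Step 2: Duhamel and the bilinear estimate for $\Phi(u(t))$.} Write $u(t)=\varphi_{-t}^*u_0-\int_0^t\varphi_{s-t}^*\{\Phi(u(s)),u(s)\}\,\dd s$ and apply $\mathrm K\pi_*$. The linear term $\mathrm K\pi_*\varphi_{-t}^*u_0$ decays like $e^{-\vartheta_0 r_0 t}\|u_0\|_{\mathscr C^{N_0}}$ by Corollary~\ref{c:smoothcorrelation} (using that $\mathrm K$ kills constants in the $x$-variable, so only the $\mathbf P$-part contributes). For the nonlinear term, since $\mathrm K\pi_*=\mathrm K\pi_*\mathbf P$ (the kernel integrates to zero), one inserts $\mathbf B^{(-m_1,-N_1)}_{\mathfrak S_0,-2}\mathbf B^{(m_1,N_1)}_{\mathfrak S_0,-2}$ and uses Cauchy--Schwarz together with Proposition~\ref{p:bilinear}: each summand is bounded by $Ce^{-\vartheta_0 r_0(t-s)/2}\|\Phi(u(s))\|_{\mathscr C^N}\big(\|u(s)\|_{L^2}+\|\mathbf A_{\mathfrak S_0,0}\chi u(s)\|_{L^2}\big)$. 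Conservation of $L^p$-norms (Lemma~\ref{lem:conservation-Lp}) bounds $\|u(s)\|_{L^2}\leqslant\|u_0\|_{L^2}$, and $\|\Phi(u(s))\|_{\mathscr C^N}\leqslant\varepsilon e^{-\vartheta s}$ on $\mathcal J_k(\varepsilon,u_0)$. Integrating $\int_0^t e^{-\vartheta_0 r_0(t-s)/2}e^{-\vartheta s}\,\dd s\leqslant C e^{-\vartheta t}$ (since $\vartheta=\vartheta_0 r_0/4<\vartheta_0 r_0/2$), we obtain
$$
\|\Phi(u(t))\|_{\mathscr C^k}\leqslant C e^{-\vartheta t}\Big(\|u_0\|_{\mathscr C^{N_0}}+\varepsilon\sup_{s\in\mathcal J_k(\varepsilon,u_0)}\|\mathbf A_{\mathfrak S_0,0}\chi u(s)\|_{L^2}\Big),
$$
which is the content of Lemma~\ref{l:mainPhi}. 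So the improved bound $e^{\vartheta t}\|\Phi(u(t))\|_{\mathscr C^k}\leqslant\varepsilon/2$ will follow once we know $\|u_0\|_{\mathscr C^{N_0}}\ll\varepsilon$ and $\|\mathbf A_{\mathfrak S_0,0}\chi u(s)\|_{L^2}$ is bounded by a constant depending only on the data, uniformly on $\mathcal J_k(\varepsilon,u_0)$.

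\textbf{Step 3: the uniform bound on the global anisotropic norm (the main obstacle).} This is Lemma~\ref{l:boundednorm}, and is the hard part. Following the roadmap, one first reduces (Lemma~\ref{l:BtoA}, via the same Duhamel identity and the bilinear estimate, now estimating the sliced norm $\|\mathbf B^{(m_1,N_1)}_{\mathfrak S_1,-2}\chi u(t)\|_{L^2}$) to a polynomial-in-time bound $\|\mathbf A_{\mathfrak S_0,0}\chi u(t)\|_{L^2}\lesssim\langle t\rangle^p\|u_0\|_{\mathscr C^{N_0}}$ on $\mathcal J_k(\varepsilon,u_0)$, which is Lemma~\ref{l:polynomialbound}. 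The latter is proved by a microlocal energy estimate: since $\mathbf A_{\mathfrak S_0,0}$ is a genuine pseudodifferential operator on $\R\times M_1$ (unlike $\mathbf B$, which has the exotic $\langle\rho\rangle$-rescaling), one may differentiate $\|\mathbf A_{\mathfrak S_0,0}\chi u(t)\|_{L^2}^2$, commute $\mathbf A_{\mathfrak S_0,0}$ through the transport operator $X=rX_1$ and the nonlinear vector field $X_{\Phi(u(t))}$, and use the sharp Gårding inequality together with the decreasing property $\mathfrak X F_{\mathfrak S_0,0}\leqslant 0$ of the global escape function (Proposition~\ref{prop:comparisonresult}(ii)) to make the leading commutator nonpositive modulo a lower-order remainder; the nonlinear term contributes a factor $\|X_{\Phi(u(t))}\|_{\mathscr C^N}\lesssim\varepsilon e^{-\vartheta t}$, integrable in time, so Grönwall gives at worst polynomial growth after an induction on the Sobolev order to absorb the remainders (a logarithmic loss in the symbol calculus forces time-dependent operators $\mathbf A_t$, hence the power $p$). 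Once Lemma~\ref{l:polynomialbound} is in hand, Lemma~\ref{l:BtoA} upgrades polynomial growth to a \emph{uniform} bound on the sliced norm because the bilinear estimate's exponential factor $e^{-\vartheta_0 r_0(t-s)/2}$ beats any polynomial, and then feeding that back through Step 2 closes the bootstrap: choosing $\varepsilon$ small enough that $C\varepsilon\cdot(\text{polynomial constant})\leqslant 1/4$ and $c_0$ small enough that $C\|u_0\|_{\mathscr C^{N_0}}\leqslant Cc_0\varepsilon\leqslant\varepsilon/4$, we get $e^{\vartheta t}\|\Phi(u(t))\|_{\mathscr C^k}\leqslant\varepsilon/2<\varepsilon$ on all of $[0,T_*]$, contradicting maximality. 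Hence $\mathcal J_k(\varepsilon,u_0)=\R_+$. The genuine difficulty throughout is keeping the two scales of anisotropic norms compatible — the global norm must be pseudodifferential to run the energy estimate, the sliced norm must match Nonnenmacher--Zworski to get exponential mixing — and it is the comparison estimate of Proposition~\ref{prop:comparisonresult}(v), already established, that glues them together.
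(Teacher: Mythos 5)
Your outline reproduces the paper's overall architecture (support control, Duhamel for $\Phi(u)$, reduction to a uniform anisotropic bound, polynomial bound on the global norm via a microlocal energy estimate with time-dependent weights), but there is a genuine gap in the way Steps 2 and 3 are assembled, and it sits exactly at the delicate point of the rate bookkeeping. The inequality you derive in Step 2 is \emph{not} Lemma~\ref{l:mainPhi}: in the paper the nonlinear term of the $\Phi$-estimate is paired against the smooth kernel $\partial_x^\alpha K$ and controlled, via Corollary~\ref{c:smoothNZ2} and the splitting $u=\mathbf{P}u+\mathbf{Q}u$, by the \emph{sliced} norm $\|\mathbf{B}_{3,-2}^{(m_1,N_1)}\mathbf{P}u(s)\|_{L^2}$, which is subsequently shown to be bounded \emph{uniformly} in time (Lemma~\ref{l:boundednorm}). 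You instead invoke Proposition~\ref{p:bilinear} directly, so that the \emph{global} norm $\|\mathbf{A}_{\mathfrak S_0,0}\chi u(s)\|_{L^2}$ enters the time integral of the $\Phi$-estimate. The only available control on that norm is the polynomial bound of Lemma~\ref{l:polynomialbound}, and with it your Step 2 cannot close the bootstrap at the rate $\vartheta$: keeping the integral one gets
$$
\int_0^t e^{-2\vartheta(t-s)}\,\varepsilon e^{-\vartheta s}\langle s\rangle^p\,\dd s
=\varepsilon\, e^{-2\vartheta t}\int_0^t e^{\vartheta s}\langle s\rangle^p\,\dd s
\leqslant C\,\varepsilon\, e^{-\vartheta t}\langle t\rangle^p ,
$$
so that $e^{\vartheta t}\|\Phi(u(t))\|_{\mathscr C^k}\lesssim \|u_0\|_{\mathscr C^{k_0}}+\varepsilon\langle t\rangle^p$, which is not $\leqslant \varepsilon/2$ uniformly on $\mathcal J_k(\varepsilon,u_0)$. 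Equivalently, in the form you wrote with $\sup_{s}\|\mathbf A_{\mathfrak S_0,0}\chi u(s)\|_{L^2}$, the ``polynomial constant'' you propose to beat by shrinking $\varepsilon$ grows like $\langle T_*\rangle^p$ with the a priori unbounded length of the bootstrap interval, so $\varepsilon$ and $c_0$ cannot be chosen independently of $T_*$; and weakening the bootstrap rate does not help, since the polynomial loss always degrades whatever rate the $\Phi$-estimate must reproduce exactly.

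The resolution — which your Step 3 gestures at but never actually connects to your Step 2 — is the paper's ordering: the polynomial loss of the global norm must be absorbed at the stage where only a \emph{uniform} bound is required. In Lemma~\ref{l:BtoA} the global norm is multiplied by $\varepsilon e^{-2\vartheta t}\int_0^t e^{\vartheta s}(\cdots)\dd s$, and $e^{-2\vartheta t}\int_0^t e^{\vartheta s}\langle s\rangle^p\dd s\lesssim e^{-\vartheta t}\langle t\rangle^p\lesssim 1$, which yields the uniform bound of Lemma~\ref{l:boundednorm} on the sliced norm; only this uniform sliced bound is then fed into the $\Phi$-estimate of Lemma~\ref{l:mainPhi}, where $\int_0^t e^{-2\vartheta(t-s)}\varepsilon e^{-\vartheta s}\dd s\leqslant C\varepsilon e^{-\vartheta t}$ closes the bootstrap at the sharp rate $\vartheta$ once $\|u_0\|_{\mathscr C^{k_0}}\leqslant c_0\varepsilon$ with $c_0$ small. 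So you should restate your Step 2 with the sliced norm in the integrand (which also requires the duality/positive-time argument of Corollary~\ref{c:smoothNZ2} applied to the test functions $\partial_x^\alpha K\,\chi(r)r^{n-1}$, a point your direct pairing leaves unaddressed), prove the uniform sliced bound first, and only then conclude; the remaining ingredients of your proposal (Lemma~\ref{l:support}, the energy estimate behind Lemma~\ref{l:polynomialbound}, the openness/contradiction mechanism) are consistent with the paper.
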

In order to alleviate notations, we will drop the dependence in $u_0$ and write $\mathcal{J}_{k}(\varepsilon)$ instead of $\mathcal{J}_{k}(\varepsilon,u_0)$.

\subsection{Uniform control of the support}
\label{sec:6.2}
We begin our proof with the following observation.
\begin{lemm}\label{l:support} There exists a constant $c_{K,\mathrm{g}}>0$ depending only on the metric $\mathrm{g}$ and on the interaction kernel $K$ such that, if $\varepsilon<c_{K,\mathrm{g}}r_0\vartheta$, then, for every $t\in\mathcal{J}_{k}(\varepsilon)$,
\begin{equation}
\supp u(t) \subset \{(x,v) \in \mathrm{T}^*\Sigma~:~ |v| \in[r_0/2,2r_0]\}.
\end{equation}
\end{lemm}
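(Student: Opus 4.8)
The plan is to track the radial coordinate along the characteristics of the Vlasov flow and show that it cannot drift far from its initial range $[r_0, r_0^{-1}]$ as long as $t \in \mathcal{J}_{k}(\varepsilon)$. Recall from \eqref{eq:potentialvectorfield2} (see also Remark~\ref{r:potential-kernel}) that the Hamiltonian vector field associated with $\Phi(u(t))$ decomposes in polar coordinates as
$$
X_{\Phi(u(t))}(r, z_1) = \frac{1}{r} X_{\Phi(u(t))}^1(z_1) + \Phi_1(u(t))(z_1)\, \partial_r,
$$
so that the characteristic $(r(t), z_1(t))$ of $\partial_t + X + X_{\Phi(u(t))}$ issued from a point of $\supp u_0$ satisfies $\dot r(t) = \Phi_1(u(t))(z_1(t))$, where $\Phi_1(u(t))(z_1) = \int_M K_1(z_1, y)\, u(t)(y,\eta)\, \dd{\mrm L}(y,\eta)$. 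Hence
$$
|r(t) - r(0)| \leqslant \int_0^t \|\Phi_1(u(s))\|_{L^\infty(M_1)}\, \dd s.
$$
The key point is then that $K_1(z_1, y) = \mathrm{g}_x^*(\dd_x K(x,y), \xi_1)$ is smooth and compactly supported in $y$ (uniformly in $z_1 \in M_1$), so $\|\Phi_1(u(s))\|_{L^\infty(M_1)} \leqslant C_K \|\pi_* u(s)\|_{L^1} = C_K\|u(s)\|_{L^1(M, \dd{\mrm L})}$ for some constant $C_K > 0$ depending only on the interaction kernel; equivalently this can be bounded through $\|\Phi(u(s))\|_{\mathscr C^1(\Sigma)}$ up to a constant depending only on $\mathrm g$ and $K$.

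Next I would invoke the definition \eqref{def:Tstar-boostrap} of $\mathcal{J}_{k}(\varepsilon)$ with $k \geqslant 1$: for every $s \in \mathcal{J}_{k}(\varepsilon)$ we have $\|\Phi(u(s))\|_{\mathscr C^k(\Sigma)} \leqslant \varepsilon e^{-\vartheta s}$, hence (using $k\ge 1$ and the relation between $\|\Phi_1(u(s))\|_{L^\infty}$ and $\|\Phi(u(s))\|_{\mathscr C^1}$, or directly bounding $\|\Phi_1(u(s))\|_{L^\infty}$ by a constant times $\|u(s)\|_{L^1}$ which is itself $\lesssim \varepsilon e^{-\vartheta s}$ thanks to the smallness of $\|u_0\|_{L^1}$ propagated via $L^1$ conservation, Lemma~\ref{lem:conservation-Lp}) there is $c_{K,\mathrm g} > 0$ with
$$
\|\Phi_1(u(s))\|_{L^\infty(M_1)} \leqslant c_{K,\mathrm g}^{-1}\, \varepsilon\, e^{-\vartheta s}, \qquad s \in \mathcal{J}_{k}(\varepsilon).
$$
Integrating, for $t \in \mathcal{J}_{k}(\varepsilon)$,
$$
|r(t) - r(0)| \leqslant c_{K,\mathrm g}^{-1}\, \varepsilon \int_0^t e^{-\vartheta s}\, \dd s \leqslant \frac{\varepsilon}{c_{K,\mathrm g}\, \vartheta}.
$$
Since $r(0) \in [r_0, r_0^{-1}]$ and $r_0 < 1$, imposing $\varepsilon < c_{K,\mathrm g}\, r_0\, \vartheta / 2$ (say) forces $|r(t) - r(0)| \leqslant r_0/2$, whence $r(t) \in [r_0/2,\, r_0^{-1} + r_0/2] \subset [r_0/2,\, 2r_0^{-1}]$. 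After adjusting the constant $c_{K,\mathrm g}$ to absorb the discrepancy between $2r_0$ and $2r_0^{-1}$ — note the statement as written has $|v|\in[r_0/2, 2r_0]$, which is only consistent if one reads $r_0$ there with the convention matching the support hypothesis; in any case the argument gives $r(t) \in [r_0/2,\, 2r_0^{-1}]$ — and using that $\supp u(t)$ is exactly the image of $\supp u_0$ under the flow $\psi^u_{t,0}$ (from the proof of Lemma~\ref{lem:localexistence}), we conclude that $\supp u(t)$ is contained in the claimed radial annulus.

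The main (minor) obstacle is bookkeeping: one must make sure the constant $c_{K,\mathrm g}$ depends only on $\mathrm g$ and $K$ and not on $u_0$ or $\varepsilon$, which requires using $L^1$-conservation (Lemma~\ref{lem:conservation-Lp}) to control $\|u(s)\|_{L^1}$ uniformly, or alternatively noting that the bound $\|\Phi_1(u(s))\|_{L^\infty} \lesssim_{K,\mathrm g} \|\Phi(u(s))\|_{\mathscr C^1}$ already only involves $K$ and $\mathrm g$ so that the exponential smallness of $\|\Phi(u(s))\|_{\mathscr C^k}$ on $\mathcal J_k(\varepsilon)$ suffices directly without invoking $L^1$ conservation at all. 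I would favor the latter route as it is cleanest: it uses only the defining inequality of $\mathcal J_k(\varepsilon)$ and the smooth-kernel structure \eqref{eq:radialcomponent-vf-potential}. The rest is the elementary integration of $\dot r = \Phi_1(u(t))(z_1(t))$ carried out above.
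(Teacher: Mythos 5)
Your proof is correct and follows essentially the same route as the paper: decompose $X_{\Phi(u(t))}$ via \eqref{eq:potentialvectorfield2}, integrate $\dot r=\Phi_1(u(t))(z_1(t))$ along characteristics, bound $\|\Phi_1(u(s))\|_{L^\infty}$ by $C_{K,\mathrm g}\|\Phi(u(s))\|_{\mathscr C^1}$, and use the defining inequality of $\mathcal J_k(\varepsilon)$ to get $|r(t)-r(0)|\leqslant \varepsilon C_{K,\mathrm g}\vartheta^{-1}$, concluding for $\varepsilon$ small (and your reading of the radial annulus as $[r_0/2,2r_0^{-1}]$ matches how the lemma is actually used later). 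Just note that your parenthetical alternative bounding $\|u(s)\|_{L^1}$ by $\varepsilon e^{-\vartheta s}$ would not work, since $L^1$ conservation only gives $\|u(s)\|_{L^1}=\|u_0\|_{L^1}$, constant in time — but you rightly discard it in favor of the $\mathscr C^1$ route, which is the paper's argument.
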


\begin{rema} In the proof of Theorem~\ref{theo:connected}, $\varepsilon$ will first of all be chosen small enough so that Lemma~\ref{l:support} holds; another smallness constraint of similar nature will also appear in the subsequent Lemma~\ref{l:boundednorm}.

\end{rema}

\begin{proof} Let $T$ be an element of $\mathcal{J}_k(\varepsilon)$. Denote by $\psi_{t,s} : M \to M$ the flow associated to the vector field $X + X_{\Phi(u)}(t)$, defined by
$$
\partial_t \psi_{t,s}(z) = [X + X_{\Phi(u)}(t)](\psi_{t,s}(z)), \quad \psi_{s,s}(z) = z, \quad z \in M.
$$
Recall that $u(t)$ is the solution to~\eqref{eq:vlasov} with initial data $u_0$ compactly supported in $[r_0,r_0^{-1}]\times M_1$. Thanks to~\eqref{eq:potentialvectorfield2}, one has, for every $t\in[0,T]$ and for every initial data $z=(r(0),z_1(0))$ lying in $[r_0,r_0^{-1}]\times M_1$, one has
$$
r(t)=r(0)+\int_0^t\Phi_1(u(s))(z_1(s))\dd s,
$$
where we set $(r(t),z_1(t))=\psi_{t,0}(z)$. According to the expression of $\Phi_1$, one gets that there exists a constant depending only the interaction kernel $K$ and the metric $\mathrm{g}$ such that, for every $t\in[0,T]$,
$$
|r(t)-r(0)|\leqslant C_{K,g}\int_0^t\|\Phi(u(s))\|_{\mathscr{C}^1}ds\leqslant \varepsilon C_{K,\mathrm{g}}\vartheta^{-1},
$$
where we use that $t\in\mathcal{J}_k(\varepsilon)$ in the last inequality. Hence, if we pick $\displaystyle\varepsilon<\frac{r_0\vartheta}{2C_{K,\mathrm{g}}}$, we get the expected conclusion.
\end{proof}

\subsection{Proof of Theorem~\ref{theo:connected}}
\label{sec:6.3}
 We now explain how to prove Theorem~\ref{theo:connected}. 
We start with the following key statement. Let $\mathbf{B}_{3, -2}^{(m_1,N_1)}$ be the sliced anisotropic operator defined in~\eqref{eq:operatorvaluedanisotropicresolvent} with the parameters $(m_1,N_1)$ chosen so that Proposition~\ref{p:bilinear}, Lemma~\ref{l:NZradial} and Corollaries~\ref{c:smoothNZ} and~\ref{c:smoothNZ2} hold true.

\begin{lemm} 
\label{l:mainPhi}There exist $k,k_0\geqslant 1$ and $C_{K,r_0}>0$ (independent of $\varepsilon$ and $u_0$) such that, for all $t \in\mathcal{J}_k(\varepsilon)$,
\begin{multline*}
\|\Phi(u(t))\|_{\mathscr{C}^k} \leqslant C_{K,r_0}\|u_0\|_{\mathscr{C}^{k_0}}e^{-\vartheta t} \\
+ C_{K,r_0}\int_0^t e^{-\frac{r_0\vartheta_0}{2} (t-s)}\|\Phi(u(s))\|_{\mathscr{C}^{k}(\Sigma)}\|\mathbf{B}_{3, -2}^{(m_1,N_1)}\mathbf{P}(u(s))\|_{L^2(\R\times M_1)}\dd s.
\end{multline*}
\end{lemm}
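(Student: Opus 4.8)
\textbf{Proof strategy for Lemma~\ref{l:mainPhi}.} The plan is to start from the Duhamel formula for the Vlasov equation~\eqref{eq:vlasov}, namely
$$
u(t) = \varphi_{-t}^*u_0 - \int_0^t \varphi_{s-t}^*\{\Phi(u(s)), u(s)\}\,\dd s,
$$
apply $\Phi(\cdot) = \mathrm{K}\pi_*(\cdot)$ to both sides, and estimate the resulting expression in $\mathscr{C}^k(\Sigma)$. Since $\Phi = \mathrm{K}\pi_*$ with $\mathrm{K}$ a smoothing operator with kernel $K \in \mathscr{C}^\infty$, controlling $\|\Phi(v)\|_{\mathscr C^k}$ reduces to controlling finitely many moments $\int_{M^\times} v\,\psi\,\dd\mrm L$ for a finite family of test functions $\psi$ built from $K$ and its derivatives (localized to the annulus $\{|\xi|\in[r_0/2,2r_0]\}$ where the solution is supported by Lemma~\ref{l:support}); this localization lets us insert a cutoff $\chi \in \mathscr C^\infty_c(I)$ with $I \Subset \R_+^*$, $I \supset [r_0/2, 2r_0]$, without changing anything.

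First I would handle the linear term $\Phi(\varphi_{-t}^*u_0)$: because $K$ integrates to zero in the first variable, pairing against $\varphi_{-t}^*u_0$ is, after the polar-coordinates identification $M^\times \simeq \R_{>0}\times M_1$ and integration in $r$, a correlation of the type controlled by Corollary~\ref{c:smoothNZ} (with the projector $\mathbf P$ appearing for free since the $K$-moments have zero mean on each sphere). This yields the bound $C_{K,r_0}\|u_0\|_{\mathscr C^{k_0}} e^{-\vartheta_0 r_0 t}$, which dominates $C_{K,r_0}\|u_0\|_{\mathscr C^{k_0}} e^{-\vartheta t}$ since $\vartheta = \vartheta_0 r_0/4 \le \vartheta_0 r_0$. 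Next, for the nonlinear Duhamel term, I would write $\{\Phi(u(s)), u(s)\} = \{\Phi(u(s)), \chi_1 u(s)\}$ using a cutoff $\chi_1$ equal to $1$ on $\supp u(s)$, decompose the relevant test functions $\psi$ against $\varphi_{s-t}^*$ of this bracket, and again use that $\mathbf P$ can be inserted (the $K$-moments kill the spherical average). Then the integrand becomes a pairing $\langle \mathbf B_{3,-2}^{(m_1,N_1)}\varphi_{-(t-s)}^*\mathbf P\{\Phi(u(s)),\chi_1 u(s)\},\ (\mathbf B_{3,-2}^{(m_1,N_1)})^{-*}\psi\rangle_{L^2}$ — here one needs $(\mathbf B)^{-*}\psi$ to be a fixed smooth, $L^2$ function, which holds because $\psi$ is smooth and compactly supported away from $\{r=0\}$ and the inverse operator is a nice operator-valued pseudodifferential operator. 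Applying the bilinear estimate of Proposition~\ref{p:bilinear} with $\mathfrak S_0 = 3$ bounds this integrand by $C e^{-\vartheta_0 r_0 (t-s)}\|\Phi(u(s))\|_{\mathscr C^N}(\|u(s)\|_{L^2} + \|\mathbf A_{3,0}\chi u(s)\|_{L^2})$.

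The remaining point — and the main subtlety — is that the right-hand side of Proposition~\ref{p:bilinear} features $\|\mathbf A_{3,0}\chi u(s)\|_{L^2}$ (a \emph{global} anisotropic norm), whereas the statement of Lemma~\ref{l:mainPhi} only wants $\|\mathbf B_{3,-2}^{(m_1,N_1)}\mathbf P u(s)\|_{L^2}$ (a \emph{sliced} anisotropic norm). So I would not apply Proposition~\ref{p:bilinear} in its raw form here; instead, I would use the version of the argument that keeps the factor in sliced form. Concretely, rather than inserting the pseudo-inverse of $\mathbf A_{3,0}$ at the last step (as is done inside the proof of Proposition~\ref{p:bilinear}), one stops one step earlier: after writing the integrand as $\langle \mathbf B_{3,-2}^{(m_1,N_1)}\varphi_{-(t-s)}^*\mathbf P\,\chi_1(a_\Phi\partial_r + r^{-1}Y_\Phi)\chi_1 u(s),\ (\mathbf B)^{-*}\psi\rangle$, one inserts $\mathbf B_{-3,2}^{(-m_1+2)}\mathbf B_{3,-2}^{(m_1-2)}$ (an identity up to harmless lower-order corrections) just to the left of $\chi_1 u(s)$, uses Lemma~\ref{l:NZradial} to absorb the flow and gain $e^{-\vartheta_0 r_0(t-s)/2}$ (note the stated rate $r_0\vartheta_0/2$, which is why a fraction of the exponent is kept in reserve), and uses Lemmas~\ref{l:comparisonnormdr} and~\ref{l:comparisonnormtgt} to bound $\|\mathbf B_{3,-2}^{(m_1-2)}\chi_1(a_\Phi\partial_r + r^{-1}Y_\Phi)\chi_1\mathbf B_{-3,?}^{(?)}\|_{L^2\to L^2}$ by $C_{r_0}\|\Phi(u(s))\|_{\mathscr C^{k}}$ (using~\eqref{eq:semi-normsymbolpotentialvf}–\eqref{eq:semi-normsymbolpotentialvf2} to control the symbols of $a_\Phi, Y_\Phi$ by $\|\Phi\|_{\mathscr C^{k}}$, and using $r \ge r_0/2$ on the support to handle $r^{-1}$). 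What is left is exactly $\|\mathbf B_{3,-2}^{(m_1-2)}\chi_1 u(s)\|_{L^2}$, which, since $\chi_1$ is supported in $I\Subset\R_+^*$ and $X$ and $X_{\Phi(u)}$ preserve the spherical average, can be replaced by $\|\mathbf B_{3,-2}^{(m_1,N_1)}\mathbf P u(s)\|_{L^2}$ plus a term controlled by $\|u(s)\|_{L^2} = \|u_0\|_{L^2}$; the latter contributes an $O(e^{-\vartheta t}\|u_0\|_{\mathscr C^{k_0}})$-type term after integrating the exponential and using smallness of $\|u_0\|_{\mathscr C^{k_0}}$. Choosing $k$ (hence $k_0 \gg k$) large enough for all the $\mathscr C^N$ norms appearing in Proposition~\ref{p:bilinear} and Lemmas~\ref{l:comparisonnormdr}–\ref{l:comparisonnormtgt} to be bounded by $\|\cdot\|_{\mathscr C^k}$, and summing over the finitely many test functions, yields the claimed inequality. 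The main obstacle is precisely this bookkeeping of which anisotropic operator ($\mathbf A$ vs. $\mathbf B$, and with which regularity/order indices $\mathfrak S_0, m_1, \sigma_1, N_1$) to insert where, so that every operator norm that appears is one of the bounded ones provided by Section~\ref{sec:anisotropic}, while simultaneously (i) keeping the exponential rate at $r_0\vartheta_0/2$, (ii) ending with a genuinely sliced norm $\|\mathbf B_{3,-2}^{(m_1,N_1)}\mathbf P u(s)\|_{L^2}$ rather than a global one, and (iii) keeping the $\Phi(u(s))$ prefactor in $\mathscr C^k$ with a \emph{fixed} $k$ independent of $\varepsilon$ and $u_0$.
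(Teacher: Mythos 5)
Your treatment of the linear term is fine and matches the paper, but the nonlinear term has a genuine gap at the very step you identify as the main subtlety. After inserting $\mathbf B^{-1}\mathbf B$ and using Lemma~\ref{l:NZradial} together with Lemmas~\ref{l:comparisonnormdr}--\ref{l:comparisonnormtgt}, the factor left acting on $u(s)$ is a sliced operator of the form $\mathbf{B}_{3,\tilde\sigma}^{(\tilde m)}\chi_1$ with \emph{no} resolvent factor $(X_1+C_1)^{-N_1}$, a weaker $\rho$-weight, and (because of the constraints $\sigma_1+\sigma_2\leqslant 0$, resp.\ $\sigma_1+\sigma_2+1\leqslant 0$, in the comparison lemmas) an index $\tilde\sigma>-2$. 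Your claim that $\|\mathbf{B}_{3,-2}^{(m_1-2)}\chi_1 u(s)\|_{L^2}$ ``can be replaced by $\|\mathbf{B}_{3,-2}^{(m_1,N_1)}\mathbf P u(s)\|_{L^2}$ plus a term controlled by $\|u(s)\|_{L^2}$'' amounts to bounding a strictly stronger norm by a strictly weaker one ($(X_1+C_1)^{-N_1}$ with $N_1\geqslant 50\mathfrak S_0+m_1$ is massively smoothing), and no lemma of Section~\ref{sec:anisotropic} provides such a bound; the only control available for this leftover quantity is Lemma~\ref{l:comparisonnormNZ-ours}, which brings back the global norm $\|\mathbf A_{3,0}\chi u(s)\|_{L^2}$ — i.e.\ your route, carried out correctly, reproduces Proposition~\ref{p:bilinear} and lands exactly on the object you were trying to avoid. (Also, the justification ``$X_{\Phi(u)}$ preserves the spherical average'' is not true: only the free flow commutes with $\mathbf Q=\mathrm{Id}-\mathbf P$.)

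The paper's proof avoids putting any positive-order norm on $u(s)$ by a different structural move. First split $u(s)=\mathbf P u(s)+\mathbf Q u(s)$. The $\mathbf Q$ part is handled separately using the explicit radial structure $\{\Phi(u(s)),\mathbf Q u(s)\}=\Phi_1(u(s))\,\partial_r\mathbf Q u(s)$, the commutation $\partial_r\varphi_\tau^*=\varphi_\tau^*(\partial_r+\tau X_1)$, an integration by parts in $r$, and the scalar mixing estimate~\eqref{eq:liverani} together with the bootstrap assumption $t\in\mathcal J_k(\varepsilon)$; this produces a contribution of size $\varepsilon e^{-\vartheta t}\|u_0\|_{L^1}$, absorbed in the first term. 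For the $\mathbf P$ part, one integrates by parts so that \emph{both} the propagator and the Hamiltonian vector field $X_{\Phi(u(s))}$ act on the fixed smooth test function $\chi(r)r^{n-1}\partial_x^\alpha K(x,\pi_1(z_1))$, and the inverse operator $\mathbf B_{-3,2}^{(-m_1)}(X_1+C_1)^{N_1}$ is placed on that smooth side as well; Corollary~\ref{c:smoothNZ2} (in positive time) applied to this smooth, compactly supported datum then gives the factor $e^{-\frac{r_0\vartheta_0}{2}(t-s)}\|\Phi(u(s))\|_{\mathscr C^{N'}}$, while by duality the solution enters only through $\|\mathbf B_{3,-2}^{(m_1,N_1)}\mathbf P u(s)\|_{L^2}$, exactly as stated. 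Proposition~\ref{p:bilinear} is not used in this lemma at all — it is reserved for Lemma~\ref{l:BtoA}, where the global norm legitimately appears.
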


Observe that a control of $\|\mathbf{B}_{3, -2}^{(m_1,N_1)}\mathbf{P}(u(s))\|_{L^2(\R\times M_1)}$ for all $s \in \mathcal{J}_k(\varepsilon)$  is needed for Lemma~\ref{l:mainPhi} to be relevant. This is the purpose of the next statement.

\begin{lemm}\label{l:boundednorm} There exist a constant $\widetilde{c}_{K,\mathrm{g}}>0$ depending only on $\mathrm{g}$ and $K$, as well as $k_0 \in \mathbb{N}$ and $C>0$ (independent of $\varepsilon$ and $u_0$) such that, for $\varepsilon<\widetilde{c}_{K,\mathrm{g}}r_0\vartheta$, for all $s\in \mathcal{J}_k(\varepsilon)$,
 $$
 \|\mathbf{B}_{3, -2}^{(m_1,N_1)}\mathbf{P}(u(s))\|_{L^2(\R\times M_1)}\leqslant C \|u_0\|_{\mathscr{C}^{k_0}}.
 $$
\end{lemm}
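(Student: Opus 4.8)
The plan is to prove Lemma~\ref{l:boundednorm} by a further bootstrap \emph{inside} the interval $\mathcal{J}_k(\varepsilon)$, reducing the desired $L^2$ bound on the sliced anisotropic norm to a polynomially-growing bound on the \emph{global} anisotropic norm. First I would apply the Duhamel formula \eqref{eq:fixedpoint} for the Vlasov equation: writing $\mathbf{P}u(t)$ in terms of $\varphi_{-t}^*\mathbf{P}u_0$ plus the integral remainder $-\int_0^t \varphi_{s-t}^*\mathbf{P}\{\Phi(u(s)),u(s)\}\,\dd s$, and noting that the free transport term $r^{-1}X_1$ commutes with $\partial_r$, I would apply $\mathbf{B}_{3,-2}^{(m_1,N_1)}$ to both sides. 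The free part $\mathbf{B}_{3,-2}^{(m_1,N_1)}\varphi_{-t}^*\mathbf{P}u_0$ is controlled by $Ce^{-\vartheta_0 t \min I}\|u_0\|_{\mathscr C^{k_0}}$ thanks to Corollary~\ref{c:smoothNZ} (with $I$ a fixed interval containing $[r_0/2,2r_0]$, which by Lemma~\ref{l:support} contains the support of $u(t)$ for all $t\in\mathcal{J}_k(\varepsilon)$ once $\varepsilon$ is small). So everything reduces to the remainder term.

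For the remainder I would insert a cutoff $\chi\in\mathscr C^\infty_c(I)$ equal to $1$ on the support of all $u(s)$, $s\in\mathcal{J}_k(\varepsilon)$, and write $\mathbf{P}\{\Phi(u(s)),\chi u(s)\}$, then apply the bilinear estimate Proposition~\ref{p:bilinear}. This gives, for each $s$,
$$
\left\|\mathbf{B}_{3,-2}^{(m_1,N_1)}\varphi_{s-t}^*\mathbf{P}\{\Phi(u(s)),\chi u(s)\}\right\|_{L^2}\leqslant C e^{-\vartheta_0(t-s)\min I}\|\Phi(u(s))\|_{\mathscr C^N}\Big(\|u(s)\|_{L^2}+\|\mathbf{A}_{3,0}\chi u(s)\|_{L^2}\Big).
$$
Now $\|u(s)\|_{L^2}=\|u_0\|_{L^2}$ is conserved by Lemma~\ref{lem:conservation-Lp}, and $\|\Phi(u(s))\|_{\mathscr C^N}\leqslant \varepsilon e^{-\vartheta s}$ since $s\in\mathcal{J}_k(\varepsilon)$ (taking $k\geqslant N$). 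Integrating in $s$ over $[0,t]$, using $\min I\geqslant r_0/2$ and that $e^{-\vartheta_0(t-s)\min I}$ integrates against the slowly-varying global norm, I get a bound of the form $C\|u_0\|_{\mathscr C^{k_0}} + C\varepsilon\sup_{s\in[0,t]}\|\mathbf{A}_{3,0}\chi u(s)\|_{L^2}$ — \emph{provided} the last supremum does not blow up. The crucial input here is the polynomial-in-time bound on $\|\mathbf{A}_{\mathfrak S_0,0}\chi u(s)\|_{L^2}$, which is exactly Lemma~\ref{l:polynomialbound}; combined with the exponential factor $e^{-\vartheta_0(t-s)\min I}$ this integrates to a finite constant. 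So one is led to invoke Lemmas~\ref{l:BtoA} and~\ref{l:polynomialbound}: Lemma~\ref{l:BtoA} repackages precisely the argument above (Duhamel $+$ bilinear estimate $+$ conservation of $L^2$ $+$ smoothness of $K$) to say that controlling the sliced norm reduces to controlling the global norm polynomially, and Lemma~\ref{l:polynomialbound} supplies that polynomial bound via a microlocal energy estimate using the decay property $\mathfrak X F_{\mathfrak S_0,\sigma_0}\leqslant 0$ of the global escape function.

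The main obstacle — and the reason the smallness $\varepsilon<\widetilde c_{K,\mathrm g}r_0\vartheta$ is needed — is closing the loop: the estimate for $\|\mathbf{B}_{3,-2}^{(m_1,N_1)}\mathbf P u(t)\|_{L^2}$ produced this way is not yet an a priori bound, because the polynomial bound of Lemma~\ref{l:polynomialbound} on $\|\mathbf{A}_{3,0}\chi u(s)\|_{L^2}$ is itself only available \emph{on} $\mathcal{J}_k(\varepsilon)$ and, more importantly, because the microlocal energy estimate producing the polynomial growth in Lemma~\ref{l:polynomialbound} feeds back on a factor $\exp(\int_0^s C\|\Phi(u)\|_{\mathscr C^N})$ that must be bounded using $s\in\mathcal{J}_k(\varepsilon)$. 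Thus the correct logical order is: Lemma~\ref{l:support} first (so the dynamics stays away from the null section, making all the $\min I>0$ rates effective and keeping $I$ fixed), then Lemma~\ref{l:polynomialbound} (global norm grows at most polynomially on $\mathcal{J}_k(\varepsilon)$, using only $\int_0^s\|\Phi(u)\|\leqslant\varepsilon/\vartheta<\infty$), then Lemma~\ref{l:BtoA}/the bilinear estimate to convert to the sliced norm. I would therefore present the proof as: fix $I$ a neighbourhood of $[r_0/2,2r_0]$ with $\overline I\subset\R_+^*$ and $\chi\equiv1$ near $\supp u(s)$; apply Lemma~\ref{l:BtoA} to reduce to $\sup_{s\in\mathcal{J}_k(\varepsilon)}\|\mathbf{A}_{3,0}\chi u(s)\|_{L^2}$; bound the latter by $C\|u_0\|_{\mathscr C^{k_0}}$ times a polynomial in $s$ via Lemma~\ref{l:polynomialbound}; absorb the polynomial against the exponential decay $e^{-\vartheta_0(t-s)r_0/2}$ in the Duhamel integral; and finally impose $\varepsilon$ small enough (depending only on $\mathrm g$ and $K$, through the constant $C_{K,r_0}$) so that the resulting inequality yields the claimed uniform bound. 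The one calculation worth doing carefully is checking that the exponent constraints $\mathfrak S_0\geqslant3$, $m_1\geqslant 50\mathfrak S_0$, $N_1\geqslant 50\mathfrak S_0+m_1$ from Proposition~\ref{p:bilinear} are compatible with the choices $\mathfrak S_0=3$, $(m_1,N_1)$ fixed at the start of Subsection~\ref{sec:6.3}.
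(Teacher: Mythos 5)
Your proposal follows essentially the same route as the paper: Lemma~\ref{l:boundednorm} is reduced to Lemma~\ref{l:BtoA} (Duhamel formula, the decay of the free part, the bilinear estimate of Proposition~\ref{p:bilinear} with the cutoff justified by Lemma~\ref{l:support}, and conservation of the $L^2$ norm from Lemma~\ref{lem:conservation-Lp}) together with Lemma~\ref{l:polynomialbound} (polynomial growth of the global anisotropic norm via the microlocal energy estimate, which is where the smallness $\varepsilon<\widetilde c_{K,\mathrm g}r_0\vartheta$ enters), the polynomial growth being absorbed by the exponential factors in the Duhamel integral. One small point: in the intermediate step you should keep the weighted integral $\int_0^t e^{-2\vartheta(t-s)}e^{-\vartheta s}\left\|\mathbf{A}_{3,0}\chi u(s)\right\|_{L^2}\dd s$ rather than pulling out $\sup_{s\in[0,t]}\left\|\mathbf{A}_{3,0}\chi u(s)\right\|_{L^2}$ (which would reintroduce the unbounded factor $\langle t\rangle^p$); your concluding summary does exactly this, in line with the statement of Lemma~\ref{l:BtoA}.
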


The reason for taking $\varepsilon$ small enough will come from the subsequent Lemma~\ref{l:polynomialbound}.
Hence, recalling that $\vartheta=\vartheta_0 r_0/4$ and up to increasing the value of $k_0$ (for the regularity of the initial data), one gets that, for every $t\in\mathcal{J}_k(\varepsilon)$, for every $u_0$ (supported in $[r_0,r_0^{-1}]\times M_1$) with $\|u_0\|_{\mathscr{C}^{k_0}}$ small enough,
\begin{equation}\label{eq:boundnonlinear8}
C_{K,r_0}\int_0^t e^{-\frac{r_0\vartheta_0}{2} (t-s)}\|\Phi(u(s))\|_{\mathscr{C}^{k}(\Sigma)}\|\mathbf{B}_{3, -2}^{(m_1,N_1)}\mathbf{P}(u(s))\|_{L^2(\R\times M_1)}\dd s\leqslant\frac{\varepsilon}{4}e^{-\vartheta t}.
\end{equation}
Gathering 
Lemmas~\ref{l:mainPhi} and~\ref{l:boundednorm}, we find that $\|\Phi(u(t))\|_{\mathscr{C}^k(\Sigma)}\leqslant\frac{\varepsilon}{2}e^{-\vartheta t}$ from which we infer that $\mathcal{J}_{k}(\varepsilon)$ is open. As already explained, this allows to conclude the proof of Theorem~\ref{theo:connected}. 

We are therefore left with the proofs of Lemmas~\ref{l:mainPhi} and~\ref{l:boundednorm}, which is precisely the purpose of the next two subsections.

\subsection{Proof of Lemma~\ref{l:mainPhi}}
\label{sec:6.4}
In this section, we prove Lemma~\ref{l:mainPhi}. We first use Duhamel formula to write
\begin{equation}\label{eq:duhamel}
u(t)=\varphi_{-t}^*(u_0)-\int_0^t\varphi_{s-t}^*\{ \Phi(u(s)),u(s)\}\dd s,
\end{equation}
which yields
\begin{equation}\label{eq:duhamelkernel}
\Phi(u(t))=\Phi\left(\varphi_{-t}^*(u_0)\right)-\int_0^t\Phi\left(\varphi_{s-t}^*\{ \Phi(u(s)),u(s)\}\right)\dd s.
\end{equation}
We pick $t\in\mathcal{J}_k(\varepsilon)$ and our goal is to estimate $\|\Phi(u(t))\|_{\mathscr{C}^k(\Sigma)}$. We begin with the linear term in~\eqref{eq:duhamelkernel}. One has
$$
\partial_x^\alpha\left(\Phi\left(\varphi_{-t}^*(u_0)\right)\right)=\int_{\mrm{T}^*\Sigma}\partial_x^\alpha K(x,y) u_0(r,\varphi_{-tr}(y,\eta_1))\dd {\mrm L}(y,\eta_1).
$$
Thanks to Corollary~\ref{c:smoothcorrelation} and as $\int_\Sigma K(x,y)\dd \text{vol}_{\mathrm{g}}(y)=0$ for every $x\in\Sigma$, there exists some $k_0\geqslant 1$ (depending only on $(\Sigma,\mathrm{g})$) and a constant $C_{k,r_0}>0$ (depending also on $k$ and $r_0$) such that
\begin{equation}\label{eq:boundlinearterm}
 \|\Phi(\varphi_{-t}^*(u_0))\|_{\mathscr{C}^k(\Sigma)}\leqslant C_{k,r_0} e^{-\vartheta t} \|K\|_{\mathscr{C}^{k_0+k}}\|u_0\|_{\mathscr{C}^{k_0}}.
\end{equation}
Note that this step did not rely on the fact that $t$ belongs to $\mathcal{J}_k(\varepsilon)$. We will now deal with the nonlinear term for which this assumption will be crucially used. 
We now write, for $\chi\in\mathscr{C}^{\infty}_c(\R_+^*)$ which is equal to $1$ on $[r_0/2,2r_0^{-1}]$ (where $u$ is supported thanks to Lemma~\ref{l:support}),
\begin{multline}\label{eq:boundnonlinear1}
 \int_0^t\partial_x^\alpha\Phi\left(\varphi_{s-t}^*\{ \Phi(u(s)),u(s)\}\right)\dd s\\
 =\int_0^t\left\langle \partial_x^\alpha K(x,\pi_1\circ\varphi_{(t-s)r}(z_1))\chi(r)r^{n-1} ,\chi(r)\{\Phi(u(s)),u(s)\}(r,z_1)\right\rangle_{L^2(\R\times M_1)}\dd s,
\end{multline}
where $\pi_1:(y,\eta_1)\in M_1=S^*\Sigma\mapsto y\in\Sigma$ and where the $L^2$ scalar product is taken with respect to the measure $\dd r\dd {\mrm L}_1(z_1)$. Using again $\int_{\Sigma}K(x,y)\dd \text{Vol}_\mathrm g(y)=0$, one can rewrite~\eqref{eq:boundnonlinear1} as
\begin{multline}\label{eq:boundnonlinear2}
 \int_0^t\partial_x^\alpha\Phi\left(\varphi_{s-t}^*\{ \Phi(u(s)),u(s)\}\right)\dd s\\
 =\int_0^t\left\langle \varphi_{t-s}^*\mathbf{P}\left(\partial_x^\alpha K(x,\pi_1(z_1))\chi(r)r^{n-1}\right) ,\chi(r)\{\Phi(u(s)),u(s)\}(r,z_1)\right\rangle_{L^2(\R\times M_1)}\dd s.
\end{multline}
Next we define
$
\mathbf Q = \mathrm{Id} - \mathbf P
$
so that
$$
\mathbf{Q}(v)=\int_{M_1}v(z_1,r)\dd {\mrm L}_1(z_1), \quad v \in \mathscr C^\infty_c(\R_+^*\times M_1).
$$
We now decompose $u(s)=\mathbf{Q}(u(s))+\mathbf{P}(u(s))$ and use~\eqref{eq:potentialvectorfield2} to rewrite
$$\{\Phi(u(s)),\mathbf{Q}(u(s))\}(r,z_1)=
\Phi_1(u(s))(z_1)\partial_r \mathbf{Q}(u(s)).
$$
Recall now that $X=rX_1$ is the geodesic vector field generating $\varphi_\tau$ so that
$$\partial_r\varphi_\tau^*=\varphi_\tau^*\left(\partial_r +\tau X_1\right).
$$
Hence, since $\chi$ is identically equal to $1$ on the support of $u(s)$ (as $s\in\mathcal{J}_k(\varepsilon)$), one has
\begin{equation}\label{eq:boundnonlinear3}
\begin{aligned}
 &\int_0^t\partial_x^\alpha\Phi\left(\varphi_{s-t}^*\{ \Phi(u(s)),\mathbf{Q}(u(s))\}\right)\dd s = \\
 &-\int_0^t(t-s)\left\langle \varphi_{t-s}^*\mathbf{P}\left(\partial_x^\alpha K(x,\pi_1(z_1))\chi(r)\right) ,\chi(r)r^{n-1}\mathbf{Q}(u(s))(r) X_1\Phi_1(u(s))\right\rangle_{L^2(\R\times M_1)}\dd s\\
& -(n-1)\int_0^t\left\langle \varphi_{t-s}^*\mathbf{P}\left(\partial_x^\alpha K(x,\pi_1(z_1))\chi(r)\right) ,\chi(r)r^{n-2}\mathbf{Q}(u(s))(r)\Phi_1(u(s))\right\rangle_{L^2(\R\times M_1)}\dd s.
 \end{aligned}
\end{equation}
We will bound the first term on the right-hand side of~\eqref{eq:boundnonlinear3}, the second one can be handled similarly. It writes, after integrating by parts with respect to the $r$ variable,
 $$
 \int_{\R_+}\chi^2(r)\mathbf{Q}(u(s))(r)\left(\int_{M_1}\partial_x^{\alpha}K(x,\varphi_{(t-s)r}(z_1))X_1(\Phi_1(u(s)))(z_1)\dd {\mrm L}_1(z_1)\right)r^{n-1} \dd r.
 $$
Applying~\eqref{eq:liverani} together with~\eqref{eq:semi-normsymbolpotentialvf2}, there exist a constant $C_{K,\alpha,r_0}>0$ (depending on the convolution kernel, $r_0$ and $(\Sigma,\mathrm{g})$) such that, for every $t\in\mathcal{J}_k(\varepsilon)$, the modulus of the above integral is bounded from above by
\begin{equation}\label{eq:boundnonlinear4}
\begin{aligned}
 & 
 C_{K,\alpha,r_0}\|u_0\|_{L^1}\int_0^t(t-s)e^{-\frac{\vartheta_0r_0}{2}(t-s)}\|\Phi_1(u(s))\|_{\mathcal{C}^{k_0+1}}\dd s\\
 & \hspace{3cm} \leqslant C_{K,\alpha,r_0}\|u_0\|_{L^1}\int_0^t(t-s)e^{-\frac{\vartheta_0r_0}{2}(t-s)}\|\Phi(u(s))\|_{\mathcal{C}^{k_0+2}}\dd s \\
 & \hspace{3cm} \leqslant \widetilde{C}_{K,\alpha,r_0}\|u_0\|_{L^1}\varepsilon  e^{-\vartheta t},
\end{aligned}
\end{equation}
where we used that $\|u(s)\|_{L^1(r^{n-1}\dd r\dd {\mrm L}_1)}=\|u_0\|_{L^1(r^{n-1}\dd r\dd {\mrm L}_1)}$ for every $s\geqslant 0$ and that $\vartheta=\vartheta_0 r_0/4$. A similar estimate holds for the second term of the right-hand side of~\eqref{eq:boundnonlinear3} and thus 
we are left with analyzing
\begin{multline*}
 \int_0^t\partial_x^\alpha\Phi\left(\varphi_{s-t}^*\{ \Phi(u(s)),\mathbf{P}(u(s))\}\right)\dd s\\
 =-\int_0^t\left\langle \varphi_{t-s}^*\mathbf{P}\left(\partial_x^\alpha K(x,\pi_1(z_1))r^{n-1}\chi(r)\right) ,\{\Phi(u(s)),\mathbf{P}(u(s))\}(r,z_1)\right\rangle_{L^2(\R\times M_1)}\dd s.
\end{multline*}
After integration by parts and recalling that $\chi$ is identically equal to $1$ on the support of $u$, this is equal to
\begin{equation}\label{eq:boundnonlinear6}
\begin{aligned}
 \int_0^t\left\langle r^{n-1}\chi(r) X_{\Phi(u(s))}\varphi_{t-s}^*\mathbf{P}\chi(r)\left(\partial_x^\alpha K(x,\pi_1(z_1))\right) ,\mathbf{P}(u(s))(r,z_1)\right\rangle_{L^2(\R\times M_1)}\dd s \\
  =\int_0^t\left\langle  T_\alpha(x) ,\mathbf{B}_{3, -2}^{(m_1,N_1)}\mathbf{P}(u(s))(r,z_1)\right\rangle_{L^2(\R\times M_1)}\dd s,
  \end{aligned}
\end{equation}
where we set
\begin{align*}
T_\alpha(x) &= \mathbf{B}_{-3, 2}^{(-m_1)}(X_1+C_1)^{N_1} X_{\Phi(u(s))}\varphi_{t-s}^*\mathbf{P}\left(\chi(r)r^{n-1}\left(\partial_x^\alpha K(x,\pi_1(z_1))\right)\right).\end{align*}
Thanks to Corollary~\ref{c:smoothNZ2} applied in positive time, there exist $N'\geqslant 1$ (depending only on the choice of the operator $\mathbf{B}$) and a constant $C_{K,k,r_0}>0$ such that, for all $|\alpha|\leqslant k$ and for all $x\in\Sigma$
$$
\|T_\alpha(x)\|_{L^2(\R\times M_1)} \leqslant C_{K,k,r_0} e^{-\frac{r_0\vartheta_0}{2} (t-s)}\|\Phi(u(s))\|_{\mathscr{C}^{N'}(\Sigma)}.
$$
As a result (up to modifying $C_{K,k,r_0}>0$), ~\eqref{eq:boundnonlinear6} can be controlled by
\begin{multline}\label{eq:boundnonlinear7}
 \left|\int_0^t\partial_x^\alpha\Phi\left(\varphi_{s-t}^*\{ \Phi(u(s)),\mathbf{P}(u(s))\}\right)\dd s\right|\\
 \leqslant C_{K,k,r_0}\int_0^t e^{-\frac{r_0\vartheta_0}{2} (t-s)}\|\Phi(u(s))\|_{\mathscr{C}^{N'}(\Sigma)}\|\mathbf{B}_{3, -2}^{(m_1,N_1)} \mathbf{P}(u(s))\|_{L^2(\R\times M_1)}\dd s.
\end{multline}
The proof of  Lemma~\ref{l:mainPhi} is complete
if we pick $k\geqslant N'$ in the definition of $\mathcal{J}_k(\varepsilon)$.

\subsection{Proof of Lemma~\ref{l:boundednorm}}
\label{sec:6.5}
In order to ease readability, we reduce Lemma~\ref{l:boundednorm} to the following two statements.

\begin{lemm}
\label{l:BtoA}
There exist $k_0 \in \mathbb{N}$, $C>0$ (independent of $\varepsilon$ and $u_0$) such that, for all $t \in\mathcal{J}_k(\varepsilon)$, 
$$
 \left\|\mathbf{B}_{3, -2}^{(m_1,N_1)} \mathbf{P}(u(t))\right\|_{L^2(\R\times M_1)}\leqslant C  \| u_0 \|_{\mathscr{C}^{k_0}} 
 + C\varepsilon e^{-2\vartheta t}\int_0^te^{\vartheta s}\left\|\mathbf{A}_{3,0}\chi u(s)\right\|_{L^2}ds.
$$
\end{lemm}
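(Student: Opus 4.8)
\textbf{Plan of proof for Lemma~\ref{l:BtoA}.} The strategy is to start from the Duhamel formula~\eqref{eq:duhamel} for $u(t)$, apply the projector $\mathbf{P}$ and the sliced anisotropic operator $\mathbf{B}_{3,-2}^{(m_1,N_1)}$, and then estimate each resulting term. The linear term $\mathbf{B}_{3,-2}^{(m_1,N_1)}\mathbf{P}\varphi_{-t}^*u_0$ is handled by Corollary~\ref{c:smoothNZ} (in its version for the flow in positive time, replacing $\mathfrak S_1$ by $-\mathfrak S_1$), which gives an exponentially decaying bound $Ce^{-t\vartheta_0 r_0/2}\|u_0\|_{\mathscr{C}^{k_0}}$ for $k_0$ large enough, using that $u_0$ is supported in an interval $I$ with $\overline I\subset\R_+^*$ and $\min I\geqslant r_0/2$ by hypothesis. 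This contributes at most $C\|u_0\|_{\mathscr{C}^{k_0}}$ as desired.

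For the nonlinear term, I would write it as $-\int_0^t\mathbf{B}_{3,-2}^{(m_1,N_1)}\mathbf{P}\varphi_{-(t-s)}^*\varphi_{-s}^*\{\Phi(u(s)),u(s)\}\dd s$ and insert a cutoff $\chi\in\mathscr{C}^\infty_c(I)$, equal to $1$ on the support of $u$ (which remains in $[r_0/2,2r_0]$ for $s\in\mathcal{J}_k(\varepsilon)$ by Lemma~\ref{l:support}, provided $\varepsilon<\widetilde c_{K,\mathrm g}r_0\vartheta$). Thus $\{\Phi(u(s)),u(s)\}=\chi\{\Phi(u(s)),\chi u(s)\}$ up to harmless terms. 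Then Proposition~\ref{p:bilinear}, applied with $\Phi=\Phi(u(s))$ and $u=u(s)$ (after commuting through the extra $\varphi_{-s}^*$ pushforward, which one can absorb using the semigroup property $\varphi_{-(t-s)}^*=\varphi_{-t}^*\varphi_{s}^*$ and re-examining the bilinear estimate, or more directly by applying the bilinear estimate to the composite), yields
$$
\left\|\mathbf{B}_{3,-2}^{(m_1,N_1)}\varphi_{-(t-s)}^*\mathbf{P}\{\Phi(u(s)),\chi u(s)\}\right\|_{L^2}\leqslant Ce^{-\vartheta_0(t-s)\min I}\|\Phi(u(s))\|_{\mathscr{C}^N}\bigl(\|u(s)\|_{L^2}+\|\mathbf{A}_{3,0}\chi u(s)\|_{L^2}\bigr).
$$
Since $\min I\geqslant r_0/2$, the exponential factor is at most $e^{-\vartheta_0 r_0(t-s)/2}$.

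Now I would use the three available controls on the right-hand side: first, $\|u(s)\|_{L^2}=\|u_0\|_{L^2}\leqslant C\|u_0\|_{\mathscr{C}^{k_0}}$ by the $L^p$-conservation (Lemma~\ref{lem:conservation-Lp}); second, on $\mathcal{J}_k(\varepsilon)$ one has $\|\Phi(u(s))\|_{\mathscr{C}^N}\leqslant\varepsilon e^{-\vartheta s}\leqslant\varepsilon$ (taking $k\geqslant N$), so the term involving $\|u(s)\|_{L^2}$ contributes $\int_0^t e^{-\vartheta_0 r_0(t-s)/2}\varepsilon\,\|u_0\|_{\mathscr{C}^{k_0}}\dd s\leqslant C\|u_0\|_{\mathscr{C}^{k_0}}$ (absorbing $\varepsilon\leqslant 1$), fitting into the first term of the claimed bound; third, the term involving $\|\mathbf{A}_{3,0}\chi u(s)\|_{L^2}$ is bounded by $C\int_0^t e^{-\vartheta_0 r_0(t-s)/2}\|\Phi(u(s))\|_{\mathscr{C}^N}\|\mathbf{A}_{3,0}\chi u(s)\|_{L^2}\dd s\leqslant C\varepsilon\int_0^t e^{-\vartheta_0 r_0(t-s)/2}e^{-\vartheta s}\|\mathbf{A}_{3,0}\chi u(s)\|_{L^2}\dd s$. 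Using $\vartheta=\vartheta_0 r_0/4$ so that $e^{-\vartheta_0 r_0(t-s)/2}=e^{-2\vartheta(t-s)}$, this is exactly $C\varepsilon e^{-2\vartheta t}\int_0^t e^{2\vartheta s}e^{-\vartheta s}\|\mathbf{A}_{3,0}\chi u(s)\|_{L^2}\dd s=C\varepsilon e^{-2\vartheta t}\int_0^t e^{\vartheta s}\|\mathbf{A}_{3,0}\chi u(s)\|_{L^2}\dd s$, which is the second term in the statement.

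\textbf{Main obstacle.} The delicate point is the bookkeeping with the extra pushforward $\varphi_{-s}^*$ inside $\varphi_{-s}^*\{\Phi(u(s)),u(s)\}$: Proposition~\ref{p:bilinear} is stated for $\varphi_{-t}^*\mathbf{P}\{\Phi,\chi u\}$, not $\varphi_{-t}^*\mathbf{P}\varphi_{-s}^*\{\Phi,\chi u\}$. One must either rewrite $\{\Phi(u(s)),u(s)\}=-\partial_s(\varphi_s^* u(s))\cdot(\ldots)$-type identities, or — more robustly — observe that $\varphi_{s-t}^*\{\Phi(u(s)),u(s)\}$ is itself of the form $\varphi_{-(t-s)}^*$ applied to a Poisson bracket with a cutoff that stays in a fixed compact annulus (since $u(s)$ and hence $\{\Phi(u(s)),u(s)\}$ are supported in $[r_0/2,2r_0]\times M_1$ uniformly over $\mathcal{J}_k(\varepsilon)$), so that Proposition~\ref{p:bilinear} applies directly with interval $I\supset[r_0/2,2r_0]$. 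Checking that the $\mathscr C^N$-norm of $\Phi(u(s))$ and the $L^2$/$\mathbf A$-norms on the right are indeed those of $u(s)$ (and not some flowed-out version with uncontrolled constants) requires care, but no new idea — it is precisely what Remark~\ref{r:potential-kernel} and the support control of Lemma~\ref{l:support} are designed to provide.
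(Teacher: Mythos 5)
Your proposal is correct and follows essentially the same route as the paper: Duhamel formula, exponential decay of the linear term via the sliced-norm estimates (Lemma~\ref{l:NZradial}/Corollary~\ref{c:smoothNZ}), insertion of a cutoff justified by the support control of Lemma~\ref{l:support}, the bilinear estimate of Proposition~\ref{p:bilinear}, the bootstrap bound on $\|\Phi(u(s))\|_{\mathscr C^k}$ with $\vartheta=\vartheta_0 r_0/4$, and $L^p$-conservation for the $\|u(s)\|_{L^2}$ term. The ``main obstacle'' you flag is illusory: the Duhamel integrand is $\varphi_{s-t}^*\{\Phi(u(s)),u(s)\}=\varphi_{-(t-s)}^*\{\Phi(u(s)),u(s)\}$ with no extra pushforward factor, so Proposition~\ref{p:bilinear} applies directly with time $t-s\geqslant 0$, exactly as you conclude in your fallback argument (and as the paper does).
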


\begin{lemm}\label{l:polynomialbound}There exist a constant $\widetilde{c}_{K,\mathrm{g}}>0$ depending only on the metric $\mathrm{g}$ and on the interaction kernel $K$, $p \in \mathbb{N}$ and $C>0$ (independent of $\varepsilon$ and $u_0$) such that, for $\varepsilon<\widetilde{c}_{K,\mathrm{g}}r_0\vartheta$,  for every $s\in \mathcal{J}_k(\varepsilon)$,
 $$
 \|\mathbf{A}_{3,0} \chi u(s)\|_{L^2(\R\times M_1)}\leqslant C \langle s\rangle^p\|u_0\|_{L^2}.
 $$
 
\end{lemm}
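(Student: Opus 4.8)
The plan is to prove the polynomial bound on the global anisotropic Sobolev norm $\|\mathbf A_{3,0}\chi u(s)\|_{L^2}$ by a microlocal energy estimate, in the spirit of the toy model computation~\eqref{eq:heuristiccommutator}. First I would use the Vlasov equation~\eqref{eq:vlasov} in the form $\partial_t u=-\{\mathrm H+\Phi(u),u\}=-Xu-X_{\Phi(u)}u$, apply the cutoff $\chi$ (which is identically $1$ on the support of $u(t)$ for $t\in\mathcal J_k(\varepsilon)$ by Lemma~\ref{l:support}) and compute the time derivative of $\|\mathbf A_{3,0}\chi u(t)\|_{L^2}^2$. Writing $v=\chi u$, and recalling that $\mathbf A_{3,0}=\mathbf{Op}(e^{F_{3,0}})$ is a (self-adjoint, up to lower order) pseudo-differential operator on $\R\times M_1$ with symbol in $\overline S^{6}$, one gets
$$
\frac{\dd}{\dd t}\|\mathbf A_{3,0}v\|_{L^2}^2=2\operatorname{Re}\langle \mathbf A_{3,0}\partial_t v,\mathbf A_{3,0}v\rangle,
$$
and substituting $\partial_t v=-X v-\chi X_{\Phi(u)}u$ (plus commutator terms $[\partial_t,\chi]u=0$ since $\chi$ is time-independent, and $[\chi,X]u$, $[\chi,X_{\Phi(u)}]u$ which vanish on $\supp u$), this splits into a \emph{linear} contribution $-2\operatorname{Re}\langle \mathbf A_{3,0}Xv,\mathbf A_{3,0}v\rangle$ and a \emph{nonlinear} contribution involving $\omega$-type factors controlled by $\|\Phi(u(t))\|_{\mathscr C^N}\leqslant\varepsilon e^{-\vartheta t}$.

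For the linear term, the key is that $X=rX_1$ and that $\mathbf A_{3,0}$ was built from the \emph{global} escape function $F_{3,0}=(3\mu+\sigma)\log f$ with $\sigma=0$, which by Proposition~\ref{prop:comparisonresult}(ii) and Lemma~\ref{lem:comparison} satisfies $\mathfrak X F_{3,0}\leqslant 0$ in $\{|\zeta|\geqslant 1\}$. Thus, writing $2\operatorname{Re}\langle\mathbf A_{3,0}Xv,\mathbf A_{3,0}v\rangle=\langle(\mathbf A_{3,0}X+X^*\mathbf A_{3,0})\mathbf A_{3,0}v,\mathbf A_{3,0}v\rangle$ modulo conjugation, and noting that $X^*=-X$ with respect to the invariant measure $\dd r\,\dd L_1$, the operator $\mathbf A_{3,0}X-X\mathbf A_{3,0}=[\mathbf A_{3,0},X]$ is pseudo-differential with principal symbol $\tfrac{1}{i}\{e^{F_{3,0}},\langle\zeta,X\rangle\}=\tfrac{1}{i}e^{F_{3,0}}\mathfrak X F_{3,0}$. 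Conjugating by $\mathbf A_{3,0}$ and using the sharp G\aa rding inequality (Appendix~\ref{a:pseudo}), one obtains that $[\mathbf A_{3,0},X]\mathbf A_{3,0}^{-1}$ is $\leqslant 0$ modulo a pseudo-differential operator of lower order (order $2\mathfrak S_0-1=5$ say). This lower-order remainder is exactly where a loss of derivative occurs; one absorbs it by inserting a resolvent $(X_1+C_1)^{-N}$ (as flagged after~\eqref{eq:intropropB}) or, more systematically, by running an induction on the order of the anisotropic space: at each step the top-order energy controls itself up to a remainder measured by an energy one notch lower, and after finitely many steps one reaches a standard Sobolev norm $H^{k_0}$ which is bounded by $\|u_0\|_{\mathscr C^{k_0}}$ using that all $L^p$ (hence $H^0$) norms of $u(t)$ are conserved by Lemma~\ref{lem:conservation-Lp} and propagation of regularity along the non-autonomous flow (bounding $\|u(t)\|_{H^{k_0}}$ polynomially in $t$ via Gronwall as in the proof of Lemma~\ref{lem:localexistence}, using $\|\Phi(u(s))\|_{\mathscr C^{k_0+1}}\leqslant\varepsilon$). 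This induction is what produces the power $\langle s\rangle^p$.

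For the nonlinear term $2\omega\operatorname{Re}\langle\mathbf A_{3,0}\chi X_{\Phi(u)}u,\mathbf A_{3,0}v\rangle$, I would use~\eqref{eq:potentialvectorfieldpseudo}--\eqref{eq:semi-normsymbolpotentialvf2}: the vector field $X_{\Phi(u)}=r^{-1}X^1_{\Phi(u)}+\Phi_1(u)\partial_r$ has symbol controlled by $\|\Phi(u)\|_{\mathscr C^{N+1}}$, hence $\leqslant\varepsilon e^{-\vartheta t}$ on $\mathcal J_k(\varepsilon)$, and on the support of $\chi$ we have $r\in[r_0/2,2r_0^{-1}]$ so $r^{-1}$ is harmless. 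The commutator $[\mathbf A_{3,0},\chi X_{\Phi(u)}]\mathbf A_{3,0}^{-1}$ is (almost) order zero by the composition rule, with operator norm bounded by $C\|\Phi(u(t))\|_{\mathscr C^N}$ via Calder\'on--Vaillancourt, so by Cauchy--Schwarz this contributes $\leqslant C\varepsilon e^{-\vartheta t}\|\mathbf A_{3,0}v\|_{L^2}^2$ to the derivative (there is again a logarithmic loss that forces time-dependent operators $\mathbf A_t$, handled exactly as in~\S\ref{sec:strategy}; I would suppress this in the plan). Combining, one reaches
$$
\frac{\dd}{\dd t}\|\mathbf A_{3,0}v(t)\|_{L^2}^2\leqslant C_p\langle t\rangle^{2p-1}\|u_0\|_{\mathscr C^{k_0}}^2+C\varepsilon e^{-\vartheta t}\|\mathbf A_{3,0}v(t)\|_{L^2}^2,
$$
and Gronwall's lemma, using $\int_0^\infty\varepsilon e^{-\vartheta s}\dd s<\infty$, yields $\|\mathbf A_{3,0}v(t)\|_{L^2}\leqslant C\langle t\rangle^p\|u_0\|_{\mathscr C^{k_0}}$ (absorbing $\mathscr C^{k_0}$ into $L^2$ via the conservation of $L^2$ and the base case of the induction, or more honestly keeping $\|u_0\|_{\mathscr C^{k_0}}$ on the right as the lemma is stated with $L^2$ — one should double-check which norm is needed, but in any case it is dominated by $\|u_0\|_{\mathscr C^{k_0}}$). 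The main obstacle is the rigorous bookkeeping of the derivative losses: making precise the induction on the order of the anisotropic space and the time-dependent regularization $\mathbf A_t$ with the extra resolvent $(X_1+C_1)^{-N}$, so that the G\aa rding remainder is genuinely controlled at each step and the constants $C_p$, $p$ are uniform in $t\in\mathcal J_k(\varepsilon)$. The smoothness of $K$ enters here to guarantee that $\|\Phi(u(t))\|_{\mathscr C^{N+1}}$ is finite for the large $N$ dictated by the number of induction steps.
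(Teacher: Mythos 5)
Your overall strategy (microlocal energy estimate with the operator built from the global escape function, sharp G\aa rding via $\mathfrak X F\leqslant 0$, an induction to absorb the derivative loss, and a Gronwall factor for the perturbation) is indeed the route the paper takes, but two steps of your plan do not work as written, and they are precisely the points where the hypotheses of the lemma are used. First, your base case is wrong: you propose to terminate the induction at a standard Sobolev norm $H^{k_0}$, bounded ``polynomially in $t$ via Gronwall as in the proof of Lemma~\ref{lem:localexistence}''. Gronwall applied to $\dd\psi^{u}_{t,0}$ involves the differential of the \emph{full} vector field $X+X_{\Phi(u)}$, and $\dd X$ is not small: by the Anosov property $\|\dd\varphi_t\|$ grows exponentially, so $\|u(t)\|_{H^{k_0}}$ for $k_0\geqslant 1$ can only be bounded by $e^{Ct}$, which destroys any polynomial estimate. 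This is the whole reason for working with anisotropic spaces. In the paper the induction does not descend to a positive-order Sobolev norm at all: it descends in the \emph{weight} $\sigma$, replacing $\sigma_0(t)$ by $\sigma_0(t)-q/8$ (each G\aa rding remainder costs a fixed shift of $\sigma$), until $3\mu+\sigma\leqslant 0$, at which point the symbol has nonpositive order and the norm is controlled by $\|u(t)\|_{L^2}$, which is conserved (Lemma~\ref{lem:conservation-Lp}). This is also why the final bound involves only $\|u_0\|_{L^2}$, a discrepancy you noticed yourself but did not resolve.

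Second, you dismiss the logarithmic loss in the nonlinear commutator (``I would suppress this in the plan''), but it is the heart of the proof and the source of the hypothesis $\varepsilon<\widetilde c_{K,\mathrm g}\,r_0\vartheta$ that appears in the statement. The commutator $[\mathbf A_{3,\sigma},X_{\Phi(u)}]\mathbf A_{3,\sigma}^{-1}$ is \emph{not} almost of order zero: its principal symbol is $3\log f\,\{\zeta(X_{\Phi(u)}),\mu\}$, which is unbounded, so Calder\'on--Vaillancourt plus Cauchy--Schwarz does not give a term $\leqslant C\varepsilon e^{-\vartheta t}\|\mathbf A_{3,\sigma}v\|_{L^2}^2$, and the Gronwall closure you invoke does not apply. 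The paper's mechanism is to run the estimate for the time-dependent weight $\sigma_0(t)=\tfrac18 e^{-\vartheta t}$: differentiating the norm produces the dissipative symbol $-\tfrac18\vartheta e^{-\vartheta t}\log f$, and since $|\{\zeta(X_{\Phi(u(t))}),\mu\}|\leqslant C(r_0,\mu)\|\Phi(u(t))\|_{\mathscr C^2}\leqslant C(r_0,\mu)\varepsilon e^{-\vartheta t}$ on $\mathcal J_k(\varepsilon)$, the combined symbol $\bigl[\tfrac18\vartheta e^{-\vartheta t}+3\{\zeta(X_{\Phi(u(t))}),\mu\}\bigr]\log f$ is nonnegative exactly when $3C(r_0,\mu)\varepsilon\leqslant\vartheta/8$ (this is \eqref{eq:value-vareps}), after which sharp G\aa rding applies. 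Without this (or an equivalent device) your energy inequality does not close, and your plan gives no account of where the smallness condition on $\varepsilon$ in terms of $r_0\vartheta$ would enter. One must also keep $|\sigma|$ within the admissible range of Proposition~\ref{prop:comparisonresult} at every step of the $\sigma$-descent, a constraint your order-based induction ignores.
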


\begin{rema} In Lemma~\ref{l:polynomialbound}, $\varepsilon$ must be small as follows. We set $C(r_0,\mu)>0$ to be a positive constant such that, for all $u\in L^1([r_0/4,4r_0]\times M_1)$ and for all $(r,z_1;\rho,\zeta_1)$ in $\mrm{T}^*([r_0/4,4r_0]\times M_1)$,
$$
\left|\{\zeta(X_{\Phi(u)}),\mu(z_1;\rho,\zeta_1)\}\right|\leqslant C(r_0,\mu)\|\Phi(u)\|_{\mathscr{C}^2}.
$$
Recall from Remark~\ref{r:potential-kernel} that $\zeta(X_{\Phi(u)})$ is in fact of the form
$\frac{1}{r}\zeta_1(X_{\Phi(u)}^1(z_1))+ \Phi_1(u)\rho,$
with $\Phi_1(u)$ and $X_{\Phi(u)}^1$ defined through linear combination of derivatives of $\Phi(u)$. We  shall fix $\varepsilon>0$ small enough to ensure that
\begin{equation}\label{eq:value-vareps}
 3C(r_0,\mu)\varepsilon \leqslant \frac{\vartheta}{8}.
\end{equation}
\end{rema}

Assuming Lemmas~\ref{l:BtoA} and \ref{l:polynomialbound}, we obtain Lemma~\ref{l:boundednorm}, as 
the possible polynomial growth of  $\|\mathbf{A}_{3,0} \chi u(s)\|_{L^2(\R\times M_1)}$ is compensated by the exponential factor $e^{-2\vartheta t}$. We provide the proofs of Lemmas~\ref{l:BtoA} and ~\ref{l:polynomialbound} in the next subsections.

\subsection{Proof of Lemma~\ref{l:BtoA}}


We start with the Duhamel formula to write
\begin{equation}\label{eq:duhamelanisotropic}
\mathbf{B}_{3, -2}^{(m_1,N_1)} \mathbf{P}(u(t))
=\mathbf{B}_{3, -2}^{(m_1,N_1)}\mathbf{P}(\varphi_{-t}^*(u_0))
-\int_0^t\mathbf{B}_{3, -2}^{(m_1,N_1)} \varphi_{s-t}^*\mathbf{P}\{\Phi(u(s)),u(s)\}\dd s.
\end{equation}

Thanks to Lemma~\ref{l:NZradial}, there exists a constant $C>0$ and an integer $N'\geqslant 1$ 
such that, for all $t\geqslant 0$,
$$
\|\mathbf{B}_{3, -2}^{(m_1,N_1)} \mathbf{P}(\varphi_{-t}^*(u_0))\|_{L^2(\R\times M_1)}\leqslant C e^{-\vartheta t}\|u_0\|_{\mathscr{C}^{N'}}.
$$
Hence, in order to prove Lemma~\ref{l:BtoA}, we just have to obtain the expected upper bound on the integral remainder in the right-hand side of~\eqref{eq:duhamelanisotropic}, which we denote by
$$
R(t)=\int_0^t\mathbf{B}_{3, -2}^{(m_1,N_1)}\varphi_{s-t}^*\mathbf{P}\{\Phi(u(s)),u(s)\}\dd s.
$$
Since $\supp u(t)\subset [r_0/2,2r_0]$ by  Lemma~\ref{l:support}, this is also equal to
$$
R(t)=\int_0^t\mathbf{B}_{3, -2}^{(m_1,N_1)}\varphi_{s-t}^* \chi\mathbf{P} \{\Phi(u(s)),u(s)\}\dd s,
$$
where $\chi\in\mathscr{C}^\infty_c(\R_+^*)$ is identically equal to $1$ on $[r_0/2,2r_0]$.

We are now in position to apply the bilinear estimate of Proposition~\ref{p:bilinear}. Indeed, since $s \in  \mathcal{J}_k(\varepsilon)$ and up to taking $k\geqslant N$ (with $N$ the regularity parameter appearing in this proposition), one has
\begin{align*}
\|{R}(t)\|_{L^2}&\leqslant C \int_0^t e^{-\frac{\vartheta_0 r_0}{2}(t-s)} \| \Phi(u(s))\|_{\mscr C^k} \left( \| u(s) \|_{L^2( \R \times M_1)} + \left\|\mathbf{A}_{3,0}\chi u(s)\right\|_{L^2} \right) \dd s\\
&\leqslant C \varepsilon e^{-2\vartheta t}\int_0^t e^{\vartheta s}\left( \| u(s) \|_{L^2( \R \times M_1)} + \left\|\mathbf{A}_{3,0}\chi u(s)\right\|_{L^2} \right) \dd s.
\end{align*}
By Lemma~\ref{lem:conservation-Lp}, one has\footnote{We only have an upper bound as we took the measure $\dd r \dd {\mrm L}_1$ for the $L^2$ scalar product.} $\left\| u(s)\right\|_{L^2}\leqslant C_{r_0}\|u_0\|_{L^2}$ for every $s\in\mathcal{J}_k(\varepsilon)$. Hence, we can ensure that the contribution of this term to the upper bound for $ \|\mathbf{B}_{3,-2}^{(m_1,N_1)}\mathbf{P}(u(t))\|_{L^2(\R\times M_1)}$ is controlled by $C  e^{-\vartheta t}\varepsilon\|u_0\|_{L^2}$ for every $t\in\mathcal{J}_k(\varepsilon)$.
This concludes the proof of Lemma~\ref{l:BtoA} provided that we pick $k_0\geqslant N'$. 

\subsection{Proof of Lemma~\ref{l:polynomialbound}}
\label{ss:proof-microlocal}

In this section, we use microlocal energy estimates to prove Lemma~\ref{l:polynomialbound}. This is precisely here that our construction of a global escape function is crucial.

Due to a logarithmic loss of derivatives that will occur in the energy estimates, we will not study directly $\|\mathbf{A}_{3,0} \chi u(s)\|_{L^2(\R\times M_1)}$ but rather
$$
\|\mathbf{A}_{3,\sigma_0(s)}  \chi u(s)\|_{L^2(\R\times M_1)}
$$
where $\sigma_0(s)=\frac{1}{8} e^{-\vartheta s} $ is a time-dependent weight. This will yield some extra dissipation in the energy estimates that will precisely compensate the aforementioned logarithmic loss of derivatives. 
This idea of considering a time-dependent regularity is somehow reminiscent of the time-dependent weights often used for Cauchy-Kowalevskaya type theorems in analytic regularity (see e.g. \cite{Caf} and the proofs of Landau damping in the torus of \cite{BMM} and \cite{GNR21}). The fact that $\sigma_0$ remains non-negative for all times is important and, as we shall see at the end of the proof, estimating $
\|\mathbf{A}_{3,\sigma_0(s)}  \chi u(s)\|_{L^2(\R\times M_1)}
$ will be sufficient to bound $\|\mathbf{A}_{3,0} \chi u(s)\|_{L^2(\R\times M_1)}$.

Recall that $\mathbf{A}_{3,\sigma_0(t)}$ is of the form
$$
\mathbf{A}_{3,\sigma_0(t)}v(r,z_1)=\frac{1}{2\pi}\int_{\R^2}e^{i(r-s)\rho}{\oM}\left(e^{(3\mu+\sigma_0(t)) \log f}\right)v(s,z_1)\dd\rho  \dd s,
$$
where $\mu$ and $f$ are given by Proposition~\ref{prop:comparisonresult}. Let $\chi$ be a smooth function that is compactly supported in $]r_0/4,4r_0[$ and that is identically equal to $1$ on $[r_0/2,2r_0]$. Thanks to Lemma~\ref{l:cutoff}, there exists a constant $C>0$  such that
$$
\left\|\mathbf{A}_{3,\sigma_0(t)} u(t)\right\|_{L^2}\leqslant \left\|\chi \mathbf{A}_{3,\sigma_0(t)}\chi u(t)\right\|_{L^2}+ C\|u_0\|_{L^2}.
$$
Hence, we can restrict our attention to estimating the norm of $\left\|\chi \mathbf{A}_{3,\sigma_0(t)}\chi u(t)\right\|_{L^2}$. To this purpose, we write
\begin{equation}
\label{eq:derivativeanisotropicnorm}
\begin{aligned}
\frac{\dd}{\dd t}\left\|\chi \mathbf{A}_{3,\sigma_0(t)}\chi u(t)\right\|_{L^2}^2 
=-2\operatorname{Re}&\left\langle \chi \mathbf{A}_{3,\sigma_0(t)}\chi Xu(t),\chi \mathbf{A}_{3,\sigma_0(t)}\chi u(t)\right\rangle\\
&-2\operatorname{Re}\left\langle \chi \mathbf{A}_{3,\sigma_0(t)}\chi X_{\Phi(u(t))} u(t),\chi \mathbf{A}_{3,\sigma_0(t)}\chi u(t)\right\rangle \\
&-\frac{1}{4} \vartheta e^{-t\vartheta} \operatorname{Re}\left\langle \chi \widetilde{\mathbf{A}}_{3,\sigma_0(t)}\chi u(t),\chi \mathbf{A}_{3,\sigma_0(t)}\chi u(t)\right\rangle,
\end{aligned}
\end{equation}
where
$$
\widetilde{\mathbf{A}}_{3,\sigma_0(t)}v(r,z_1)=\frac{1}{2\pi}\int_{\R^2}e^{i(r-s)\rho}{\oM}\left(\log f e^{(3\mu+\sigma_0(t)) \log f}\right)v(s,z_1)\dd\rho  \dd s.
$$
Now using the composition rule for pseudo-differential operators on $\R\times M_1$ (see Remark~\ref{r:pseudononcompact} for a brief reminder), one has
$$
\widetilde{\mathbf{A}}_{3,\sigma_0(t)}\chi^2 =\mathbf{Op}\left(\log f \right)\chi\mathbf{Op}\left( e^{(3\mu+\sigma_0(t)) \log f}\right)\chi\\
+\mathbf{Op}\left(b_{-1} e^{(3\mu+\sigma_0(t)) \log f}\right)+\mathbf{R}_N,
$$
where $b_{-1}$ is an element in $\overline{S}^{-1+0}(\mrm{T}^*(\R\times M_1))$ and the remainder $\mathbf{R}_N$ is a bounded operator from $H^{-N}_{\text{comp}}(\R\times M_1)$ to $H^N_{\text{loc}}(\R\times M_1)$ (with the semi-norm of $R_N$ that is uniformly bounded in terms of $t$). Applying the composition rule for pseudo-differential operators, one finds that the last term in the right-hand side of~\eqref{eq:derivativeanisotropicnorm} satisfies
\begin{equation}\label{eq:derivativevariableorder}
\begin{aligned}
\left\langle \chi \widetilde{\mathbf{A}}_{3,\sigma_0(t)}\chi u(t),\chi \mathbf{A}_{3,\sigma_0(t)}\chi u(t)\right\rangle = \left\langle \chi\mathbf{Op}(\log f)\chi \mathbf{A}_{3,\sigma_0(t)}\chi u(t),\chi \mathbf{A}_{3,\sigma_0(t)}\chi u(t)\right\rangle\\
+\mathcal{O}\left(\|\mathbf{A}_{3,\sigma_0(t)-\frac{1}{4}}u(t)\|_{L^2}^2+\|u_0\|_{L^2}^2\right),
\end{aligned}
\end{equation}
where the constant in the remainder depends on $\vartheta$, and on the choice of $f$ and $\mu$ (but not on $t$). Note that we used one more time that $\|u(t)\|_{L^2}\leqslant 2^nr_0^{-n+1} \|u_0\|_{L^2}$ for every $t\in\mathcal{J}_k(\varepsilon)$ as the $L^2$-scalar product is taken with respect to $\dd r \dd {\mrm L}_1(z_1)$ rather than $r^{n-1}\dd r \dd {\mrm L}_1(z_1)$ .

Next we deal with the first term in~\eqref{eq:derivativeanisotropicnorm}. As $X$ preserves the measure $\dd r \dd {\mrm L}_1$ on $\R\times M_1$, one has
$$
\left\langle \chi \mathbf{A}_{3,\sigma_0(t)}\chi Xu(t),\chi \mathbf{A}_{3,\sigma_0(t)}\chi u(t)\right\rangle=\left\langle \left[\chi \mathbf{A}_{3,\sigma_0(t)}\chi, X\right]u(t),\chi \mathbf{A}_{3,\sigma_0(t)}\chi u(t)\right\rangle
$$
We can now insert the almost inverse $\mathbf{W}_{3,\sigma_0(t)}^{(N)}= \mathbf{Op}\left(e^{-(3 \mu + \sigma_0(t)\log f)}b_{N}\right)$ of $\mathbf{A}_{3,\sigma_0(t)}$, as given by~\eqref{eq:pseudoinverseA},  with $N\geqslant 5$. Applying the composition rule together with Calder\'on-Vaillancourt one more time, it yields 
\begin{equation}\label{eq:bracketX}
\begin{aligned}
&\left\langle \chi \mathbf{A}_{3,\sigma_0(t)}\chi Xu(t),\chi \mathbf{A}_{3,\sigma_0(t)}\chi u(t)\right\rangle\\
&\hspace{0.62cm}= \left\langle \left[\chi \mathbf{A}_{3,\sigma_0(t)}\chi, X\right]{\mathbf{W}}_{3,\sigma_0(t)}^{(N)} \mathbf{A}_{3,\sigma_0(t)}u(t),\chi \mathbf{A}_{3,\sigma_0(t)}\chi u(t)\right\rangle\\
&\hspace{7cm}+\mathcal{O}\left(\|\mathbf{A}_{3,\sigma_0(t)-\frac{1}{4}}u(t)\|_{L^2}^2+\|u_0\|_{L^2}^2\right),
\end{aligned}
\end{equation}
where the constant in the remainder depends again on $\vartheta$ and the choice of $f$ and $\mu$ (but not on $t$). Using Lemmas~\ref{l:support} and~\ref{l:cutoff}, we can (up to increasing the size of the constant in the remainder) insert a cutoff $\chi$ so that~\eqref{eq:bracketX} becomes
\begin{multline*}
\left\langle \left(\left[\chi \mathbf{A}_{3,\sigma_0(t)}\chi, X\right]\mathbf{W}_{3,\sigma_0(t)}^{(N)}\chi\right)\chi \mathbf{A}_{3,\sigma_0(t)}\chi u(t),\chi \mathbf{A}_{3,\sigma_0(t)}\chi u(t)\right\rangle\\
+\mathcal{O}\left(\|\mathbf{A}_{3,\sigma_0(t)-\frac{1}{4}}u(t)\|_{L^2}^2+\|u_0\|_{L^2}^2\right).
\end{multline*}
Another application of the composition rule together with the Calder\'on-Vaillancourt Theorem allows us to write the above term as
\begin{multline*}
\left\langle \mathbf{Op}\left(\chi^3{\mathfrak{X}}((3\mu+\sigma_0(t))\log f)\right)\chi\mathbf{A}_{3,\sigma_0(t)}\chi u(t),\chi \mathbf{A}_{3,\sigma_0(t)}\chi u(t)\right\rangle\\
+\mathcal{O}\left(\|\mathbf{A}_{3,\sigma_0(t)-\frac{1}{4}}u(t)\|_{L^2}^2+\|u_0\|_{L^2}^2\right).
\end{multline*}
By Proposition~\ref{prop:comparisonresult}, we know that $\chi(r)\mathfrak{X}((3\mu+\sigma_0(t))\log f)(z,\zeta)\geqslant 0$ for $\langle\zeta\rangle \geqslant R_0$. Hence, up to adding a pseudo-differential that is compactly supported in the variable $\zeta$ (and thus smoothing), we are in position to apply the sharp G\aa{}rding inequality~\cite[Prop.~E.23]{dyatlov2017mathematical} (see 
Section~\ref{s:garding} and Remark~\ref{r:pseudononcompact} for a brief reminder). It allows to end up with
\begin{equation}\label{eq:bracketX4}
-2\operatorname{Re}\left\langle \chi \mathbf{A}_{3,\sigma_0(t)}\chi Xu(t),\chi \mathbf{A}_{3,\sigma_0(t)}\chi u(t)\right\rangle\leqslant
C\left(\|\mathbf{A}_{3,\sigma_0(t)-\frac{1}{4}}u(t)\|_{L^2}^2+\|u_0\|_{L^2}^2\right),
\end{equation}
where $C>0$ is a constant that is independent of $t$.

It remains to deal with the second term in the right-hand side of~\eqref{eq:derivativeanisotropicnorm}. Using that $X_{\Phi(u)}$ is an Hamiltonian vector field (and thus preserves the Liouville measure), we can write
\begin{multline*}
 \left\langle \chi \mathbf{A}_{3,\sigma_0(t)}\chi X_{\Phi(u(t))} u(t),\chi \mathbf{A}_{3,\sigma_0(t)}\chi u(t)\right\rangle\\
 =\left\langle \left[\chi \mathbf{A}_{3,\sigma_0(t)}\chi, X_{\Phi(u(t))}\right] u(t),\chi \mathbf{A}_{3,\sigma_0(t)}\chi u(t)\right\rangle\\
 +\left\langle X_{\Phi(u(t))}(r^{-n+1})\chi \mathbf{A}_{3,\sigma_0(t)}\chi  u(t),\chi \mathbf{A}_{3,\sigma_0(t)}\chi u(t)\right\rangle.
\end{multline*}
As $t\in\mathcal{J}_k(\varepsilon)$, we find
\begin{multline*}
 -2\operatorname{Re}\left\langle \chi \mathbf{A}_{3,\sigma_0(t)}\chi X_{\Phi(u(t))} u(t),\chi \mathbf{A}_{3,\sigma_0(t)}\chi u(t)\right\rangle\\
 \leqslant -2\operatorname{Re}\left\langle \left[\chi \mathbf{A}_{3,\sigma_0(t)}\chi, X_{\Phi(u(t))}\right] u(t),\chi \mathbf{A}_{3,\sigma_0(t)}\chi u(t)\right\rangle\\
 +n (2r_0)^{-n} \varepsilon e^{-\vartheta t}\left\|\chi \mathbf{A}_{3,\sigma_0(t)}\chi u(t)\right\|^2.
\end{multline*}
Proceeding as for the case of the vector field $X$ (i.e. through several applications of the composition rule together with the Calder\'on-Vaillancourt Theorem), we find that there exists a constant $C>0$ (depending on $\vartheta$, $r_0$ and on the choices of $f$ and $\mu$) such that
\begin{multline*}
 -2\operatorname{Re}\left\langle \chi \mathbf{A}_{3,\sigma_0(t)}\chi X_{\Phi(u(t))} u(t),\chi \mathbf{A}_{3,\sigma_0(t)}\chi u(t)\right\rangle\\
 \leqslant -2\operatorname{Re}\left\langle \mathbf{Op}\left(3\log f\{\zeta(X_{\Phi(u(t))}),\mu\}\right)\chi \mathbf{A}_{3,\sigma_0(t)}\chi u(t),\chi \mathbf{A}_{3,\sigma_0(t)}\chi u(t)\right\rangle\\
 +C \varepsilon e^{-\vartheta t}\left\|\chi \mathbf{A}_{3,\sigma_0(t)}\chi u(t)\right\|^2 +C\left(\|\mathbf{A}_{3,\sigma_0(t)-\frac{1}{4}}u(t)\|_{L^2}^2+\|u_0\|_{L^2}^2\right) .
\end{multline*}
Combining this inequality with~\eqref{eq:derivativeanisotropicnorm},~\eqref{eq:derivativevariableorder} and~\eqref{eq:bracketX4}, we are led to study the term
$$
-2\left\langle \chi\mathbf{Op}\left(\left[ \frac{1}{8} \vartheta e^{-t\vartheta}  + 3 \{\zeta(X_{\Phi(u(t))}) , \mu \} \right]\log f \right)\chi \mathbf{A}_{3,\sigma_0(t)}\chi u(t),\chi \mathbf{A}_{3,\sigma_0(t)}\chi u(t)\right\rangle.
$$
Thanks to~\eqref{eq:value-vareps}, for $\varepsilon$ small enough, one finds that, for all $t \in \mathcal J_k(\varepsilon)$,
$$
\frac{1}{8}  \vartheta e^{-t\vartheta}  + 3 \{\zeta(X_{\Phi(u(t))}) , \mu \} \geqslant 0.
$$
We can thus apply the sharp G\aa{}rding inequality one more time (together with the composition rule and the Calder\'on-Vaillancourt Theorem). This eventually yields the  upper bound
\begin{align*}
 \frac{\dd}{\dd t}\left\|\chi \mathbf{A}_{3,\sigma_0(t)}\chi u(t)\right\|_{L^2}^2
 &\leqslant C \varepsilon e^{-\vartheta t}\left\|\chi \mathbf{A}_{3,\sigma_0(t)}\chi u(t)\right\|^2 +C\left(\|\mathbf{A}_{3,\sigma_0(t)-\frac{1}{4}}u(t)\|_{L^2}^2+\|u_0\|_{L^2}^2\right)\\
 &\leqslant C \varepsilon e^{-\vartheta t}\left\|\chi \mathbf{A}_{3,\sigma_0(t)}\chi u(t)\right\|^2 +C\left(\|\mathbf{A}_{3,\sigma_0(t)-\frac{1}{8}}u(t)\|_{L^2}^2+\|u_0\|_{L^2}^2\right),
\end{align*}
which implies
\begin{equation}\label{eq:maininequalitybracket}
 \frac{\dd}{\dd t}\left(\left\|\chi \mathbf{A}_{3,\sigma_0(t)}\chi u(t)\right\|_{L^2}^2e^{-C\varepsilon\int_0^te^{-\vartheta s} \dd s}\right)\leqslant C\left(\|\mathbf{A}_{3,\sigma_0(t)-\frac{1}{8}}u(t)\|_{L^2}^2+\|u_0\|_{L^2}^2\right),
\end{equation}
where the constant $C>0$ depends on $r_0$, $\vartheta$ and the choice of the function $f$ and $\mu$. 

Iterating this procedure $p$ times (where $p$ is such that $0<3+\frac{1}{8}-\frac{p}{8}\leqslant\frac{1}{4}$), recalling that $3(1-2\delta) > 3-1/8$, so that Proposition~\ref{prop:comparisonresult} can be indeed applied at each step,
we find, according to~\eqref{1/8}, that for every $0\leqslant q\leqslant p$ and for every $t\in\mathcal{J}_k(\varepsilon)$,
\begin{align}
\label{1/4}
\frac{\dd}{\dd t}\left(\left\|\chi \mathbf{A}_{3,\sigma_0(t)-\frac{q}{8}}\chi u(t)\right\|_{L^2}^2e^{-C\varepsilon\int_0^te^{-\vartheta s} \dd s} \right)
&\leqslant C\left(\|\mathbf{A}_{3,\sigma_0(t)-\frac{q}{8}-\frac{1}{4}}u(t)\|_{L^2}^2+\|u_0\|_{L^2}^2\right)\\
\label{1/8}
&\leqslant \widetilde{C}\left(\|\mathbf{A}_{3,\sigma_0(t)-\frac{q+1}{8}}u(t)\|_{L^2}^2+\|u_0\|_{L^2}^2\right)
\end{align}
By definition of $p$, using the Calder\'on-Vaillancourt (together with the composition rule) one more time, there holds
$$
\|\mathbf{A}_{3,\sigma_0(t)-\frac{p}{8}-\frac{1}{4}} \chi u(t)\|_{L^2} \leqslant C \| u(t) \|_{L^2} \leqslant (r_0/2)^{1-n} \| u_0 \|_{L^2}.
$$
By induction, using~\eqref{1/4} for $q=p$ and~\eqref{1/8} for $q <p$, we deduce that for every $t\in\mathcal{J}_k(\varepsilon)$,
$$
\|\mathbf{A}_{3,\sigma_0(t)} \chi u(t)\|_{L^2} \leqslant C \langle t \rangle^p \| u_0 \|_{L^2},
$$
and by the composition rule and the Calder\'on-Vaillancourt Theorem, we conclude that, for every $t\in\mathcal{J}_k(\varepsilon)$,
$$
\|\mathbf{A}_{3,0} \chi u(t)\|_{L^2} \leqslant C \langle t \rangle^p \| u_0 \|_{L^2},
$$
where the constant depends only $\vartheta$ and the choice of the escape function $F$ used to define $\mathbf{A}$.
We have therefore proved the final Lemma~\ref{l:polynomialbound}.


\subsection{End of the proof}
\label{sec:weak}
We are now ready to complete the proof of Theorem~\ref{thm:main}. Let $\psi$ be a smooth test function. We have for all $t \in \R_+$,
\begin{equation}
\label{eq:uPsi}
\langle u(t),\psi\rangle = \langle\varphi_{-t}^*u_0,\psi\rangle + \int_0^t \langle \varphi_{-(t-s)}^*X_{\Phi(u(s))} u(s),\psi\rangle \dd s.
\end{equation}
By Lemma~\ref{l:support},  $u(t)$ is supported away from the null section, and since the geodesic flow preserves this property, we can assume without loss of generality that this is also the case for $\psi$. By Corollary~\ref{c:smoothcorrelation}, the first term in the right-hand side of~\eqref{eq:uPsi} converges exponentially fast to 
$$
\int_{M^\times}\left(\int_{M_1} \psi(r,z_1')\dd {\mrm L}_1(z_1')\right) u_0(r,z_1)\dd {\mrm L}(r,z_1) = \left\langle \left(\int_{M_1} u_0(r,z_1')\dd {\mrm L}_1(z_1')\right), \psi\right\rangle.
$$ 
For what concerns the second one, we write
\begin{multline}\label{eq:splitting-duhamel}
 \int_0^t \langle \varphi_{(t-s)}^*X_{\Phi(u(s))} u(s),\psi\rangle \dd s =  \int_0^t \langle X_{\Phi(u(s))} u(s),  \mathbf{P} \psi \circ \varphi_{-(t-s)} \rangle \dd s\\
 +   \int_0^t \langle X_{\Phi(u(s))} u(s),  \mathbf{Q} \psi\rangle \dd s.
 \end{multline}
 In order to analyze the two terms above, recall from Remark~\ref{r:potential-kernel} that $X_{\Phi(u)}$ is of the form
$\frac{1}{r}X_{\Phi(u)}^1(z_1)+ \Phi_1(u)\partial_r,$
with $\Phi_1(u)$ and $X_{\Phi(u)}^1$ defined through the smooth kernels given in~\eqref{eq:radialcomponent-vf-potential} and~\eqref{eq:tangentialcomponent-vf-potential}. From~\eqref{eq:semi-normsymbolpotentialvf} and~\eqref{eq:semi-normsymbolpotentialvf2}, we also have that their semi-norms can be controlled by $\|\Phi(u(t))\|_{\mathscr{C}^k}$. 

With that conventions at hand, we can rewrite~\eqref{eq:splitting-duhamel} as
\begin{multline*}
 \int_0^t \langle  X_{\Phi(u(s))} u(s),  \varphi_{(t-s)}^* \mathbf{P} \psi\rangle \dd s -   \int_0^t \left\langle \Phi_1(u(s)) \mathbf{P}u(s),  \left(\partial_r \mathbf{Q} \psi+\frac{n-1}{r}\mathbf{Q} \psi\right)\right\rangle \dd s\\
-   \int_0^t \left\langle \Phi_1(u(s)) \mathbf{Q}u(s),  \left(\partial_r \mathbf{Q} \psi+\frac{n-1}{r}\mathbf{Q} \psi\right)\right\rangle \dd s.
 \end{multline*}
For the fist term, we can argue as in the proof of Lemma~\ref{l:mainPhi} and use Theorem~\ref{theo:connected} and Lemma~\ref{l:boundednorm}
to show that it converges exponentially fast to $0$ as $t\to \infty$. For the second, we use Lemma~\ref{l:boundednorm} one more time together with the fact that 
$$
\left\|\mathbf{B}_{3,-2}^{(m_1,N_1)} \Phi_1(u(s))(\mathbf{B}_{3,-2}^{(m_1,N_1)})^{-1}\right\|_{L^2\rightarrow L^2}\leqslant C\|\Phi(u(s))\|_{\mathscr{C}^k}.
$$
This last property follows from the composition rule for pseudo-differential on $M_1$ together with the Calder\'on-Vaillancourt (up to increasing the value of $k$). Thanks to Theorem~\ref{theo:connected}, this shows that the second term converges exponentially fast to
$$
-   \left\langle \int_0^\infty\Phi_1(u(s)) \mathbf{P}u(s) \dd s,  \left(\partial_r \mathbf{Q} \psi+\frac{n-1}{r}\mathbf{Q} \psi\right)\right\rangle. 
$$
The third term also converges converges exponentially to some limit thanks to Theorem~\ref{theo:connected} together with the fact that the $L^p$ norms of $u(t)$ are preserved. In summary, it yields that the Duhamel remainder term converges exponentially fast to 
$$
\left\langle \underbrace{ \int_0^{+\infty}  X_{\Phi(u(s))} u(s)\dd s}_{= w_\infty}, \int_{M_1} \psi (r,z_1')\dd {\mrm L}_1(z_1') \right\rangle =  \left\langle \left(\int_{M_1} w_\infty(r,z_1')\dd {\mrm L}_1(z_1')\right), \psi\right\rangle.
$$
All in all, we deduce that $u(t)$ weakly converges exponentially fast to the distribution 
$$u_\infty = \int_{M_1} u_0(r,z_1')\dd {\mrm L}_1(z_1') + \int_{M_1} w_\infty(r,z_1')\dd {\mrm L}_1(z_1').$$ 
By Lemma~\ref{lem:conservation-Lp}, we deduce that $u_\infty \in L^1 \cap L^\infty(M)$ and that the support of $u_\infty$ avoids the null section. Hence we can write it as $h_\infty \circ \mrm H$ for some $h_\infty \in  L^\infty_{\operatorname{comp}}(\R_{>0})$.

To conclude, the exponential decay estimate of $\|\Phi(u(t))\|_{\mathscr{C}^{N}}$ for all $N\geqslant 0$ follows from the previous calculation by replacing $\psi$ by $\partial_x^\alpha K(x, \cdot)$ which has zero average.

\appendix

\section{Toolbox on microlocal analysis}
\label{a:pseudo}

In this appendix, we review material from microlocal analysis with a special emphasis on pseudo-differential operators on manifolds. These tools are used all along the article and we refer to~\cite[Ch.~18]{HormanderIII},~\cite[Ch.~4, 9, 14]{zworski} and~\cite[App.~E]{dyatlov2017mathematical} -- that we closely follow -- for more details and background on this theory. A notable difference with these references is the use of an isochore atlas and of the Weyl quantization which simplify some aspects of the exposition as it was pointed to us by Guedes Bonthonneau~\cite{bonthonneaubreviary}.

\subsection{Pseudodifferential operators on $\R^N$}\label{aa:pseudoRN} Let $a$ be a function in the Schwartz class $\mathscr{S}(\R^{2N})$. One can define the Weyl quantization of the symbol $a$ as
$$
\forall u\in\mathscr{S}(\R^N),\quad{\oW}(a)u(x)=\frac{1}{(2\pi)^N}\int_{\R^{2N}}e^{i\langle x-y,\xi\rangle}a\left(\frac{x+y}{2},\xi\right)u(y)dyd\xi.
$$
One can verify that ${\oW}(a)u$ belongs to $\mathscr{S}(\R^{2N})$. Let us observe that the Schwartz kernel of ${\oW}(a)$ is given by
$$
K_a(x,y)=\frac{1}{(2\pi)^N}\int_{\R^{N}}e^{i\langle x-y,\xi\rangle}a\left(\frac{x+y}{2},\xi\right)d\xi.
$$
One can also recover the symbol from the kernel through the formula
\begin{equation}\label{eq:symbol-kernel} 
 a(x,\xi)=\int_{\R^N}K_a\left(x+\frac{\eta}{2},x-\frac{\eta}{2}\right)e^{-i\langle \eta,\xi\rangle}d\eta,
\end{equation}
which is often referred to as the Wigner transform.

We can also introduce the class of Kohn-Nirenberg (or classical) symbols~\cite[\S 9.3]{zworski} $a$, for every $m\in\R$,
$$
S^m(\mrm{T}^*(\R^N))=\left\{a\in\mathscr{C}^\infty(\mrm{T}^*\R^N):\ \forall(\alpha,\beta),\ \sup_{x,\xi} \, \langle\xi\rangle^{|\beta|-m}|\partial^\alpha_x\partial_\xi^\beta a|<\infty\right\},
$$
where we used the standard convention $\langle\xi\rangle=\left(1+|\xi|^2\right)^{\frac12}$. One can verify that the definition of $\oW(a)$ extends to symbols lying in $S^m(\mrm{T}^*(\R^N))$ and that, for every $m\in\R$ and every $a\in S^m(\mrm{T}^*(\R^N))$,
$$
{\oW}(a):\mathscr{S}(\R^N)\rightarrow\mathscr{S}(\R^N)
$$
is a bounded operator~\cite[Th.~4.16]{zworski}. We denote by $\Psi^m(\R^n)$ the set of operators of the form ${\oW}(a)$ with $a$ belonging to $S^m(\mrm{T}^*(\R^N))$. This is what we will refer to the set of pseudo-differential operators of order $m$ on $\R^N$. The useful extra classes of operators are
$$
\Psi^{-\infty}(\R^N)=\bigcap_{m\in\R}\Psi^m(\R^N),\qquad \Psi^{m+}(\R^N)=\bigcap_{m'>m}\Psi^{m'}(\R^N)
$$
and their subset $\Psi_{\text{comp}}(\R^N)$ consisting of pseudo-differential operators whose symbol $a$ is compactly supported. Recall from~\cite[Th.~9.6]{zworski} that, if $a\in S^m(\mrm{T}^*(\R^N))$, then the Schwartz kernel $K_a$ of ${\oW}(a)$ belongs to $\mathscr{S}'(\R^{2N})$ and is smooth outside the diagonal 
$$
\Delta = \left\{(x,x)~:~x \in\R^N\right\}.
$$
Moreover, for every $N_0>|\alpha|+|\beta|+m+N$, one has 
$$
|\partial_x^\alpha\partial_y^\beta K_a(x,y)|\leqslant C_{N_0}|x-y|^{-N_0}, \quad (x,y) \in \R^{2N}.
$$
Conversely, if the Schwartz kernel $K \in\mathscr{C}^\infty(\R^{2N})$ of a smoothing operator $A$~\cite[Th.5.2.6]{hormander2015analysis} verifies, for all $\alpha,\beta$ and for all $N_0$, $\langle x-y\rangle ^{N_0}\partial_x^\alpha\partial_y^\beta K(x,y) \in L^{\infty}(\R^{2N})$ and one can show that $A={\oW}(a)$ for some $a\in S^{-\infty}(\mrm{T}^*\R^N)$~\cite[Th.~9.6]{zworski}.

\begin{rema}\label{r:invariantsymbol}
The key observation in view of defining pseudo-differential operators on manifolds is that, given a smooth diffeomorphism $\gamma:\R^N\rightarrow \R^N$ (with all the derivatives of $\gamma$ and $\gamma^{-1}$ being bounded on $\R^N$), one can define its symplectic lift
$$
\widetilde{\gamma}:\mrm{T}^*\R^N\rightarrow \mrm{T}^*\R^N,\quad (x,\xi)\mapsto \left(\gamma^{-1}(x),\dd\gamma(x)^\top\xi\right),
$$
and verify that $a\in S^m$ implies that $a\circ\widetilde{\gamma}\in S^m$~\cite[Th.~9.4]{zworski}.
\end{rema}
Let us now review a few standard properties of these operators.

\subsubsection{Adjoint} One can check the following, for every $a\in S^m(\mrm{T}^*(\R^N))$,
\begin{equation}\label{eq:adjointRN}
 \langle {\oW}(a)u,v\rangle_{L^2}=\langle u,{\oW}(\overline{a})v\rangle_{L^2}, \quad u, v\in\mathscr{S}(\R^N).
\end{equation}
In particular, if $a$ is real-valued, the operator is formally selfadjoint.

\subsubsection{Composition formula} Given $a\in S^{m_1}(\mrm{T}^*(\R^N))$ and $b\in S^{m_2}(\mrm{T}^*(\R^N))$, one has
\begin{equation}\label{eq:compositionRN}
 {\oW}(a)\circ{\oW}(b)={\oW}(a \operatorname{\sharp} b),
\end{equation}
where $a \operatorname{\sharp} b$ is the Moyal product of $a$ and $b$. It can be shown~\cite[Th.~9.5]{zworski} that $a \operatorname{\sharp} b$ belongs to $S^{m_1+m_2}(\mrm{T}^*(\R^N))$ and that, for every $N_0\geqslant 0$,
\begin{equation}\label{eq:moyalRN}
 (a \operatorname{\sharp} b)(x,\xi)-\sum_{k=0}^{N_0}\frac{\left(\langle\partial_\xi,\partial_y\rangle-\langle\partial_\eta,\partial_x\rangle\right)^k}{(2i)^kk!}\left(a(x,\xi)b(y,\eta)\right)|_{x=y,\xi=\eta} \in S^{m_1+m_2-N_0-1}(\mrm{T}^*(\R^N)) ,
\end{equation}
where one can verify that each term in the sum belongs to $S^{m_1+m_2-k}(\mrm{T}^*(\R^N))$. In particular, one can observe that the first term in the sum is equal to $ab$ while the second one is $\frac{1}{2i}\{a,b\}$. Going through the proof of this result, one can verify that the semi-norms of the remainder of order $N_0$ are controlled by the semi-norms of $\partial_x^{\alpha_1}\partial_\xi^{\beta_1}a \, \partial_x^{\alpha_2}\partial_\xi^{\beta_2}b$ with $|\alpha_1|+|\alpha_2|=|\beta_1|+|\beta_2|=N_0$.

These properties immediately lead to the formula for the bracket
\begin{equation}\label{eq:bracketRN}
 [{\oW}(a),{\oW}(b)]={\oW}(a \operatorname{\sharp} b-b\sharp a),
\end{equation}
and one can verify that the symbol $c=a \operatorname{\sharp} b-b\sharp a$ has the following asymptotic form:
\begin{equation}\label{eq:bracketsymbolRN}
 c(x,\xi)-2\sum_{k=0}^{N_0}\frac{\left(\langle\partial_\xi,\partial_y\rangle-\langle\partial_\eta,\partial_x\rangle\right)^{2k+1}}{(2i)^{2k+1}(2k+1)!}\left(a(x,\xi)b(y,\eta)\right)|_{x=y,\xi=\eta} \in S^{m_1+m_2-2N_0-3}(\mrm{T}^*(\R^N)).
\end{equation}
Again, one can check that each term in the sum belongs to $S^{m_1+m_2-2k-1}(\mrm{T}^*(\R^N))$.

\subsubsection{Action on Sobolev spaces} In order to make these formulas of interest, one needs to understand the action of these operators on standard Sobolev spaces
$$
H^s(\R^N)=\left\{ u\in\mathscr{S}'(\R^N): \langle\xi\rangle^s\widehat{u}(\xi) \in L^2(\R^N)\right\},\quad s\in\R,
$$
where $\widehat{u}(\xi)$ is the Fourier transform of $u$. The Calder\'on-Vaillancourt Theorem reads as follows\footnote{Technically speaking, this section deals with the $s=m=0$ but, using the composition rule, one can derive this more general version.}~\cite[\S 4.5]{zworski}. For every $s\in\R$, there exists $C_s>0$ and $N_s\in\Z_+$ such that, for every $a\in S^m(\mrm{T}^*\R^N)$,
\begin{equation}\label{eq:calderonRN}
\left\|{\oW}(a)\right\|_{H^{s}\rightarrow H^{s-m}}\leqslant C_s\sum_{|\alpha|+|\beta|\leqslant N_s}\left\|\langle\xi\rangle^{-m+|\beta|}\partial^\alpha_x\partial_\xi^\beta a\right\|_{\infty}. 
\end{equation}
Combined with~\eqref{eq:compositionRN}, this yields asymptotic formulas for ${\oW}(a)\circ{\oW}(b)$ with terms that are ``more and more smoothing''.

\subsubsection{G\aa{}rding inequality} 
\label{s:garding}
A key property is the sharp G\aa{}rding inequality~\cite[Th.~9.11]{zworski} which states that one can find $C_m,N_m>0$ such that, for every $a\in S^m(\mrm{T}^*\R^N)$ satisfying $a\geqslant 0$, one has
\begin{equation}\label{eq:gardingRN}
\langle {\oW}(a)u,u\rangle \geqslant-C_m\|u\|^2_{H^{m-\frac{1}{2}}}\sum_{|\alpha|+|\beta|\leqslant N_m}\|\langle\xi\rangle^{-m+|\beta|}\partial_x^\alpha\partial_\xi^\beta a\|_{\infty}, \quad u \in \mathscr S(\R^N).
\end{equation}

\subsubsection{Change of variables} Finally, if we let $\gamma:\R^N\rightarrow \R^N$ be a smooth diffeomorphism which is equal to the identity outside a compact set and $a$ be an element in $S^m(\mrm{T}^*\R^N)$, then there exists~\cite[Th.~9.10]{zworski} $a_\gamma$ in $S^m(\mrm{T}^*\R^N)$ such that
\begin{equation}\label{eq:changevariableRN}
(\gamma^{-1})^*{\oW}(a)\gamma^*={\oW}(a_\gamma).
\end{equation}
Following the proof in this reference, in the case where $a$ belongs to $\mathscr{S}(\R^{2N})$, the kernel of $K_{a_\gamma}$ writes
$$
K_{a_\gamma}(x,y)=\frac{1}{(2\pi)^N}\int_{\R^N}a\left(\frac{\gamma(x)+\gamma(y)}{2},\xi\right)e^{i\langle\gamma(x)-\gamma(y),\xi\rangle}d\xi.
$$
Thanks to~\eqref{eq:symbol-kernel}, we can write an asymptotic expression for $a_\gamma(x,\xi)$ following the lines of~\cite[Ch.~9]{zworski}. In fact, a remarkable feature of the Weyl quantization is that, in the case of a smooth \emph{volume preserving} diffeomorphism (which is the identity outside a compact set), one can find a sequence $(a_{\gamma,k})_{k\geqslant 2}$ such that, for every $k\geqslant 2$, $a_{\gamma,k}\in S^{m-k}(\mrm{T}^*\R^N)$ and for every $N_0\geqslant 2$,
\begin{equation}\label{eq:changevariableweyl}
 a_\gamma(x,\xi)-a\circ\widetilde{\gamma}(x,\xi)-\sum_{k=2}^{N_0}a_{\gamma,k}(x,\xi) \in S^{m-N_0-1}(\mrm{T}^*\R^N),
\end{equation}
where each $a_{\gamma,k}$ depends linearly on derivatives of order $k$ of $a$ with respect to $\xi$. See e.g.~\cite{bonthonneaubreviary} or~\cite[Th.~9.3]{zworski}.

\subsection{Operator-valued symbols}\label{aa:operatorvalued}

Due to the product structure of the phase space $\R\times M_1$, it is convenient to introduce pseudo-differential operators on $\R$ with operator-valued symbols. For the sake of simplicity, we restrict ourselves to $N=1$ but the presentation can be extended to any $N\geqslant 1$ as in the previous subsection. We fix two Hilbert spaces $\mathscr{H}_1$ and $\mathscr{H}_2$ and the space of continuous linear maps $\mathscr{L}(\mathscr{H}_1,\mathscr{H}_2)$ between these two spaces. As above, the space of (Kohn-Nirenberg) operator-valued symbols is given, for every $m\in\R$, by
\begin{multline*}
S^m(\mrm{T}^*(\R),\mathscr{L}(\mathscr{H}_1,\mathscr{H}_2))\\
=\left\{A\in\mathscr{C}^\infty(\mrm{T}^*\R,\mathscr{L}(\mathscr{H}_1,\mathscr{H}_2)):\ \forall(\alpha,\beta),\ \sup_{r,\rho}\langle\rho\rangle^{|\beta|-m}\|\partial^\alpha_r\partial_\rho^\beta A\|_{\mathscr{H}_1\rightarrow\mathscr{H}_2}<\infty\right\}.
\end{multline*}
As before, one can define the Weyl quantization of such a symbol as follows
$$
\forall u\in\mathscr{S}(\R,\mathscr{H}_1),\quad
{\oW}(A)u(r)=\frac{1}{2\pi}\int_{\R^{2}}e^{i\langle r-s,\rho\rangle}A\left(\frac{r+s}{2},\rho\right)u(s)dsd\rho,
$$
and, along the same lines, one can verify that
$$
{\oW}(A):\mathscr{S}(\R,\mathscr{H}_1)\rightarrow \mathscr{S}(\R,\mathscr{H}_2)
$$
is a continuous linear map. Again, one can introduce the space of pseudo-differential operators $\Psi^m(\R, \mathscr{H}_1,\mathscr{H}_2)$ of order $m$ with values in $\mathscr{L}(\mathscr{H}_1,\mathscr{H}_2)$ as the space of all operators of the previous form ${\oW}(A)$.

The formula for the adjoint remains true in this setting, i.e. for every $u$ in $\mathscr{S}(\R,\mathscr{H}_1)$ and every $v$ in $\mathscr{S}(\R,\mathscr{H}_2)$
\begin{equation}\label{eq:adjointoperatorvalued}
  \int_{\R} \langle {\oW}(A)u(r),v(r)\rangle_{\mathscr{H}_2}dr=\int_{\R} \langle u(r),{\oW}(A^*)v(r)\rangle_{\mathscr{H}_1}dr.
\end{equation}
Similarly, one can write a composition formula for this quantization:
\begin{equation}\label{eq:compositionoperatorvalued}
 {\oW}(A)\circ{\oW}(B)={\oW}(a \operatorname{\sharp} b),
\end{equation}
where $a \operatorname{\sharp} b$ is the Moyal product of the two operators  $A\in S^{m_1}(\mrm{T}^*(\R),\mathscr{L}(\mathscr{H}_2,\mathscr{H}_3))$ and $B\in S^{m_2}(\mrm{T}^*(\R),\mathscr{L}(\mathscr{H}_1,\mathscr{H}_2))$. Again, it can be shown that $a \operatorname{\sharp} b$ belongs to $S^{m_1+m_2}(\mrm{T}^*(\R),\mathscr{L}(\mathscr{H}_1,\mathscr{H}_3))$ and that, for every $N_0\geqslant 0$
\begin{equation}\label{eq:moyaloperatorvalued}
 a \operatorname{\sharp} b(r,\rho)-\sum_{k=0}^{N_0}\frac{\left(\partial_\rho\partial_s-\partial_\sigma\partial_r\right)^k}{(2i)^kk!}\left(A(r,\rho)B(s,\sigma)\right)|_{r=s,\rho=\sigma},
\end{equation}
belongs to $S^{m_1+m_2-N_0-1}(\mrm{T}^*(\R^N),\mathscr{L}(\mathscr{H}_1,\mathscr{H}_3))$ where one can verify that each term in the sum belongs to $S^{m_1+m_2-k}(\mrm{T}^*(\R),\mathscr{L}(\mathscr{H}_1,\mathscr{H}_3))$. A notable difference with the scalar case is that there is no particular simplification when writing $[{\oW}(A),{\oW}(B)]$. In particular, we do not have in general that $[{\oW}(A),{\oW}(B)]\in \Psi^{m_1+m_2-1}$. Finally, the analogue of the Calder\'on-Vaillancourt Theorem reads (on $H^s$ spaces): for every $s\in\R$, there exist $C_s>0$ and $N_s\in\Z_+$ such that, for every $a\in S^m(\mrm{T}^*\R,\mathscr{L}(\mathscr{H}_1,\mathscr{H}_2))$,
\begin{equation}\label{eq:calderonoperatorvalued}
\left\|{\oW}(A)\right\|_{H^s(\R,\mathscr{H}_1)\rightarrow H^{s-m}(\R,\mathscr{H}_2)}\leqslant C_0\sum_{|\alpha|+|\beta|\leqslant N_0}\sup_{r,\rho}\left\|\langle\rho\rangle^{-m+|\beta|}\partial^\alpha_r\partial_\rho^\beta A(r,\rho)\right\|_{\mathscr{L}(\mathscr{H}_1,\mathscr{H}_2)}. 
\end{equation}

\subsection{Pseudodifferential operators on $\R\times M_1$}\label{aa:mfd}

\subsubsection{The case of $M_1$}\label{aa:compactmfd} 

As in the case of $\R^N$, we can introduce the class of Kohn-Nirenberg symbols:
$$
S^m(\mrm{T}^*M_1)=\left\{a\in\mathcal{C}^\infty(\mrm{T}^*M_1):\forall(\alpha,\beta),\ \|\langle\zeta_1\rangle^{|\beta|-m}\partial_{z_1}^\alpha\partial_{\zeta_1}^\beta a\|_\infty<\infty\right\},
$$
where the derivatives are understood in local coordinate charts. Again, we use the convention 
$$
S^{m+}(\mrm{T}^*M_1)=\bigcap_{m'>m}S^{m'}(\mrm{T}^*M_1),
$$
We will now proceed as in~\cite[\S14.2.3]{zworski} to associate to each $a$ in $S^m(\mrm{T}^*M_1)$ a pseudo-differential operator in $\Psi^{m}(M_1)$ (see $\S14.2.2$ in this reference for a precise definition of pseudo-differential operators on a compact manifold). For later purposes, we introduce the semi-norms associated to this class of symbols
$$
\forall(\alpha,\beta)\in\Z_+^2,\quad p_{m,\alpha,\beta}(a)=\sum_{j\in J}\|\langle\zeta_1\rangle^{|\beta|-m}\partial_{z_1}^\alpha\partial_{\zeta_1}^\beta (\widetilde{\gamma}_j^{-1})^*(a)\|_\infty,
$$
where we have fixed a finite atlas $(U_j,\gamma_j)_{j\in J}$ on $M_1$ made of smooth diffeomorphism $\gamma_j:U_j\subset M_1\rightarrow V_j\subset \R^{2n-1}$ such that $\gamma_j^*(\text{Vol}_{g_S})=\text{Leb}_{\R^{2n-1}}$. Here $\text{Vol}_{g_S}$ denotes the volume form induced by the Sasaki metric $g_S$ on $M_1$. The existence of an isochore atlas follows from an argument due to Moser~\cite{Moser1965}. We then fix a smooth partition of unity $(\chi_j)_{j\in J}$, i.e.
$$
\forall z_1\in M_1,\quad\sum_{j\in J}\chi_j^2(z_1)=1,\quad\chi_j\in\mathscr{C}^\infty_c(U_j,[0,1]).
$$
For every $j\in J$, we also fix a smooth function $\widetilde{\chi}_j\in\mathscr{C}^\infty_c(U_j,[0,1])$ such that $\widetilde{\chi}_j\chi_j=\chi_j$. We can then set, for every $a\in S^m(\mrm{T}^*M_1)$,
\begin{equation}\label{eq:pseudomfd}
{\oM}(a)=\sum_{j\in J}\chi_j\gamma_j^*{\oW}\left((\widetilde{\gamma}_j^{-1})^*(\widetilde{\chi_j}a)\right)(\gamma_j^{-1})^*\chi_j:\mathscr{C}^\infty(M_1)\rightarrow\mathscr{C}^\infty(M_1).
\end{equation}
\begin{rema}\label{r:changecoord} For any $\ell\in J$, we fix two smooth functions $(\psi_{\ell,j})_{j=1,2}\in\mathscr{C}^\infty_c(U_\ell,[0,1])$ such that $\psi_{\ell,j}\widetilde{\chi}_\ell=\widetilde{\chi}_\ell$ and $\psi_{\ell,1}\psi_{\ell,2}=\psi_{\ell,2}$. One can then write
 \begin{multline*}
 (\gamma_\ell^{-1})^*\psi_{\ell,1}{\oM}(a)\psi_{\ell,2}\gamma_\ell^*\\
 =\sum_{j\in J}(\psi_{\ell,1}\chi_j)\circ\gamma_\ell^{-1}(\gamma_j\circ \gamma_\ell^{-1})^*{\oW}\left((\widetilde{\gamma}_j^{-1})^*(\widetilde{\chi_j}a)\right)(\chi_j\psi_{\ell,2})\circ\gamma_j^{-1}(\gamma_\ell\circ\gamma_j^{-1})^*
 \end{multline*}
 Thanks to~\eqref{eq:compositionRN} and to~\eqref{eq:changevariableRN}, there exists $\widetilde{a}_\ell\in S^m(\mrm{T}^*\R^{2n-1})$ such that 
 $$
 (\gamma_\ell^{-1})^*\psi_{\ell,1}{\oM}(a)\psi_{\ell,2}\gamma_\ell^*={\oW}(\widetilde{a}_\ell).
 $$
 Moreover, thanks to~\eqref{eq:changevariableweyl} and to the fact that we picked an isochore atlas, $a_\ell$ has an asymptotic expansion of the form
 $$
 \widetilde{a}_\ell=(\psi_{\ell,1}\psi_{\ell,2})\circ\gamma_\ell^{-1} a\circ\widetilde{\gamma}_\ell^{-1}+\sum_{k=2}^{N_0}a_{k,\ell}+\mathcal{O}_{S^{m-N_0-1}}(1),
 $$
 where each $a_{k,l}\in S^{m-k}(\mrm{T}^*\R^N)$ depends linearly on derivatives of order $k$ of $a$ with respect to $\xi$ and is supported in $ V_\ell\times\R^N$. Multiplying this expression by $\chi_\ell^2$ and making the sum over $\ell$, we define $\sigma({\oM}(a))$ the principal symbol of ${\oM}(a)$. Hence, one finds that the principal symbol of ${\oM}(a)$ is well defined modulo $S^{m-2}(\mrm{T}^*M_1)$. This is a specific feature of the fact that we picked an isochore atlas combined with the Weyl quantization.
\end{rema}

\begin{rema}\label{r:explicitsymbol} With our choice of quantization, we have the following simplified expressions. If $b(z_1)$ is a function that is independent of $\zeta_1$, then ${\oM}(b)u=bu$. If $Y$ is a smooth vector field on $M$, then $Y={\oM}(\zeta_1(Y(z_1))+r(z_1))$~\cite[Th.~4.5]{zworski}, where $r(z_1)$ depends on the choice of coordinate charts and linearly on $Y$. Indeed, one can write
$$
Yu=\sum_{j\in J}Y(\chi_j^2u)=\sum_{j\in J}\chi_jY(\chi_j u)=\sum_{j\in J}\chi_j\gamma_j^*\left((\gamma_j^{-1})^*Y\gamma_j^*\right)(\gamma_j^{-1})^*\chi_j u,
$$
and apply~\cite[Th.~4.5]{zworski} to express the vector field $(\gamma_j^{-1})^*Y\gamma_j^*$ as ${\oW}(\zeta(Y)+r_j)$. Note also that, for a volume preserving vector field, the remainder is equal to $0$ thanks to our choice of isochore charts.
\end{rema}

From our choice of picking an isochore atlas, we can immediately verify that the following properties hold, for every $a\in S^m(\mrm{T}^*M_1)$
\begin{equation}\label{eq:adjointmfd}
 \forall  u, v \in \mathscr{C}^\infty(M_1),\quad\langle {\oM}(a)u,v\rangle_{L^2(M_1)}=\langle u,{\oM}(\overline{a})v\rangle_{L^2(M_1)},
\end{equation}
and there exists $C_m,N_m>0$ such that, if $a\geqslant 0$, then
\begin{equation}\label{eq:gardingmfd}
 \forall  u \in \mathscr{C}^\infty(M_1),\ \langle {\oM}(a)u,u\rangle_{L^2(M_1)}\geqslant -C_m\|u\|^2_{H^{m-\frac{1}{2}}}\sum_{|\alpha|+|\beta|\leqslant N_m}\|\langle\zeta_1\rangle^{-m+|\beta|}\partial_{z_1}^\alpha\partial_{\zeta_1}^\beta a\|_{\infty}.
\end{equation}
We also find that the Calder\'on-Vaillancourt Theorem remains true for these operators. In other words, for every $s\in\R$, one can find $C_s,N_s>0$ such that 
\begin{equation}\label{eq:calderonmfd}
 \| {\oM}(a)\|_{H^s(M_1)\rightarrow H^{s-m}(M_1)}\leqslant C_s\sum_{|\alpha|+|\beta|\leqslant N_s}\|\langle\zeta_1\rangle^{-m+|\beta|}\partial_{z_1}^\alpha\partial_{\zeta_1}^\beta a\|_{\infty}.
\end{equation}
\begin{rema}
More generally, one can verify that every pseudo-differential operator $A\in \Psi^m(M_1)$ verifies
\begin{equation}\label{eq:calderonmfd2}
 \| A\|_{H^s(M_1)\rightarrow H^{s-m}(M_1)}<\infty.
\end{equation}
\end{rema}
Similarly, following the proof of~\cite[Th.~14.1]{zworski}, we can give asymptotic expansions for the operator ${\oM}(a){\oM}(b)$ when $a\in S^{m_1}(\mrm{T}^*M_1)$ and $b\in S^{m_2}(\mrm{T}^*M_1)$. In fact, for any $\ell\in J$, we fix a smooth function $\psi_\ell\in\mathscr{C}^\infty_c(U_\ell,[0,1])$ such that $\psi_\ell\widetilde{\chi}_\ell=\widetilde{\chi}_\ell$. Proceeding and keeping the same notations as in Remark~\ref{r:changecoord}, we can then determine the symbol 
$$
(\gamma_\ell^{-1})^*\psi_{\ell,1}{\oM}(a){\oM}(b)\psi_{\ell,2}\gamma_\ell^*
$$ in view of determining the principal symbol of ${\oM}(a){\oM}(b)$:
\begin{multline*}
(\gamma_\ell^{-1})^*\psi_{\ell,1}{\oM}(a){\oM}(b)\psi_{\ell,2}\gamma_\ell^*\\
=\sum_{j,k\in J}(\gamma_j\circ\gamma_\ell^{-1})^*(\psi_{\ell,1}\chi_j)\circ\gamma_j^{-1}{\oW}((\widetilde{\gamma}_j^{-1})^*(\widetilde{\chi}_ja))(\chi_j\widetilde{\psi}_\ell)\circ\gamma_j^{-1}(\gamma_\ell\circ\gamma_j^{-1})^*\\
\sum_{k\in J}(\gamma_k\circ\gamma_\ell^{-1})^*(\widetilde{\psi}_\ell\chi_k)\circ\gamma_k^{-1}{\oW}((\widetilde{\gamma}_k^{-1})^*(\widetilde{\chi}_kb))(\chi_k\psi_{\ell,2})\circ \gamma_k^{-1}(\gamma_\ell\circ\gamma_k^{-1})^*
+{\oW}(r_\ell),
\end{multline*}
where $\widetilde{\psi}_\ell\in\mathscr{C}^\infty_c(U_\ell)$ is identically equal to $1$ on the support of $\psi_{\ell,j}$, $j=1,2$, and where $r_\ell$ is an element in $S^{-\infty}(\mrm{T}^*\R^N)$ thanks to the kernel representation of pseudo-differential operators recalled in \S\ref{aa:pseudoRN}. The exact same calculation as in Remark~\ref{r:changecoord} shows that
$$
c_\ell=(ab)\circ\widetilde{\gamma}_\ell^{-1}(\psi_{\ell,1}\psi_{\ell,2})^2\circ\gamma_\ell^{-1}+\frac{1}{2i}\left\{a\psi_{\ell,1},b\psi_{\ell,2}\right\}\circ\widetilde{\gamma}_\ell^{-1}+\sum_{k=2}^{N_0}c_{k,\ell}+\mathcal{O}_{S^{m_1+m_2-N_0-1}}(1).
$$
Multiplying by $\chi_\ell^2$ and summing over $\ell$, we find that
\begin{equation}\label{eq:compositionmfd}
{\oM}(a){\oM}(b)={\oM}\left(ab+\frac{1}{2i}\{a,b\}\right)+\mathcal{O}_{\Psi^{m_1+m_2-2}(M_1)}(1). 
\end{equation}
More precisely, there exist bilinear differential operators\footnote{It means that $L_k(a,b)=\sum_{|\alpha|+|\beta|\leqslant 2k}c_{\alpha,\beta}(z_1;\zeta_1)\partial^\alpha a \, \partial^\beta b$.} $(L_k)_{k\geqslant 0}$ of order $\leqslant 2k$ (depending on our choice of coordinate charts) such that, for every $a\in S^{m_1}(\mrm{T}^*M_1)$ and for every $b\in S^{m_2}(\mrm{T}^*M_1)$, one has, for every $N_0\geqslant 0$,
\begin{equation}\label{eq:compositionmfd2}
R_{N_0}(a,b)= {\oM}(a){\oM}(b)-\sum_{k=0}^{N_0}{\oM}(L_k(a,b))\in\Psi^{m_1+m_2-N_0-1}(M_1),
\end{equation}
where
\begin{itemize}
 \item $L_0(a,b)=ab$,
 \item ${\mrm L}_1(a,b)=\frac{1}{2i}\{a,b\}$,
 \item for every $k\geqslant 2$ and for every $\alpha=(\alpha_1,\alpha_2,\alpha_3,\alpha_4)\in\Z^{8n-4}$, there exist smooth functions $c_{\alpha,k}$ on $M_1$ (depending on our various choices of coordinate charts and cutoff functions) such that
 \begin{equation}\label{eq:highordertermcomposition}
  L_k(a,b)=\sum_{|\alpha|\leqslant 2k, |\alpha_2|+|\alpha_4|=k}c_{\alpha,k}(z_1)\partial_{z_1}^{\alpha_1}\partial_{\zeta_1}^{\alpha_2}a\, \partial_{z_1}^{\alpha_3}\partial_{\zeta_1}^{\alpha_4}b \in S^{m_1+m_2-k}(\mrm{T}^*M_1).
 \end{equation}
 \end{itemize}
We observe that the formulas for the bracket $[{\oM}(a),{\oM}(b)]$ also translate to the case of manifolds thanks to the above discussion.

\begin{rema}\label{r:controlproduct} From the proof of~\eqref{eq:compositionmfd}, we can estimate the size of the remainder as a continuous operator from $H^s(M_1)$ to $H^{s+N_0+1-(m_1+m_2)}(M_1)$ in terms of the semi-norms of $a$ and $b$. Indeed, in view of estimating these norms, one can conjugate $R_{N_0}(a,b)$ by $(\gamma_{\ell}^{-1})^*\psi_{\ell,1}$ (on the left) and $(\gamma_{k})^*\psi_{k,2}$ (on the right) and verify that all the involved terms can be expressed in terms of a finite number of derivatives of $a$ and $b$ (with at least $N_0$ derivatives with respect to $\zeta_1$). More precisely, one can verify that, for every $N_0,N_1\geqslant 0$,
\begin{multline*}
\|R_{N_0+N_1}(a,b)\|_{H^s\rightarrow H^{s+N_0+1-(m_1+m_2)}}\leqslant C_{N_0, N_1,s,m_1,m_2}\\
\sup_{|\alpha_0|+|\alpha_1|=N_1}\sup_{|\beta_0|+|\beta_1|\leqslant K_{N_0, N_1,s,m_1,m_2}}\left\{p_{m_1,\beta_0}(\partial_{\zeta_1}^{\alpha_0}a)p_{m_2,\beta_1}(\partial_{\zeta_1}^{\alpha_1}b))\right\}.
\end{multline*}
Here, we took two indices $N_0$ and $N_1$ as, when working on $\R\times M_1$, we will have an extra parameter $\rho$ that we will consider as a kind of semiclassical gain through the small parameter $\langle\rho\rangle^{-1}$. For these symbols, this gain will occur with each derivative with respect to $\zeta_1$ so that the above remainder will be of size $\langle\rho\rangle^{-N_1}$. See for instance the proof of Lemma~\ref{l:pseudoinverse1} below.
\end{rema}

\subsubsection{Exponential of pseudo-differential operators on manifolds}\label{aaa:exponential}

We consider a smooth function $\lambda$ verifying the following assumptions:
\begin{equation}\label{eq:admissibleweight}
 \lambda\in S^{0+}(\mrm{T}^*M_1,\R)\quad\text{and}\quad \exists m,R\geqslant 0\ \text{such that}\ |\lambda(z_1;\zeta_1)| \leqslant m\log(R+\langle\zeta_1\rangle).
\end{equation}
Then, according to~\cite[Th.~8.6]{zworski} (adapted to the case of compact manifolds, see also~\cite[Th.~6.4]{BonyChemin1994} for general H\"ormander-Weyl symbols~\cite[\S18.4]{HormanderIII}), one finds that, the equation
$$
\partial_tB(t)={\oM}(\lambda)B(t),\quad B(0)=\text{Id}
$$
has a solution that we denote by $\exp(t{\oM}(\lambda))$. More precisely, it is a smooth function of $t$ with values in continuous linear mapping from $\mathscr{C}^\infty(M_1)$ to itself. In fact, the proof in~\cite{zworski} (adapted to the case of compact manifolds) which is of semiclassical nature shows that, for every $R_0\geqslant 1$, one can find $0<h_0<1$ such that, for every $t\in[-R_0,R_0]$, for every $s\in\mathbb{R}$ and for every $0<h<h_0$, the following holds:
\begin{itemize}
 \item there exists a bounded operator $Q_{h,1}(t):H^s\rightarrow H^s$ (with a norm that is bounded independently of $t\in[-R_0,R_0]$ and $0<h<h_{0}$) such that one can find a symbol $b_{h,1}$ which is equal to $1$ modulo in $S^{-1+}(\mrm{T}^*M_1)$ with the following property
\begin{equation}\label{eq:symbolexpright}
 \exp\left(t{\oM}(\lambda(z_1,h\zeta_1))\right)=Q_{h,1}(t){\oM}\left(e^{t\lambda(z_1,h\zeta_1)}b_{h,1}(z_1,h\zeta_1)\right),
\end{equation}
where we note that $b_{h,1}$ is independent of $t$ and has all its semi-norms (in $S^{0+}(\mrm{T}^*M_1)$) uniformly bounded in terms of $0<h<h_0$. Recall that the operator ${\oM}\left(e^{t\lambda(z_1,h\zeta_1)}b_{h,1}(z_1,h\zeta_1)\right)$ appears has a pseudo-inverse in the proof of~\cite[Lemma~8.4]{zworski}, namely
\begin{equation}\label{eq:approxinverse-exp}
{\oM}\left(e^{t\lambda(z_1,h\zeta_1)}b_{h,1}(z_1,h\zeta_1)\right){\oM}\left(e^{-t\lambda(z_1,h\zeta_1)}\right)=\text{Id}+\mathcal{O}_{H^s\rightarrow H^s}(h).
\end{equation}
Observe that the involved symbols lie in the class $S^{R_0m}(\mrm{T}^*M_1)$ of standard Kohn-Nirenberg symbols.
\item Similarly, there exists a bounded operator $Q_{h,2}(t):H^s\rightarrow H^s$ (with a norm that is bounded independently of $t\in[-R_0,R_0]$ and $0<h<h_0$) such that one can find a symbol $b_{h,2}$ which is equal to $1$ modulo  $S^{-1+}(\mrm{T}^*M_1)$ with the following property
\begin{equation}\label{eq:symbolexpleft}
 \exp\left(t{\oM}(\lambda(z_1,h\zeta_1))\right)={\oM}\left(e^{t\lambda(z_1,h\zeta_1)}b_{h,2}(z_1,h\zeta_1)\right)Q_{h,2}(t),
\end{equation}
with the same properties for $b_{h,2}$.
 \end{itemize}

\begin{rema} Recall that the proof in~\cite{zworski} goes as follows. Suppose that there exists a solution to
 $$
\partial_tB_h(t)={\oM}_h(\lambda)B_h(t),\quad B_h(0)=\text{Id},
$$
where ${\oM}_h(b(z_1;\zeta_1))={\oM}(b(z_1,h\zeta_1))$. Then, letting $U_h(t)={\ohM}(e^{t\lambda})$, one has 
$$
\partial_t\left(U_h(-t)B_h(t)\right)=\left(-{\ohM}(\lambda e^{-t\lambda})+{\ohM}(e^{-t\lambda}){\ohM}(\lambda)\right)B_h(t)=V_h(t)B_h(t).
$$
Hence, if one sets $C_h(t)=-V_h(t)U_h(-t)^{-1}$, one gets a smooth family of bounded operator on $H^s(M_1)$ using the composition rule and the Calder\'on-Vaillancourt Theorem. Now the Cauchy-Lipschitz Theorem ensures the existence and uniqueness of a smooth solution $Q_h(t):H^s\rightarrow H^s$ to the problem
$$
\partial_tQ_h(t)=C_h(t)Q_h(t),\quad Q_h(0)=\text{Id},
$$
and $B_h(t)$ is then given by $U_h(-t)^{-1}Q_h(t)$. Using~\eqref{eq:approxinverse-exp} with $h>0$ small enough, one can find an expression for $U_h(-t)^{-1}$ and this yields~\eqref{eq:symbolexpleft}. The same reasoning yields~\eqref{eq:symbolexpright}.
\end{rema}

\begin{rema} Note that it would require slightly more work to check that $Q_{h,j}(t)$ is a pseudo-differential operator but we do not discuss this issue as this is not necessary for our analysis. 
\end{rema}


\subsubsection{Pseudodifferential operators on $\R\times M_1$ as operator valued pseudo-differential operators on $\R$} In view of our analysis, it is convenient to use the framework of \S\ref{aa:operatorvalued} to define operators associated with symbols lying in 
$$
\overline{S}^m(\mrm{T}^*(\R\times M_1))=\left\{a\in\mathcal{C}^\infty(\mrm{T}^*(\R\times M_1)):\forall(\alpha,\beta),\ \|\langle(\rho,\zeta_1)\rangle^{|\beta|-m}\partial_{rz_1}^\alpha\partial_{\rho\zeta_1}^\beta a\|_\infty<\infty\right\}.
$$
Namely, we define
\begin{equation}\label{eq:pseudononcompact}
 \mathbf{Op}(a)u(r,z_1)=\frac{1}{2\pi}\int_{\R^2}e^{i(r-s)\rho}{\oM}\left(a\left(\frac{r+s}{2},\rho;.\right)\right)u(s,z_1)dsd\rho,
\end{equation}
which can be identified with a pseudo-differential operator with symbol taking values in $\mathscr{L}(H^\sigma(M_1),H^{\sigma-m}(M_1))$ (for every $\sigma\in\R$). Note that compared with the class $S^m$ we require the derivatives to be bounded uniformly on $\mathbb{R}$ (rather than on compact sets).

Letting 
$$
\mathscr{S}(\R\times M_1)=\left\{u\in\mathscr{C}^\infty(\R\times M_1):\ \forall(\alpha,\beta),\ \|(1+|r|)^\alpha\partial^\beta_{rz_1} u(r,z_1)\|_\infty<\infty\right\},
$$
this quantization procedure induces a bounded operator
$$
\mathbf{Op}(a):\mathscr{S}(\R\times M_1)\rightarrow \mathscr{S}(\R\times M_1).
$$

\begin{rema}\label{r:exoticsymbol}
 The advantage of devising such a definition is that we can quantize more exotic symbols than in the standard Kohn-Nirenberg classes $\overline{S}^m(\mrm{T}^*(\R\times M_1))$ or $S^m(\mrm{T}^*(\R\times M_1))$. For instance, one can pick in this definition functions of the form $(1+\rho^2)^{m_0}a(z_1;\zeta_1)$ (with $m_0\in\R$ and $a\in S^m(\mrm{T}^*M_1)$) and view the corresponding operator as an element of $\Psi^{m_0}(\R,\mathscr{L}(H^\sigma(M_1),H^{\sigma-m}(M_1)))$. 
\end{rema}

\begin{rema}\label{r:pseudononcompact} When picking symbols $a$ in $\overline{S}^m(\mrm{T}^*(\R\times M_1))$ or in $S^m(\mrm{T}^*(\R\times M_1))$, one recover using~\eqref{eq:changevariableRN} that $\mathbf{Op}(a)$ is a pseudo-differential operator of order $m$ on the noncompact manifold $\R\times M_1$ in the sense of~\cite[Prop.~E.13, Rk p.551]{dyatlov2017mathematical}. For instance, $\mathbf{Op}(a)$ verifies the Calder\'on-Vaillancourt Theorem between $H^{s}_{\text{comp}}$ and $H^{s-m}_{\text{loc}}$~\cite[Prop.~E.22]{dyatlov2017mathematical}. Moreover, if we fix a smooth cutoff function $\chi\in\mathcal{C}^{\infty}_c(\R)$, then $\mathbf{Op}(a)\chi$ is properly supported in the sense of~\cite[\S A.7]{dyatlov2017mathematical} and it is thus amenable to the composition rule for pseudo-differential operators on noncompact manifold~\cite[Prop.~E.17]{dyatlov2017mathematical}. Similarly, if $\widetilde{\chi}\in\mathcal{C}^{\infty}_c(\R)$, then $\chi\mathbf{Op}(a)\widetilde{\chi}$ is compactly supported. In that case, $\chi\mathbf{Op}(a)\widetilde{\chi}$ verifies the G\aa{}rding inequality for smooth $u$ that are  compactly supported in $\R\times M_1$~\cite[Prop.~E.23]{dyatlov2017mathematical}.
\end{rema}

\subsubsection{Cutoff functions in the $\R$ variable} 

In the above class of symbols, one has the following lemma:
\begin{lemm}\label{l:cutoff} Let $\chi_1,\chi_2$ be two smooth functions on $\R$ that have disjoint supports, all of whose derivatives are bounded and such that $\chi_2$ has compact support. Let $a\in\overline{S}^m(\mrm{T}^*(M_1\times \R))$. Then, one has, for every $N_0,N_1\geqslant 0$,
$$
\|\chi_1\mathbf{Op}(a)\chi_2\|_{H^{-N_1}(\R, H^{-N_0}( M_1))\rightarrow H^{N_1}(\R, H^{N_0}( M_1))}<\infty,
$$
and
$$
\|\chi_1\mathbf{Op}(a)X\chi_2\|_{H^{-N_1}(\R, H^{-N_0}( M_1))\rightarrow H^{N_1}(\R, H^{N_0}( M_1))}<\infty,
$$
where $X=rX_1$ is the geodesic vector field.
\end{lemm}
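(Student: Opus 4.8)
Both estimates are of "pseudolocality" type: they express that cutting $\mathbf{Op}(a)$ off by two functions with disjoint supports produces a kernel that is smooth and, thanks to the compact support of $\chi_2$, rapidly decaying. The plan is to write out the kernel using \eqref{eq:pseudononcompact}, use the disjointness of the supports to integrate by parts in the $\rho$–variable (which is the only place a gain of regularity in the $M_1$–directions can come from), and then observe that the second estimate follows from the first because $X=rX_1$ has no $\partial_r$–component. Concretely, $\chi_1\mathbf{Op}(a)\chi_2$ has operator-valued Schwartz kernel
\[
K(r,s)=\chi_1(r)\,\chi_2(s)\,\frac{1}{2\pi}\int_{\R}e^{i(r-s)\rho}\,{\oM}\!\left(a\bigl(\tfrac{r+s}{2},\rho;\cdot\bigr)\right)\dd\rho ,
\]
and since $\supp\chi_2$ is compact and $\supp\chi_1\cap\supp\chi_2=\emptyset$ there is $c>0$ with $|r-s|\geqslant c$ on $\supp\chi_1\times\supp\chi_2$. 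On that set I would integrate by parts $k$ times via $\frac{1}{i(r-s)}\partial_\rho(e^{i(r-s)\rho})=e^{i(r-s)\rho}$, producing the factor $(i(r-s))^{-k}$ and replacing $a$ by $\partial_\rho^k a\in\overline S^{m-k}(\mrm T^*(\R\times M_1))$.

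\textbf{The key symbol estimate.} For fixed $\rho$, $\partial_\rho^k a(\cdot,\rho;\cdot)$ is a symbol in $\zeta_1$ on $M_1$, and from $|\partial_{z_1}^\alpha\partial_{\zeta_1}^\beta\partial_\rho^k a|\leqslant C\langle(\rho,\zeta_1)\rangle^{m-k-|\beta|}$ together with the factorisation $\langle(\rho,\zeta_1)\rangle^{m-k-|\beta|}=\langle(\rho,\zeta_1)\rangle^{-(2N_0+|\beta|)}\langle(\rho,\zeta_1)\rangle^{-(k-m-2N_0)}\leqslant\langle\zeta_1\rangle^{-(2N_0+|\beta|)}\langle\rho\rangle^{-(k-m-2N_0)}$ (valid once $k\geqslant m+2N_0$), the Calderón–Vaillancourt estimate \eqref{eq:calderonmfd} on $M_1$ gives $\|{\oM}(\partial_\rho^k a(\cdot,\rho;\cdot))\|_{H^{-N_0}(M_1)\to H^{N_0}(M_1)}\leqslant C\langle\rho\rangle^{-(k-m-2N_0)}$, uniformly in $(r,s)$. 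Choosing $k\geqslant m+2N_0+2$ makes the $\rho$–integral absolutely convergent and yields $\|K(r,s)\|_{\mathscr L(H^{-N_0}(M_1),H^{N_0}(M_1))}\leqslant C_k|r-s|^{-k}$; taking $k$ even larger gives, for every $M$, $\|K(r,s)\|\leqslant C_M|r-s|^{-M}\leqslant C_M'\langle r\rangle^{-M}$ on the support (the last step using that $s$ ranges over a compact set). Applying $\partial_r^j\partial_s^l$ beforehand does not change anything substantial: the derivatives fall on $\chi_1,\chi_2$, on the smooth factor $a(\tfrac{r+s}{2},\rho;\cdot)$ (whose $r,s$–derivatives still lie in $\overline S^m$), or bring down powers of $\rho$ absorbed by further $\rho$–integrations by parts. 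Hence $K$ is smooth and $\partial_r^j\partial_s^lK$ obeys the same bounds for all $j,l$.

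\textbf{From the kernel to the mapping property, and the $X$–term.} An operator $T$ with kernel $K(r,s)$ valued in $\mathscr L(H^{-N_0}(M_1),H^{N_0}(M_1))$, compactly supported in $s$, smooth, and with $\|\partial_r^j\partial_s^lK(r,s)\|\leqslant C_{j,l,M}\langle r\rangle^{-M}$ for all $j,l,M$, maps $H^{-N_1}(\R,H^{-N_0}(M_1))\to H^{N_1}(\R,H^{N_0}(M_1))$ boundedly: pairing $f\in H^{-N_1}(\R,H^{-N_0}(M_1))$ against $s\mapsto K(r,s)\in H^{N_1}(\R_s,\mathscr L(H^{-N_0},H^{N_0}))$ produces, for each $r$, an element of $H^{N_0}(M_1)$, and the $r$–smoothness and rapid $r$–decay of $K$ give $Tf\in H^{N_1}(\R_r,H^{N_0}(M_1))$ with the corresponding norm bound (a routine Schur-type argument). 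This is the first estimate. For the second, note that $X=rX_1$ differentiates only the $M_1$–variable, hence commutes with multiplication by functions of $r$ alone, so $X\chi_2=\chi_2 X$. Choosing $\hat\chi_2\in\mathscr C_c^\infty(\R)$ equal to $1$ on $\supp\chi_2$ with $\supp\hat\chi_2\cap\supp\chi_1=\emptyset$ (possible since $\supp\chi_2$ is compact and $\dist(\supp\chi_1,\supp\chi_2)>0$), one writes $\chi_1\mathbf{Op}(a)X\chi_2=\bigl(\chi_1\mathbf{Op}(a)\hat\chi_2\bigr)\circ\bigl(\chi_2X\bigr)$. The factor $\chi_2X=\chi_2(r)rX_1$ is bounded from $H^{-N_1}(\R,H^{-N_0}(M_1))$ to $H^{-N_1}(\R,H^{-N_0-1}(M_1))$, being $X_1$ acting fibrewise (bounded $H^{-N_0}(M_1)\to H^{-N_0-1}(M_1)$ by \eqref{eq:calderonmfd2}) composed with multiplication by $r\chi_2(r)\in\mathscr C_c^\infty(\R)$; the factor $\chi_1\mathbf{Op}(a)\hat\chi_2$ is, by the first part applied with $\hat\chi_2$ in place of $\chi_2$ and $N_0+1$ in place of $N_0$, bounded from $H^{-N_1}(\R,H^{-N_0-1}(M_1))$ to $H^{N_1}(\R,H^{N_0}(M_1))$.

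\textbf{Main obstacle.} The only genuinely delicate point is the bookkeeping in the key symbol estimate: one must check that after enough $\partial_\rho$'s the symbol $\partial_\rho^ka(\cdot,\rho;\cdot)$ is \emph{simultaneously} of sufficiently negative order in $\zeta_1$ (to gain $2N_0$ derivatives on $M_1$ via Calderón–Vaillancourt) \emph{and} decaying in $\rho$ (to make the $\rho$–integral converge). Once the two roles of the exponent $m-k-|\beta|$ are separated as above, the remaining steps — smoothness of the kernel and the passage to the $H^{N_1}(\R,H^{N_0}(M_1))$ bound — are standard smoothing-kernel arguments.
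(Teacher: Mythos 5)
Your proof is correct, and it reaches the conclusion by a more hands-on route than the paper. The paper stays entirely inside the operator-valued pseudodifferential calculus: it applies the composition formula \eqref{eq:compositionoperatorvalued}--\eqref{eq:moyaloperatorvalued} to $\chi_1\mathbf{Op}(a)X_1$, multiplies on the right by $r\chi_2(r)$ and expands again, observes that every explicit term of the expansion carries a product of derivatives of $\chi_1$ and of $r\chi_2$ at the same point $r$ and hence vanishes by disjointness of the supports, and is left with a remainder whose operator-valued symbol is $\mathcal{O}(\langle\rho\rangle^{-N_1'})$ in $\mathscr L(H^{-(N_0+1)}(M_1),H^{N_0}(M_1))$; the operator-valued Calder\'on--Vaillancourt bound \eqref{eq:calderonoperatorvalued} then concludes (and the case without $X$ is declared analogous). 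You instead work directly on the operator-valued Schwartz kernel: the lower bound $|r-s|\geqslant c$ on $\supp\chi_1\times\supp\chi_2$ lets you integrate by parts in $\rho$, and your factorisation of $\langle(\rho,\zeta_1)\rangle^{m-k-|\beta|}$ cleanly separates the two roles of the gain --- negative order in $\zeta_1$ to apply the scalar Calder\'on--Vaillancourt estimate \eqref{eq:calderonmfd} on $M_1$, and decay in $\rho$ for integrability --- after which a smoothing-kernel/Schur argument (using the compact $s$-support and the resulting $\langle r\rangle^{-M}$ decay of all $(r,s)$-derivatives) gives the mapping property between the vector-valued Sobolev spaces. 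The two arguments exploit exactly the same mechanism (disjoint $r$-supports yield arbitrary gain in $\rho$), since the Moyal remainder estimates are themselves proved by such non-stationary-phase arguments; what your version buys is independence from the black-box remainder estimate, at the price of carrying out the kernel bookkeeping by hand. A genuinely distinct and rather elegant point of yours is the treatment of the second estimate: instead of redoing the expansion with $X_1$ inserted, you use $X\chi_2=\chi_2X$ (legitimate since $X=rX_1$ involves no $\partial_r$), insert an intermediate cutoff $\hat\chi_2$ still disjoint from $\supp\chi_1$ (possible because the closed supports, one compact, are at positive distance), and factor $\chi_1\mathbf{Op}(a)X\chi_2=(\chi_1\mathbf{Op}(a)\hat\chi_2)\circ(r\chi_2(r)X_1)$, absorbing the loss of one $M_1$-derivative by applying the first estimate with $N_0+1$; this reduction is sound and arguably cleaner than the paper's ``analogous'' remark.
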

\begin{rema}\label{r:comparisonsobolevnorms} Note that, for every $N_0\geqslant 0,$ one has
$$\left\|u\right\|_{H^{N_0}(\R\times M_1)}\leqslant C_{N_0}\left\|u\right\|_{H^{N_0}(\R,H^{N_0}(M_1))},$$
and
 $$\left\|u\right\|_{H^{-N_0}(\R,H^{-N_0}(\times M_1))}\leqslant C_{N_0}\left\|u\right\|_{H^{-N_0}(\R\times M_1)}.$$
 Hence, Lemma~\ref{l:cutoff} can be translated to the standard Sobolev spaces on $\R\times M_1$.
\end{rema}

\begin{proof}
 We treat the case with $X$ (the other case works analoguously). To that aim, we write the composition formula~\eqref{eq:compositionoperatorvalued} for $\chi_1\mathbf{Op}(a)X_1$ and we find, for every $N_2\geqslant 0$,
 $$
 \chi_1\mathbf{Op}(a)X_1={\oW}\left(\sum_{k=0}^{N_2}\frac{1}{(-2i)^kk!}\chi^{(k)}_1{\oM}(\partial_\rho^ka)X_1 +R_{N_2}\right),
 $$
 where the remainder lies in $S^{m-N_2-1}(\R,\mathscr{L}(H^{s}(M_1),H^{s-m}(M_1)))$ with the semi-norm of $R_{N_0}$ (in that class) that depends on the semi-norms of $\chi_1^{(N_2)}(r){\oM}(\partial_\rho^{N_2}a)X_1$. We apply the composition formula one more time to this expression after multiplying this equality on the right by $r\chi_2(r)$. We find that
 $$
 \chi_1\mathbf{Op}(a)rX_1\chi_2={\oW}\left(\widetilde{R}_{N_2}(r,\rho)\right)\widetilde{\chi}_2,
 $$
 where $\widetilde{\chi}_2$ is compactly supported function which is equal to $1$ on the support of $\chi_2$ and $\widetilde{R}_{N_2}$ is an operator-valued symbol that can be expressed in terms ${\oM}(\partial_\rho^{N_2}a)$. As $a$ belongs to $\overline{S}^m(\mrm{T}^*(\R\times M_1))$, this defines a bounded operator from $H^{-(N_0+1)}(M_1)$ to $H^{N_0}(M_1)$ whose norm is bounded by $\langle\rho\rangle^{-N_1'}$ provided that $N_2\gg N_0+N_1'+m.$ Picking $N_1'\gg N_1$, this yields the expected upper bound.
\end{proof}

\subsubsection{Pseudo inverse on $\R\times M_1$}

We conclude this appendix with two observations showing that one can always find an (almost) inverse for $\mathbf{Op}(a)$ when $a>0$ is a simple enough symbol. More precisely, suppose that $\lambda(z_1;\zeta_1)$ verifies~\eqref{eq:admissibleweight} and that $m_0\in\R$. We define
$$
B_{\lambda,m_0}u(r,z_1)=\frac{1}{2\pi}\int_{\R^2}e^{i(r-s)\rho}(1+\rho^2)^{m_0}\exp{\oM}(\lambda)u(s,z_1)dsd\rho.
$$
With this notation at hand, one finds
\begin{equation}\label{eq:inverseeasy}
 \forall u\in\mathscr{S}(\R\times M_1),\quad B_{\lambda,m_0} B_{-\lambda,-m_0}u=u.
\end{equation}
Finally, we discuss the case of the more general symbols in $\overline{S}^{m}(\mrm{T}^*(\R\times M_1))$:
\begin{lemm}\label{l:pseudoinverse1} Let $\lambda\in \overline{S}^{+0}(\mrm{T}^*(M_1\times \R))$ that is independent of the variable $r$ and such that $|\lambda(z_1,\rho,\zeta_1)|\leqslant m\ln(R+ \langle(\rho,\zeta_1)\rangle)$ for some given $m,R\geqslant 0$. Then, $a=e^\lambda$ belongs to $\overline{S}^{m+}(\mrm{T}^*(\R\times M_1))$

Moreover, given $N_0,N_1\geqslant 0$, one can find $b_{N_0,N_1}\in\overline{S}^0(\mrm{T}^*(\R\times M_1))$ which is independent of $r$ and equal to $1$ modulo $\overline{S}^{-1+}(\mrm{T}^*(\R\times M_1))$ such that, for every $\chi\in\mathscr{C}^\infty_c(\R)$, the operator
$$
\left(\mathbf{Op}(e^{-\lambda}b_{N_0,N_1})\mathbf{Op}(e^{\lambda})-\operatorname{Id}\right)\chi: H^{-N_1}\left(\R,H^{-N_0}(M_1)\right)\rightarrow H^{N_1}\left(\R,H^{N_0}(M_1)\right)
$$
is bounded.
\end{lemm}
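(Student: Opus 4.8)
The plan is to proceed in two independent parts, matching the two assertions of Lemma~\ref{l:pseudoinverse1}. The first claim---that $a = e^\lambda \in \overline{S}^{m+}(\mrm{T}^*(\R\times M_1))$---is a standard Fa\`a di Bruno computation. First I would note that derivatives of $e^\lambda$ are finite sums of terms of the form $e^\lambda \prod_{i} \partial^{\alpha_i}_{(r,z_1)}\partial^{\beta_i}_{(\rho,\zeta_1)}\lambda$. Since $\lambda$ is independent of $r$ and lies in $\overline{S}^{0+}$, each factor with $|\beta_i|\geqslant 1$ gains a power $\langle(\rho,\zeta_1)\rangle^{\varepsilon - |\beta_i|}$ and each factor with $\beta_i = 0$ is $O(\langle(\rho,\zeta_1)\rangle^{\varepsilon})$; using the logarithmic bound $|\lambda|\leqslant m\ln(R+\langle(\rho,\zeta_1)\rangle)$ one gets $e^\lambda = O(\langle(\rho,\zeta_1)\rangle^{m})$, so that the full derivative $\partial^\alpha\partial^\beta(e^\lambda)$ is bounded by $C_{\alpha,\beta,\varepsilon}\langle(\rho,\zeta_1)\rangle^{m + \varepsilon - |\beta|}$ for any $\varepsilon > 0$. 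This is exactly membership in $\overline{S}^{m+}$.

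For the second claim, the approach is to build $b_{N_0,N_1}$ by the usual asymptotic parametrix construction for the composition $\mathbf{Op}(e^{-\lambda}b)\mathbf{Op}(e^\lambda)$, viewing these as operator-valued pseudodifferential operators on $\R$ as in Appendix~\ref{aa:operatorvalued}, and treating $\langle\rho\rangle^{-1}$ as a semiclassical small parameter (Remark~\ref{r:controlproduct}). First I would apply the composition formula~\eqref{eq:compositionoperatorvalued}--\eqref{eq:moyaloperatorvalued} together with the manifold composition rule~\eqref{eq:compositionmfd2} on $M_1$: writing $a^{-1}_\sharp$ for the symbol we are constructing, the leading term of $\mathbf{Op}(e^{-\lambda}b)\mathbf{Op}(e^\lambda)$ is $b$ modulo $\overline{S}^{-1+}$, so one sets $b = 1$ at leading order and then solves away successive error terms: at step $k$ one adds to $b$ a term in $\overline{S}^{-k+}(\mrm{T}^*(\R\times M_1))$, built from derivatives of $\lambda$, chosen so that the new composition error drops by one order. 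Each such correction is independent of $r$ (since $\lambda$ is), which is consistent with the stated form of $b_{N_0,N_1}$. After $N := N(N_0,N_1)$ steps (with $N$ chosen large compared to $N_0 + N_1 + m$) the remainder $R$ satisfies: its operator-valued symbol lies in $S^{-N'}(\mrm{T}^*\R, \mathscr{L}(H^{-N_0}(M_1), H^{N_0}(M_1)))$ with $N'$ as large as we wish, because each $\zeta_1$-derivative in the composition tail produces a gain $\langle(\rho,\zeta_1)\rangle^{-1}$, hence in particular a gain $\langle\rho\rangle^{-1}$; the Calder\'on--Vaillancourt estimate~\eqref{eq:calderonoperatorvalued} for operator-valued symbols then shows $R\chi$ maps $H^{-N_1}(\R, H^{-N_0}(M_1))$ to $H^{N_1}(\R, H^{N_0}(M_1))$ boundedly, where the cutoff $\chi$ is used only to make the operator properly supported and to absorb the loss coming from working on the noncompact factor $\R$ (Remark~\ref{r:pseudononcompact}).

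The main obstacle, and the place requiring the most care, is the bookkeeping of the two scales of regularity: the ``honest'' order coming from $\zeta_1$ on $M_1$ and the auxiliary order coming from $\rho$ on $\R$, which do not gain at the same rate in the expansion. Concretely, in~\eqref{eq:moyaloperatorvalued} a single application of $\partial_\rho \partial_s$ lowers the $\rho$-order by one but does \emph{not} by itself lower the $M_1$-order, whereas in~\eqref{eq:compositionmfd2} a $\zeta_1$-derivative lowers both. One therefore has to iterate the $M_1$-composition rule enough times inside each term of the $\R$-composition rule (using Remark~\ref{r:controlproduct}'s refined estimate with its two indices $N_0, N_1$) so that the remainder is simultaneously smoothing in $z_1$ of order $\geqslant N_0$ and decaying in $\rho$ of order $\geqslant N_1$; this forces $N$ to depend on both $N_0$ and $N_1$ (and on $m$), as reflected in the notation $b_{N_0,N_1}$. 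A secondary, more routine point is checking that the exponentials $e^{\pm\lambda}$ can legitimately be fed into the operator-valued calculus: this is handled by the discussion in \S\ref{aaa:exponential} (after the harmless rescaling $\zeta_1 \mapsto h\zeta_1$ mentioned there, which only affects a compact region of phase space and hence is irrelevant to the symbolic tail), so I would simply invoke~\eqref{eq:symbolexpleft}--\eqref{eq:symbolexpright} rather than re-prove it.
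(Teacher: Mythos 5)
Your argument is correct, and its core mechanism is the same as the paper's: a parametrix built from the symbolic calculus, with the decay in $\rho$ of the remainder coming from the fact that each $\zeta_1$-derivative of a symbol in $\overline{S}^{m}(\mrm{T}^*(\R\times M_1))$ gains a factor $\langle(\rho,\zeta_1)\rangle^{-1}\leqslant\langle\rho\rangle^{-1}$ (this is exactly the two-index estimate of Remark~\ref{r:controlproduct}), followed by the operator-valued Calder\'on--Vaillancourt bound \eqref{eq:calderonoperatorvalued}. The organizational difference is that what you single out as the main obstacle --- the interplay between the Moyal expansion \eqref{eq:moyaloperatorvalued} in $(r,\rho)$ and the $M_1$-expansion \eqref{eq:compositionmfd2} --- is in fact absent: since $e^{\pm\lambda}$ and all your corrections are independent of $r$, every term with $k\geqslant 1$ in \eqref{eq:moyaloperatorvalued} contains $\partial_s B$ or $\partial_r A$ and vanishes, so the composition in the $\R$-factor is \emph{exact}, i.e.\ $\mathbf{Op}(e^{-\lambda}b)\mathbf{Op}(e^{\lambda})$ is the quantization of the fiberwise product ${\oM}(e^{-\lambda(\rho)}b(\rho)){\oM}(e^{\lambda(\rho)})$. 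The paper exploits this immediately and reduces the whole construction to a family, indexed by $\rho$, of compositions on the compact manifold $M_1$, where Remark~\ref{r:controlproduct} directly yields $\|\mathbf{R}(\rho)\|_{H^{-N_0}(M_1)\to H^{N_0}(M_1)}\lesssim\langle\rho\rangle^{-N_1'}$ with $N_1'\gg N_1$; your two-scale bookkeeping achieves the same end but does extra work. Two minor points: the appeal to \S\ref{aaa:exponential} and \eqref{eq:symbolexpleft}--\eqref{eq:symbolexpright} is not needed (and slightly conflates two objects) --- the lemma quantizes the \emph{function} $e^{\lambda}$, which by your first part is an honest symbol in $\overline{S}^{m+}$, not the exponential of the operator ${\oM}(\lambda)$; and the cutoff $\chi$ is not there to make the operator-valued calculus on $\R$ work (that calculus is global), it merely localizes in $r$, so attributing to it the role of ``absorbing the loss from the noncompact factor'' is inessential to the argument.
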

\begin{proof} The fact that $e^\lambda$ belongs to $\overline{S}^{m+}(\mrm{T}^*(\R\times M_1))$ follows directly from the definition of this class of symbols. We now fix some $b\in \overline{S}^0(\mrm{T}^*(\R\times M_1))$ which is equal to $1$ modulo $\overline{S}^{-1+}(\mrm{T}^*(\R\times M_1))$ and which is independent of the variable $r\in\R$. From the composition rule~\eqref{eq:compositionoperatorvalued}, one has that, for every $u\in\mathscr{S}(\R\times M_1)$,
$$
\mathbf{Op}(e^\lambda)\mathbf{Op}(e^{-\lambda}b)u(r,z_1))=\frac{1}{2\pi}\int_{\R^2}e^{i(r-s)\rho}{\oM}(e^{-\lambda(\rho)}b(\rho)){\oM}(e^{\lambda(\rho)})u(s,z_1)d\rho ds.
$$
Using the composition rule~\eqref{eq:compositionmfd} on $M_1$ together with Remark~\ref{r:controlproduct}, we can find $b_{N_0,N_1}(z_1,\rho,\zeta_1)\in \overline{S}^{0}(\mrm{T}^*(\R\times M_1))$ such that, for every $\rho\in\R$,
$$
{\oM}(e^{-\lambda(\rho)}b_{N_0,N_1'}(\rho)){\oM}(e^{\lambda(\rho)})=\text{Id}+\mathbf{R}_{N_0,N_1'}(\rho),
$$
where $\|\mathbf{R}_{N_0,N_1'}(\rho)\|_{H^{-N_0}\rightarrow{H}^{N_0}}\leqslant C_{N_0,N_1'}\langle\rho\rangle^{-N_1'}$. Applying this identity with $N_1'\gg N_1$, we obtain the expected result.
\end{proof}

\section{Proof of Theorem~\ref{t:dolgopyatliverani}}\label{a:liverani}

In this short appendix, we briefly explain how to deduce Theorem~\ref{t:dolgopyatliverani} from the upper bound~\eqref{eq:exp-mixing-classical},
$$
\left|\int_{\mrm{S}^*\Sigma} u(t)\psi \dd {\mrm L}_1- \int_{\mrm{S}^*\Sigma}\left(\int_{\mrm{S}^*\Sigma} u_0\dd {\mrm L}_1\right) \psi \dd {\mrm L}_1\right|\leqslant Ce^{-\vartheta_0 |t|}\|u_0\|_{\mathscr{C}^{N_0}}\|\psi\|_{\mathscr{C}^{N_0}}.
$$
Write
$$
 \int_{\mrm{T}^*\Sigma} u(t)\psi \dd {\mrm L} =c_n\int_0^\infty r^{n-1}\int_{\mrm{S}^*\Sigma} u_0(r,\varphi_{tr}(x,\xi_1))\psi(r,x,\xi_1) \dd {\mrm L}_1(x,\xi_1)\dd r,
$$
where $(x,\xi)=(r,x,\xi_1)$ are the spherical coordinates on $\mrm{T}^*\Sigma$ and $c_n$ is a normalizing constant depending only on $n$. Applying~\eqref{eq:exp-mixing-classical} to the integral over $\mrm{S}^*\Sigma$, one gets
$$
 \int_{\mrm{T}^*\Sigma} u(t)\psi \dd {\mrm L} =\int_{\mrm{T}^*\Sigma} \left(\int_{\mrm{S}^*\Sigma} u_0(r,x,\xi_1) \dd {\mrm L}_1(x,\xi_1)\right)\psi \dd {\mrm L}+\mathcal{O}_{\psi,u_0}\left(\int_{r_0}^\infty e^{-tr\vartheta_0 }r^{n-1}dr\right),
$$
where the constant depends linearly on the $\mathscr{C}^{N_0}$ norms of $u_0$ and $\psi$. This concludes the argument.

\bibliographystyle{alpha}
\bibliography{biblio}

\end{document}